\documentclass[10pt]{amsart}
\usepackage{mathrsfs}
\usepackage{amssymb}
\usepackage{amsmath, bm}

\usepackage{titletoc}
\pagestyle{plain}

\usepackage{amscd}
\usepackage{amsmath, amssymb}
\usepackage{amsthm}
\usepackage{amsfonts}
\usepackage[colorlinks,linkcolor=blue,citecolor=blue, pdfstartview=FitH]{hyperref}

\usepackage[all,cmtip]{xy}

\usepackage{stmaryrd}

\usepackage{tikz-cd} 

\usepackage{extarrows} 



\setlength{\textwidth}{5.6in} \setlength{\oddsidemargin}{0.3in}
\setlength{\evensidemargin}{0.3in}\setlength{\footskip}{0.3in}
\setlength{\headsep}{0.3in}
\numberwithin{equation}{section}

\theoremstyle{plain}
\newtheorem{thm}{Theorem}[section]

\newtheorem{lem}[thm]{Lemma}
\newtheorem{corollary}[thm]{Corollary}
\newtheorem{prop}[thm]{Proposition}

\newtheorem{thmx}{Theorem}

\theoremstyle{definition}

\newtheorem{rmk}[thm]{Remark}

\newtheorem{claim}[thm]{Claim}

\newtheorem{defn-thm}[thm]{Definition-Theorem}
\newtheorem{defn-pro}[thm]{Definition-Proposition}

\theoremstyle{remark}

\usepackage{fancyhdr}




\newcommand{\bA}{{\mathbf C}}

\newcommand{\C}{{\mathbb C}}
\newcommand{\sC}{{\mathcal C}}

\newcommand{\sD}{{\mathcal D}}

\newcommand{\F}{{\mathbb F}}
\newcommand{\bF}{{\textbf F}}
\newcommand{\sF}{{\mathcal F}}

\newcommand{\I}{{\mathrm I}}
\newcommand{\sI}{{\mathcal I}}

\newcommand{\K}{{\mathbb K}}

\newcommand{\m}{{\mathfrak m}}

\newcommand{\N}{{\mathbb N}}
\newcommand{\sN}{{\mathcal N}}

\newcommand{\sO}{{\mathcal O}}

\newcommand{\sP}{{\mathcal P}}
\newcommand{\Q}{{\mathbb Q}} 
\newcommand{\rQ}{{\mathrm Q}}
\newcommand{\sQ}{{\mathcal Q}}
\newcommand{\fQ}{{\mathfrak Q}}

\newcommand{\sT}{{\mathcal T}}
\newcommand{\sfT}{{\mathsf T}}

\newcommand{\fU}{{\mathfrak U}}

\newcommand{\sX}{{\mathfrak X}}

\newcommand{\Z}{{\mathbb Z}}

\newcommand{\bb}{\mathbf{b}} 
\newcommand{\bc}{\mathbf{c}} 
\renewcommand{\k}{\C} 

\newcommand{\id}{\mathrm{id}}

\newcommand{\pr}{\mathrm{pr}} 
\newcommand{\tr}{{\mathrm{tr}}} 
\newcommand{\Tr}{{\mathrm{Tr}}} 
 
\newcommand{\xs}{{\ \xrightarrow{\sim} \ }}

\newcommand{\ch}{\mathrm{ch}} 
\newcommand{\ev}{\mathrm{ev}} 
 
\newcommand{\Stab}{\mathrm{Stab}} 
\newcommand{\rad}{\mathrm{rad}} 
\newcommand{\Hom}{\mathrm{Hom}}
\newcommand{\End}{\mathrm{End}}

\newcommand{\Ext}{\mathrm{Ext}}

\newcommand{\op}{{\mathrm{op}}}

\newcommand{\Fun}{\mathrm{Fun}}

\newcommand{\Mod}{\text{-}\mathrm{Mod}}
\renewcommand{\mod}{\text{-}\mathrm{mod}}

\newcommand{\rep}{\mathrm{rep}}
\newcommand{\Vect}{\mathrm{Vect}}
\newcommand{\Coh}{\mathrm{Coh}}
\newcommand{\QCoh}{\mathrm{QCoh}}
 
\newcommand{\Lim}[1]{\mathop{\mathrm{lim}}\limits_{#1}}
\newcommand{\cLim}[1]{\mathop{\mathrm{colim}}\limits_{#1}}
\newcommand{\adj}{\mathrm{adj}} 
\newcommand{\Db}{D^\mathrm{b}} 

\renewcommand{\for}{\mathrm{for}}

\newcommand{\rR}{\mathrm{R}} 
\newcommand{\rI}{\mathrm{I}} 

\newcommand{\Spec}{\mathrm{Spec}}

\newcommand{\rk}{\text{rk}} 
 
\newcommand{\pt}{\mathrm{pt}} 
\newcommand{\red}{\mathrm{red}}

\newcommand{\Gm}{\mathbb{G}_{\mathrm{m}}}

\newcommand{\Lie}{\mathrm{Lie}} 
\renewcommand{\sc}{\mathrm{sc}} 

\newcommand{\g}{{\mathfrak{g}}}
\renewcommand{\b}{{\mathfrak{b}}}
\renewcommand{\t}{{\mathfrak{t}}}
\newcommand{\n}{{\mathfrak{n}}}

\newcommand{\Fr}{\mathrm{Fr}} 
\newcommand{\HC}{\mathrm{HC}} 
\newcommand{\hc}{\mathrm{hc}} 
 
\newcommand{\af}{\mathrm{af}} 
 
\newcommand{\ex}{\mathrm{ex}}


\newcommand{\Gr}{\mathcal{G}\mathfrak{r}} 
\newcommand{\Fl}{\mathcal{F}{l}} 
\newcommand{\hb}{\mathrm{hb}} 
\newcommand{\mx}{\mathrm{mix}} 
\newcommand{\pur}{\mathrm{pure}} 
\newcommand{\reg}{\mathrm{reg}}

\newcommand{\bL}{\mathbf{\Lambda}} 
\newcommand{\bP}{\mathbf{P}}



\begin{document}
\title{Center of the category $\sO$ for a hybrid quantum group} 

\makeatletter
\let\MakeUppercase\relax
\makeatother

\author{Quan Situ} 
\address{Yau Mathematical Sciences Center\\
Tsinghua University\\
Beijing 100084, P.~R.~China}
\email{stq19@tsinghua.org.cn, quan.situ@uca.fr}
\date{}
\begin{abstract} 
We establish an algebra isomorphism between the center of the category $\sO$ for a hybrid quantum group at a root of unity $\zeta$ and the cohomology of $\zeta$-fixed locus on affine Grassmannian. 
A deformed version of this isomorphism was established in the previous paper \cite{Situ1}. 
For the Steinberg block of $\sO$, we construct an abelian equivalence to the category of equivariant sheaves on the Springer resolution. 
\end{abstract}

\maketitle
\setcounter{tocdepth}{1} \tableofcontents 

\section{Introduction and notations} 
\subsection{Main results} 
Let $G$ be a complex connected and simply-connected semisimple algebraic group, with a Borel subgroup $B$ and a Cartan subgroup $T\subset B$. 
Let $\zeta\in \C$ be a primitive $l$-th root of unity, where $l$ is an odd integer greater than the Coxeter number and coprime to the determinant of the Cartan matrix for $G$. 
The \textit{hybrid} (or ``\textit{mixed}") quantum group $U^\hb_\zeta$ 
was firstly introduced by Gaitsgory \cite{Gai18} with the perspective of generalizing the Kazhdan--Lusztig equivalence. 
The positive part of $U^\hb_\zeta$ is given by the positive part of Lusztig's quantum group and the negative part of $U^\hb_\zeta$ is given by the one of De Concini--Kac quantum group. 
The category $\sO$ for $U^\hb_\zeta$ can be viewed as a quantum analogue of the BGG category $\sO$. 

In the work of Bezrukavnikov--Boixeda-Alvarez--Shan--Vasserot \cite{BBASV}, the category $\sO$ for $U^\hb_\zeta$ is used to study the representations of small quantum group and its center. 
In particular, they constructed a homomorphism from the cohomology of ring of $\zeta$-fixed locus on affine Grassmannian to the center $Z(\sO)$ of the category $\sO$ for $U^\hb_\zeta$. 
In this paper we prove that it is an isomorphism.

\subsubsection{Center of category $\sO$ for $U^\hb_\zeta$}
Let $\check{G}$ be the Langlands dual group of $G$, with the corresponding Borel subgroup $\check{B}$ and the Cartan subgroup $\check{T}$. 
Denote the affine Grassmannian for $\check{G}$ by $\Gr=\check{G}\big(\C(\!(t)\!)\big)/\check{G}\big(\C[\![t]\!]\big)$. 
There is a $\Gm$-action on $\Gr$ by loop rotations. 
We consider the set of $\zeta$-fixed points $\Gr^\zeta$. 
There is a decomposition of this ind-variety, 
\begin{equation}\label{equ 1.00}
\Gr^\zeta=\bigsqcup_{\omega} \Fl^\omega, 
\end{equation}
where $\Fl^\omega$ are partial affine flag varieties of parahoric type $\omega$ associated with the loop group $\check{G}\big(\C(\!(t^l)\!)\big)$. 
Let $H^\bullet(\Gr^\zeta)$ be the singular cohomology of $\Gr^\zeta$ with complex coefficients. 
In \cite{BBASV} an algebra homomorphism $H^\bullet(\Gr^{\zeta})\rightarrow Z(\sO)$ is constructed, and it is extended to a homomorphism $\overline{\bb}: H^\bullet(\Gr^{\zeta})^\wedge \rightarrow Z(\sO)$ from a completion $H^\bullet(\Gr^{\zeta})^\wedge$ of $H^\bullet(\Gr^\zeta)$ in \cite{Situ1}. 
Our main result is the following. 

\begin{thmx}[Theorems \ref{prop 4.12}, \ref{thm 5.10} and \ref{thm 5.17}] \label{thm A} 
There is an algebra isomorphism 
$$\overline{\bb}: H^\bullet(\Gr^{\zeta})^\wedge\xs Z(\sO).$$ 
Under the map $\overline{\bb}$, the decomposition (\ref{equ 1.00}) is compatible with the block decomposition $\sO=\bigoplus\limits_\omega \sO^{[\omega]}$ 
labelled by the singular type $\omega$. 
In other words, $\overline{\bb}$ restricts to isomorphisms 
$$\overline{\bb}_\omega: H^\bullet(\Fl^\omega)^\wedge \xs Z(\sO^{[\omega]})$$ 
for each parahoric/singular type $\omega$. 
\end{thmx}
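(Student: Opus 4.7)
The natural plan is to descend from the deformed isomorphism of \cite{Situ1}. In that paper, a deformed category $\sO^{\mathrm{def}}$ is constructed as a flat family over a completion of $\Sym(\t^*)$ at the origin, and its center is identified with a deformed/equivariant completion of $H^\bullet(\Gr^\zeta)$. I would construct $\overline{\bb}$ as the base change of that deformed isomorphism along the augmentation $\Sym(\t^*)^\wedge \twoheadrightarrow \C$. By flatness of the deformed category, the specialisation of its center recovers $Z(\sO)$; on the geometric side, for each $\Fl^\omega$ the deformed cohomology is free over the base with fiber $H^\bullet(\Fl^\omega)$, a formality statement for Kac--Moody partial flags, so the specialisation recovers $H^\bullet(\Gr^\zeta)^\wedge$.

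The bijectivity of $\overline{\bb}$ then follows from a Nakayama-type argument: once both sides are flat over the deformation base and the map is an isomorphism before specialisation, the specialised map is also an isomorphism. Preservation of the algebra structure is automatic from the deformed level. For the block decomposition, one matches the labels $\omega$ on both sides. On the algebraic side, blocks of $\sO$ are parametrised by orbits of the $l$-dilated dot-action of the extended affine Weyl group on $\t^*$, and the singular type $\omega$ records the stabiliser of a representative. On the geometric side, the $\zeta$-fixed locus of loop rotation decomposes as in (\ref{equ 1.00}) into partial affine flag varieties $\Fl^\omega$ for $\check{G}\big(\C((t^l))\big)$, indexed by the same stabiliser combinatorics. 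Matching which component of $\Spec\Sym(\t^*)^\wedge$ modulo the Weyl action corresponds under the deformed isomorphism to which $\Fl^\omega$ yields that $\overline{\bb}$ restricts to an isomorphism $\overline{\bb}_\omega$ block by block.

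The main technical obstacle is controlling the completions compatibly on both sides. On the algebraic side, $Z(\sO)$ is naturally a product $\prod_\omega Z(\sO^{[\omega]})$ where each factor is a pro-finite topological algebra; on the geometric side, the completion of $H^\bullet(\Gr^\zeta)$ must be chosen so as to match this. The cleanest route is to reduce to one $\omega$ at a time, where both completions become the $\m$-adic completion of an explicit graded algebra, identifiable via a GKM-type presentation of $H^\bullet(\Fl^\omega)$ in terms of $\check{T}$-fixed points and $\check{T}$-invariant curves. The Steinberg block case should be treated separately by combining the computation of $\overline{\bb}_\omega$ with the abelian equivalence between the Steinberg block of $\sO$ and equivariant sheaves on the Springer resolution promised in the abstract: this equivalence provides an independent and explicit presentation of $Z(\sO^{[\omega]})$ matching $H^\bullet(\Fl^\omega)^\wedge$, thereby pinning down the topology on the Steinberg piece and, via propagation of equivalences on principal blocks, on every other $\omega$ as well.
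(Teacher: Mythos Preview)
Your proposal has a genuine gap at its core. You write that ``by flatness of the deformed category, the specialisation of its center recovers $Z(\sO)$,'' and then invoke a Nakayama argument. But taking the center of a category does \emph{not} commute with base change: $Z(\sO_S)\otimes_S\k$ need not equal $Z(\sO_\k)$, even when $\sO_S$ is well-behaved over $S$. The center is an end (a limit), not a colimit, and limits do not commute with tensor products in general. The paper states this explicitly in the introduction: although $H_{\check T}^\bullet(\Gr^\zeta)\otimes_{H_{\check T}^\bullet(\pt)}\C=H^\bullet(\Gr^\zeta)$, one does \emph{not} have $Z(\sO_S)\otimes_S\C=Z(\sO)$ a priori, and overcoming this is precisely the content of the paper. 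Your Nakayama argument would prove that $\overline{\bb}$ maps isomorphically onto the image of $Z(\sO_S)\otimes_S\k$ inside $Z(\sO_\k)$, but gives no control on whether that image is all of $Z(\sO_\k)$.

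The paper's actual strategy is substantially different and proceeds block by block. For the Steinberg block $\sO^{[-\rho]}$, it establishes an abelian equivalence with $\Coh^G(\widetilde{\sN})$ (your last paragraph hints at this, but as an afterthought rather than the engine of the argument), and then computes the center directly via the Arkhipov--Bezrukavnikov--Ginzburg equivalence and Ginzburg's identification $H^\bullet(\Gr)\cong U\g^x_u$. For the principal block, the paper uses translation functors $T^0_{-\rho}$ out of the Steinberg block: it shows that $Z(\sO^{[0]})$ injects into the center of a limit of endomorphism rings of certain projectives, and identifies that limit via the fibration $\check G/\check B\to\Fl^0\to\Gr$ together with the Steinberg computation. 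For general $\omega$, it uses trace maps of the biadjoint pair $(T^\omega_0,T^0_\omega)$ between centers, proving that the relevant composite is invertible and hence bootstrapping from the principal block. None of this is a specialisation or flatness argument; each step requires genuine new input about the structure of $\sO$.
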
 
Let $S=H_{\check{T}}^\bullet(\pt)_{\widehat{0}}$ be the completion of $H_{\check{T}}^\bullet(\pt)$ at the augmentation ideal. 
Recall that in \cite{Situ1} we established a deformed version of the isomorphism above, 
$$\bb: H_{\check{T}}^\bullet(\Gr^\zeta)^\wedge_S \xs Z(\sO_{S}),$$ 
where $H_{\check{T}}^\bullet(\Gr^\zeta)^\wedge_S$ is a completion of $H_{\check{T}}^\bullet(\Gr^\zeta)$, and $\sO_{S}$ is the deformation category $\sO$ for $U^\hb_\zeta$ defined over $S$. 
We have $\sO_{S}\otimes_S \C=\sO$. 
The isomorphisms $\bb$ and $\overline{\bb}$ satisfy $\bb\otimes_S \C=\overline{\bb}$. 
Note however that, although $H_{\check{T}}^\bullet(\Gr^\zeta)\otimes_{H_{\check{T}}^\bullet(\pt)}\C=H^\bullet(\Gr^\zeta)$, we do not have $Z(\sO_S)\otimes_S \C=Z(\sO)$ a priori, since taking center does not commute with specialization in general. 
To prove that $\overline{\bb}$ is an isomorphism, we need to study further properties of the category $\sO$. 

\subsubsection{Equivalence for Steinberg block}
We consider the Steinberg block $\sO^{[-\rho]}$ of $\sO$, i.e. the block containing Steinberg representations of $U^\hb_\zeta$. 
We establish the following 

\begin{thmx}[Theorem \ref{thm 4.7} and Corollary \ref{cor 4.8}] \label{thm B} 
There is an equivalence of abelian categories 
$$\sO^{[-\rho]} \xs \Coh^G(T^*(G/B)).$$ 
\end{thmx}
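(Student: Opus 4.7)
My plan is to construct the equivalence by matching standard-like generators on both sides and using the isomorphism of centers from \cite{Situ1} to rigidify the identification. Since $-\rho$ is fixed by the dot action of the finite Weyl group $W$, the simple objects of $\sO^{[-\rho]}$ are naturally labeled by $\lambda\in P$ via $L(-\rho+l\lambda)$; this matches the parametrization of the simple objects of $\Coh^G(T^*(G/B))\cong \Coh^B(\n)$ by characters $\C_\lambda$ of $T$, regarded as skyscraper sheaves at the origin. The candidate equivalence should identify the Verma module $M(-\rho+l\lambda)$ with the line bundle $\sL_\lambda\otimes\sO_{T^*(G/B)}$, so that the two families of standard objects correspond.

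Concretely, the functor $F:\Coh^G(T^*(G/B))\to\sO^{[-\rho]}$ will be built from the deformed data. Restricting the deformed isomorphism $\bb$ of \cite{Situ1} to the Steinberg summand yields an algebra isomorphism $H^\bullet_{\check T}(\Fl^{-\rho})^\wedge_S\xs Z(\sO^{[-\rho]}_S)$. Because $\Fl^{-\rho}$ is the affine Grassmannian associated with $\check G\big(\C((t^l))\big)$, its $\check T$-equivariant cohomology is related, via a Ginzburg--Peterson style argument, to $G$-equivariant functions on a deformation of $T^*(G/B)$; this endows $\sO^{[-\rho]}_S$ with a module structure over such a ring. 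One then verifies by a direct weight computation that $\Hom$'s between the proposed standards agree: $\Hom_{\sO^{[-\rho]}}(M(-\rho+l\lambda),M(-\rho+l\mu))$ and $\Hom_{\Coh^G(T^*(G/B))}(\sL_\lambda,\sL_\mu)$ both reduce to the $B$-invariants in $\Sym(\n)\otimes\C_{\mu-\lambda}$.

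To upgrade this to an abelian equivalence, I would promote the $\Hom$-match to an $\Ext$-match by comparing BGG-style resolutions of the simples $L(-\rho+l\lambda)$ (which are available because the $W$-fixity of $-\rho$ makes the multiplicity combinatorics in the Steinberg block degenerate) with the Koszul resolutions of skyscrapers in $\Coh^B(\n)$. Essential surjectivity of $F$ then follows by identifying a projective generator $P$ of $\sO^{[-\rho]}$ with the image under $F$ of $\bigoplus_\lambda \sL_\lambda\otimes\sO_{T^*(G/B)}$. The main obstacle will be showing that the endomorphism algebra of $P$ matches that of $\bigoplus_\lambda\sL_\lambda\otimes\sO_{T^*(G/B)}$: while the $\Hom^0$-match is a direct weight calculation, matching the full $\End$-algebra requires finer information on the Jantzen filtration of Vermas in the Steinberg block. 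The isomorphism of centers from \cite{Situ1} enters crucially here, pinning down this filtration and, combined with the exactness of $F$, yielding the abelian equivalence.
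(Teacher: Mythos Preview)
Your proposal has a genuine gap: you never actually construct a functor between the two categories. Matching centers, Hom-spaces, and Ext-groups does not produce an equivalence; it at best suggests one should exist. When you write ``the functor $F$ will be built from the deformed data'' and then invoke the center isomorphism from \cite{Situ1}, what you get is an action of a commutative ring on $\sO^{[-\rho]}_S$, not a functor from $\Coh^G(T^*(G/B))$. Likewise, the claimed identification $\Hom_{\sO^{[-\rho]}}(M(-\rho+l\lambda),M(-\rho+l\mu)) \cong (\Sym(\n)\otimes\C_{\mu-\lambda})^B$ is not a ``direct weight computation'': it is essentially the content of the equivalence you are trying to prove, and establishing it independently requires knowing that the $u_\zeta^+$-invariants of the Steinberg Verma are exactly $Z_\Fr^-$, which is a nontrivial fact about the small quantum group (see Lemma~\ref{lem 4.6}).

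The paper's approach is entirely different and constructive. It builds an explicit subalgebra $\fU_\zeta^{\hb}=Z_\Fr^-\otimes U_\zeta^{\hb,\geq}$ of $U_\zeta^\hb$ and shows, via the quantum coadjoint action of De~Concini--Kac--Procesi, that the quotient $\fU_\zeta^{\hb}/\langle (u_\zeta^+)_+\rangle$ is isomorphic to $\k[B\times_T T]\rtimes U\n$ (Proposition~\ref{prop 4.3}). This ``hybrid Frobenius'' map $\Fr^\hb$ yields a concrete adjoint pair $(\rI^\hb_{1},\rR^\hb_{1})$ between $U_\zeta^\hb$-modules and $U_1^{\hb'}$-modules. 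The proof then checks exactness of both functors, verifies they send Vermas to Vermas (using Lemma~\ref{lem 4.6}), and concludes the unit and counit are isomorphisms on projectives via Verma flags. Finally, $\sO_{1,\k}$ is \emph{tautologically} $\Coh^B(\n)=\Coh^G(T^*(G/B))$. The center isomorphism from \cite{Situ1} plays no role in constructing this equivalence; in fact the paper uses Theorem~B to help establish that $\overline{\bb}$ is an isomorphism on the Steinberg block, so your logical order is inverted relative to the paper.
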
 

We explain some ideas about this equivalence. 
In fact, Lusztig's version of Borel and De Concini--Kac's version of negative Borel are dual Hopf algebras (in the category of $X^*(T)$-graded vector spaces with braiding structure deformed by $q=\zeta$), so that $U^\hb_\zeta$ is realized as a Drinfeld double. 
So the analogue of $U^\hb_\zeta$ at $q=1$ corresponds to the semi-product 
$$U^\hb_1:=\k[B]\rtimes U\n,$$ 
where $\n$ is the Lie algebra of the unipotent radical $N$ of $B$. 
Recall that the analogue of Lusztig's quantum group $U_\zeta$ at $q=1$ is the enveloping algebra $U_1$ of $\Lie(G)$, and there is a quantum Frobenius map  $U_\zeta\twoheadrightarrow U_1$. 
Here for $U^\hb_\zeta$, we find a sub-quotient algebra which is isomorphic to $U^\hb_1$. 
Its construction is based on the quantum coadjoint action \cite{DeCKP92}, see \textsection \ref{subsect 4.1} for details. 
Using this sub-quotient algebra and this isomorphism, we establish an equivalence between the Steinberg block $\sO^{[-\rho]}$ and the category $\sO$ for $U^\hb_1$ (denoted by $\sO_1$). 
Note that a similar equivalence for the Steinberg block (which is called the ``special block") of the category $\sO$ for $U_\zeta$ was established in \cite[Thm 3.11]{AM15}. 
Finally, note that any $U^\hb_1$-module in $\sO_1$ is naturally a coherent sheave on $B$ supported on $N$, and that the conditions of $X^*(T)$-grading and locally unipotent $U\n$-action amount to gives a $B$-equivariant structure on it. 
Using the two observations above, we obtain identifications 
$$\sO_1= \Coh^B(N)= \Coh^G(T^*(G/B)),$$ 
where the last equality is by induction, since $T^*(G/B)=G\times^B N$. 

\subsection{Main steps of proof} 
The proof of Theorem~\ref{thm A} is separated in blocks. 
We sketch the main steps as follows. 

\subsubsection{Center of Steinberg block} 
We firstly consider the Steinberg block $\sO^{[-\rho]}$. 
In this case, $\omega=-\rho$ is a maximal parahoric type, and in particular $\Fl^{-\rho}$ is the affine Grassmannian associated with the loop group $\check{G}\big(\C(\!(t^l)\!)\big)$. 

Using the equivalence in Theorem \ref{thm B}, we compute the center of $\sO^{[-\rho]}$ in two ways. 
The first one is algebraic, namely we identify $Z(\sO^{[-\rho]})$ with the center of the degree zero part of a completion of $\k[N]\rtimes U\n$. 
The second one is geometric, based on the equivalence between the derived category of $G\times \Gm$-equivariant sheaves on $T^*(G/B)$ and the triangulated category of mixed complexes on affine Grassmannian $\Fl^{-\rho}$, due to Arkhipov--Bezrukavnikov--Ginzburg \cite{ABG04}. 
Using this equivalence we obtain a homomorphism 
\begin{equation}\label{equ 1.0} 
H^\bullet(\Fl^{-\rho})^\wedge \rightarrow Z(\Coh^{G}(T^*(G/B)))\xlongequal{\text{Thm \ref{thm B}}} Z(\sO^{[-\rho]}). 
\end{equation} 
We prove that (\ref{equ 1.0}) is an isomorphism, by using Ginzburg's description \cite{Gin95} of the cohomology of affine Grassmannian. 
We further show that (\ref{equ 1.0}) coincides with the map $\overline{\bb}_{-\rho}$, whose proof relies on a deformed version of the constructions above. 

\subsubsection{Center of principal block} 
Next, we treat the case of the principal block $\sO^{[0]}$. 
Now $\Fl^0$ is the affine flag variety associated with $\check{G}\big(\C(\!(t^l)\!)\big)$. 
We explain the idea of comparing $Z(\sO^{[0]})$ with the cohomology of $\Fl^0$ as follows. 

Recall in the classical case, computation of the center of the category $\sO^0_{\mathrm{BGG}}$ (the principal block of the category $\sO$ for $\Lie(G)$) follows from the three steps, see \cite{Soe90}: 
\begin{enumerate}
\item[(a)] construct an algebra homomorphism $H^\bullet(\check{G}/\check{B})\rightarrow Z(\sO^0_{\mathrm{BGG}})$; 
\item[(b)] consider the anti-dominant projective module $Q$, and show the composition 
\begin{equation}\label{equ 1.a} 
H^\bullet(\check{G}/\check{B})\rightarrow Z(\sO^0_{\mathrm{BGG}})\rightarrow \End(Q)
\end{equation}
is an isomorphism; 
\item[(c)] prove that the last map in (\ref{equ 1.a}) is an injection. 
\end{enumerate} 
Moreover, $Q$ can be obtained by applying translation functor to the Verma module of highest weight $-\rho$. 

Now turn to our case. 
The step (a) is fulfilled by the map $\overline{\bb}_0$. 
For step (b), we consider a family of projective covers $\{Q^{\leq \mu}\}_{\mu\geq 0}$ of the Verma module $M(-\rho)$ in different truncated categories of $\sO^{[-\rho]}$, and when $\mu=0$ we have $Q^{\leq 0}=M(-\rho)$. 
Their endomorphism algebra form a limit $\Lim{\mu\geq 0} \End(Q^{\leq\mu})$, such that the natural restrictions $Z(\sO^{[-\rho]})\rightarrow \End(Q^{\leq\mu})$ yield a homomorphism $Z(\sO^{[-\rho]})\rightarrow \Lim{\mu\geq 0} \End(Q^{\leq\mu})$. 
By using the algebraic description of $Z(\sO^{[-\rho]})$, we show that it yields an isomorphism 
\begin{equation}\label{equ 1.b} 
Z(\sO^{[-\rho]})\xs Z\big(\Lim{\mu\geq 0}\End(Q^{\leq\mu})\big). 
\end{equation}
We then apply the the translation functor $\sfT^0_{-\rho}:\sO^{[-\rho]}\rightarrow \sO^{[0]}$ to the family $\{Q^{\leq \mu}\}_{\mu\geq 0}$, and similarly obtain an algebra homomorphism 
\begin{equation}\label{equ 1.c} 
Z(\sO^{[0]})\rightarrow Z\big(\Lim{\mu\geq 0} \End(\sfT^0_{-\rho}Q^{\leq \mu})\big). 
\end{equation} 
Moreover, $\sfT_{-\rho}^0M(-\rho)$ is the anti-dominant projective module in a sub-quotient category of $\sO$, whose endomorphism ring is isomorphic to $H^\bullet(\check{G}/\check{B})$ as in the classical case. 
We show that there is a commutative diagram 
$$\begin{tikzcd}
H^\bullet(\Fl^{-\rho})^\wedge \arrow[r]\arrow[d,"(\ref{equ 1.b})\circ \overline{\bb}_{-\rho}","\simeq"'] & 
H^\bullet(\Fl^{0})^\wedge \arrow[r]\arrow[d,"(\ref{equ 1.c})\circ\overline{\bb}_{0}"] & 
H^\bullet(\check{G}/\check{B})\arrow[d,"\simeq"] \\ 
Z\big(\Lim{\mu\geq 0}\End(Q^{\leq\mu})\big)\arrow[r,"\sfT^0_{-\rho}"] & 
Z\big(\Lim{\mu\geq 0} \End(\sfT^0_{-\rho}Q^{\leq \mu})\big)\arrow[r] &\End(\sfT_{-\rho}^0M(-\rho)),
\end{tikzcd}$$
where the upper horizontal maps are induced by the fibration $\check{G}/\check{B}\rightarrow \Fl^0\rightarrow \Fl^{-\rho}$. 
In fact, both the horizontal maps above are short exact sequence of algebras, in the sense that the third algebra is the quotient of the second one by the ideal generated by the augmentation ideal of the first one. 
It implies the bijectivity of the middle vertical map. 
For step (c), we show (\ref{equ 1.c}) is injective. 

\subsubsection{Center of singular blocks} 
Finally, we deduce the case of arbitrary block $\sO^{[\omega]}$ as follows. 
Consider the translation functors $\sfT^\omega_0:\sO^{[0]}\rightarrow \sO^{[\omega]}$ and $\sfT_\omega^0:\sO^{[0]}\rightarrow \sO^{[\omega]}$, which form a biadjoint pair. 
As recalled in the Appendix \ref{app D}, the traces of such functors (in the sense of \cite{B90}) provide linear maps between the centers 
$$\tr_{\sfT^\omega_0}: Z(\sO^{[\omega]})\rightarrow Z(\sO^{[0]}),\quad 
\tr_{\sfT_\omega^0}: Z(\sO^{[0]})\rightarrow Z(\sO^{[\omega]}).$$ 
We show that there exist compatible linear maps between $H^\bullet(\Fl^{\omega})^\wedge$ and $H^\bullet(\Fl)^\wedge$ fitting into a commutative diagram 
$$\begin{tikzcd}
H^\bullet(\Fl^{\omega})^\wedge \arrow[d,"\overline{\bb}_\omega"']\arrow[r] 
& H^\bullet(\Fl)^\wedge \arrow[d,"\overline{\bb}_0"]\arrow[r] & H^\bullet(\Fl^{\omega})^\wedge \arrow[d,"\overline{\bb}_\omega"] \\ 
Z(\sO^{[\omega]})\arrow[r,"\tr_{\sfT^\omega_0}"]& Z(\sO^{[0]})\arrow[r,"\tr_{\sfT_\omega^0}"] & Z(\sO^{[\omega]}).
\end{tikzcd}$$
We show that the horizontal compositions are linear isomorphisms, and hence the bijectivity of $\overline{\bb}_\omega$ follows from the one for $\overline{\bb}_0$. 

\subsection{Arrangement of the paper} 
In Section~\ref{sect 2} we recall some facts about quantum groups and basic properties of their category $\sO$ studied in \cite{Situ1}. 

Section~\ref{sect 3} is devoted to the study of the Steinberg block $\sO^{[-\rho]}$. 
In \textsection\ref{subsect 4.1} we construct a sub-quotient algebra of $U^\hb_\zeta$ that is isomorphic to a central extension of $U^\hb_1$; in \textsection\ref{subsect 3.2} we prove the equivalence in Theorem \ref{thm B}; in \textsection\ref{subsect 3.3} we study the center $Z(\sO^{[-\rho]})$ and prove Theorem \ref{thm A} in this case, where the algebraic interpretation of $Z(\sO^{[-\rho]})$ will be discussed in \textsection\ref{subsect 3.3.4}. 

We study the principal block $\sO^{[0]}$ in Section~\ref{sect 4}. 
In \textsection\ref{subsect 4.0} we recall the translation functors; in \textsection\ref{subsect 4.2} we construct a new truncation of category $\sO$ that is useful to compare the center of $\sO^{[-\rho]}$ and $\sO^{[0]}$; in \textsection\ref{subsect 4.3} we prove Theorem \ref{thm A} for principal block. 

In Section~\ref{sect 5} we consider an arbitrary block $\sO^{[\omega]}$, where in \textsection\ref{subsect 5.1}-\ref{subsect 5.3.4} we study the trace of translation functors between $\sO^{[0]}$ and $\sO^{[\omega]}$, and in \textsection\ref{subsect 5.3} we complete the rest of proof of Theorem \ref{thm A}. 

In Appendix~\ref{app A} we discuss some general facts on center of a category. 
Appendix~\ref{app D} is about the trace of translation functors, where in \textsection\ref{subsect D.1} we prove the Bernstein's formula for quantum groups, and in \textsection\ref{subsect B.2} we discuss the the compatibility of trace map and pushforward of cohomology. 

\subsection{Notations and conventions} 
\subsubsection{Notations} 
For a complex variety $X$ with an action of a complex linear group $G$, we denote by $H^\bullet_G(X)$ the $G$-equivariant cohomology with coefficients in $\k$. 
For a Lie algebra $\g$ over $\k$, we denote by $U\g$ its enveloping algebra. 

\subsubsection{Root data} 
Let $G$ be a complex connected and simply-connected semisimple algebraic group, with a Borel subgroup $B$ and a maximal torus $T$ contained in $B$. 
Let $B^-$ be the opposite Borel subgroup, and let $N$, $N^-$ be the unipotent radical of $B$, $B^-$. 
Denote their Lie algebras by 
$$\g=\Lie(G),\quad \b=\Lie(B), \quad \n=\Lie(N),\quad \n^-=\Lie(N^-),\quad \t=\Lie(T).$$ 
Let $W=N_G(T)/T$ be the Weyl group for $G$, with longest element $w_0$. 
Let $\check{G}$ be the Langlands dual group of $G$, with the dual torus $\check{T}$ and the corresponding Borel subgroup $\check{B}$. 

Let $(X^*(T),X_*(T), \Phi, \check{\Phi})$ be the root datum associated with $G$. 
Let $\Phi^+$ and $\Sigma=\{\alpha_i\}_{i\in \I}$ be the subsets of $\Phi$ consisting of positive roots and simple roots. 
We set $\check{\Sigma}=\{\check{\alpha}_i\}_{i\in \I}$. 
We abbreviate $\Lambda:= X^*(T)$ and $\check{\Lambda}:= X_*(T)$, and let $\langle-,-\rangle:\check{\Lambda} \times \Lambda \rightarrow \Z$ be the canonical pairing. 
Let $\rQ\subset \Lambda$ be the root lattices. 
There is a partial order $\leq$ on $\Lambda$ given by $\lambda\leq \mu$ if $\lambda-\mu\in \rQ_{\leq}:=\Z_{\leq}\Sigma$. 
Recall that the fundamental group of $\check{G}$ is $\pi_1:=\pi_1(\check{G})=\Lambda/\rQ$.  
Let $a_{ij}:=\langle \check{\alpha}_i, \alpha_j\rangle$ be the $(i,j)$-th entry of the Cartan matrix of $G$. 
Let $(d_i)_{i\in \I}\in \N^\I$ be a tuple of relatively prime positive integers such that $(d_ia_{ij})_{i,j\in \I}$ is symmetry and positive definite. 
It defines a pairing $(-,-):\rQ \times \rQ \rightarrow \Z$ by $(\alpha_i,\alpha_j):=d_ia_{ij}$ and extents to 
$$(-,-):\Lambda \times \Lambda \rightarrow \frac{1}{e} \Z, \quad e:=|\pi_1|.$$ 
Let $\{e_{\alpha}\}_{\alpha\in \Phi^+}$ and $\{f_{\alpha}\}_{\alpha\in \Phi^+}$ be the Chevalley basis for $\n$ and $\n^-$. 

\subsubsection{Rings} 
Let $h$ be the Coxeter number of $G$. 
Let $l\geq h$ be an odd positive integer which is prime to $e$, and to $3$ if $G$ contains a component of type $G_2$. 
Let $\zeta_e\in \k$ be a primitive $l$-th root of unity, and let $\zeta=(\zeta_e)^e$. 
Let $q$ be a formal variable, and set $q_e=q^{\frac{1}{e}}$. 
We set $\bA=\k[q_e^{\pm1}]$ and $\bF=\k(q_e)$. 

We set $S'=\k[\t]$. 
Consider the $W$-invariant isomorphism $\t^*\xs \t$ such that $\alpha_i\mapsto d_i \check{\alpha}_i$ for each $i\in \rI$. 
It yields an isomorphism $S'\xs \k[\t^*] =H_{\check{T}}^\bullet(\pt)$. 
Let $S=\k[\t]_{\widehat{0}}$ be the completion at $0\in \t$, and $\k[T]_{\widehat{1}}$ be the completion at $1\in T$. 
We have $S=\k[T]_{\widehat{1}}$ via the exponential map $\exp:\t \rightarrow T$. 

\subsubsection{Affine Weyl group} 
Let $W_\af:= W\ltimes \rQ$ and $W_\ex:= W\ltimes \Lambda \simeq W_\af \rtimes \pi_1$ be the affine Weyl group and the extended Weyl group. 
Denote by $\tau_\mu\in W_\ex$ the translation by $\mu\in \Lambda$. 
Denote by $\ell(-):W_\ex \rightarrow \Z_{\geq 0}$ the length function. 
For any subset $J$ of affine simple roots, we denote by $W_J$ the parabolic subgroup in $W_\af$ generated by the reflections associated with $J$, and we identify 
$$W^J_\af=W_\af/W_J=\{ x\in W_\af|\ \ell(x)\leq \ell(y),\ \forall y\in xW_J\}.$$ 
Let $W^J_\ex=W_\ex/W_J=\pi_1\times W^J_\af$. 

Let $W_{l,\af}:= W\ltimes l\rQ$ and $W_{l,\ex}:= W\ltimes l\Lambda$ be the $l$-affine Weyl group and the $l$-extended Weyl group. 
There is a shifted action of $W_{l,\ex}$ on $\Lambda$, given by $w\bullet \lambda:=w(\lambda+\rho)-\rho$ for any $w\in W_{l,\ex}$ and $\lambda\in \Lambda$, where $\rho=\frac{1}{2}\sum_{\alpha\in \Phi^+}\alpha$. 
Set 
$$\Xi_\sc:= \Lambda/ (W_{l,\af}, \bullet), \quad \Xi:=\Xi_\sc/\pi_1= \Lambda/ (W_{l,\ex}, \bullet) ,$$ 
where $\bullet$ represents the $\bullet$-action above. 
We may identify 
$$\Xi_\sc=\{\omega\in \Lambda\ |\ 0\leq\langle \omega+\rho, \check{\alpha} \rangle\leq l,\ \forall \alpha\in \Phi^+ \},$$ 
since any coset in $\Xi_\sc$ is uniquely determined by an element in the RHS. 
For $\omega\in \Xi_\sc$, we denote by $W_{l,\omega}=\Stab_{(W_{l,\af},\bullet)}(\omega)$, and set $W^\omega_{l,\af}=W_{\l,\af}/W_{l,\omega}$, $W^\omega_{l,\ex}=W_{\l,\ex}/W_{l,\omega}$. 

\subsubsection{Affine flag varieties} 
Let $\check{G}(\!(t)\!)$ and $\check{G}[\![t]\!]$ be the loop group and the arc group of $\check{G}$. 
For any parabolic subgroup $W_J\subset W_\ex$, we denote by $P^J$ the corresponding standard parahoric subgroup of $\check{G}(\!(t)\!)$. 
The partial affine flag variety of type $J$ is the fpqc quotient $\Fl^J=\check{G}(\!(t)\!)/P^J$. 
Recall the $\check{T}$-fixed point set 
$$(\Fl^J)^{\check{T}}=\{\delta_x^J|\ x\in W^J_\ex\}= W^J_\ex, \quad \delta_x^J:=xP^J/P^J .$$ 
If $J=\emptyset$, then $\sI=P^\emptyset$ is the Iwahoric subgroup and $\Fl=\Fl^\emptyset$ is the affine flag variety. 
If $J=\check{\Sigma}$, we have natural identifications 
$$W^{\check{\Sigma}}_\af=\rQ, \quad W^{\check{\Sigma}}_\ex=\Lambda ,\quad P^{\check{\Sigma}}=\check{G}[\![t]\!],$$ 
and $\Gr=\Fl^{\check{\Sigma}}$ is the affine Grassmannian. 
We abbreviate $\delta_x=\delta^J_x$ if without confusion, and write $\delta_\mu=\delta_{\tau_\mu}$ for $\mu\in \Lambda$. 
Recall that $\pi_0(\Fl^J)=\pi_1$. 
Denote by $\Fl^{J,\circ}$ the connected component for $\Fl^J$ containing $P^J/P^J$. 
Then we have $(\Fl^{J,\circ})^{\check{T}}=\{\delta_x^J|\ x\in W^J_\af\}$ and an isomorphism 
\begin{equation}\label{equ 1.6pi_1}
\Fl^J \simeq \pi_1 \times \Fl^{J,\circ}. 
\end{equation}
There is a $\Gm$-action on $\C(\!(t)\!)$ by rotating $t$, which induces a $\Gm$-action on each $\Fl^J$. 

Let $\Fl_l^J$ be the partial affine flag variety given by replacing $t$ by $t^l$. 
We may regard $\Fl^J=\Fl^J_l$ if without ambiguity. 
For $\omega\in \Xi_\sc$, we let $J_\omega$ be the subset of $l$-affine simple roots corresponding to $W_{l,\omega}$. 
We abbreviate $\Fl^{\omega}=\Fl^{J_\omega}_l$ and $\Fl^{\omega,\circ}=\Fl^{J_\omega,\circ}_l$. 
Consider the fixed locus $\Gr^\zeta$ of $\zeta\in \Gm$ on $\Gr$. 
By \cite[\textsection 4]{RW22}, there is an isomorphism 
\begin{equation}\label{equ 1.1} 
\bigsqcup_{\omega\in \Xi} \Fl^{\omega}=\Gr^\zeta, \quad g\delta^{J_\omega}_e\mapsto g\delta_{\omega+\rho}, \quad \forall g\in \check{G}(\!(t^l)\!). 
\end{equation} 
At the level of $\check{T}$-fixed points, the isomorphism (\ref{equ 1.1}) is compatible with the isomorphism $\bigsqcup_{\omega\in \Xi} W_{l,\ex}^\omega=\Lambda$, $xW_l^\omega \mapsto x(\omega+\rho)$. 
Combining \eqref{equ 1.6pi_1} with \eqref{equ 1.1}, we obtain a decomposition 
\begin{equation}\label{equ 1.8new}
\bigsqcup_{\omega\in \Xi_\sc} \Fl^{\omega,\circ}=\Gr^\zeta
\end{equation}

Let $T'=\check{T}\times \Gm$, $\check{T}$, $\Gm$ or the trivial group, the cohomology $H^\bullet_{T'}(\Fl^J)$ is freely generated as a $H^\bullet_{T'}(\pt)$-module by the fundamental classes $[\Fl^{J,x}]_{T'}$ of the finite codimensional Schubert varieties $\Fl^{J,x}$ labelled by $x\in W^J_\ex$. 
For any $H^\bullet_{T'}(\pt)$-algebra $R$, we denote by $H^\bullet_{T'}(\Fl^J)^\wedge_R$ the space of formal series of $[\Fl^{J,x}]_{T'}$ with coefficients in $R$. 
We will drop the subscript $R$ if $R=H^\bullet_{T'}(\pt)$. 

\subsubsection{Conventions} 
Categories and functors are additive and $\k$-linear. 
A \textit{block} in a category means an additive full subcategory that is a direct summand (not necessarily indecomposable). 
We will write ``$\Lim{}$" and ``$\cLim{}$" for the limits and colimits in a category (if exist). 

Let $\sC$ be an $R$-linear category. 
The \textit{center} $Z(\sC)$ of $\sC$ is the ring of $R$-linear endomorphism of the identity functor of $\sC$, i.e. 
$$Z(\sC)=\{\big(z_M\in \End_{\sC}(M)\big)_{M\in \sC}|\ f\circ z_{M_1}=z_{M_2}\circ f,\ \forall M_1,M_2\in \sC, \forall f\in \Hom_{\sC}(M_1,M_2)\}.$$
We may abbreviate $\Hom(M_1,M_2)=\Hom_{\sC}(M_1,M_2)$ if there is no ambiguity. 
For a set $\sX$, we denote by $\Fun(\sX,R)$ the space of $R$-valued functions on $\sX$, which is naturally endowed with an $R$-algebra structure. 

For a scheme $X$, we denote by $\sO_{X}$ its structure sheaf and by $\sT_{X}$ its tangent sheaf if $X$ is smooth. 
For any algebraic group $K$, we will denote by $\rep(K)$ the category of finite dimensional rational representations of $K$. 

\subsection{Acknowledgments} 
The author sincerely thanks his supervisor Professor Peng Shan for suggesting this problem and her patient guidance. 
Without her help this article could not be finished by the author alone. 
The author also thanks Professor Nicolai Reshetikhin for enlightening discussions. 
The author thanks Tamas Hausel for pointing out a mistake on Harish-Chandra center in an earlier version.

\section{Quantum groups and their representations}\label{sect 2} 
\subsection{Quantum groups} 
The quantum group $\mathscr{U}_q$ associated with $G$ is the $\bF$-algebra generated by the standard generators $E_i, F_i, K_\lambda \ (i\in \I, \lambda\in \Lambda)$. 
We abbreviate $K_i=K_{\alpha_i}$. 

The Lusztig's integral form $U_q$ is a $\bA$-subalgebra generated by $E_i^{(n)}, F_i^{(n)}, K_\lambda$; the De Concini--Kac's integral form $\fU_q$ is generated by $E_i, F_i, K_\lambda$. 
We define the \textit{hybrid quantum group} $U^\hb_q$ to be the $\bA$-subalgebra generated by $E_i^{(n)}, F_i, K_\lambda$. 
There are algebra inclusions 
\begin{equation}\label{equ QGs} 
\fU_q \subset U_q^{\hb} \subset U_q. 
\end{equation} 
We denote the $\bF$-subalgebras $\mathscr{U}^+_q:=\langle E_i\rangle_{i\in \I}$, $\mathscr{U}^-_q:=\langle F_i\rangle_{i\in \I}$ and $\mathscr{U}^0_q:=\langle K_\lambda\rangle_{\lambda\in \Lambda}$, and denote by $\fU_q^+, \fU_q^-, \fU^0_q$ and $U_q^+, U_q^-, U^0_q$ their intersections with $\fU_q$ and $U_q$. 
There is a triangular decomposition 
$$U_q^{\hb}=\fU_q^-\otimes \fU_q^0 \otimes U_q^+.$$ 
We will identify $\mathscr{U}^0_q=\bF[\Lambda]=\bF[T]$. 

For any integral form $A_q$ above, we let the $\k$-algebra $A_\zeta:=A_q\otimes_{\bA} \k$ be the specialization at $q_e= \zeta_e$. 
The specialization yields a chain of maps 
\begin{equation}\label{equ QGspe} 
\fU_\zeta \rightarrow U_\zeta^{\hb} \rightarrow U_\zeta .
\end{equation} 
Let $u_\zeta$ be the \textit{small quantum group} in $U_\zeta$, which coincides with the image of $\fU_\zeta\rightarrow U_\zeta$. 
Denote by $\fU^b_\zeta$ the image of $\fU_\zeta \rightarrow U_\zeta^{\hb}$. 
Then there are triangular decompositions 
$$u_\zeta=u_\zeta^-\otimes u_\zeta^0 \otimes u_\zeta^+, \quad \fU^b_\zeta= \fU^-_\zeta\otimes \fU^0_\zeta \otimes u_\zeta^+.$$ 

Fix a convex order on $\Phi^+$, let $E_\beta\in U_q^+$ and $F_\beta\in U_q^-$ be the \textit{root vectors} associated with $\beta\in \Phi^+$. 
Lusztig \cite[\textsection 8]{Lus90} defined the \textit{quantum Frobenius homomorphism} 
\begin{equation}
\Fr: U_\zeta \rightarrow U\g, \quad 
\text{by}\quad E_\beta^{(n)},\ F_\beta^{(n)}\mapsto 
\begin{cases} 
		\frac{e_\beta^{n/l}}{(n/l)!},\ \frac{f_\beta^{n/l}}{(n/l)!} & \text{if}\ l|n , \\ 
		 0 & \text{if else} 
\end{cases}, \quad K_\lambda\mapsto 1. 
\end{equation} 
It restricts to the homomorphisms $\Fr : U_\zeta^+ \rightarrow U\n$ and $\Fr: U_\zeta^-\rightarrow U\n^-$. 

\subsection{Centers of quantum groups} 
\subsubsection{Harish-Chandra center} 
We set $\mathscr{U}_q^{0,ev}:=\bF\langle K_{2\lambda}\rangle_{\lambda\in \Lambda}$. 
There is an algebra isomorphism 
$$Z(\mathscr{U}_q) \xs (\mathscr{U}_q^{0,ev})^{(W,\bullet)}$$ 
given by projecting $Z(\mathscr{U}_q)$ to $\mathscr{U}_q^0$ under the triangular decomposition, where $(W,\bullet)$ is the shifted action by $w\bullet K_\lambda =q^{(w\lambda -\lambda, \rho)}K_{w\lambda}$, for any $w\in W$, $\lambda\in\Lambda$. 
By the natural identifications 
$$(\mathscr{U}_q^{0,ev})^{(W,\bullet)}= \bF[T]^{(W,\bullet)}= \bF[T/W],\quad f(K_{2\lambda})\mapsto f(K_\lambda)\mapsto f(q^{-2(\lambda,\rho)}K_\lambda),$$ 
we have an isomorphism $\hc:\ Z(\mathscr{U}_q) \xs \bF[T/W]$. 
The centers of the integral forms are given by 
$$Z(U_q)= Z(\mathscr{U}_q) \cap U_q, \quad Z(\fU_q)= Z(\mathscr{U}_q) \cap \fU_q.$$ 
The isomorphism $\hc$ induces isomorphisms 
$$ \hc: \ Z(\fU_q) \xs \bA[T/W] ,\quad 
\hc: \ Z_\HC:= Z(\fU_q)/(q_e-\zeta_e)Z(\fU_q) \xs \k[T/W],$$ 
where $Z_\HC$ is called the \textit{Harish-Chandra center} of $\fU_\zeta$. 
The natural inclusion $Z(\fU_q)\subset Z(U_q)$ induces homomorphisms 
$$\hc^{-1}: \bA[T/W]\rightarrow Z(U_q), \quad 
\hc^{-1}: \C[T/W]\rightarrow Z(U_\zeta).$$ 

\subsubsection{Frobenius center}\label{subset Frocen} 
The \textit{Frobenius center} of $\fU_\zeta$ is the $\k$-subalgebra 
$$Z_\Fr:=\langle K^l_{\lambda}, F^l_\beta , E^l_\beta \rangle_{\lambda\in \Lambda, \beta\in \Phi^+}. $$ 
We abbreviate $Z_\Fr^{\flat}:= Z_\Fr \cap \fU_\zeta^{\flat}$ for $\flat=-,+,0,\leq$ and $\geq$. 
By \cite[\textsection 0]{DeCKP92}, there are isomorphisms of $\k$-algebras, $Z_\Fr^{-}\xs \k[N^-]$, $Z_\Fr^{+}\xs \k[N]$ and $Z_\Fr^{0}\xs \k[T]$, which give an isomorphism  
\begin{equation}\label{equ 2.7} 
\Spec Z_\Fr \xs G^*, 
\end{equation} 
where $G^*=N^-\times T \times N$ is the Poisson dual group of $G$. 
We have the following isomorphisms, see \cite[p128]{DeCP92}, 
\begin{equation}\label{equ 2.-1} 
Z(\fU_\zeta)=Z_\Fr\otimes_{Z_\Fr\cap Z_\HC}Z_\HC=\k[G^*\times_{T/W}T/W], 
\end{equation} 
where $G^*\rightarrow T/W$ is by sending $(n_1,t,n_2)\in N^-\times T \times N$ to the $W$-orbit of the semisimple part of $n_1t^2n_2^{-1}$, and $T/W\rightarrow T/W$ is by $W(t)\mapsto W(t^l)$ for any $t\in T$.

Note that $\fU_\zeta^b$ coincides with the algebra $\fU_\zeta\otimes_{\k[N]} \k$ by evaluating $\k[N]$ at $1\in N$. 
We may view $Z_\Fr^{\leq}$ as a subalgebra in $U_\zeta^\hb$, which is central in $\fU_\zeta^b\subset U_\zeta^\hb$. 

\subsection{Representations}\label{subsect 2.3} 
There is a $\Lambda$-action on $\mathscr{U}_q^0$ such that any $\mu\in \Lambda$ corresponds to the $\bF$-algebra automorphism  
$$\tau_\mu: K_\lambda \mapsto q^{(\mu,\lambda)}K_\lambda, \quad \forall\lambda\in\Lambda.$$ 
The $\Lambda$-action on $\mathscr{U}_q^0$ preserves the integral forms $\fU^0_q$, $U^0_q$, and specializes to an action on $\fU^0_\zeta$, $U^0_\zeta$. 
Let $A^0$ be a finitely generated $\k$-subalgebra of $\fU_\zeta^0$ or $U_\zeta^0$ that is preserved under the $\Lambda$-action. 
Let $A=\bigoplus_{\lambda\in \rQ}A_\lambda$ be a $\rQ$-graded $\k$-algebra with $A^0\subset A_0$ such that 
$$fm=m\tau_\lambda(f),\quad \forall f\in A^0,\ \forall m\in A_\lambda,$$ 
Let $A^-,A^0,A^+$ be subalgebras of $A$ with triangular decomposition $A= A^-\otimes A^0\otimes A^+$ and satisfy further conditions as in \cite[\textsection 2.3]{Situ1}. 
We abbreviate $A^\leq=A^-A^0$ and $A^\geq=A^+A^0$. 
A \textit{deformation ring} $R$ for $A$ is a commutative and Noetherian $A^0$-algebra. 
Let $\pi: A^0\rightarrow R$ be the structure map. 

\subsubsection{Module categories} 
We define $A\Mod_R^{\Lambda}$ to be the category consisting of $A\otimes R$-modules $M$ endowed with a decomposition $M=\bigoplus\limits_{\mu\in \Lambda}M_\mu$ of $R$-modules (called the \textit{weight spaces}), such that $M_\mu$ is killed by the elements in $A\otimes R$ of the form 
$$f\otimes 1-1\otimes \pi({\tau_\mu}(f)), \quad f\in A^0.$$ 
Let $A\mod_R^{\Lambda}$ be the full subcategory of $A\Mod_R^{\Lambda}$ consisting of finitely generated $A\otimes R$-modules whose weight spaces are finitely generated $R$-modules. 
Define the \textit{category $\sO$ for $A$} to be the full subcategory $\sO^A_R$ of $A\mod_R^{\Lambda}$ of modules that are locally unipotent for the action of $A^+$. 
It is an abelian subcategory of $A\Mod_R^{\Lambda}$. 

Define the \textit{Verma module} 
$$M^A(\lambda)_R:= A \otimes_{A^{\geq}} R_\lambda \ \in \sO^A_R$$ 
where $R_\lambda$ is an $A^{\geq}$-module via $A^{\geq}\twoheadrightarrow A^0\xrightarrow{\pi\circ\tau_{\lambda}}R_\lambda$. 
If $R=\F$ is a field, $M^A(\lambda)_\F$ has a unique simple quotient $L^A(\lambda)_\F$. 

\subsubsection{$\pi_1$-grading} 
Since $A$ is $\rQ$-graded, any $\Lambda$-graded module of $A$ decomposes into submodules whose weights belong to the same class in $\pi_1=\Lambda/\rQ$. 
It yields a block decomposition 
\begin{equation}\label{equ 3.2} 
\sO^A_{R}=\bigoplus_{\gamma\in \pi_1}\sO^{A,\gamma}_{R}. 
\end{equation} 

\subsubsection{Truncations and base changes} 
\label{subsect 2.3.3new}
For any $\nu\in \Lambda$, there is a \textit{truncated category} $A\Mod_R^{\Lambda,\leq \nu}$ consists of the module $M$ in $A\Mod_R^{\Lambda}$ such that $M_\mu=0$ unless $\mu \leq \nu$. 
The category $\sO^{A,\leq \nu}_R:=\sO^{A}_R\cap A\Mod_R^{\Lambda,\leq \nu}$ always admits enough projective objects, in contrast to $\sO^{A}_R$. 

The \textit{truncation functor} 
$$\tau^{\leq \nu}: A\Mod_R^{\Lambda} \rightarrow A\Mod_R^{\Lambda,\leq \nu} ,\quad M \mapsto M \big/ A. \big( \bigoplus_{\mu\nleq \nu} M_\mu \big),$$ 
is by taking the maximal quotient in $A\Mod_R^{\Lambda,\leq \nu}$. 
Note that $\tau^{\leq \nu}$ is left adjoint to the natural inclusion $A\Mod_R^{\Lambda,\leq \nu} \hookrightarrow A\Mod_R^{\Lambda}$. 
We denote the counit by $\epsilon^{\leq \nu}:\id \rightarrow \tau^{\leq \nu}$. 

Let $R'$ be a commutative Noetherian $R$-algebra. 
There is a base change functor $-\otimes_R R': \sO^A_R \rightarrow \sO^A_{R'}$. 
Denote by $\sP^{A,\leq \nu}_R$ the full subcategory of projective modules in $\sO^{A,\leq \nu}_R$.  
The base change functor yields a natural equivalence, see \cite[Prop 2.4]{Fie03}, 
\begin{equation}\label{equ 2.10} 
\sP^{A,\leq \nu}_R\otimes_R R'\xs \sP^{A,\leq \nu}_{R'}. 
\end{equation}
By \cite[(2.8)]{Situ1}, it induces an $R$-algebra homomorphism 
\begin{equation}\label{equ 2.8} 
-\otimes_R R':\ Z(\sO^A_R) \rightarrow Z(\sO^A_{R'}). 
\end{equation} 

\subsection{Category $\sO$ for hybrid quantum group} 
We view $S$ as a deformation ring for $U^\hb_\zeta$ by the inclusion $\fU_\zeta^0=\k[\Lambda]=\k[T]\subset S$. 
Let $R$ be a commutative $S$-algebra that is a local Noetherian domain with residue field $\F$. 
For $A=U^\hb_\zeta$, we abbreviate $E(\lambda)_\F=L^A(\lambda)_\F$, $M(\lambda)_R=M^A(\lambda)_R$ and $\sO_{R}=\sO^A_R$. 
We denote by $Q(\mu)^{\leq \nu}_R$ the projective cover for $E(\mu)_\F$ in $\sO^{\leq \nu}_{R}$. 
In this subsection, we recall some basic properties for the category $\sO_{R}$ shown in \cite[\textsection 3]{Situ1}. 

\subsubsection{Projective and simple modules} 
Denote the set of $l$-restricted dominant weights by 
$$\Lambda_l^+=\{\mu\in \Lambda|\ 0\leq \langle \mu, \check{\alpha}_i \rangle <l,\ \forall i\in \I \}.$$ 
Recall that for any $\lambda^0\in \Lambda_l^+$, the simple module $L(\lambda^0)_\k$ of $u_\zeta$ of highest weight $\lambda^0$ can be extend to a $U_\zeta$-module. 
We view $L(\lambda^0)_\k$ as a $U^\hb_\zeta$-module via $U^\hb_\zeta\rightarrow U_\zeta$. 
\begin{lem}[{\cite[Lem 3.1]{Situ1}}]\label{lem simple} 
We have $E(\lambda)_\k=L(\lambda^0)_\k \otimes \k_{l\lambda^1}$ for any $\lambda\in \Lambda$, where $\lambda=\lambda^0+l\lambda^1$ is the unique decomposition such that $\lambda^0\in \Lambda_l^+$. 
\end{lem}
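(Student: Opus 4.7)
The plan is to identify $N := L(\lambda^0)_\k \otimes \k_{l\lambda^1}$ as the unique simple quotient of the Verma module $M(\lambda)_\k$. This reduces to two steps: constructing a nonzero map $M(\lambda)_\k \to N$, and proving that $N$ is simple in $\sO_\k$ for $U^\hb_\zeta$.

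First I verify that $N$ lies in $\sO_\k$. It is finite-dimensional, so the locally unipotent condition for $U^+_\zeta$ is automatic; its $\Lambda$-weight decomposition is inherited from that of $L(\lambda^0)_\k$ with weights shifted by $l\lambda^1$, so $N$ has top weight $\lambda^0 + l\lambda^1 = \lambda$. The vector $v := v_{\lambda^0} \otimes 1_{l\lambda^1}$ is killed by every $E_i^{(n)}$ with $n \geq 1$ by a weight argument: $E_i^{(n)}v$ would have weight $\lambda + n\alpha_i$, strictly above the top weight of $N$. By the universal property of the Verma module, the assignment $v_\lambda \mapsto v$ extends to a nonzero $U^\hb_\zeta$-homomorphism $M(\lambda)_\k \to N$.

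Second, I show $N$ is simple. Since $\k_{l\lambda^1}$ is an invertible one-dimensional $U^\hb_\zeta$-module (with inverse $\k_{-l\lambda^1}$), tensoring with it is an autoequivalence of the category of $U^\hb_\zeta$-modules, so it suffices to prove $L(\lambda^0)_\k$ is simple over $U^\hb_\zeta$. By (\ref{equ QGspe}) the composition $\fU_\zeta \to U^\hb_\zeta \to U_\zeta$ has image $u_\zeta$, so the image of $U^\hb_\zeta \to U_\zeta$ (through which the action on $L(\lambda^0)_\k$ factors) already contains $u_\zeta$. Hence every $U^\hb_\zeta$-submodule of $L(\lambda^0)_\k$ is in particular a $u_\zeta$-submodule; since $L(\lambda^0)_\k$ is by construction simple as a $u_\zeta$-module, it is simple as a $U^\hb_\zeta$-module as well.

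Combining the two steps, the nonzero map $M(\lambda)_\k \to N$ must surject onto the simple module $N$, and uniqueness of the simple quotient of a Verma module identifies $E(\lambda)_\k$ with $N$. The subtle point is the passage from $u_\zeta$-simplicity to $U^\hb_\zeta$-simplicity, which is not a priori obvious because $U^\hb_\zeta$ has fewer relations than $u_\zeta$; it is forced by the factorisation (\ref{equ QGspe}) through $U_\zeta$, after which the remainder of the argument is a standard highest weight calculation.
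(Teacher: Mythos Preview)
Your proof is correct. The paper does not give its own proof of this lemma; it simply cites \cite[Lem 3.1]{Situ1}, so there is no argument in the present paper to compare against. Your approach---producing a highest weight vector in $N$ to get a surjection from the Verma module, then deducing $U^\hb_\zeta$-simplicity of $L(\lambda^0)_\k$ from $u_\zeta$-simplicity via the factorisation (\ref{equ QGspe})---is the standard route and goes through without issue. The only comment is that the phrase ``tensoring with $\k_{l\lambda^1}$ is an autoequivalence'' is really just the grading shift $-\otimes\k_{l\lambda^1}$ described in \textsection 2.4.3, since $U^\hb_\zeta$ is not a Hopf algebra; but this does not affect the argument.
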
 

In \cite[\textsection 3.3.9]{BBASV}, the authors define a module $Q(\lambda)_R$ in $U^\hb_{\zeta}\Mod^{\Lambda}_R$ by 
\begin{equation}\label{equ 3.1} 
Q(\lambda)_R:= U^\hb_\zeta\otimes_{\fU^b_\zeta} P^b(\lambda)_R , \quad \lambda\in \Lambda, 
\end{equation} 
where $P^b(\lambda)_R$ is the projective cover for the simple module $L^b(\lambda)_\F$ of highest weight $\lambda$ in $\fU^b_\zeta\mod^{\Lambda}_R$. 
If $\lambda\in -\rho+l\Lambda$, we have $P^b(\lambda)_R=M(\lambda)_R$ as $\fU^b_\zeta$-modules, hence 
\begin{equation}\label{equ 3.20} 
\begin{aligned} 
Q(\lambda)_R&=U^\hb_\zeta\otimes_{\fU^b_\zeta} M(\lambda)_R
=U^\hb_\zeta \otimes_{U^{\hb,\geq}_\zeta} (U^{\hb,\geq}_\zeta\otimes_{\fU^{b,\geq}_\zeta} R_{\lambda})\\ 
&=U^\hb_\zeta \otimes_{U^{\hb,\geq}_\zeta} (U\n\otimes R_{\lambda}) , 
\end{aligned} 
\end{equation} 
where $U\n\otimes R_{\lambda}$ is viewed as a $U^{\hb,\geq}_\zeta$-module by $U^{\hb,\geq}_\zeta= U_\zeta^+\otimes \fU_\zeta^0\xrightarrow{\Fr\otimes \pi\circ \tau_{\lambda}} U\n \otimes R_\lambda$. 

\begin{lem}[{\cite[Lem 3.7]{BBASV} and \cite[Lem 3.2, Prop 3.4]{Situ1}}]\label{lem 3.2} \ 
\begin{enumerate} 
\item The functor $\Hom_{U^\hb_{\zeta}\Mod^{\Lambda}_R}(Q(\lambda)_R,-)$ on $\sO_{R}$ is exact; 
\item The projective cover for $E(\lambda)_\k$ in $\sO^{\leq \nu}_{S}$ is $Q(\lambda)_S^{\leq \nu}\simeq \tau^{\leq \nu}Q(\lambda)_S$. 
In particular, for any $\lambda\leq \nu$, we have 
$$Q(-\rho+l\lambda)^{\leq -\rho+l\nu}_S= U^\hb_\zeta \otimes_{U^{\hb,\geq}_\zeta} \big((U\n/ \bigoplus_{\mu\nleq \nu-\lambda} (U\n)_{\mu}) \otimes S_{-\rho+l\lambda}\big).$$ 
\item (BGG reciprocity) For any $\lambda \leq \mu \leq \nu$, we have an equality 
$$(Q(\lambda)^{\leq \nu}_R: M(\mu)_R)=[M(\mu)_\F: E(\lambda)_\F].$$ 
\end{enumerate} 
\end{lem}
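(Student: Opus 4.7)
The plan is to treat the three statements in turn, all resting on the induced description $Q(\lambda)_R = U^\hb_\zeta \otimes_{\fU^b_\zeta} P^b(\lambda)_R$ of (\ref{equ 3.1}) and the freeness of $U^\hb_\zeta$ as a $\fU^b_\zeta$-module, which follows from the triangular decompositions and the Frobenius quotient $U^+_\zeta/u^+_\zeta \simeq U\n$. For \textbf{(1)}, the tensor-Hom adjunction identifies
$$\Hom_{U^\hb_\zeta\Mod_R^\Lambda}(Q(\lambda)_R, M) \cong \Hom_{\fU^b_\zeta\Mod_R^\Lambda}(P^b(\lambda)_R, \Res M)$$
for every $M \in \sO_R$. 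Since restriction is exact and $P^b(\lambda)_R$ is projective in the $\fU^b_\zeta$-deformation category by \cite{BBASV}, the composite is exact on $\sO_R$.

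For \textbf{(2)}, the adjunction $\tau^{\leq\nu}\dashv \iota$ combined with (1) immediately yields projectivity of $\tau^{\leq\nu}Q(\lambda)_S$ in $\sO^{\leq\nu}_S$; its head is $E(\lambda)_\k$ because the condition $\lambda\leq\nu$ ensures that the unique simple quotient of highest weight $\lambda$ survives the truncation. The explicit formula for $Q(-\rho+l\lambda)^{\leq -\rho+l\nu}_S$ is extracted from (\ref{equ 3.20}) by tracing $\Lambda$-weights: a homogeneous element of $U\n$ of $\g$-weight $\eta\in\rQ_{\geq 0}$ contributes $l\eta$ to the $\Lambda$-weight in $Q(-\rho+l\lambda)_S$ via the Frobenius $U^+_\zeta\twoheadrightarrow U\n$, while the $\fU^-_\zeta$-action only decreases weights. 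Hence the truncation at $-\rho+l\nu$ annihilates precisely those summands $(U\n)_\eta$ with $\eta\nleq \nu-\lambda$, and leaves the remaining $\fU^-_\zeta$-generation intact.

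For \textbf{(3)}, I would first construct a Verma filtration of $Q(\lambda)^{\leq\nu}_R$ by lifting a $\fU^b_\zeta$-Verma filtration of $P^b(\lambda)^{\leq\nu}_R$ through the induction functor, then carry out the standard Hom-to-dual-Verma argument. Namely, introduce dual Verma modules $\nabla(\mu)_R$ sharing the character of $M(\mu)_R$, verify the Ext-orthogonality $\Ext^i_{\sO^{\leq\nu}_R}(M(\nu)_R,\nabla(\mu)_R)=\delta_{i,0}\delta_{\nu,\mu}R$, and combine it with the projectivity from (1) and base change (\ref{equ 2.10}) to compute
$$(Q(\lambda)^{\leq\nu}_R : M(\mu)_R) \;=\; \dim_\F\Hom_{\sO^{\leq\nu}_\F}(Q(\lambda)^{\leq\nu}_\F, \nabla(\mu)_\F) \;=\; [\nabla(\mu)_\F : E(\lambda)_\F] \;=\; [M(\mu)_\F : E(\lambda)_\F].$$

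The main technical obstacle I anticipate is the existence of a Verma filtration on $Q(\lambda)^{\leq\nu}_R$ compatibly with the deformation ring $R$: since $P^b(\lambda)_R$ is itself a Verma only in the Steinberg-type case $\lambda\in -\rho+l\Lambda$, one must for general $\lambda$ import the Verma filtration of $P^b(\lambda)_R$ from small-quantum-group representation theory, lift it through the $R$-deformation, and show that it is preserved by $\tau^{\leq\nu}$ and by induction to $U^\hb_\zeta$. Establishing the Ext-orthogonality between Vermas and dual Vermas over $R$, together with the correct base-change behaviour of $\Hom$ under the specialization $R\to\F$, will also require careful justification.
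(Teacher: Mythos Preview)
The paper does not give its own proof of this lemma: it is quoted directly from \cite[Lem~3.7]{BBASV} and \cite[Lem~3.2, Prop~3.4]{Situ1}. Your outline follows exactly the standard highest-weight-category arguments those references carry out---induction/restriction adjunction for (1), the adjunction $\tau^{\leq\nu}\dashv\iota$ together with weight bookkeeping on (\ref{equ 3.20}) for (2), and Verma filtrations plus $\Ext$-orthogonality with dual Vermas for (3)---so there is nothing to contrast.

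One small point in your sketch of the explicit formula in (2) is worth tightening. You argue that the weight spaces of $Q(-\rho+l\lambda)_S$ with weight $\nleq -\rho+l\nu$ lie inside $\fU^-_\zeta\otimes\bigl(\bigoplus_{\eta\nleq\nu-\lambda}(U\n)_\eta\bigr)\otimes S_{-\rho+l\lambda}$, which is correct; but to conclude, you also need this subspace to be a $U^\hb_\zeta$-submodule, and that does not follow just from ``$\fU^-_\zeta$ only decreases weights'', since $U^+_\zeta$ acts nontrivially and not purely on the $U\n$-tensor factor. The clean way around this is to note that $\bigoplus_{\eta\nleq\nu-\lambda}(U\n)_\eta$ is a two-sided ideal of $U\n$ (because $\eta\nleq\nu-\lambda$ and $\eta'\geq 0$ force $\eta+\eta'\nleq\nu-\lambda$), so the quotient is already a $U^{\hb,\geq}_\zeta$-module via $\Fr$, and the displayed induced module is well-defined and receives a surjection from $Q(-\rho+l\lambda)_S$; one then checks both inclusions between its kernel and the truncation kernel. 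The ``technical obstacle'' you flag in (3)---producing a Verma filtration on $P^b(\lambda)_R$ for arbitrary $\lambda$ over the deformation ring---is indeed the substantive input, and is handled in \cite{Situ1} exactly along the lines you describe.
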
 

\subsubsection{Blocks decomposition}\label{subsect 2.3.2} 
In \cite[\textsection3.3]{Situ1} (see also \cite[II \textsection6]{Jan03}) we introduce a partial order $\uparrow$ on $\Lambda$ generated by 
\[
s\bullet \lambda \uparrow \lambda \quad \text{if}\quad s\bullet \lambda \leq \lambda
\]
where $s\in W_{l,\af}$ is a reflection. 
Note that the order $\uparrow$ is invariant under $l\Lambda$-translation, i.e. if $\mu \uparrow \lambda$ then $\mu+l\nu \uparrow \lambda+l\nu$ for any $\nu\in \Lambda$. 
Moreover, two weights in different $(W_{l,\af},\bullet)$-orbits in $\Lambda$ are incomparable under $\uparrow$.

\begin{lem}[{\cite[Prop 3.7]{Situ1}}]\label{lem LP} 
We have the following linkage principle: 
\begin{equation}\label{equ LP}
[M(\lambda)_\k:E(\mu)_\k]\neq 0 \quad \text{if and only if}\quad \mu \uparrow \lambda .
\end{equation}
In particular, there is a block decomposition 
\begin{equation}\label{equ 2.12} 
\sO_{R}=\bigoplus_{\omega\in \Xi_\sc} \sO^{ \omega}_{R}, 
\end{equation} 
such that the Verma module $M(\lambda)_R$ is contained in $\sO^{\omega}_{R}$ if and only if $\lambda\in W_{l,\af}\bullet \omega$. 
Moreover, the block $\sO^\omega_\k$ is indecomposable. 
\end{lem} 
\noindent 
We will abbreviate $\sO^{\omega,\leq \nu}_{R}=\sO^{\omega}_{R}\cap \sO^{\leq \nu}_{R}$. 

We give another construction of (\ref{equ 2.12}) here. 
Since the image of $Z_\HC \rightarrow \fU_\zeta \rightarrow U^\hb_\zeta$ is central in $U^\hb_\zeta$, we have homomorphisms $\k[T/W] \xrightarrow{\hc^{-1}} Z_\HC\rightarrow Z(\sO_{\k})$. 
By (\ref{equ 2.-1}), the composition factors through the quotient 
$$\k[T/W]\rightarrow \k[\Omega],$$ 
where $\Omega$ is the scheme-theoretic preimage of $W(1)$ of the morphism $T/W\rightarrow T/W$, given by $W(t) \mapsto W(t^l)$, for any $t\in T$. 
Consider the map 
$$\Lambda \rightarrow T/W ,\quad \lambda\mapsto W(\zeta^{2(\lambda+\rho)}),$$ 
which induces a bijection $\Lambda/(W_{l,\ex},\bullet)=\Xi \xs \Omega^{\red}$. 
It yields a decomposition of schemes 
\begin{equation}\label{equ 3.3.1} 
\Omega= \bigsqcup_{[\omega]\in \Xi} \Omega_{[\omega]}. 
\end{equation} 
Note that the character $Z_\HC \rightarrow \End_{U^\hb_\zeta}(M(\lambda)_\k)= \k$ corresponds to the point $W(\zeta^{2(\lambda+\rho)})\in T/W$, for any $\lambda\in \Lambda$. 
We denote by $[\lambda]$ the class of $\lambda$ in $\Xi$. 
Then there is an (extended) block decomposition 
\begin{equation}\label{equ 3.3} 
\sO_{\k} = \bigoplus_{[\omega]\in \Xi} \sO^{ [\omega]}_{\k} 
\end{equation} 
compatible with (\ref{equ 3.3.1}), such that $M(\lambda)_\k$ lies in $\sO^{ [\omega]}_{\k}$ if and only if $[\lambda]=[\omega]$. 
Refining (\ref{equ 3.3}) by (\ref{equ 3.2}), we get a block decomposition 
\begin{equation}\label{equ 3.4} 
\sO_{\k} = \bigoplus_{\omega\in \Xi_\sc} \sO^{\omega}_{\k}.  
\end{equation} 
Since $S$ is local with residue field $\k$, (\ref{equ 3.4}) can be lifted to a decomposition for $\sO_{S}$, and then extends to the one (\ref{equ 2.12}). 

\subsubsection{The $l\Lambda$-symmetry} 
Note that for any $\nu\in \Lambda$, there is a trivial $U_\zeta^\hb$-module $\k_{l\nu}$ supported on the weight $l\nu$. 
It gives an auto-equivalence of $\sO_{R}$, 
$$-\otimes \k_{l\nu}: \sO_{R}\xs \sO_{R}.$$ 
If $\omega_1 ,\omega_2 \in \Xi_\sc$ satisfy $[\omega_1]=[\omega_2]$, then there are $\lambda_i\in W_{l,\af}\bullet \omega_i$, $i=1,2$ such that $\lambda_1-\lambda_2\in l\Lambda$, which gives an equivalence $$-\otimes \k_{\lambda_2-\lambda_1}: \sO^{\omega_1}_{R}\xs \sO^{\omega_2}_{R}.$$ 
Therefore, the refinement of $\sO^{[\omega]}_{R}$ by (\ref{equ 3.2}) yields an equivalence 
\[
\sO^{[\omega]}_{R}\simeq (\sO^{\omega}_{R})^{\oplus \pi_1}. 
\]

\subsubsection{The maps from cohomology to center} 
Consider the central characters associated to Verma modules, 
$$\chi_R:\ Z(\sO_{R})\rightarrow \prod_{\lambda\in \Lambda}\End_{\sO_{R}} (M(\lambda)_R)=\Fun(\Lambda,R).$$ 
\begin{thm}[{\cite[Thm 5.4]{Situ1}}]\label{thm 3.11} 
There is a commutative diagram of algebra homomorphisms 
$$\begin{tikzcd}
 H_{\check{T}}^\bullet(\Gr^\zeta)^\wedge_S \arrow[r,"{\bb}","\sim"'] \arrow[d]
 & Z(\sO_{S})\arrow[d]\\ 
H^\bullet(\Gr^\zeta)^\wedge \arrow[r,"\overline{\bb}"] & Z(\sO_{\k}),
\end{tikzcd}$$
such that the composition 
$$\chi_{S}\circ {\bb}: H_{\check{T}}^\bullet(\Gr^\zeta)^\wedge_S\rightarrow \Fun(\Lambda,S)$$ 
coincides with the restrictions on the $\check{T}$-fixed points $\{\delta^{\check{\Sigma}}_{\lambda+\rho}\}_{\lambda\in \Lambda}$. 
In particular, the isomorphism $\bb$ is compatible with the decompositions (\ref{equ 1.8new}) and (\ref{equ 2.12}). 
\end{thm}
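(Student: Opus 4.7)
The plan is to construct $\bb$ by matching equivariant localization on $\Gr^\zeta$ with the central character morphism $\chi_S$, using the common parametrization $(\Gr^\zeta)^{\check{T}} = \{\delta^{\check{\Sigma}}_{\lambda+\rho}\}_{\lambda\in\Lambda}$. First I would verify $\End_{\sO_S}(M(\lambda)_S) = S$ via the triangular decomposition of $U^\hb_\zeta$ and the $S$-freeness of $M(\lambda)_S$, producing the factorization $\chi_S: Z(\sO_S) \to \Fun(\Lambda, S)$. On the geometric side, since $\check{T}$ acts on each $\Fl^\omega$ with isolated fixed points indexed by $W_{l,\ex}^\omega$ and $H_{\check{T}}^\bullet(\Fl^\omega)$ is free over $S'$ on the Schubert classes $[\Fl^{\omega,x}]_{\check{T}}$, the space $H_{\check{T}}^\bullet(\Gr^\zeta)^\wedge_S$ admits a GKM-type presentation: it embeds into $\Fun(\Lambda, S)$ with image characterized by divisibility conditions along the one-dimensional $\check{T}$-stable orbits.

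The key technical step is to identify these two images in $\Fun(\Lambda, S)$. A $\check{T}$-stable $\mathbb{P}^1$ in $\Gr^\zeta$ joining $\delta_{\lambda+\rho}$ and $\delta_{\mu+\rho}$ exists precisely when $\mu$ and $\lambda$ differ by a single affine reflection of $W_{l,\af}$ acting via $\bullet$, and the GKM compatibility becomes divisibility by an explicit $\check{T}$-weight in $S$. Matching this algebraically, I would compute $\Hom_{\sO_S}$ and first-order extensions between Verma modules in $\sO_S$ using the linkage principle (Lemma \ref{lem LP}) together with the explicit presentation (\ref{equ 3.20}); the constraints on a family $(c_\lambda) \in \prod_\lambda \End_{\sO_S}(M(\lambda)_S) = \Fun(\Lambda, S)$ required to define a natural transformation of the identity functor should exactly reproduce the GKM conditions. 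The map $\bb$ is then defined as the resulting matching of images, and the formula for $\chi_S \circ \bb$ is built into the construction.

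The main obstacle will be the surjectivity, i.e., lifting an arbitrary GKM-compatible function to an actual element of $Z(\sO_S)$. The strategy is to pass to truncated blocks $\sO^{\omega,\leq \nu}_S$ and use the projective covers $Q(\lambda)_S^{\leq \nu}$ of Lemma \ref{lem 3.2}, which reduce the construction of a natural transformation to an endomorphism computation on a projective generator; BGG reciprocity then controls $\End(Q(\lambda)_S^{\leq\nu})$ as an $S$-module. Taking the inverse limit over $\nu$ and matching the truncations with the Schubert-cell filtration of $H_{\check{T}}^\bullet(\Gr^\zeta)^\wedge_S$ should yield the desired lift. Once $\bb$ is established as an isomorphism, compatibility with (\ref{equ 2.12}) follows from identifying each $\Fl^\omega$ with the $W_{l,\ex}^\omega$-orbit in $(\Gr^\zeta)^{\check{T}}$ and refining by $\pi_1$; the commutative diagram with $\overline{\bb}$ then follows from specialization $S \to \k$ via (\ref{equ 2.8}).
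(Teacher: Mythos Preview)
This theorem is not proved in the present paper: it is quoted verbatim from the author's earlier work \cite[Thm 5.4]{Situ1}, and the current paper uses it as a black box. So there is no proof here to compare your proposal against.

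That said, your outline is the natural moment-graph/GKM strategy and is almost certainly the shape of the argument in \cite{Situ1}. The identification $(\Gr^\zeta)^{\check{T}}=\{\delta^{\check{\Sigma}}_{\lambda+\rho}\}_{\lambda\in\Lambda}$, the embedding of $H_{\check{T}}^\bullet(\Gr^\zeta)^\wedge_S$ into $\Fun(\Lambda,S)$ via localization, and the matching of edge relations with linkage between Verma modules are exactly the ingredients one expects. Two places deserve more care than your sketch suggests. First, the claim that the constraints forced on $(c_\lambda)_\lambda$ by centrality in $\sO_S$ are \emph{exactly} the GKM edge conditions requires more than $\Hom$ and first-order $\Ext$ between Vermas: you need to control endomorphisms of the projective generators $Q(\lambda)_S^{\leq\nu}$ sufficiently well to rule out extra constraints, and this typically goes through a Soergel-type structure theorem for these endomorphism rings rather than a direct $\Ext^1$ computation. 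Second, your surjectivity step (``lifting an arbitrary GKM-compatible function'') is the genuinely hard direction; the inverse-limit-over-$\nu$ argument you sketch is correct in spirit, but making it work requires knowing that the truncation maps on the cohomology side match the truncation functors on the category side compatibly with the Schubert filtration, which is where the real work lies.
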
 
\noindent 
By restriction on the direct summands corresponding to $\omega$, we obtain homomorphisms 
$$\bb_{[\omega]}:H_{\check{T}}^\bullet(\Fl^{\omega})^\wedge_S\rightarrow Z(\sO^{[\omega]}_{S}), \quad 
\bb_{\omega}:H_{\check{T}}^\bullet(\Fl^{\omega,\circ})^\wedge_S\rightarrow Z(\sO^{\omega}_{S}),$$ 
and 
$$\overline{\bb}_{[\omega]}:H^\bullet(\Fl^{\omega})^\wedge\rightarrow Z(\sO^{[\omega]}_{\k}), \quad 
\overline{\bb}_{\omega}:H^\bullet(\Fl^{\omega,\circ})^\wedge\rightarrow Z(\sO^{\omega}_{\k}).$$ 

\section{The Steinberg block}\label{sect 3} 
In this section we establish an equivalence between the \textit{Steinberg block} $\sO^{[-\rho]}_{\k}$ and the category $\Coh^B(\n)$, by relating them to the category $\sO$ for $U^\hb_1$. 
Next, we apply this equivalence to show that the algebra homomorphism 
$$\overline{\bb}_{[-\rho]}:H^\bullet(\Gr)^\wedge \rightarrow Z(\sO^{[-\rho]}_{\k})$$ 
is an isomorphism. 

\subsection{Hybrid quantum Frobenius map}\label{subsect 4.1} 
In this subsection, we construct a sub-quotient algebra $U^{\hb'}_1$ of $U^\hb_\zeta$ which is isomorphic to a central extension of $U^\hb_1$. 

\subsubsection{Quantum coadjoint actions}
Denote by $\underline{X}_{i}, \underline{Y}_{i}$ the adjoint operators on $U_q$ associated with the elements $E^{(l)}_{i}, F^{(l)}_{i}$, for any $i\in \I$, i.e. 
$$\underline{X}_{i} : U_q \rightarrow U_q, \  x \mapsto [E^{(l)}_{i},x] , \quad  \underline{Y}_{i} : U_q \rightarrow U_q, \  x \mapsto  [F^{(l)}_{i},x] . $$ 
By \cite[\textsection 3.4]{DeCK90}, they preserve the subalgebra $\fU_q\subset U_q$, and yield operators $\underline{X}_{i}, \underline{Y}_{i}$ on its specialization $\fU_\zeta$. 
Furthermore, the derivations $\underline{X}_{i}$, $\underline{Y}_{i}$ preserve $Z_\Fr\subset \fU_\zeta$, and induce tangent fields $\underline{X}_{i}, \underline{Y}_{i} \in \Gamma( G^*, \sT_{G^*})$ via the isomorphism (\ref{equ 2.7}). 
There is a unramified covering  from $G^*$ onto the big open cell $N^- T N$ in $G$, given by 
$$\kappa: G^* \rightarrow G, \quad (n_-,t ,n_+)\mapsto n_- t^2 n_+^{-1}, \quad \forall n_{-}\in N^{-},\ n_+\in N,\ t\in T .$$ 
Consider the pullback of tangent fields $\kappa^*: \Gamma(G,\sT_{G})\rightarrow \Gamma(G^*,\sT_{G^*})$. 
In the theorem below, we view $\g \subset \Gamma( G, \sT_{G})$ as the subspace of Killing vector fields on $G$ (i.e. the tangent fields induced by the conjugate action). 

\begin{thm}[{\cite[\textsection 5]{DeCKP92}}]\label{thm 4.1} 
The tangent fields $e_{\alpha_i}$, $f_{\alpha_i}$ on $G$ and $\underline{X}_{i}$, $\underline{Y}_{i}$ on $G^*$ are related by 
$$ \kappa^*(f_{\alpha_i})= -K_i^l \underline{X}_{i}, \quad \kappa^*(e_{\alpha_i})= K_i^l \underline{Y}_{i} , \quad \forall i\in \I. $$ 
\end{thm}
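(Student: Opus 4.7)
The plan is to verify this identity of vector fields on $G^\ast$ by evaluating both sides on a set of algebra generators of $\k[G^\ast]=Z_\Fr$, and then invoking the fact that derivations are determined by their values on generators.

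\textbf{Step 1 (Both sides are derivations).} First I would observe that each side of the claimed identity is a $\k$-linear derivation of $Z_\Fr=\k[G^\ast]$. For the left-hand side, $\kappa$ is unramified, so $\kappa^\ast$ takes the Killing field $f_{\alpha_i}\in \Gamma(G,\sT_G)$ to a genuine vector field on $G^\ast$. For the right-hand side, I would use the fact (established in \cite{DeCKP92}) that $\underline{X}_i=\ad(E_i^{(l)})$ and $\underline{Y}_i=\ad(F_i^{(l)})$ preserve $Z_\Fr\subset \fU_\zeta$ and act there as derivations; since $K_i^l\in Z_\Fr$ is a global function, $K_i^l\underline{X}_i$ is again a derivation. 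Hence it suffices to match the two sides on generators.

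\textbf{Step 2 (Compute the quantum side on generators).} I would take the natural generating set $\{F_\beta^l, K_\mu^l, E_\beta^l\}_{\beta\in \Phi^+,\ \mu\in \Lambda}$ of $Z_\Fr$, which under the factorization $\k[G^\ast]=\k[N^-]\otimes \k[T]\otimes \k[N]$ corresponds to natural coordinate functions. Compute $[E_i^{(l)},K_\mu^l]$, $[E_i^{(l)},F_\beta^l]$, $[E_i^{(l)},E_\beta^l]$ inside the De Concini--Kac integral form $\fU_q$ and then specialize at $q_e=\zeta_e$. The key input is the divided-power identity
$$[E_i,F_j^n]=\delta_{ij}\,F_j^{n-1}[n]_{q_i}\,\frac{q_i^{1-n}K_i-q_i^{n-1}K_i^{-1}}{q_i-q_i^{-1}},$$
together with its iteration to $[E_i^{(l)},F_j^l]$, which has a clean form at $q=\zeta$ because the divided powers absorb the zeros of $[l]_{q_i}$ and isolate a surviving factor involving $K_i^{-l}$. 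For non-simple root vectors $F_\beta$, I would reduce to the simple-root case by Lusztig's braid group action.

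\textbf{Step 3 (Compute the classical side on the same generators).} I would now compute how the Killing field $f_{\alpha_i}$ acts, after pullback by $\kappa$, on each of these coordinate functions. Writing a point of $G^\ast$ as $(n_-,t,n_+)$ with $\kappa(n_-,t,n_+)=n_- t^2 n_+^{-1}$, the left-invariant/conjugate form of $f_{\alpha_i}$ acts on a function $\phi$ by
$$(\kappa^\ast f_{\alpha_i})(\phi)(n_-,t,n_+)=\tfrac{d}{ds}\big|_{s=0}\phi\big(\kappa^{-1}(\exp(sf_{\alpha_i})\,n_-t^2n_+^{-1}\,\exp(-sf_{\alpha_i}))\big),$$
whose first-order expansion via Baker--Campbell--Hausdorff decomposes canonically along the triangular factors $N^-\times T\times N$. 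Identifying these components with the coordinate functions arising from $F_\beta^l,K_\mu^l,E_\beta^l$ produces explicit formulas, and the factor $t^{2\alpha_i}$ from the conjugation is precisely the classical counterpart of $K_i^l$.

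\textbf{Step 4 (Comparison and the $\underline{Y}_i$ case).} Matching the two computations term by term gives $\kappa^\ast(f_{\alpha_i})=-K_i^l\underline{X}_i$. The identity $\kappa^\ast(e_{\alpha_i})=K_i^l\underline{Y}_i$ follows by a completely symmetric argument, either by repeating Steps 2--3 with the roles of $E,F$ swapped, or more economically by invoking the Chevalley anti-involution $\omega$ exchanging $E_i^{(l)}\leftrightarrow F_i^{(l)}$ and $K_i\leftrightarrow K_i^{-1}$, together with its geometric counterpart exchanging $e_{\alpha_i}\leftrightarrow f_{\alpha_i}$ on $G$.

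\textbf{Main obstacle.} The hard part will be Step 2: although the naive commutator $[E_i,F_j^l]$ has a vanishing $[l]_{q_i}$ at roots of unity, the divided-power version $[E_i^{(l)},F_j^l]$ does not vanish, and extracting its precise leading term at $q_e=\zeta_e$ requires careful work with the $\bA$-integral form before specialization. The delicate sign and the $K_i^l$ prefactor in the final answer are both artifacts of this limit procedure, and matching them exactly with the classical BCH expansion in Step 3 is where one must be most careful.
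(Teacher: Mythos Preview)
The paper does not prove this theorem at all: it is quoted verbatim from \cite[\textsection 5]{DeCKP92} and used as a black box in the proof of Lemma~\ref{lem 4.2}. So there is no ``paper's own proof'' to compare against.

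That said, your plan is a reasonable sketch of how the original De Concini--Kac--Procesi argument goes. A few remarks. First, in Step~2 you propose to compute $[E_i^{(l)},F_\beta^l]$ for all $\beta\in\Phi^+$ and reduce to simple roots via the braid group action; in practice this is more work than necessary, since it suffices to match the two derivations on the \emph{simple-root} generators $F_j^l$, $K_\mu^l$, $E_j^l$ (which already generate $Z_\Fr$ as an algebra), and the case $j\neq i$ requires a separate but elementary Serre-relation argument. Second, your Step~3 is stated a bit loosely: the Killing field is the infinitesimal conjugation action, and the key point is that conjugating $n_-t^2n_+^{-1}$ by $\exp(sf_{\alpha_i})$ stays in the big cell to first order, so the Gauss decomposition is smooth there and the three factors can be differentiated separately. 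Third, your identification of the ``main obstacle'' is accurate: the entire content of the theorem is hidden in the careful $\bA$-integral computation of $[E_i^{(l)},F_i^l]$ before specialization, and this is exactly what \cite{DeCKP92} carries out in detail.
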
 

\begin{lem}\label{lem 4.2} 
Let $i\in \I$. 
\begin{enumerate} 
\item The operator $[E_{i}^{(l)}, -]$ on $U^\hb_\zeta$ preserves the subalgebras $Z_\Fr^{\leq}$ and $Z_\Fr^{-}\otimes \fU^0_\zeta$. 
\item It induces a $U\n$-action on $Z_\Fr^{-}\otimes \fU^0_\zeta$ such that $e_{\alpha_i}$ acts via $-K_i^l [E_{i}^{(l)}, -]$ for any $i\in \I$. 
    There is a $U\n$-isomorphism of algebras 
	\begin{equation}\label{equ 4.1} 
	Z_\Fr^{-}\otimes \fU^0_\zeta \xs \k[B\times_T T], 
	\end{equation} 
	where the base change $T\rightarrow T$ is by $t\mapsto t^{2l}$ for any $t\in T$, and the $U\n$-action on $\k[B\times_T T]$ is induced by the $N$-conjugation on $B$. 
\end{enumerate} 
\end{lem}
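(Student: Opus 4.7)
The plan is to prove (1) by combining a direct Cartan computation with the DeConcini--Kac quantum coadjoint action on $Z_\Fr$, and to prove (2) via Theorem \ref{thm 4.1} together with the transpose identification $N^-\simeq N$.

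For (1), first observe that $E_i^{(l)}$ commutes with every element of $\fU^0_\zeta$ in $U^\hb_\zeta$: from $K_\lambda E_i^{(l)}=\zeta^{l(\lambda,\alpha_i)}E_i^{(l)}K_\lambda$ one has $\zeta^{l(\lambda,\alpha_i)}=\zeta_e^{el(\lambda,\alpha_i)}=(\zeta_e^l)^{e(\lambda,\alpha_i)}=1$, since $(\lambda,\alpha_i)\in\frac{1}{e}\Z$ and $\zeta_e^l=1$; in particular $[E_i^{(l)},K_\lambda^l]=0$. Next, the DeConcini--Kac quantum coadjoint action (the content underlying Theorem \ref{thm 4.1}) asserts that the derivation $\underline{X}_i=[E_i^{(l)},-]$ preserves $Z_\Fr\subset\fU_\zeta$. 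The key point is that under the specialization $\fU_\zeta\to U^\hb_\zeta$ we have $E_\beta^l=[l]!\,E_\beta^{(l)}=0$, so $Z_\Fr^+$ is killed and the image of $Z_\Fr$ in $U^\hb_\zeta$ is exactly $Z_\Fr^{\leq}$. Applying $\underline{X}_i$ to the generators $F_\beta^l$ of $Z_\Fr^-$ and passing to $U^\hb_\zeta$ therefore yields $[E_i^{(l)},F_\beta^l]\in Z_\Fr^{\leq}$. Together with $[E_i^{(l)},\fU^0_\zeta]=0$ and the derivation property this gives preservation of both $Z_\Fr^{\leq}$ and $Z_\Fr^-\otimes\fU^0_\zeta$.

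For (2), set $\phi_i:=-K_i^l[E_i^{(l)},-]$ on $Z_\Fr^-\otimes\fU^0_\zeta$. A calculation analogous to the one above shows $\zeta^{l^2(\alpha_i,\beta)}=1$, so $K_i^l$ is central in $Z_\Fr^-\otimes\fU^0_\zeta$ and $\phi_i$ is a derivation. Theorem \ref{thm 4.1} identifies $\phi_i$ with the pullback vector field $\kappa^*(f_{\alpha_i})$ on $\Spec Z_\Fr$. To obtain the claimed isomorphism I would combine the DeConcini--Kac identification $Z_\Fr^-\xs\k[N^-]$, the transpose $N^-\simeq N$ (which, at the Lie algebra level, swaps $f_{\alpha_i}\leftrightarrow e_{\alpha_i}$), and the identification $\fU^0_\zeta=\k[T]$ together with the base change $T\to T$, $t\mapsto t^{2l}$. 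Under the resulting isomorphism $Z_\Fr^-\otimes\fU^0_\zeta\xs\k[B\times_T T]$, the vector field $\kappa^*(f_{\alpha_i})$ becomes the Killing vector field on $B$ induced by conjugation with $e_{\alpha_i}\in\n$, i.e.\ the $N$-conjugation action pulled back to $B\times_T T$. The Lie/Serre relations for the $\phi_i$'s are automatic, since pullback of vector fields along $\kappa$ is a Lie algebra homomorphism, so the assignment $e_{\alpha_i}\mapsto\phi_i$ extends to a well-defined $U\n$-action.

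The main technical obstacle will be pinning down the precise base change $t\mapsto t^{2l}$. This exponent arises from composing the squaring in $\kappa(n_-,t,n_+)=n_-t^2n_+^{-1}$ with the Frobenius $l$-th power relation $K_\lambda^l\in Z_\Fr^0$, and requires careful bookkeeping between $\fU^0_\zeta$, its even subalgebra, and $Z_\Fr^0$ so as to match the $\rho$-shift appearing in the Harish--Chandra identifications of Section 2.2. Everything else is a formal consequence of the vanishing $E_\beta^l=0$ in $U^\hb_\zeta$, the commutativity $[E_i^{(l)},\fU^0_\zeta]=0$, and the geometric description in Theorem \ref{thm 4.1}.
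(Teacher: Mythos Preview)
Your proposal is correct and follows essentially the same route as the paper. For (1), both you and the paper use that $\underline{X}_i$ preserves $Z_\Fr$ and that the generators $E_\beta^l$ of $Z_\Fr^+$ die in $U^\hb_\zeta$; the paper phrases this at the $q$-integral level (computing modulo $(q_e-\zeta_e)\fU_q$ inside $U_q^\hb$) while you phrase it via the map $\fU_\zeta\to U^\hb_\zeta$, but the content is identical. For (2), the paper makes explicit one step you leave implicit: it notes that $\underline{X}_i$ stabilises the ideal $Z_\Fr(Z_\Fr^+)_+$ (equivalently, the Killing field $f_{\alpha_i}$ is tangent to $B^-\subset G$), so the operator descends to $Z_\Fr^{\leq}=Z_\Fr/Z_\Fr(Z_\Fr^+)_+$; it then reads off the exponent $2l$ in two steps, first $t\mapsto t^2$ from $\kappa$ on $Z_\Fr^{\leq}$, then an extra factor $l$ from the inclusion $Z_\Fr^0=\k\langle K_\lambda^l\rangle\hookrightarrow\fU^0_\zeta=\k\langle K_\lambda\rangle$, and finally passes from $B^-,\n^-$ to $B,\n$ via the Chevalley involution (your ``transpose''). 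Your remark that the Serre relations hold because $\kappa^*$ is a Lie algebra homomorphism is a nice addition that the paper leaves implicit.
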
 
\begin{proof} 
    (1) Note that $[E_{i}^{(l)},-]$ is trivial on $\fU^0_\zeta\subset U^\hb_\zeta$. 
    We show $[E_{i}^{(l)}, F_\beta^l]\in Z_\Fr^{\leq}$ for any $\beta\in \Phi^+$. 
	Since $\underline{X}_{i}$ preserves the subspace $\fU_q\subset U_q$ and its specialization on $\fU_\zeta$ preserves $Z_\Fr$, it follows that in the integral form $U_q^\hb$, 
	$$ [E_{i}^{(l)}, F_\beta^l] \in \sum_{\wp, \wp'\in (l\N)^{\Phi^+}, \lambda\in \Lambda} \k \ F^\wp K^l_\lambda E^{\wp'} \quad \mathrm{mod} \ (q_e-\zeta_e)\fU_q .$$ 
	Hence in $U^\hb_\zeta$, we have $[E_{i}^{(l)}, F_\beta^l]\in Z_\Fr^{\leq}$. 
	
	(2) Denote by $(Z_\Fr^+)_+$ the augmentation ideal of $Z_\Fr^+$. 
	Since the operator $\underline{X}_{i}$ on $Z_\Fr$ stables $(Z_\Fr^+)_+$, it induces an operator (still denoted by $\underline{X}_{i}$) on 
	$$Z_\Fr^{\leq}= Z_\Fr/ Z_\Fr(Z_\Fr^+)_+ ,$$ 
	which coincides with the action $[E_{i}^{(l)}, -]$ on $Z_\Fr^{\leq}$ in (1). 
	Consider the Cartesian diagram 
	$$\begin{tikzcd} \Spec Z_\Fr^{\leq} \arrow[r,two heads,"\kappa"] \arrow[d,hook] & B^- \arrow[d,hook] \\ 
    \Spec Z_\Fr \arrow[r,two heads,"\kappa"] & N^- T N.
    \end{tikzcd}$$
	Note that the Killing vector field $f_{\alpha_i}$ on $G$ is tangent to $B^-$. 
	By Theorem \ref{thm 4.1}, we have $\kappa'^*(f_{\alpha_i})=-K_i^l \underline{X}_{i}$ as tangent fields on $\Spec Z_\Fr^{\leq}$, and $\kappa'$ induces a $U\n^{-}$-isomorphism 
	$$Z_\Fr^{\leq} \xs \k[B^-\times_T T],$$ 
	where $T\rightarrow T$ is by $t\mapsto t^{2}$. 
	By the inclusion $Z_\Fr^0=\k\langle K_\lambda^l\rangle_{\lambda\in \Lambda}\hookrightarrow \fU^0_\zeta=\k\langle K_\lambda\rangle_{\lambda\in \Lambda}$, it extends to a $U\n^{-}$-isomorphism $Z_\Fr^{-}\otimes \fU^0_\zeta \xs \k[B^-\times_T T]$, where $T\rightarrow T$ is by $t\mapsto t^{2l}$. 
	Finally, replacing $B^-,\n^-$ by $B,\n$ via the Chevalley involution, we get the desired isomorphism (\ref{equ 4.1}). 
\end{proof} 

\subsubsection{Sub-quotient algebra} 
Consider the subspace $\fU_\zeta^{\hb}:=Z_{\Fr}^- \otimes U_\zeta^{\hb,\geq}$ of $U_\zeta^\hb$. 
Denote by $(u_\zeta^+)_+$ the augmentation ideal of $u_\zeta^+$. 
\begin{prop}\label{prop 4.3} 
The subspace $\fU_\zeta^{\hb}$ is a subalgebra of $U_\zeta^\hb$. 
There is an algebra isomorphism 
\begin{equation}\label{equ 4.3} 
\fU_\zeta^{\hb}/\langle (u_\zeta^+)_+\rangle \xs \k[B\times_T T]\rtimes U\n, 
\end{equation} 
which is compatible with (\ref{equ 4.1}), and sends $-K_\beta^lE^{(l)}_\beta$ to $1\otimes e_{\beta}$ for any $\beta\in \Phi^+$. 
\end{prop}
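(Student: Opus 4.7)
The plan is to use the triangular decomposition $U^\hb_\zeta = \fU^-_\zeta \otimes \fU^0_\zeta \otimes U^+_\zeta$ to reduce both assertions to commutation computations with generators, which are then supplied by Lemma \ref{lem 4.2}. For the subalgebra claim, it suffices to verify $U^+_\zeta \cdot Z_\Fr^- \subseteq \fU^\hb_\zeta$. Since $U^+_\zeta$ is generated over $\k$ by $\{E_i\}_{i\in \I}$ together with $\{E^{(l)}_i\}_{i\in \I}$, this reduces to two checks: the $E_i$'s commute with $F^l_\beta$ because $F^l_\beta \in Z_\Fr^{\leq}$ is central in $\fU^b_\zeta$, and by Lemma \ref{lem 4.2}(1) the commutators $[E^{(l)}_i, F^l_\beta]$ lie in $Z_\Fr^{\leq} \subset Z_\Fr^- \otimes \fU^0_\zeta \subset \fU^\hb_\zeta$. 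Combined with the standard $K_\lambda$-commutations in $\fU^0_\zeta$, these yield closure of $\fU^\hb_\zeta$ under multiplication.

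Next, I would analyze the ideal $I := \langle (u^+_\zeta)_+\rangle \subset \fU^\hb_\zeta$. The same commutation facts show that any $E_i$ can be moved past an element of $Z_\Fr^- \otimes \fU^0_\zeta$ without generating new $Z_\Fr^-$- or $\fU^0_\zeta$-components, only producing weight shifts. Consequently $I = Z_\Fr^- \otimes \fU^0_\zeta \otimes J$, where $J \subset U^+_\zeta$ is the two-sided ideal generated by $(u^+_\zeta)_+$. By Lusztig's quantum Frobenius, $\ker\bigl(\Fr\colon U^+_\zeta \twoheadrightarrow U\n\bigr) = J$, so $U^+_\zeta / J \cong U\n$ as algebras. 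Hence
\[
\fU^\hb_\zeta / I \;\cong\; (Z_\Fr^- \otimes \fU^0_\zeta) \otimes U\n
\]
as vector spaces, matching the vector-space shape of $\k[B \times_T T] \rtimes U\n$.

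Finally, I would upgrade this to an algebra isomorphism. The first factor $Z_\Fr^- \otimes \fU^0_\zeta$ is identified with $\k[B \times_T T]$ via (\ref{equ 4.1}). To check the cross relations, set $\tilde{e}_\beta := -K^l_\beta E^{(l)}_\beta$. One verifies that $K^l_\beta$ is central in $Z_\Fr^- \otimes \fU^0_\zeta$ and commutes with every $E^{(l)}_\gamma$; in both cases the check reduces to $\zeta^{l^2(\beta,\gamma)} = 1$, since $(\beta,\gamma) \in \Z$ and $\zeta^l = 1$. Consequently, for any $y \in Z_\Fr^- \otimes \fU^0_\zeta$,
\[
[\tilde{e}_i, y] \;=\; -K^l_i\, [E^{(l)}_i, y] \;=\; e_{\alpha_i} \cdot y
\]
by Lemma \ref{lem 4.2}(2), which is precisely the defining cross-relation in $\k[B \times_T T] \rtimes U\n$. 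The Serre relations among $\{\tilde{e}_i\}$ modulo $I$ follow by collecting all $K^l$-factors on one side (possible by the commutations just noted) and invoking the Serre relations for $\{E^{(l)}_i\}$ modulo $J$, which under $\Fr$ become the classical Serre relations in $\n$. The formula $\tilde{e}_\beta \mapsto 1 \otimes e_\beta$ for non-simple $\beta$ then follows by expressing both sides via iterated commutators of simple-root generators and invoking Lusztig's braid-group compatibility of $\Fr$.

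The main obstacle is verifying that the ideal $I$ stays confined to the rightmost tensor factor; this rests on $[E_i, F^l_\beta] = 0$ in $\fU^b_\zeta$ and on identifying $\ker(\Fr|_{U^+_\zeta})$ with the two-sided ideal generated by $(u^+_\zeta)_+$, the latter being a classical but non-trivial input from the structure theory of quantum groups at roots of unity.
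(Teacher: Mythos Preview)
Your proposal is correct and follows essentially the same route as the paper: both use Lemma~\ref{lem 4.2}(1) to verify that $\fU^\hb_\zeta$ is closed under multiplication, identify the ideal $\langle (u^+_\zeta)_+\rangle$ with $Z_\Fr^- \otimes \fU^0_\zeta \otimes \ker(\Fr|_{U^+_\zeta})$ via the commutation of $u^+_\zeta$ with $Z_\Fr^-$, and then invoke Lemma~\ref{lem 4.2}(2) for the cross-relations. The paper is more concise on the algebra-isomorphism step, simply noting that (\ref{equ 4.1}) and the quantum Frobenius realize $\k[B\times_T T]$ and $U\n$ as subalgebras of the quotient and that Lemma~\ref{lem 4.2}(2) ensures they glue correctly; your explicit verification of the $K^l_\beta$-commutations, the Serre relations, and the non-simple root case via braid-group compatibility is more detailed but not a genuinely different argument.
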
 
\begin{proof} 
We show the subspace $\fU_\zeta^{\hb}=Z_{\Fr}^- \otimes U_\zeta^{\hb,\geq}$ is closed under multiplications. 
Indeed, the left multiplications by the elements $E_i$, $F^l_\beta$, $K_\lambda$ stabilize $\fU_\zeta^{\hb}$, since they commute with $Z_{\Fr}^-$. 
The same holds for $E^{(l)}_i$ by Lemma \ref{lem 4.2}(1). 
Hence $\fU_\zeta^{\hb}$ is closed under the left multiplications. 
Similar arguments apply to the right multiplications. 

For the second assertion, we denote by $\langle u_\zeta^+\rangle_+$ the ideal in $U^+_\zeta$ generated by $(u_\zeta^+)_+$, which is also the kernel of the quantum Frobenius map $\Fr: U_\zeta^+ \twoheadrightarrow U\n$. 
Since $u_\zeta^+$ commutes with $Z_\Fr^{-}$, the ideal $\langle (u_\zeta^+)_+\rangle$ coincides with the subspace $Z_\Fr^{-}\otimes \fU^0_\zeta\otimes \langle u_\zeta^+\rangle_+$ in $\fU^\hb_\zeta$. 
It gives an isomorphism of $\k$-vector spaces $Z_\Fr^{-}\otimes \fU^0_\zeta\otimes U\n \xs \fU_\zeta^{\hb}/\langle (u_\zeta^+)_+\rangle$.  
Combining with (\ref{equ 4.1}), there is an isomorphism of $\k$-vector spaces 
\begin{equation}\label{equ 4.4} 
\k[B\times_T T] \rtimes U\n \xs Z_\Fr^{-}\otimes \fU^0_\zeta\otimes U\n \xs \fU_\zeta^{\hb}/\langle (u_\zeta^+)_+\rangle, 
\end{equation} 
such that $1\otimes e_\beta \mapsto -K_\beta^lE^{(l)}_\beta \text{ mod } \langle (u_\zeta^+)_+\rangle$ for any $\beta\in \Phi^+$. 
It remains to show (\ref{equ 4.4}) is an isomorphism of $\k$-algebras. 
Indeed, (\ref{equ 4.4}) realizes $\k[B\times_T T]$ and $U\n$ as subalgebras in $\fU_\zeta^{\hb}/\langle (u_\zeta^+)_+\rangle$, and Lemma \ref{lem 4.2}(2) shows that these subalgebras glue together in the way we want. 
\end{proof} 

We set $U^{\hb'}_1:=\k[B\times_T T]\rtimes U\n$, then there is natural inclusion $U^\hb_1\hookrightarrow U^{\hb'}_1$, and (\ref{equ 4.3}) gives an algebra surjection 
$$\Fr^\hb: \fU^{\hb}_\zeta \rightarrow U^{\hb'}_1.$$ 

\subsection{Equivalence for the Steinberg block}\label{subsect 3.2} 
In this subsection, we use the sub-quotient structure shown in \textsection \ref{subsect 4.1} to construct an equivalence between $\sO^{[-\rho]}_{\k}$ and the category $\sO$ for $U^\hb_1=\k[B]\rtimes U\n$, and then interpret the latter as $\Coh^B(\n)$. 
A deformed version is also discussed. 
\subsubsection{Module categories for $U^{\hb}_1$} 
The $\rQ$-grading on $U^\hb_\zeta$ induces an $l\rQ$-grading on $U^{\hb}_1$ and $U^{\hb'}_1$. 
We set  
$$U^{\hb,0}_1=\k\langle K^{2l}_{\lambda}\rangle_{\lambda\in \Lambda}\subset U^{\hb',0}_1=\fU^0_\zeta,$$ 
then $U^{\hb}_1$ and $U^{\hb'}_1$ naturally fit into the settings of \textsection \ref{subsect 2.3}. 
Let $A$ be either $U^\hb_1$ or $U^{\hb'}_1$, and $R$ be a deformation ring for $A$. 
For any lattice $l\rQ\subset \Lambda' \subset \Lambda$, we define $A\Mod_R^{\Lambda'}$ to be the full subcategory of $A\Mod_R^{\Lambda}$ of the $\Lambda'$-graded modules. 
The action of $l\Lambda$ on $\fU^0_\zeta$ is trivial, hence if $R$ is a $\fU^0_\zeta$-algebra, then we have $U^{\hb}_1\Mod^{l\Lambda}_R=U^{\hb'}_1\Mod^{l\Lambda}_R$. 

We define $\sO_{1,R}$ to be the full subcategory of $U^{\hb}_1\Mod^{l\Lambda}_R$ consisting of the modules that are finitely generated over $U^{\hb}_1\otimes R$ and are locally unipotent under the action of $U\n$. 
Define the Verma module 
$$M(\lambda)_{1,R}:= U^\hb_1\otimes_{\k[T]\otimes U\n}R_{l\lambda}\in \ \sO_{1,R} , \quad \lambda\in \Lambda.$$ 

\subsubsection{An invariant functor} 
Let $\langle u_\zeta^+\rangle_+$ be the ideal of $U_\zeta^+$ generated by $(u_\zeta^+)_+$. 
For any $\fU_\zeta^{\hb}$-module $M$, we denote by $M^{\langle u_\zeta^+\rangle_+}$ the subspace of $M$ where $\langle u_\zeta^+\rangle_+$ acts by zero. 
By the isomorphism (\ref{equ 4.3}), $M^{\langle u_\zeta^+\rangle_+}$ is naturally a $U^{\hb'}_1$-module. 
Let $R$ be a deformation ring for $U^\hb_\zeta$. 
Consider the functor 
$$(-)^{\langle u_\zeta^+\rangle_+}: 
U_\zeta^\hb\Mod^{\Lambda}_{R}\xrightarrow{\text{forget}} \fU_\zeta^{\hb}\Mod^{\Lambda}_{R}\rightarrow U^{\hb'}_1\Mod^{\Lambda}_R.$$ 
\begin{lem}\label{lem 4.4} 
Suppose $R=S$ or $\k$. The functor $(-)^{\langle u_\zeta^+\rangle_+}$ is exact on $\sO^{[-\rho]}_{R}$. 
\end{lem}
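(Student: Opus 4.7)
The plan is to reduce the claim to showing that every module $M\in\sO^{[-\rho]}_R$ is free (equivalently projective, equivalently injective) as a module over the small positive part $u_\zeta^+$. Since $u_\zeta^+$ is a finite-dimensional local Hopf algebra and hence a local Frobenius algebra, these three notions coincide, and a free $u_\zeta^+$-module $M$ satisfies $\Ext^{i}_{u_\zeta^+}(\k,M)=0$ for all $i>0$. Because the ideal $\langle(u_\zeta^+)_+\rangle$ is generated by the augmentation ideal $(u_\zeta^+)_+$, the functor $(-)^{\langle u_\zeta^+\rangle_+}$ coincides with the $u_\zeta^+$-invariants $\Hom_{u_\zeta^+}(\k,-)$, whose higher derived functors then vanish on all of $\sO^{[-\rho]}_R$, giving the desired exactness.

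For $R=\k$, the key input is Lemma \ref{lem simple}: every simple $E(\lambda)_\k$ in $\sO^{[-\rho]}_\k$ has $\lambda\in-\rho+l\Lambda$ and factors as $L((l-1)\rho)_\k\otimes\k_{l\nu}=\St\otimes\k_{l\nu}$, where $\St$ is the Steinberg module. The twist $\k_{l\nu}$ is trivial over $u_\zeta$ (the $K_i$ act by $\zeta^{l(\alpha_i,\nu)}=1$, while $E_i,F_i$ act by zero), so $E(\lambda)_\k|_{u_\zeta}\simeq\St$ for every such $\lambda$. Since $\St$ is simple and both projective and injective in $u_\zeta\Mod$, and since $u_\zeta=u_\zeta^-\otimes u_\zeta^0\otimes u_\zeta^+$ is free as a right $u_\zeta^+$-module, the restriction $\St|_{u_\zeta^+}$ is $u_\zeta^+$-free. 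To pass from simples to arbitrary $M\in\sO^{[-\rho]}_\k$, I would observe that every element of $M$ generates a finite-dimensional $u_\zeta$-submodule whose composition factors are all $\St$; by $\Ext^1_{u_\zeta}(\St,\St)=0$, any such finite-dimensional submodule is a direct sum of copies of $\St$. A Zorn argument -- taking a maximal direct sum of Steinberg subobjects inside $M|_{u_\zeta}$ and using the projectivity (equivalently injectivity) of $\St$ to split off a further Steinberg summand whenever the sum is not all of $M|_{u_\zeta}$ -- then shows that $M|_{u_\zeta}$ itself is a direct sum of copies of $\St$, hence $M|_{u_\zeta^+}$ is $u_\zeta^+$-free.

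For $R=S$, the same strategy applies with modifications for the $S$-module structure. Verma modules $M(\lambda)_S$ for $\lambda\in-\rho+l\Lambda$ are $S$-flat and reduce mod $\m_S$ to $M(\lambda)_\k$, while the truncated projectives $Q(\lambda)^{\leq\nu}_S$ have the explicit induction description of Lemma \ref{lem 3.2}(2). I would either repeat the Steinberg-orbit argument directly -- noting that any $\k$-finite $u_\zeta$-submodule of $M\in\sO^{[-\rho]}_S$ still has only Steinberg composition factors, so Zorn gives $M|_{u_\zeta^+}$ free -- or combine the $\k$-case with a Nakayama lifting: a $(u_\zeta^+\otimes S)$-module that is $S$-flat and whose reduction mod $\m_S$ is $u_\zeta^+$-free is itself $u_\zeta^+$-free. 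The main obstacle, and the most delicate point, is the careful handling of the $S$-module structure throughout the Zorn argument, since weight spaces of deformed modules are $S$-modules rather than finite-dimensional $\k$-vector spaces; once this is handled, the derived $u_\zeta^+$-invariants vanish on all of $\sO^{[-\rho]}_R$ and exactness of $(-)^{\langle u_\zeta^+\rangle_+}$ follows in both cases.
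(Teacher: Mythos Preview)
Your approach has a genuine gap. The crux is that $u_\zeta$ is \emph{not} a subalgebra of $U_\zeta^\hb$: the negative part of $U_\zeta^\hb$ is the De~Concini--Kac algebra $\fU_\zeta^-$, in which $F_\beta^l\neq 0$, so there is no algebra map $u_\zeta\to U_\zeta^\hb$ carrying $F_i$ to $F_i$, and ``$M|_{u_\zeta}$'' has no meaning for a general object $M\in\sO^{[-\rho]}_R$. (Lemma~\ref{lem simple} lets you view each simple $E(\lambda)_\k$ as pulled back along $U_\zeta^\hb\to U_\zeta$, so restriction to $u_\zeta$ makes sense \emph{for simples}; arbitrary $M$ need not factor through $U_\zeta$.) The subalgebra of $U_\zeta^\hb$ generated by $E_i,F_i,K_\lambda$ is $\fU_\zeta^b=\fU_\zeta^-\otimes\fU_\zeta^0\otimes u_\zeta^+$, and the $\fU_\zeta^b$-orbit of the highest-weight vector of $M(-\rho)_R$ is the entire module, not a finite-dimensional piece. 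If instead you retreat to the genuine subalgebra $u_\zeta^+\subset U_\zeta^\hb$, then every element does sit in a finite-dimensional $u_\zeta^+$-submodule, but its $u_\zeta^+$-composition factors are all the trivial module $\k$, not $\St$, and the Zorn step loses its force. Note too that even for $R=\k$ the objects of $\sO^{[-\rho]}_\k$ have infinite length (e.g.\ $M(-\rho)_\k\supset M(-\rho-l\alpha)_\k\supset\cdots$), so one cannot simply induct up a finite composition series; and for $R=S$ your Nakayama reduction needs $S$-flatness, which fails for modules like $E(\lambda)_\k\in\sO^{[-\rho]}_S$.

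The paper's argument is structurally different and uniform in $R$. One verifies that for any $M\in U_\zeta^\hb\Mod^\Lambda_R$ there is a natural isomorphism of $\Lambda$-graded $R$-modules
\[
M^{\langle u_\zeta^+\rangle_+}\;\cong\;\bigoplus_{\mu\in\Lambda}\Hom_{U_\zeta^\hb\Mod^\Lambda_R}\bigl(Q_1(\mu)_R,\,M\bigr),\qquad Q_1(\mu)_R:=U_\zeta^\hb\otimes_{U_\zeta^{\hb,\geq}}(U\n\otimes R_\mu),
\]
given by evaluation at the generator $1\otimes 1\otimes 1$. For $M\in\sO^{[-\rho]}_R$ the block decomposition forces the $\mu$-summand to vanish unless $\mu\in-\rho+l\Lambda$, and for such $\mu$ one has $Q_1(\mu)_R=Q(\mu)_R$ by (\ref{equ 3.20}). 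Exactness then follows at once from Lemma~\ref{lem 3.2}(1), which asserts that $\Hom(Q(\mu)_R,-)$ is exact on $\sO_R$. This bypasses any direct analysis of the $u_\zeta^+$-module structure of $M$.
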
 
\begin{proof} 
We define the following module in $U_\zeta^\hb\Mod^{\Lambda}_{R}$ as (\ref{equ 3.20}), 
$$Q_1(\mu)_R= U_\zeta^\hb\otimes_{U_\zeta^{\hb,\geq}}(U\n\otimes R_\mu), \quad \mu\in \Lambda.$$ 
For any $M\in U_\zeta^\hb\Mod^{\Lambda}_{R}$, we have an isomorphism of $\Lambda$-graded $R$-modules 
$$\bigoplus_{\mu\in \Lambda} \Hom_{U_\zeta^\hb\Mod^{\Lambda}_{R}}(Q_1(\mu)_R,M)\xs M^{\langle u_\zeta^+\rangle_+},$$ 
given by evaluating $(f_\mu)_{\mu}$ at the elements $1\otimes 1\otimes 1\in Q_1(\mu)_R$. 
It yields a natural isomorphism 
$$\bigoplus_{\mu\in \Lambda} \Hom_{U_\zeta^\hb\Mod^{\Lambda}_{R}}(Q_1(\mu)_R,-)\xs (-)^{\langle u_\zeta^+\rangle_+},$$ 
as functors from $U_\zeta^\hb\Mod^{\Lambda}_{R}$ to the category of $\Lambda$-graded $R$-modules. 
If $M\in \sO^{[-\rho]}_{R}$, then by the block decomposition (\ref{equ 2.12}), we have $\Hom(Q_1(\mu)_R,M)\neq 0$ only if $\mu\in -\rho+l\Lambda$, and in this case $Q_1(\mu)_R=Q(\mu)_R$ by (\ref{equ 3.20}). 
Therefore, by Lemma \ref{lem 4.2}(1), the functor 
\begin{equation}\label{equ 4.40} 
(-)^{\langle u_\zeta^+\rangle_+}= 
\bigoplus_{\mu\in \Lambda} \Hom_{U_\zeta^\hb\mod^{\Lambda}_{R}}(Q(-\rho+l\mu)_R,-)
\end{equation}
on $\sO^{[-\rho]}_{R}$ is exact. 
\end{proof} 

Recall that the Verma module $M(\lambda)_R=\fU^-_\zeta \otimes R_{\lambda}$ as $\fU^{\leq}_\zeta\otimes R$-modules. 
\begin{lem}\label{lem 4.6} 
Suppose $R=S$ or $\k$. 
If $\lambda\in -\rho+l\Lambda$, then we have $M(\lambda)_R^{\langle u_\zeta^+\rangle_+}= Z_\Fr^-\otimes R_{\lambda}$.  
\end{lem}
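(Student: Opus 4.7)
The plan has two parts. For the inclusion $\supseteq$, I use that $Z_\Fr^-\subset Z_\Fr$ is central in $\fU_\zeta$, so every $z\in Z_\Fr^-$ commutes with each $E_i$; combined with $(u_\zeta^+)_+\cdot v_\lambda=0$ (the augmentation of $U^+_\zeta$ on $R_\lambda$ from the Verma construction), this yields $(u_\zeta^+)_+\cdot(zv_\lambda)=0$. Lemma~\ref{lem 4.2}(1) shows moreover that $U^+_\zeta$ preserves $Z_\Fr^-\otimes R_\lambda$ (acting through its Frobenius quotient $U\n$), so the full ideal $\langle u_\zeta^+\rangle_+\subset U^+_\zeta$ annihilates this subspace.

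For the reverse inclusion I would decompose $M(\lambda)_R$ as a $(u_\zeta^+\otimes Z_\Fr^-)$-module. Fix a PBW lift $\tilde u_\zeta^-\subset \fU^-_\zeta$ of a monomial basis of $u_\zeta^-$; then $\fU^-_\zeta$ is a free $Z_\Fr^-$-module with basis $\tilde u_\zeta^-$, producing a $(Z_\Fr^-\otimes R)$-module isomorphism
$$M(\lambda)_R\ \simeq\ Z_\Fr^-\otimes_\k W,\qquad W:=\tilde u_\zeta^-\cdot v_\lambda.$$
The heart of the argument is that $W$ is stable under $u_\zeta^+$, and that under the natural identification with the small quantum Verma module $u_\zeta^-\otimes R_\lambda$ for $u_\zeta$, the $u_\zeta^+$-action on $W$ matches that on $u_\zeta^-\otimes R_\lambda$. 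This reduces to a PBW computation in $\fU_\zeta$: for $\tilde u\in\tilde u_\zeta^-$ of weight $-\mu$, the commutator $[E_i,\tilde u]\in \fU^-_\zeta\otimes\fU^0_\zeta$ has $\fU^-_\zeta$-part of weight $\alpha_i-\mu$, whose simple-root coordinates lie in $[-(l-1),0]$, so its PBW expansion involves only elements of $\tilde u_\zeta^-$ and no positive power of $F_\beta^l$. Centrality $[Z_\Fr^-,u_\zeta^+]=0$ then promotes the above isomorphism to one of $(u_\zeta^+\otimes Z_\Fr^-)$-modules with $u_\zeta^+$ acting solely on the second factor.

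Taking $(u_\zeta^+)_+$-invariants thus commutes with tensoring by the trivial $u_\zeta^+$-module $Z_\Fr^-$, yielding $M(\lambda)_R^{(u_\zeta^+)_+}=Z_\Fr^-\otimes W^{(u_\zeta^+)_+}$. The hypothesis $\lambda\in-\rho+l\Lambda$ enters decisively here: since $l\Lambda$ acts trivially on $u_\zeta$, $\lambda$ defines the same $u_\zeta^0$-character as $-\rho$, so $W$ is isomorphic over $R$ to the Steinberg module for $u_\zeta$. This module is simple, whence $W^{(u_\zeta^+)_+}=R\cdot v'_\lambda$ is the one-dimensional highest weight line, and $M(\lambda)_R^{(u_\zeta^+)_+}=Z_\Fr^-\otimes R_\lambda$. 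Since $M(\lambda)_R^{\langle u_\zeta^+\rangle_+}\subseteq M(\lambda)_R^{(u_\zeta^+)_+}$, the claim follows.

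The main technical obstacle is the PBW step: verifying that $[E_i,\tilde u]\cdot v_\lambda$ remains inside $W$ (with no $(Z_\Fr^-)_+$-contribution) and that the induced $u_\zeta^+$-action on $W$ indeed matches the small Verma structure. The weight-interval argument sketched above handles the former, and the $u_\zeta^+$-equivariance of the algebra surjection $\fU^-_\zeta\twoheadrightarrow u_\zeta^-$ (for their respective adjoint actions) handles the latter.
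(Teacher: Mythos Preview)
Your argument is correct and takes a different route from the paper's. The paper first identifies $M(-\rho)_R^{\langle u_\zeta^+\rangle_+}$ with $\End_{\fU^b_\zeta\otimes R}(M(-\rho)_R)$ via the $\Hom$-description (\ref{equ 4.40}), then shows the multiplication map $\phi\colon Z_\Fr^-\otimes R\to\End_{\fU^b_\zeta\otimes R}(M(-\rho)_R)$ is surjective by a graded Nakayama argument: reduce to $R=\k$, then reduce modulo $(Z_\Fr^-)_+$ and invoke \cite[Lem~6.3]{AJS94} to identify the resulting quotient with the simple module $L^b(-\rho)_\k$. You instead exhibit an explicit $u_\zeta^+$-stable splitting $M(\lambda)_R\cong Z_\Fr^-\otimes_\k W$ via PBW and appeal directly to the simplicity of the Steinberg module to compute $W^{(u_\zeta^+)_+}$. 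Both proofs ultimately rest on Steinberg simplicity; yours is more direct and avoids the $\Hom$-functor and projective-cover machinery, while the paper's is cleaner in that it requires no explicit PBW manipulation. Two small points to tighten: the weight bound on $[E_i,\tilde u]$ fails literally when the $\alpha_i$-coefficient $m_i$ of $\mu$ vanishes (the $\alpha_i$-coordinate of $\alpha_i-\mu$ is then $+1$), but in that case $\alpha_i-\mu\notin\rQ_{\leq 0}$ forces $[E_i,\tilde u]=0$ since $[E_i,\fU^-_\zeta]\subseteq\fU^-_\zeta\fU^0_\zeta$, so the conclusion survives; and for $R=S$ the module $W$ is only a flat deformation of the simple Steinberg over $\k$, so to conclude $W^{(u_\zeta^+)_+}=S\,v_\lambda$ one should note that the map $(E_i)_i\colon W_{\lambda-\nu}\to\bigoplus_i W_{\lambda-\nu+\alpha_i}$ of finite free $S$-modules is injective modulo the maximal ideal (by simplicity over $\k$) and hence injective over the regular local ring $S$.
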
 
\begin{proof} 
Without loss of generality, we may assume $\lambda=-\rho$. 
By (\ref{equ 4.40}), there are isomorphisms of $R$-modules 
\begin{equation}\label{equ 3.5} 
\begin{aligned} 
M(-\rho)_R^{\langle u_\zeta^+\rangle_+}
&=\bigoplus_{\mu\in \rQ} \Hom_{U_\zeta^\hb\Mod^{\Lambda}_{R}}(Q(-\rho+l\mu)_R,M(-\rho)_R)\\
&=\Hom_{U_\zeta^\hb\otimes {R}}(Q(-\rho)_R,M(-\rho)_R)\\ 
&=\End_{\fU_\zeta^b\otimes {R}}(M(-\rho)_R). 
\end{aligned} 
\end{equation}
Since $Z_\Fr^-$ is central in $\fU_\zeta^b$, the ring $\End_{\fU_\zeta^b\otimes {R}}(M(-\rho)_R)$ is naturally a $Z_\Fr^-$-algebra, and the isomorphism (\ref{equ 3.5}) is a homomorphism of $Z_\Fr^-\otimes R$-modules. 
Consider the $Z_\Fr^-\otimes R$-homomorphism 
$$\phi:\ Z_\Fr^-\otimes R\rightarrow \End_{\fU_\zeta^b\otimes {R}}(M(-\rho)_R), \quad 1\otimes 1\mapsto \id_{M(-\rho)_R}.$$ 
Note that $Z_\Fr^-\otimes R_{\lambda}\subseteq M(-\rho)_R^{\langle u_\zeta^+\rangle_+}$ coincides with the image of $\phi$ under the isomorphism (\ref{equ 3.5}). 
It remains to show that $\phi$ is a surjection. 
Since $\phi$ preserves the $\Lambda$-grading of both sides, and the weight spaces of $Z_\Fr^-\otimes R$ and $M(-\rho)_R^{\langle u_\zeta^+\rangle_+}$ are finitely generated over the local ring $R$, by Nakayama's Lemma, it is enough to consider the case $R=\k$. 
Denote by $(Z_\Fr^-)_+$ the augmentation ideal for $Z_\Fr^-$, and $\overline{\phi}$ the specialization of $\phi$ on $\k=Z_\Fr^-/(Z_\Fr^-)_+$. 
By \cite[Lem 6.3]{AJS94}, there is a short exact sequence of $\fU^b_\zeta$-modules 
$$0\rightarrow (Z_\Fr^-)_+.M(-\rho)_\k \rightarrow M(-\rho)_\k \rightarrow L^b(-\rho)_\k\rightarrow 0.$$ 
Since $M(-\rho)_\k$ is the projective cover of $L^b(-\rho)_\k$ in $\fU^b_\zeta\mod^\Lambda_\k$, the specialization $\overline{\phi}$ is by 
$$\overline{\phi}:\ Z_\Fr^-/(Z_\Fr^-)_+=\k \xs \Hom_{\fU_\zeta^b\otimes {R}}(M(-\rho)_\k,L^b(-\rho)_\k)=\k.$$ 
Note that $\End_{\fU_\zeta^b\otimes {R}}(M(-\rho)_R)$ is a $\rQ_{\leq}$-graded module of the $l\rQ_{\leq}$-graded algebra $Z_\Fr^-$. 
By graded Nakayama's Lemma, $\phi$ is a surjection. 
\end{proof} 

\subsubsection{Equivalence for the Steinberg block} 
Consider the short exact sequence 
\begin{equation}\label{equ 4.7} 
 0\rightarrow \langle F^l_\beta, K^l_{\lambda}-1, E_i^{(n)}\rangle \rightarrow \fU_\zeta^{\hb} \rightarrow \k\langle K_\lambda\rangle/\langle K^l_\lambda-1\rangle \rightarrow 0. 
\end{equation} 
For any $\mu\in \Lambda$, there is a one dimensional module $\k_\mu$ of $\k\langle K_\lambda\rangle/\langle K^l_\lambda-1\rangle$ such that $K_\lambda$ acts by $\zeta^{(\lambda,\mu)}$. 
It gives a module of $\fU_\zeta^{\hb}$ by pullback via the third map in (\ref{equ 4.7}). 
Consider the following functors 
$$\rR^\hb_{1,R}:\ 
U_\zeta^\hb\Mod^{\Lambda}_{R}\rightarrow U^{\hb'}_1\Mod^{\Lambda}_R, 
\quad M \mapsto  (M\otimes \k_{\rho})^{\langle u_\zeta^+\rangle_+},$$ 
and 
$$\rI^\hb_{1,R}:\ 
U^{\hb'}_1\Mod^{\Lambda}_R \rightarrow U_\zeta^\hb\Mod^{\Lambda}_{R}, \quad V\mapsto U_\zeta^\hb\otimes_{\fU_\zeta^{\hb}} (\Fr^{\hb,*}(V)\otimes \k_{-\rho}).$$ 
Note that $(\rI^\hb_{1,R},\rR^\hb_{1,R})$ forms an adjoint pair: for any $V\in U_1^{\hb'}\Mod^{\Lambda}_R$ and $M\in U_\zeta^\hb\Mod^{\Lambda}_{R}$, there are natural isomorphisms 
\begin{align*} 
	\Hom_{U_\zeta^\hb\Mod^{\Lambda}_{R}}(\rI^\hb_{1,R}(V),M)&=\Hom_{\fU_\zeta^{\hb}\Mod^{\Lambda}_R}(\Fr^{\hb,*}(V)\otimes \k_{-\rho} ,M) \\ 
	&=\Hom_{\fU_\zeta^{\hb}\Mod^{\Lambda}_R}(\Fr^{\hb,*}(V),M\otimes \k_{\rho} ) \\ 
	&=\Hom_{U_1^{\hb'}\Mod^{ \Lambda}_R}(V,\rR^\hb_{1,R}(M)). 
\end{align*} 

\begin{thm}\label{thm 4.7} 
Suppose $R=S$ or $\k$. 
The functors $\rR^{\hb}_{1,R}$ and $\rI^{\hb}_{1,R}$ restrict to the full subcategories 
$$\rI^{\hb}_{1,R}:\ \sO_{1,R} \rightleftarrows \sO^{[-\rho]}_{R}\ : \rR^{\hb}_{1,R},$$ 
and induce an equivalence of categories, matching $M(\lambda)_{1,R}$ with $M(-\rho+l\lambda)_R$ for any $\lambda\in \Lambda$. 
\end{thm}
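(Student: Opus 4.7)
The plan is to verify that the adjoint pair $(\rI^\hb_{1,R},\rR^\hb_{1,R})$ restricts to the stated subcategories, that it matches Verma modules as asserted, and that the adjunction is then an equivalence.

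First I would check that the two functors land where claimed. For $M\in \sO^{[-\rho]}_R$, tensoring by $\k_\rho$ shifts weights from $-\rho+l\Lambda$ into $l\Lambda$, so Proposition \ref{prop 4.3} turns $(M\otimes\k_\rho)^{\langle u_\zeta^+\rangle_+}$ into an $l\Lambda$-graded $U^{\hb'}_1$-module, and the weight shift forces the $\fU^0_\zeta$-action on this subspace to factor through $U^{\hb,0}_1=\k\langle K^{2l}_\lambda\rangle$. Local $U\n$-unipotency holds because modules in $\sO^{[-\rho]}_R$ are weight-bounded above and $E_i^{(l)}$ strictly raises weights by $l\alpha_i$, while finite generation over $U^\hb_1\otimes R$ can be read off from the identification (\ref{equ 4.40}) of invariants with $\bigoplus_\mu\Hom(Q(-\rho+l\mu)_R,M)$, each summand being finitely generated over $R$. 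In the opposite direction, $\rI^\hb_{1,R}(V)$ is $\fU_\zeta^-$-free by a PBW argument for the inclusion $\fU_\zeta^\hb\subset U_\zeta^\hb$, so its weights sit in $-\rho+l\Lambda$ (placing it in the Steinberg block by Lemma \ref{lem LP}), and $U^+_\zeta$ is locally unipotent because $u_\zeta^+$ acts trivially on the generating subspace $\Fr^{\hb,*}(V)\otimes\k_{-\rho}$.

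Next I would match Verma modules. The isomorphism $\rI^\hb_{1,R}(M(\lambda)_{1,R})\xs M(-\rho+l\lambda)_R$ is a direct unwinding of the two nested tensor products. For the reverse, Lemma \ref{lem 4.6} gives $M(-\rho+l\lambda)_R^{\langle u_\zeta^+\rangle_+}=Z_\Fr^-\otimes R_{-\rho+l\lambda}$; the $\k_\rho$-twist moves the weight to $l\lambda$, and Lemma \ref{lem 4.2}(2), applied via the Chevalley involution on the negative side, identifies $Z_\Fr^-$ with $\k[N^-]$ together with its natural $U\n$-action, recovering $M(\lambda)_{1,R}$. With Verma matching in hand, the unit and counit of the adjunction are isomorphisms on Verma modules. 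Both functors are exact on the blocks in question ($\rR^\hb_{1,R}$ by Lemma \ref{lem 4.4}, $\rI^\hb_{1,R}$ by the $\fU_\zeta^-$-freeness established above), so the compositions $\rI^\hb_{1,R}\circ\rR^\hb_{1,R}$ and $\rR^\hb_{1,R}\circ\rI^\hb_{1,R}$ are exact as well. Lemma \ref{lem 3.2} supplies enough projectives with Verma flags in truncated blocks, so a standard five-lemma argument propagates the isomorphism from Vermas to arbitrary objects.

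The main obstacle I anticipate is Step 1, specifically confirming that $\rR^\hb_{1,R}$ takes values in $\sO_{1,R}$ rather than merely in $U^{\hb'}_1\Mod^{l\Lambda}_R$: one has to check both that the $\fU^0_\zeta$-action descends to $U^{\hb,0}_1$ (using the weight shift together, in the deformed case, with the compatibility of the exponential map with the $S$-structure) and that the resulting module is finitely generated over $U^\hb_1\otimes R$. Once these technicalities are handled, the Verma-module computation is a direct unwinding, and the equivalence follows formally.
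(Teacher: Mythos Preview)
Your proposal is correct and follows essentially the same three-step structure as the paper: exactness, Verma matching, and propagation via Verma flags on projectives. The one noteworthy difference is the order and emphasis in Step~1 versus Step~2. You propose to verify directly that $\rR^\hb_{1,R}(M)$ satisfies the defining conditions of $\sO_{1,R}$ (weights in $l\Lambda$, finite generation, local $U\n$-unipotency, descent of the $\fU^0_\zeta$-action), and you flag this as the main obstacle. The paper sidesteps this entirely: it first proves exactness (your argument for this is the same as the paper's), and then observes that since every object of $\sO^{[-\rho]}_R$ has a presentation by Verma-filtered projectives in a truncated category, the Verma-to-Verma computation plus exactness immediately forces the image to lie in $\sO_{1,R}$. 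Your anticipated obstacle thus dissolves once you reorder the argument. Similarly, your worry about the $\fU^0_\zeta$-action descending to $U^{\hb,0}_1$ is handled in the paper by the remark preceding the theorem that $U^\hb_1\Mod^{l\Lambda}_R = U^{\hb'}_1\Mod^{l\Lambda}_R$ whenever $R$ is a $\fU^0_\zeta$-algebra, which holds for $R=S$ or $\k$; no separate descent argument is needed.
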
 
\begin{proof} 
\textit{Step 1. Show the exactness for $\rR^{\hb}_{1,R}$ and $\rI^{\hb}_{1,R}$.} 
Note that for any module $M\in \fU_\zeta^{\hb}\Mod^{\Lambda}_{R}$, we have $(M\otimes \k_{\rho})^{\langle u_\zeta^+\rangle_+}=M^{\langle u_\zeta^+\rangle_+}\otimes \k_{\rho}$ as $R$-modules. 
By Lemma \ref{lem 4.4}, $\rR^{\hb}_{1,R}$ is exact on $\sO^{[-\rho]}_{R}$. 
Note that $\rI^{\hb}_{1,R}$ is the composition of the functors $\Fr^{\hb,*}$, $-\otimes \k_{\rho}$ and $U_\zeta^\hb\otimes_{\fU_\zeta^{\hb}}-$, where the first two are clearly exact. 
Since $U_\zeta^\hb\otimes_{\fU_\zeta^{\hb}}M=\fU^-_\zeta \otimes_{Z^-_\Fr}M$ as $R$-modules, and $\fU^-_\zeta$ is a free $Z^-_\Fr$-module, the functor $U_\zeta^\hb\otimes_{\fU_\zeta^{\hb}}-$ is also exact. 
It shows the exactness of $\rI^{\hb}_{1,R}$. 
	
\textit{Step 2. Show that $\rR^{\hb}_{1,R}$ and $\rI^{\hb}_{1,R}$ can be restricted to the category $\sO$'s.} 
We show that $\rI^{\hb}_{1,R}$ and $\rR^{\hb}_{1,R}$ send Verma modules to Verma modules, then the result follows by exactness. 
Indeed, for any $\lambda\in \Lambda$, by Lemma \ref{lem 4.6} and (\ref{equ 4.1}) we have 
\begin{equation}\label{equ 4.8} 
\rR^{\hb}_{1,R}(M(-\rho+l\lambda)_R)= M(-\rho+l\lambda)^{{\langle u_\zeta^+\rangle_+}}_R \otimes \k_{\rho} = Z_\Fr^-\otimes R_{l\lambda} = M(\lambda)_{1,R}.  
\end{equation}
On the other hand, 
\begin{equation}\label{equ 4.9}
\begin{aligned}
 \rI^{\hb}_{1,R}( M(\lambda)_{1,R}) 
 &=U_\zeta^\hb\otimes_{\fU_\zeta^{\hb}} \big((\fU_\zeta^{\hb}\otimes_{U_\zeta^{\hb,\geq}} R_{l\lambda}) \otimes \k_{-\rho}\big) \\ 
 &= U_\zeta^\hb\otimes_{\fU_\zeta^{\hb}} (\fU_\zeta^{\hb}\otimes_{U_\zeta^{\hb,\geq}} R_{-\rho+l\lambda})= M(-\rho+l\lambda)_R. 
\end{aligned} 
\end{equation} 

\textit{Step 3. Show that $\rR^{\hb}_{1,R}$ and $\rI^{\hb}_{1,R}$ give an equivalence of categories.} 
By exactness, it is enough to show that the unit and counit morphisms associated with the adjoint pair $(\rR^{\hb}_{1,R},\rI^{\hb}_{1,R})$ are isomorphisms on projective modules (in truncated categories), which only needs to be verified for Verma modules since projective modules admit Verma flags. 
This follows from (\ref{equ 4.8}) and (\ref{equ 4.9}). 
\end{proof}

\subsubsection{Equivariant coherent sheaves}\label{subsect 4.2.4} 
Let $R$ be a deformation ring for $U^\hb_1$. 
Note that any module in $\sO_{1,R}$ is naturally a coherent sheaf on $B\times_T \Spec R$, and the requirements of locally unipotent $U\n$-actions and the $l\Lambda$-gradings amount to give a $B$-equivariant structure on it, where $B$ acts on $B\times_T \Spec R$ by the conjugation (on $B$). 
Thus, there is a tautological identification 
\begin{equation}\label{equ 4.9.1} 
\sO_{1,R}=\Coh^B(B\times_T \Spec R). 
\end{equation} 
Under (\ref{equ 4.9.1}), the Verma module $M(\lambda)_{1,R}$ in $\sO_{1,R}$ corresponds to the sheave $\sO_{B\times_T \Spec R}\otimes \k_\lambda$, where $\k_\lambda$ is the $1$-dimensional representation for $B$ associated with $\lambda\in \Lambda$. 

Denote by $\t_{\hat{0}}$ and $T_{\hat{1}}$ the completion at $0\in \t$ and $1\in T$. 
Let $R$ be the $\fU^0_\zeta$-algebra $S$, then we can identify 
$$B\times_{T} \Spec S= N\times T_{\hat{1}} =\n\times \t_{\hat{0}}$$ 
via the following well-known lemma 
\begin{lem} 
The exponential map $\exp:\b \rightarrow B$ induces an isomorphism of $B$-schemes $\exp : \n\times \t_{\hat{0}} \xs N\times T_{\hat{1}}$. 
\end{lem}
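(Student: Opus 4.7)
The plan is to interpret both sides as formal completions along the unipotent parts and show that $\exp$ induces the desired isomorphism via the classical algebraic isomorphism $\exp: \n \xs N$ together with an étale argument in the transversal direction. Using the scheme isomorphism $\b = \n \oplus \t$, the formal completion of $\b$ along the closed subscheme $\n = \n \times \{0\}$ is canonically $\n \times \t_{\hat{0}}$. Similarly, via the multiplication isomorphism $N \times T \xs B$, the formal completion of $B$ along $N = N \times \{1\}$ is $N \times T_{\hat{1}}$. Both $\n \subset \b$ and $N \subset B$ are $B$-stable closed subschemes, and $\exp$ restricts on them to the classical algebraic isomorphism $\exp: \n \xs N$ (with polynomial inverse $\log$), a standard fact about unipotent groups in characteristic zero.

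To extend to the formal neighborhoods, I would compute the differential of $\exp$ along $\n$. For any $X \in \n$, identifying $T_{\exp X} B$ with $\b$ via left translation, the standard formula gives $(d\exp)_X = \frac{1 - e^{-\ad X}}{\ad X}: \b \to \b$; this operator is invertible because $\ad X$ is nilpotent on $\b$ for $X \in \n$ (its inverse is a terminating polynomial in $\ad X$). Hence $\exp$ is étale along the closed subscheme $\n \subset \b$, and combined with the bijection $\exp: \n \xs N$ on this subscheme, the standard formal criterion yields an isomorphism of the formal completions $\n \times \t_{\hat{0}} \xs N \times T_{\hat{1}}$. The $B$-equivariance is inherited from that of $\exp: \b \to B$ (for the adjoint action on $\b$ and the conjugation action on $B$), together with the $B$-stability of $\n \subset \b$ and $N \subset B$.

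The main technical subtlety is the passage from ``$\exp$ is étale along $\n$ and a bijection $\n \xs N$'' to ``$\exp$ induces an isomorphism of formal completions'', which requires some care since $\exp$ is not globally algebraic on $\b$ in the $\t$-direction. An alternative that bypasses the formal argument is to construct the inverse explicitly via the Baker--Campbell--Hausdorff (BCH) formula: the $N$-component sends $(X, H) \mapsto \exp(X + H)\exp(-H)$, which by BCH equals $\exp(Y(X, H))$ with $Y(X, H) \in \n$ polynomial in $X$ and formal power series in $H$. This is well-defined because every nonzero BCH bracket lies in $\n$ (as $[\n, \t] \subset \n$ and $[\n, \n] \subset \n$) and $\ad X$ is nilpotent on $\b$, so for each fixed degree in $H$ only finitely many bracket monomials contribute. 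The two-sided inverse $N \times T_{\hat{1}} \to \n \times \t_{\hat{0}}$ is obtained symmetrically, sending $(n, t) \mapsto (\log(nt) - \log t, \log t)$, with $\log(nt)$ computed as $\mathrm{BCH}(\log n, \log t)$ and truncation guaranteed by the ad-nilpotency of $\log n \in \n$.
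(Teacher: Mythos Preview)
Your argument is correct; the paper itself supplies no proof, labelling the lemma ``well-known.'' Your BCH approach is the rigorous one: the key observation, which you state correctly, is that in $\mathrm{BCH}(X+H,-H)$ every term past degree one lies in $[\b,\b]\subset\n$, and for each fixed $H$-degree only finitely many nested brackets survive because a bracket containing $k$ copies of $X\in\n$ lands in root-height at least $k$, which vanishes once $k$ exceeds the height of the highest root. Your first approach via the differential formula $(d\exp)_X=\frac{1-e^{-\ad X}}{\ad X}$ is heuristically right but, as you yourself flag, cannot be run directly in the algebraic category since $\exp:\b\to B$ is not a morphism of schemes; one would need to first construct the map on the formal completion along $\n$ before speaking of \'etaleness, and that construction is precisely what the BCH argument provides.
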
 
So the equivalence in Theorem \ref{thm 4.7} can be reformulated as follows. 
\begin{corollary}\label{cor 4.8} 
There are compatible equivalences of abelian categories 
$$ \sO^{[-\rho]}_{S}\xs \Coh^B(\n\times\t_{\hat{0}}),\quad \sO^{[-\rho]}_{\k}\xs \Coh^B(\n) ,$$ 
sending $M(-\rho+l\lambda)_S$, $M(-\rho+l\lambda)_\k$ to $\sO_{\n\times\t_{\hat{0}}}\otimes \k_\lambda$, $\sO_{\n}\otimes \k_\lambda$, for any $\lambda\in \Lambda$. 
\end{corollary}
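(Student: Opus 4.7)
The proof chains together three identifications: (a) Theorem \ref{thm 4.7} applied with $R = S$ and $R = \k$, giving equivalences $\sO^{[-\rho]}_R \xs \sO_{1,R}$ that send $M(-\rho + l\lambda)_R \mapsto M(\lambda)_{1,R}$; (b) the tautological identification (\ref{equ 4.9.1}), $\sO_{1,R} = \Coh^B(B \times_T \Spec R)$, under which $M(\lambda)_{1,R}$ corresponds to the twisted structure sheaf $\sO_{B \times_T \Spec R} \otimes \k_\lambda$; and (c) a geometric identification of $B \times_T \Spec R$ via the exponential map lemma stated just above the corollary.

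For the case $R = S$, since $\Spec S \to T$ is the inclusion of the formal neighborhood of $1 \in T$, the fiber product $B \times_T \Spec S$ is the formal completion of $B$ along $N \subset B$, which decomposes $B$-equivariantly as $N \times T_{\hat{1}}$. The exponential lemma then identifies this with $\n \times \t_{\hat{0}}$ as a $B$-scheme. Composing (a)--(c) produces the desired equivalence $\sO^{[-\rho]}_S \xs \Coh^B(\n \times \t_{\hat{0}})$, and tracing a Verma module through the three steps sends $M(-\rho + l\lambda)_S$ to $\sO_{\n \times \t_{\hat{0}}} \otimes \k_\lambda$.

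For the case $R = \k$, the residue field corresponds to $1 \in T$, so $\Spec \k \to T$ is evaluation at $1$, and $B \times_T \Spec \k$ is the fiber of $B \to T$ over $1$, namely $N \simeq \n$ via the exponential. The same three-step composition gives $\sO^{[-\rho]}_\k \xs \Coh^B(\n)$ sending $M(-\rho + l\lambda)_\k$ to $\sO_\n \otimes \k_\lambda$. Compatibility of the two equivalences under base change $S \to \k$ is immediate from the naturality in $R$ of each of (a), (b), and (c).

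The only genuinely nontrivial point is the $B$-equivariance of the exponential map for the adjoint action on $\b$ and the conjugation action on $B$, which is classical. Otherwise the argument is just a routine unwinding: Theorem \ref{thm 4.7} supplies the hard algebraic input (the sub-quotient structure together with the invariants functor $(-)^{\langle u^+_\zeta\rangle_+}$), while (\ref{equ 4.9.1}) and the exponential lemma are purely formal.
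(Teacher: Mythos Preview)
Your proposal is correct and follows exactly the paper's approach: the corollary is obtained by composing Theorem \ref{thm 4.7}, the tautological identification (\ref{equ 4.9.1}), and the exponential lemma, precisely as you outline. The paper presents this as a direct reformulation without further argument, and your unwinding of the three steps (including the tracking of Verma modules and the compatibility under base change) matches what is implicit there.
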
 

\subsection{Center of the Steinberg block}\label{subsect 3.3} 
In this subsection, we give two descriptions of $Z(\sO^{[-\rho]}_\k)$. 
The first one in Theorem \ref{prop 4.12} is geometric, obtained by using the equivalence in Theorem \ref{thm ABG}. 
 The second one in Corollary \ref{cor 3.15} is algebraic, obtained by analyzing a limit of endomorphism rings of projective modules in truncated categories. 

\subsubsection{Arkhipov--Bezrukavnikov--Ginzburg equivalence} 
Recall the Bruhat decomposition for the affine Grassmannian $\Gr$ into $\sI$-orbits, 
$$\Gr= \bigsqcup_{\lambda\in \Lambda} \Gr_\lambda, \quad \Gr_\lambda:= \sI \delta_\lambda.$$  
The triangulate category of $\sI$-monodromic, resp. $\sI$-equivariant $\ell$-adic mixed complexes on $\Gr$ is defined as the colimit 
$$ D^{\mathrm{b},\mx}_{(\sI)}(\Gr):= \cLim{\lambda} D^{\mathrm{b},\mx}_{(\sI)}(\overline{\Gr_\lambda}) \quad \text{resp.} \quad D^{\mathrm{b},\mx}_{\sI}(\Gr):= \cLim{\lambda} D^{\mathrm{b},\mx}_{\sI}(\overline{\Gr_\lambda}) .$$ 
Denote by $\langle 1\rangle$ the half of the Tate twist. 
Let $i_\lambda : \Gr_\lambda \hookrightarrow \Gr$ be the locally closed inclusion, and 
denote the costandard sheaves by $\nabla_\lambda:= i_{\lambda*} {\overline{\Q}_\ell}_{{\Gr_\lambda}}[\dim \Gr_\lambda]$. 

On the other hand, consider the adjoint action of $B$ and the dilation action of $\C^{\times}$ on the varieties $\b$, $\n$. 
There is a $B\times \C^{\times}$-equivariant embedding $i : \n=\n \times \{0\} \hookrightarrow \b =\n \times \t$. 
We also denote by $\langle 1\rangle$ the $\C^{\times}$-grading shift on $\Coh^{\C^{\times}}(\b)$ and $\Coh^{\C^{\times}}(\n)$. 
We view $(\Coh^{B\times \C^{\times}}(\b),\langle 1\rangle)$ and $(\Coh^{B\times \C^{\times}}(\n),\langle 1\rangle)$ as graded categories in the sense of Appendix \ref{app A3}. 

Let $\Lambda^{++} \subset \Lambda$ be the set of regular dominant characters, i.e. 
$$\Lambda^{++}:=\{\lambda\in \Lambda|\ \langle \check{\alpha}, \lambda\rangle > 0,\ \forall \alpha\in \Phi^+ \}.$$ 
The following equivalence $\Psi$ is proved in {\cite[Thm 9.4.3]{ABG04}}, see also {\cite[Thm 1.4, Prop 6.5]{MR18}} for the deformation equivalence $\widetilde{\Psi}$ and their compatibility (in positive characteristic). 
\begin{thm}[{\cite{ABG04}}, {\cite{MR18}}]\label{thm ABG} 
There are compatible equivalences of triangulate categories 
$$\begin{tikzcd} \Db\Coh^{B\times \C^{\times}}(\b) \arrow[r,"\simeq"',"\widetilde{\Psi}"] \arrow[d,"Li^*"'] & D^{\mathrm{b},\mx}_\sI(\Gr) \arrow[d,"\for"] \\ 
\Db\Coh^{B\times \C^{\times}}(\n) \arrow[r,"\simeq"',"\Psi"] & D^{\mathrm{b},\mx}_{(\sI)}(\Gr) ,
\end{tikzcd}$$ 
such that $\widetilde{\Psi} \circ \langle 1 \rangle=  \langle 1 \rangle [1] \circ \widetilde{\Psi}$ and $\Psi \circ \langle 1 \rangle=  \langle 1 \rangle [1] \circ \Psi$. 
Moreover, $\widetilde{\Psi}(\sO_\b\otimes \k_\lambda)=\nabla_\lambda$ and $\Psi(\sO_\n\otimes \k_\lambda)=\nabla_\lambda$ for any $\lambda\in \Lambda^{++}$. 
\end{thm}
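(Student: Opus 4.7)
The plan is to follow the Arkhipov--Bezrukavnikov--Ginzburg strategy of matching distinguished generating collections on the two sides, computing their Ext-algebras, and bootstrapping a derived equivalence from the resulting formal equivalence of dg-algebras. On the coherent side, the line bundles $\sO_\n\otimes\k_\lambda$ (respectively $\sO_\b\otimes\k_\lambda$) for $\lambda\in\Lambda$ generate $D^b\Coh^{B\times\C^*}(\n)$ (respectively $D^b\Coh^{B\times\C^*}(\b)$) under cones, cohomological shifts, and $\C^*$-grading shifts. On the constructible side, I would extend the family $\{\nabla_\lambda\}_{\lambda\in\Lambda^{++}}$ to a family of Wakimoto sheaves $\{J_\lambda\}_{\lambda\in\Lambda}$ by setting $J_\lambda=\nabla_{\mu_+}\ast\Delta_{-\mu_-}$ for any presentation $\lambda=\mu_+-\mu_-$ with $\mu_\pm\in\Lambda^{++}$, where $\Delta_\mu$ denotes the standard sheaf and $\ast$ is convolution. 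The first technical check is that $J_\lambda$ does not depend on the decomposition, which follows from the invertibility of $\nabla_\mu$ under convolution for $\mu\in\Lambda^{++}$.

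Next, I would construct the functor $\widetilde{\Psi}$ using geometric Satake $\Rep(\check{G})\simeq\Perv_{\check{G}[[t]]}(\Gr)$ combined with Gaitsgory's central functor $Z\colon\Perv_{\check{G}[[t]]}(\Gr)\to D^{b,\mx}_\sI(\Gr)$, arising from nearby cycles on the Beilinson--Drinfeld Grassmannian. For each $V\in\Rep(\check{G})$ the central sheaf $Z_V$ carries a canonical Wakimoto filtration whose associated graded realises the $\check{T}$-weight decomposition of $V$; this mirrors the decomposition of $V\otimes\sO_\b$ as a $B$-equivariant sheaf on $\b$. One then defines $\widetilde{\Psi}$ so that free $\sO_\b$-modules go to central sheaves and line bundles $\sO_\b\otimes\k_\lambda$ go to Wakimoto sheaves $J_\lambda$, matching the prescribed identification on $\nabla_\lambda$ for $\lambda\in\Lambda^{++}$.

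The core of the proof is to verify full faithfulness on the generating subcategory by matching Ext-algebras. On the coherent side, $\bigoplus_{\lambda,\mu\in\Lambda}\Ext^\bullet(\sO_\b\otimes\k_\lambda,\sO_\b\otimes\k_\mu)$ is computable via a Koszul resolution of the diagonal together with an analysis of the $B$-equivariant structure. On the constructible side, one exploits the purity of $\nabla_\lambda$ and the convolution structure of Wakimoto sheaves to compute the analogous algebra. The Koszul self-duality that emerges is precisely what forces the compatibility $\widetilde{\Psi}\circ\langle 1\rangle=\langle 1\rangle[1]\circ\widetilde{\Psi}$, encoding the exchange between the $\C^*$-grading shift and the combined cohomological-plus-Tate shift. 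Once $\widetilde{\Psi}$ is established as an equivalence, the undeformed statement $\Psi$ follows by applying $Li^*$ on the coherent side and $\for$ on the constructible side: the commutative square is built into the construction, since restricting to $\n\subset\b$ on the coherent side corresponds to forgetting the $\check{T}$-equivariant structure to a monodromic one. Essential surjectivity is then automatic, because both target categories are generated by the images of the distinguished families.

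The main obstacle is the Ext-algebra computation on the constructible side. This requires simultaneous control of the mixed Frobenius-weight structure on $\nabla_\lambda$ and its behaviour under convolution, which forces one to work in the framework of mixed perverse sheaves and to invoke sharp purity theorems for standard and costandard sheaves on $\Gr$. Moreover, matching the two grading shifts demands establishing Koszul duality at the level of formal dg-algebras rather than merely as bigraded vector spaces; this formality is the technical heart of \cite{ABG04}, and the extension to the deformed equivalence $\widetilde{\Psi}$ together with its compatibility with $\Psi$ under the vertical arrows is the content of \cite{MR18}.
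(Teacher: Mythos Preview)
This theorem is not proved in the paper at all; it is quoted from \cite{ABG04} (for $\Psi$) and \cite{MR18} (for $\widetilde{\Psi}$ and the compatibility square), as the sentence immediately preceding the statement makes explicit. There is therefore no ``paper's own proof'' to compare against: the paper treats this as a black box and only \emph{uses} the equivalence to produce the map $H^\bullet(\Gr)^\wedge \to Z(\Coh^B(\n))$ in \textsection 3.3.

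Your outline is a fair high-level summary of the Arkhipov--Bezrukavnikov--Ginzburg strategy: the use of Wakimoto sheaves $J_\lambda$ extending the costandard family, Gaitsgory's central functor and its Wakimoto filtration, the Ext-algebra match via Koszul formality, and the passage from $\widetilde{\Psi}$ to $\Psi$ by specialisation. Two small points of precision: first, the forgetful functor on the right is from $\sI$-equivariant to $\sI$-monodromic sheaves (your phrase ``forgetting the $\check{T}$-equivariant structure'' is informal shorthand for this); second, the formality statement you identify as the technical heart is indeed the most delicate step, and in \cite{ABG04} it is established by a purity argument combined with an explicit dg-model, not merely by matching bigraded Ext-groups. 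But since the paper does not reproduce any of this, your sketch is already more than what the paper itself provides for this statement.
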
 

\subsubsection{Base change}\label{subsect 4.3.2} 
Let $R$ be a commutative Noetherian $S'$-algebra. 
Consider the natural base change functor 
$$-\otimes_{S'}R:\ \Coh^B(\b) \rightarrow \Coh^B(\b\times_\t \Spec R).$$ 
By the discussions in \textsection \ref{subsect 4.2.4}, $\Coh^B(\b\times_\t \Spec R)$ can be viewed as the category $\sO$ for $\k[\n]\rtimes U\n$ with deformation ring $R$. 
By (\ref{equ 2.8}), the base change induces a homomorphism of the centers 
\begin{equation}\label{equ 4.10} 
-\otimes_{S'}R:\ Z(\Coh^B(\b)) \rightarrow Z(\Coh^B(\b\times_\t \Spec R)), 
\end{equation} 
which is an inclusion if so is $S'\rightarrow R$. 
If $\Spec R$ admits a $\C^{\times}$-action that is compatible with the one on $\t$, then similar statements hold for the category $\Coh^{B\times \C^{\times}}(\b\times_\t \Spec R)$. 

We denote by $\sP^{B,\leq \nu}_{R}$ the additive full subcategory generated by direct summands of  the $B$-equivariant sheaves 
$$\sO_{\b\times_\t \Spec R} \otimes \big(U\n/\bigoplus_{\lambda\nleq \nu-\mu} (U\n)_{\lambda}\big)\otimes \k_\mu ,\quad \mu\leq \nu$$ 
By \cite[\textsection 2.3.4]{Situ1}, $\sP^{B,\leq \nu}_{R}$ consists of projective objects in a truncation of $\Coh^B(\b\times_\t \Spec R)$. 

Note that the forgetful functor $\Coh^{B\times \C^{\times}}(\b) \rightarrow \Coh^{B}(\b)$ is a degrading functor in the sense of Appendix \ref{app A3}. 
The objects in $\sP^{B,\leq \nu}_{S'}$ admit liftings in $\Coh^{B\times \C^{\times}}(\b)$ that generate the full subcategory of projective objects in the corresponding truncated category of $\Coh^{B\times \C^{\times}}(\b)$. 
Hence by Lemma \ref{lem A.21}, there is a natural isomorphism 
\begin{equation}\label{equ 4.11} 
Z^\bullet(\Coh^{B\times \C^{\times}}(\b)) \xs Z(\Coh^{B}(\b)). 
\end{equation} 
We have a chain of algebra homomorphisms (see the notations in Appendix \ref{app A}) 
\begin{equation}\label{equ 4.12} 
H_{\check{T}}^\bullet(\Gr)^{\wedge}\rightarrow Z_{\sI}^{\pur}(\Gr)
\xrightarrow[\sim]{\text{Thm \ref{thm ABG}}} Z^\bullet(\Db\Coh^{B\times \C^{\times}}(\b))
\rightarrow Z^\bullet(\Coh^{B\times \C^{\times}}(\b)) ,
\end{equation} 
where the first arrow is because that the cohomology of $\Gr$ is pure, and the second isomorphism is by the equivalence $\widetilde{\Psi}$ in Theorem \ref{thm ABG} which exchanges $\langle 1 \rangle$ and $\langle 1 \rangle[1]$. 

\begin{prop}\label{prop 4.11} 
The composition 
$$H_{\check{T}}^\bullet(\Gr)^{\wedge}\xrightarrow{(\ref{equ 4.11})\circ (\ref{equ 4.12})} Z(\Coh^{B}(\b)) \xrightarrow{-\otimes_{S'} S} Z(\Coh^B(\n\times \t_{\hat{0}})) 
\xlongequal{\text{Cor \ref{cor 4.8}}} Z(\sO^{[-\rho]}_{S})$$ 
coincides with the map $\bb_{[-\rho]}: H_{\check{T}}^\bullet(\Gr)^{\wedge} \rightarrow Z(\sO^{[-\rho]}_{S})$. 
\end{prop}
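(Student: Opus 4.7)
The plan is to show that both $\bb$ and the geometric composition in the statement (call it $\bb_{\mathrm{geom}}$) act identically on every Verma module $M(-\rho+l\mu)_S\in \sO^{[-\rho]}_S$, which will suffice because the central character map $\chi_S: Z(\sO^{[-\rho]}_S)\to \prod_{\mu\in \Lambda}S$ is injective on the image of either algebra homomorphism (as can be verified through the equivalence of Corollary \ref{cor 4.8}, identifying the image with the algebra of $B$-equivariant regular functions on a formal neighborhood that separates the distinct highest weights). Theorem \ref{thm 3.11} pins down the action of $\bb(\alpha)$ on $M(-\rho+l\mu)_S$ as multiplication by $\alpha|_{\delta^{\check{\Sigma}}_{l\mu}}\in S$, after identifying this $\check{T}$-fixed point with the corresponding one on $\Fl^{-\rho}$ under the rescaling $t\mapsto t^l$.

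For $\mu\in \Lambda^{++}$ regular dominant, I would compute the action of $\bb_{\mathrm{geom}}(\alpha)$ on $M(-\rho+l\mu)_S$ as follows. By Corollary \ref{cor 4.8}, this Verma corresponds to $(\sO_\b\otimes \k_\mu)\otimes_{S'}S$ in $\Coh^B(\n\times\t_{\hat 0})$, and by Theorem \ref{thm ABG} the object $\sO_\b\otimes \k_\mu$ is sent by $\widetilde{\Psi}$ to the costandard sheaf $\nabla_\mu\in D^{b,\mx}_\sI(\Gr)$. Since $\nabla_\mu$ is the $*$-extension from the single orbit $\Gr_\mu$, which retracts $\check{T}$-equivariantly onto $\delta_\mu$, the ring $\End^\bullet(\nabla_\mu)=H^\bullet_\sI(\Gr_\mu)$ is identified with $S'$ and the $H_{\check{T}}^\bullet(\Gr)^\wedge$-action factors through the fixed-point restriction $\alpha\mapsto \alpha|_{\delta_\mu}$. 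Transporting this scalar through the degrading isomorphism (\ref{equ 4.11}) of Lemma \ref{lem A.21} and the base change $-\otimes_{S'}S$, I would conclude that $\bb_{\mathrm{geom}}(\alpha)$ acts on $M(-\rho+l\mu)_S$ as multiplication by $\alpha|_{\delta_\mu}\in S$, matching the formula for $\bb(\alpha)$ coming from Theorem \ref{thm 3.11}.

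To propagate the equality from $\Lambda^{++}$ to all of $\Lambda$, I would invoke the autoequivalence $-\otimes \k_{l\nu}$ of \textsection \ref{subsect 2.3.2}: both $\bb$ and $\bb_{\mathrm{geom}}$ intertwine it with the translation $\delta_\mu\mapsto \delta_{\mu+\nu}$ on $\check{T}$-fixed points of $\Fl^{-\rho}$, realised geometrically by left multiplication by $t^\nu$ and algebraically by the $l\Lambda$-symmetry of the block decomposition, so coincidence on $\Lambda^{++}$ forces coincidence on all of $\Lambda$. The main obstacle is the middle step: correctly interpreting the $H_{\check{T}}^\bullet(\Gr)$-action on $\nabla_\mu$ through the chain (\ref{equ 4.12})--(\ref{equ 4.11}), which passes from the graded center $Z^\bullet(\Coh^{B\times \C^*}(\b))$ to the ungraded $Z(\Coh^B(\b))$ and then base-changes to $S$, and ensuring that the resulting scalar on the Verma $M(-\rho+l\mu)_S$ is exactly $\alpha|_{\delta_\mu}\in S$ with no extra twisting arising from the $\C^*$-grading or the half-Tate twist $\langle 1\rangle$.
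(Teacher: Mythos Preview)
Your approach is essentially the paper's: compare the two maps after composing with the Verma-character map $\chi_S$, and compute the latter via the ABG equivalence on costandard objects $\nabla_\lambda$. The core computation you outline for $\mu\in\Lambda^{++}$ is exactly what the paper does.

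However, you are working harder than necessary, and in doing so you introduce one soft spot. The paper exploits a fact you leave unused: $\bb$ is already known to be an \emph{isomorphism} (Theorem~\ref{thm 3.11}). Hence the partial character map
\[
\chi_S:\ Z(\sO^{[-\rho]}_S)\longrightarrow \prod_{\lambda\in\Lambda^{++}} \End(M(-\rho+l\lambda)_S)=\prod_{\lambda\in\Lambda^{++}} S
\]
equals the fixed-point restriction on $\{\delta_\lambda\}_{\lambda\in\Lambda^{++}}$ precomposed with $\bb^{-1}$, and is therefore injective on the \emph{entire} center, not merely on the image of one of the maps. This kills two birds: it removes your vague justification of injectivity (``$B$-equivariant regular functions on a formal neighborhood that separates weights''), and it makes your extension step from $\Lambda^{++}$ to all of $\Lambda$ via $-\otimes\k_{l\nu}$ entirely unnecessary. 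That extension step would require checking that $\bb_{\mathrm{geom}}$ also intertwines the $l\Lambda$-shift with translation by $t^\nu$ on $\Gr$---plausible, but an extra verification you can avoid.

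Your worry about the ``middle step'' (transporting the scalar through (\ref{equ 4.12}) and (\ref{equ 4.11}) without spurious twists) is handled in the paper by the observation that for $\lambda\in\Lambda^{++}$, the composition through $Z(\Coh^B(\b))\to\End(\sO_\b\otimes\k_\lambda)=S'$ is literally the pullback of the cohomology class to $\delta_\lambda$, matching $\chi_S\circ\bb$ by Theorem~\ref{thm 3.11}. Since both routes to $\prod_{\lambda\in\Lambda^{++}}S$ are restriction to the same fixed points, the diagram commutes and the proof concludes. So: keep your $\Lambda^{++}$ computation, but replace the injectivity argument and the extension step by the single sentence that Theorem~\ref{thm 3.11} already forces $\chi_S$ (restricted to $\Lambda^{++}$) to be injective.
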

\begin{proof} 
Note that the restriction on the subfamily of fix points 
$$H_{\check{T}}^\bullet(\Gr)^\wedge_S\rightarrow 
\prod_{\lambda\in \Lambda^{++}} H_{\check{T}}^\bullet(\delta_{\lambda})\otimes_{H_{\check{T}}^\bullet(\pt)}S = \prod_{\lambda\in \Lambda^{++}}S$$ 
is an inclusion. 
It follows from Theorem \ref{thm 3.11} that the (partial) restriction 
$$\chi_{S}: Z(\sO^{[-\rho]}_{S}) \rightarrow 
\prod_{\lambda\in \Lambda^{++}}\End_{\sO_{S}}( M(-\rho+l\lambda)_S)= \prod_{\lambda\in \Lambda^{++}}S$$ 
is already an inclusion. 
Consider the following diagram, where the right square commutes, 
$$\begin{tikzcd}
H_{\check{T}}^\bullet(\Gr)^{\wedge} \arrow[r,hook,"\bb_{[-\rho]}"] \arrow[dr,"(\ref{equ 4.11})\circ (\ref{equ 4.12})"'] & Z(\sO^{[-\rho]}_{S}) 
\arrow[r,hook,"\chi_{S}"] & \prod_{\lambda\in \Lambda^{++}}\End_{\sO_{S}}( M(-\rho+l\lambda)_S )= \prod_{\lambda\in \Lambda^{++}}S \\ 
 & Z(\Coh^{B}(\b)) \ar[u,hook,"-\otimes_{S'}S"'] \arrow[r,hook] & \prod_{\lambda\in \Lambda^{++}}\End_{\Coh^{B}(\b)}(\sO_\b\otimes \k_\lambda) = \prod_{\lambda\in \Lambda^{++}} S' .\arrow[u,hook,"-\otimes_{S'}S"']
 \end{tikzcd}$$ 
By Theorem \ref{thm 3.11} and Theorem \ref{thm ABG}, the compositions $H_{\check{T}}^\bullet(\Gr)^{\wedge}\rightarrow \prod_{\lambda\in \Lambda^{++}}S$ in two ways above are both by restrictions on the $\check{T}$-fixed points. 
It follows that the left triangle commutes. 
\end{proof} 

\subsubsection{Center of the Steinberg block}\label{subsect 4.3.3} 
Similarly as (\ref{equ 4.11}) and (\ref{equ 4.12}), we have a chain of algebra homomorphisms  
\begin{equation}\label{equ 4.14}
H^\bullet(\Gr)^{\wedge}\rightarrow Z_{(\sI)}^{\pur}(\Gr) 
\xrightarrow[\sim]{\text{Thm \ref{thm ABG}}} Z^\bullet(\Db\Coh^{B\times \C^{\times}}(\n))
\rightarrow Z^\bullet(\Coh^{B\times \C^{\times}}(\n)) \xs  Z(\Coh^{B}(\n)). 
\end{equation} 

\begin{prop}\label{prop 4.12'}
The composition 
$$H^\bullet(\Gr)^{\wedge} \xrightarrow{(\ref{equ 4.14})} Z(\Coh^{B}(\n)) 
\xlongequal{\text{Cor \ref{cor 4.8}}} Z(\sO^{[-\rho]}_{\k})$$ 
coincides with the homomorphism $\overline{\bb}_{[-\rho]}: H^\bullet(\Gr)^{\wedge} \rightarrow Z(\sO^{[-\rho]}_{\k})$. 
\end{prop}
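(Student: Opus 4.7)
The plan is to deduce this by specializing Proposition~\ref{prop 4.11} from the deformation ring $S$ down to $\k$. By Theorem~\ref{thm 3.11}, $\overline{\bb}$ is obtained from $\bb$ via the natural specialization $Z(\sO^{[-\rho]}_S) \to Z(\sO^{[-\rho]}_\k)$ together with the projection $H^\bullet_{\check T}(\Gr)^\wedge \twoheadrightarrow H^\bullet(\Gr)^\wedge$ on cohomology. So it suffices to identify $\overline{\bb}$ with the $\k$-specialization of the composition appearing in Proposition~\ref{prop 4.11}.

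Under Corollary~\ref{cor 4.8}, the specialization $Z(\sO^{[-\rho]}_S) \to Z(\sO^{[-\rho]}_\k)$ corresponds to the base change $Z(\Coh^B(\n \times \t_{\widehat{0}})) \to Z(\Coh^B(\n))$ along the inclusion $\n = \n \times \{0\} \hookrightarrow \n \times \t_{\widehat{0}}$. Composing this with the map $-\otimes_{S'} S$ from Proposition~\ref{prop 4.11} yields the single base change $-\otimes_{S'} \k \colon Z(\Coh^B(\b)) \to Z(\Coh^B(\n))$. Therefore $\overline{\bb}$ should coincide with the composition
$$H^\bullet_{\check T}(\Gr)^\wedge \xrightarrow{(\ref{equ 4.11})\circ(\ref{equ 4.12})} Z(\Coh^B(\b)) \xrightarrow{-\otimes_{S'} \k} Z(\Coh^B(\n)) = Z(\sO^{[-\rho]}_\k).$$
What remains is to show that this composition factors through $H^\bullet_{\check T}(\Gr)^\wedge \twoheadrightarrow H^\bullet(\Gr)^\wedge$ and equals (\ref{equ 4.14}). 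This should follow from the compatibility square in Theorem~\ref{thm ABG}: the forgetful functor $D^{b,\mx}_\sI(\Gr) \to D^{b,\mx}_{(\sI)}(\Gr)$ corresponds under $\widetilde{\Psi}$ and $\Psi$ to the derived restriction $Li^* \colon D^b\Coh^{B\times\C^*}(\b) \to D^b\Coh^{B\times\C^*}(\n)$. Passing to pure graded centers and then degrading intertwines (\ref{equ 4.12}) with (\ref{equ 4.14}), since forgetting $\check T$-equivariance on $H^\bullet_{\check T}(\Gr)$ amounts to base change $S' \to \k$ on $H^\bullet_{\check T}(\pt) = S'$.

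The hard part will be verifying that the underived base change on centers arising from Proposition~\ref{prop 4.11} really agrees with the map induced on centers by $Li^*$ via Theorem~\ref{thm ABG}. Concretely, one must check that the graded-to-ungraded comparison isomorphism (\ref{equ 4.11}) for $\Coh^B(\n)$ is compatible with the base-change map from $\Coh^B(\b)$. I would handle this by repeating the truncation-by-projectives argument of \textsection\ref{subsect 4.3.2} with $\Coh^B(\n)$ in place of $\Coh^B(\b \times_\t \Spec R)$, applying Lemma~\ref{lem A.21} to matching families of projective covers in the truncated subcategories and noting that $\sP^{B,\leq \nu}_{S'}$ is sent to $\sP^{B,\leq \nu}_\k$ under $-\otimes_{S'}\k$ by (\ref{equ 2.10}).
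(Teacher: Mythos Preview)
Your proposal is correct and follows essentially the same route as the paper. The paper organizes the argument by writing down the two-square diagram
\[
\begin{tikzcd}
H_{\check{T}}^\bullet(\Gr)^{\wedge} \arrow[r] \arrow[d]
& Z(\Coh^{B}(\b)) \arrow[d,"i^*"] \arrow[r,"-\otimes_{S'}S"]
& Z(\sO^{[-\rho]}_{S}) \arrow[d,"-\otimes_{S}\k"] \\
H^\bullet(\Gr)^{\wedge} \arrow[r] & Z(\Coh^{B}(\n)) \arrow[r,"="]
& Z(\sO^{[-\rho]}_{\k}),
\end{tikzcd}
\]
declares the right square obvious, and verifies the left square exactly by your proposed method: restrict to projectives $Q\in \sP^{B,\leq \nu}_{S'}$ (where $i^*Q=Li^*Q$), use the compatibility $\Psi\circ Li^*\simeq \mathrm{forget}\circ\widetilde{\Psi}$ from Theorem~\ref{thm ABG}, and conclude since central elements are determined on the $\sP^{B,\leq\nu}$.
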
 
\begin{proof} 
We show that the diagram 
\begin{equation}\label{equ 4.16} 
    \begin{tikzcd}[column sep=large]
    H_{\check{T}}^\bullet(\Gr)^{\wedge} \arrow[r,"(\ref{equ 4.11})\circ (\ref{equ 4.12})"] \arrow[d] 
    & Z(\Coh^{B}(\b)) \arrow[d,"i^*"] \arrow[r,"-\otimes_{S'}S"] 
    & Z(\sO^{[-\rho]}_{S}) \arrow[d,"-\otimes_{S}\k"] \\ 
    H^\bullet(\Gr)^{\wedge} \arrow[r,"(\ref{equ 4.14})"] & Z(\Coh^{B}(\n)) \arrow[r,"="]
    & Z(\sO^{[-\rho]}_{\k})
    \end{tikzcd}
\end{equation} 
commutes. 
Then the assertion will follows from the commutative diagram in Theorem \ref{thm 3.11} and Proposition \ref{prop 4.11}. 
It is clear that the right square of (\ref{equ 4.16}) commutes, so we have to show the commutativity of the left square. 
To that end, let $Q$ be any module in $\sP^{B,\leq \nu}_{S'}$ and fix a lifting (still denoted by $Q$) of it in $\Coh^{B\times \C^{\times}}(\b)$, then there is commutative diagram 
$$\begin{tikzcd}
    Z(\Coh^{B}(\b))\arrow[r]\arrow[d,"i^*"] &\bigoplus_{d} \Hom_{\Db\Coh^{B\times \C^{\times}}(\b)}(Q , Q\langle d \rangle) \arrow[d,"i^*=Li^*"] \\ 
    Z(\Coh^{B}(\n))\arrow[r] &\bigoplus_{d} \Hom_{\Db\Coh^{B\times \C^{\times}}(\n)}(Li^*Q , Li^*Q\langle d \rangle).\end{tikzcd}$$
On the other hand, there is a commutative diagram 
$$\begin{tikzcd}
    H_{\check{T}}^\bullet(\Gr)^{\wedge} \arrow[r]\arrow[d]  &\bigoplus_{d} \Hom_{D^{\mathrm{b},\mx}_{\sI}(\Gr)}(\widetilde{\Psi}(Q),\widetilde{\Psi}(Q)\langle d \rangle [d]) \arrow[d,"\for"] \\ 
    H^\bullet(\Gr)^{\wedge} \arrow[r] &\bigoplus_{d} \Hom_{D^{\mathrm{b},\mx}_{(\sI)}(\Gr)}(\Psi(Li^*Q),\Psi(Li^*Q)\langle d \rangle [d]) .
\end{tikzcd}$$
Any element in $Z(\Coh^{B}(\b))$, resp. in $Z(\Coh^{B}(\n))$, is determined by its restriction to the full subcategories $\sP^{B,\leq \nu}_{S'}$, resp. $\sP^{B,\leq \nu}_{\k}$, for all $\nu\in \Lambda$. 
It follows that the right square of (\ref{equ 4.16}) commutes. 
\end{proof} 

\begin{thm}\label{prop 4.12} 
There is an algebra isomorphism 
\begin{equation}\label{equ 4.15} 
\overline{\bb}_{[-\rho]}: H^\bullet(\Gr)^{\wedge} \xs Z(\sO^{[-\rho]}_{\k}). 
\end{equation}
\end{thm}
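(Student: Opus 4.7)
By Proposition \ref{prop 4.12'}, $\overline{\bb}$ coincides with the composite chain (\ref{equ 4.14}), so the task reduces to verifying that each arrow in that chain is bijective. Two of the arrows are already marked as isomorphisms: the middle one by the ABG equivalence (Theorem \ref{thm ABG}), and the last one $Z^\bullet(\Coh^{B\times\C^*}(\n))\xs Z(\Coh^B(\n))$ by the same degrading argument that yields (\ref{equ 4.11}), since the projective generators of the truncated subcategories of $\Coh^B(\n)$ admit $\C^*$-equivariant lifts, so that Lemma \ref{lem A.21} applies. The work therefore concentrates on the source map $H^\bullet(\Gr)^\wedge\to Z_{(\sI)}^\pur(\Gr)$ and the derived-to-abelian restriction $Z^\bullet(D^b\Coh^{B\times\C^*}(\n))\to Z^\bullet(\Coh^{B\times\C^*}(\n))$.

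For the derived-to-abelian restriction, my approach is to exhibit a section by prescribing the action on projective generators of the truncated subcategories $\sP^{B,\leq\nu}_\k$: any graded central element of the abelian category determines a compatible family of endomorphisms of these generators which glues uniquely to a graded derived-central element, because the relevant higher self-extensions in compatible degrees vanish (a property inherited from the Koszul-type structure arising under $\widetilde{\Psi}$). For the source map, my strategy is to transport the deformed statement to the undeformed side. Theorem \ref{thm 3.11} together with Proposition \ref{prop 4.11} and the already-treated arrows yields the deformed analogue $H_{\check{T}}^\bullet(\Gr)^\wedge\xs Z(\Coh^B(\b))$, and the compatibility of $\widetilde{\Psi}$ with $Li^*$ in Theorem \ref{thm ABG} identifies the geometric specialization $i^*:\Coh^B(\b)\to\Coh^B(\n)$ with the cohomological base change $-\otimes_{S'}\k:H_{\check{T}}^\bullet(\Gr)^\wedge\to H^\bullet(\Gr)^\wedge$. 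The latter is surjective because $H_{\check{T}}^\bullet(\Gr)$ is a free $S'$-module on the fundamental classes.

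The main obstacle is that taking centers does not commute with specialization in general, so the bijectivity of $H^\bullet(\Gr)^\wedge\to Z(\Coh^B(\n))$ cannot be deduced purely formally from the deformed isomorphism by tensoring with $\k$ over $S'$. To close this gap I would invoke Ginzburg's description \cite{Gin95} of $H^\bullet(\Gr)$ and match it degree-by-degree with $Z^\bullet(\Coh^{B\times\C^*}(\n))$ via the ABG equivalence: using the further identification $\Coh^B(\n)\simeq\Coh^G(T^*(G/B))$ by induction from $B$ to $G$ (cf.\ Corollary \ref{cor 4.8}), the center of $\Coh^B(\n)$ is computed by $G$-invariant regular functions on $T^*(G/B)$, and Ginzburg's presentation matches this ring directly. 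Injectivity can be verified more concretely by restricting to the Verma-like family $\{\sO_\n\otimes\k_\lambda\}_{\lambda\in\Lambda^{++}}$, which corresponds under $\Psi$ to the standard sheaves $\nabla_\lambda$, where the resulting map from $H^\bullet(\Gr)^\wedge$ is the evaluation of characteristic classes and is injective by the purity of $H^\bullet(\Gr)$ combined with the formality furnished by Ginzburg's theorem.
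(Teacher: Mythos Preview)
Your proposal has a genuine gap in the computation of $Z(\Coh^B(\n))$. You assert that ``the center of $\Coh^B(\n)$ is computed by $G$-invariant regular functions on $T^*(G/B)$,'' but this is false: $\k[T^*(G/B)]^G=\k[G\times^B\n]^G=\k[\n]^B$, and since $T$ acts on $\n$ with strictly positive weights, $\k[\n]^B=\k$. Yet the target $Z(\sO^{[-\rho]}_\k)$ is infinite-dimensional. The point is that the $G$-action on $\widetilde{\sN}$ is not free---regular nilpotents have nontrivial stabilizer $G^x$---so the center of $\Coh^G(\widetilde{\sN})$ is genuinely larger than the ring of invariant functions; the extra endomorphisms come from the representation theory of the stabilizers. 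Ginzburg's description \cite{Gin95} is precisely $H^\bullet(\Gr)\simeq U\g^x_u$ (the enveloping algebra of the unipotent part of $\mathrm{Lie}(G^x)$), not a ring of functions.

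Your injectivity check via the family $\{\sO_\n\otimes\k_\lambda\}_{\lambda\in\Lambda^{++}}$ also fails in the undeformed setting: $\End_{\Coh^B(\n)}(\sO_\n\otimes\k_\lambda)=\k[\n]^B=\k$, so the restriction $Z(\Coh^B(\n))\to\prod_\lambda\k$ only records the degree-zero component and cannot be injective. (In the deformed case this works because the endomorphism rings are copies of $S'$ and localization applies; that is exactly what breaks at $\hbar=0$.) The paper's route avoids both problems: it passes to the open regular locus $j:\sN^{\reg}\hookrightarrow\widetilde{\sN}$, where $\Coh^G(\sN^{\reg})\simeq\rep(G^x)$ and the center is computed explicitly as $\big((U\g^x_u)^\wedge\big)^{\prod X^*(Z(G))}$; Ginzburg's isomorphism then identifies the composite $H^\bullet(\Gr)^\wedge\to Z(\Coh^G(\widetilde{\sN}))\xrightarrow{z_j} Z(\Coh^G(\sN^{\reg}))$ with this, and one checks separately that $z_j$ is injective by testing on the torsion-free sheaves in $\sP^{B,\leq\nu}_\k$. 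This forces both $z_j$ and the composite (\ref{equ 4.14}) to be isomorphisms, without ever needing to analyze the intermediate arrows $H^\bullet(\Gr)^\wedge\to Z^{\pur}_{(\sI)}(\Gr)$ or $Z^\bullet(D^b)\to Z^\bullet(\mathrm{abelian})$ individually.
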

\begin{proof} 
By Proposition \ref{prop 4.12'}, it is equivalent to show that the map 
$$b:H^\bullet(\Gr)^{\wedge}\xrightarrow{(\ref{equ 4.14})} Z(\Coh^B(\n))=Z(\Coh^G(\widetilde{\sN}))$$ 
is an isomorphism. 
To that end, we consider $\sN^{\reg}$ the set of regular nilpotent elements in $\g$, and let $j:\sN^{\reg}\hookrightarrow \widetilde{\sN}$ be the natural inclusion. 
Consider the functor $j_*:\Coh^G(\sN^{\reg})\rightarrow \QCoh^G(\widetilde{\sN})$, which is a full embedding since $j^*j_*$ is the identity. 
It yields a homomorphism of centers 
$$z_j: Z(\QCoh^G(\widetilde{\sN}))\rightarrow Z(\Coh^G(\sN^{\reg})).$$ 
Note that any sheaf in $\QCoh^G(\widetilde{\sN})$ is the union of its coherent subsheaves, hence the center of $\QCoh^G(\widetilde{\sN})$ is uniquely determined by its restriction on $\Coh^G(\widetilde{\sN})$, namely we can identify  $Z(\QCoh^G(\widetilde{\sN}))=Z(\Coh^G(\widetilde{\sN}))$. 
We claim that the map $z_j$ is an injection. 
Indeed, for any $\sF\in \Coh^G(\widetilde{\sN})$ we have a commutative diagram 
\begin{equation}\label{equ 3.17'}
	\begin{tikzcd}
    Z(\Coh^G(\widetilde{\sN})) \arrow[r,"z_j"]\arrow[d] 
    &Z(\Coh^G(\sN^{\reg})) \arrow[d] \\ 
    \End(\sF) \arrow[r,"j^*"] 
    &\End(j^*\sF).\end{tikzcd}
\end{equation}
If $\sF$ is a torsion-free sheaf on $\widetilde{\sN}$, the lower horizontal map in (\ref{equ 3.17'}) is an injection. 
Note that any $\sF$ in the full subcategory $\sP^{B,\leq \nu}_{\k}$ of $\Coh^B(\n)$ corresponds to a vector bundle in $\Coh^G(\widetilde{\sN})$, which is torsion-free. 
It follows that $\ker(z_j)$ vanishes in $Z(\sP^{B,\leq \nu}_{\k})$ for all $\nu$, so $\ker(z_j)=0$. 

Fix a regular nilpotent element $x$ in $\g$ and let $G^x$ be its stabilizer in $G$. 
Taking the fiber at $x$ gives an equivalence from $\Coh^G(\sN^{\reg})$ to $\rep(G^x)$. 
It is known (see e.g. \cite[Thm 6.1]{YZ}) that $G^x$ is commutative and $G^x=Z(G)\times G^x_u$, where $G^x_u$ is the unipotent radical of $G^x$. 
Hence $G^x_u$ is a vector group, and in particular $\g^x_u:=\Lie(G^x_u)$ is abelian. 
We thus identify the categories 
$$\rep(G^x)=\rep(Z(G))\boxtimes \rep(G^x_u)=\Vect^{X^*(Z(G))}_{\k}\boxtimes U\g^x_u\mod^{\text{nil}},$$ 
where $\Vect^{X^*(Z(G))}_{\k}$ is the category of $X^*(Z(G))$-graded vector spaces and $U\g^x_u\mod^{\text{nil}}$ is the category of nilpotent $U\g^x_u$-modules. 
Therefore we have 
$$Z(\Coh^G(\sN^{\reg}))=Z(\rep(G^x))=\big((U\g^x_u)^{\wedge}\big)^{\prod X^*(Z(G))}$$
where $(U\g^x_u)^{\wedge}$ is the completion of $U\g^x_u$ at the augmentation ideal. 
In sum, we have algebra homomorphisms 
\begin{equation}\label{equ 3.16'}
H^\bullet(\Gr)^{\wedge}\xrightarrow{b} Z(\Coh^G(\widetilde{\sN}))\xrightarrow{z_j} 
Z(\Coh^G(\sN^{\reg}))= \big((U\g^x_u)^{\wedge}\big)^{\prod X^*(Z(G))}.
\end{equation} 

Using the geometric Satake equivalence, Ginzburg \cite[Prop 1.7.2]{Gin95} (see also \cite[Cor 6.4]{YZ}) constructed an algebra isomorphism between $H^\bullet(\Gr)$ and $U\g^x_u$ when $G$ is of adjoint type. 
It induces an isomorphism of their completions $H^\bullet(\Gr)^\wedge$ and $(U\g^x_u)^{\wedge}$. 
For general $G$, it gives an isomorphism 
$$H^\bullet(\Gr)^{\wedge} \xs \big((U\g^x_u)^{\wedge}\big)^{\prod X^*(Z(G))}.$$ 
By the compatibility of Theorem \ref{thm ABG} and the geometric Satake equivalence, the map above coincides with the composition of (\ref{equ 3.16'}), showing that the latter is an isomorphism. 
Since we showed that $z_j$ is an injection, $z_j$ and thus $b$ are isomorphisms. 
\end{proof} 

\subsubsection{Another description}\label{subsect 3.3.4} 
The isomorphism (\ref{equ 4.15}) restricts to an isomorphism 
\begin{equation}\label{equ 4.26} 
\overline{\bb}_{-\rho}: H^\bullet(\Gr^\circ)^{\wedge} \xs Z(\sO^{-\rho}_{\k}). 
\end{equation} 
In this subsection, we find another description for $Z(\sO^{-\rho}_{\k})$ which is independent of the result in \textsection \ref{subsect 4.3.3}, and it will be used in the next section. 

The equivalence in Theorem \ref{thm 4.7} restricts to an equivalence of the blocks 
$$\sO^{-\rho}_{\k}\xs \sO^{0}_{1,\k},$$ 
where $\sO^{0}_{1,\k}=U^\hb_1\Mod^{l\rQ}_\k\cap \sO_{1,\k}$. 
Under the equivalence, the module $Q(-\rho+l\lambda)_\k^{\leq -\rho+l\mu}$ corresponds to the $U^\hb_1$-module 
$$\fQ(\lambda)^{\leq \mu}_\k:=(\k[B]\rtimes U\n)\otimes_{\k[T]\otimes U\n}\big((U\n/\bigoplus_{\nu\nleq \mu-\lambda} (U\n)_{\nu})\otimes \k_{\lambda}\big), $$
where we use the natural $\rQ$-grading on $U^\hb_1$, and $\k_\lambda$ is a trivial $\k[T]\otimes U\n$-module recording the degree shift. 
Note that $\fQ(\lambda)^{\leq \mu}_\k$ is a cyclic $U^\hb_1$-module generated by the element $1_{\lambda}^{\mu}:=1_{-}\otimes 1_+\otimes 1_{\lambda}$, where $1_{-}, 1_+, 1_{\lambda}$ are the identities of $\k[N]$, $U\n$ and $\k_\lambda$, respectively. 
For any nonzero homogenous elements $e\in (U\n)_{\nu}$, $\varphi\in \k[\n]_{-\nu}$ for $\nu\geq 0$, we define the morphisms 
\begin{equation}\label{equ 4.21} 
\begin{aligned}
\iota_e: \fQ(\lambda+\nu)^{\leq \mu}_\k \hookrightarrow \fQ(\lambda)^{\leq \mu}_\k, \quad 1_{\lambda+\nu}^{\mu} \mapsto 1_{-}\otimes e\otimes 1_{\lambda}, \\ 
\iota^-_\varphi: \fQ(\lambda-\nu)^{\leq \mu}_\k \rightarrow \fQ(\lambda)^{\leq \mu}_\k, \quad 1_{\lambda-\nu}^{\mu} \mapsto \varphi\otimes 1_+\otimes 1_{\lambda}. 
\end{aligned} 
\end{equation} 
Denote by $\sP_\k^{\leq \mu}$ the additive full subcategory of $\sO_{1,\k}$ generated by $\fQ(\lambda)^{\leq \mu}_\k$. 
\begin{lem}\label{lem 4.14} 
Morphisms in the category $\sP_\k^{\leq \mu}$ are generated by the $\iota_{e}$'s and $\iota^-_\varphi$'s. 
\end{lem}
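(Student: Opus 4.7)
The plan is to use cyclicity of $\fQ(\lambda')^{\leq \mu}_\k$ to identify $\Hom(\fQ(\lambda')^{\leq \mu}_\k, \fQ(\lambda)^{\leq \mu}_\k)$ with a subspace of weight-$\lambda'$ vectors in $\fQ(\lambda)^{\leq \mu}_\k$, and then realize each such vector (taken from a natural basis) as the image of $1^\mu_{\lambda'}$ under a composition of the form $\iota_e\circ\iota^-_\varphi$.

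Concretely, any morphism $f$ is determined by $x:=f(1^\mu_{\lambda'})$, a weight-$\lambda'$ vector of $\fQ(\lambda)^{\leq \mu}_\k$. Writing this module as $\k[\n]\otimes V^\mu_\lambda\otimes \k_\lambda$ as a vector space, where $V^\mu_\lambda:=U\n/\bigoplus_{\nu\nleq \mu-\lambda}(U\n)_\nu$, and fixing a homogeneous basis $\{e_j^{(\beta)}\}$ of $V^\mu_\lambda$ with $e_j^{(\beta)}\in (V^\mu_\lambda)_\beta$, I will expand uniquely
$$x=\sum_{j,\beta}\varphi_j^{(\beta)}\otimes e_j^{(\beta)}\otimes 1_\lambda,\qquad \varphi_j^{(\beta)}\in \k[\n]_{\lambda'-\lambda-\beta}.$$
Using the algebra structure of $U_1^\hb=\k[B]\rtimes U\n$, one computes
$$(\iota_{e_j^{(\beta)}}\circ\iota^-_{\varphi_j^{(\beta)}})(1^\mu_{\lambda'})=\varphi_j^{(\beta)}\cdot \iota_{e_j^{(\beta)}}(1^\mu_{\lambda+\beta})=\varphi_j^{(\beta)}\otimes e_j^{(\beta)}\otimes 1_\lambda,$$
so summing term-by-term and invoking cyclicity yields $f=\sum_{j,\beta}\iota_{e_j^{(\beta)}}\circ\iota^-_{\varphi_j^{(\beta)}}$, expressing $f$ as a $\k$-linear combination of compositions of the generators.

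The key technical point, and the only real obstacle, is verifying that every $\iota^-_{\varphi_j^{(\beta)}}:\fQ(\lambda')^{\leq \mu}_\k\to \fQ(\lambda+\beta)^{\leq \mu}_\k$ is a well-defined morphism, since a priori (\ref{equ 4.21}) only prescribes the image of the generator. Using the commutation $e'\varphi=\varphi e'+\mathrm{der}_{e'}(\varphi)$ in $U^\hb_1$, for $e'\in (U\n)_{\nu'}$ with $\nu'\nleq\mu-\lambda'$ the element $e'\cdot(\varphi\otimes 1_+\otimes 1_{\lambda+\beta})$ decomposes as $\varphi\otimes e'\otimes 1_{\lambda+\beta}+\mathrm{der}_{e'}(\varphi)\otimes 1_+\otimes 1_{\lambda+\beta}$, and both summands must vanish in $\fQ(\lambda+\beta)^{\leq \mu}_\k$. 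Setting $\alpha:=\lambda+\beta-\lambda'\geq 0$, the first summand vanishes because $\nu'\nleq\mu-\lambda'=(\mu-\lambda-\beta)+\alpha$ together with $\alpha\geq 0$ force $\nu'\nleq\mu-\lambda-\beta$, making $e'=0$ in $V^\mu_{\lambda+\beta}$. For the second summand, $\mathrm{der}_{e'}(\varphi)\in \k[\n]_{\nu'-\alpha}$, and since the weights of $\k[\n]$ lie in $-\Z_{\geq 0}\Sigma$ this space vanishes unless $\nu'\leq \alpha$; but $\nu'\leq\alpha$ combined with $\mu-\lambda-\beta\geq 0$ would give $\nu'\leq(\mu-\lambda-\beta)+\alpha=\mu-\lambda'$, contradicting our hypothesis. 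Thus both terms vanish automatically, $\iota^-_{\varphi_j^{(\beta)}}$ is a morphism, and the plan succeeds.
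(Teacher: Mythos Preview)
Your core argument is essentially identical to the paper's: use cyclicity to reduce to the image of the generator, expand in the PBW basis $\k[N]\otimes V^\mu_\lambda$, and recognize each term as $(\iota_e\circ\iota^-_\varphi)(1^\mu_{\lambda'})$. The paper's proof is a three-line version of exactly this.

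You go further than the paper by checking well-definedness of $\iota^-_\varphi$, which the paper simply asserts in (\ref{equ 4.21}). This is a worthwhile addition, but your argument has a small gap: the commutation formula $e'\varphi=\varphi e'+\mathrm{der}_{e'}(\varphi)$ with $\mathrm{der}_{e'}(\varphi)\in\k[\n]$ holds only for $e'\in\n$, not for arbitrary $e'\in(U\n)_{\nu'}$. For a PBW monomial $e'=e_{\beta_1}\cdots e_{\beta_k}$ the commutator $e'\varphi-\varphi e'$ acquires mixed terms in $\k[N]\otimes U\n$, so your two-summand analysis does not cover all relations. The fix is straightforward: write $e'\varphi=\sum_k\varphi_k e'_k$ in PBW form with $\varphi_k\in\k[N]_{-\delta_k}$, $e'_k\in(U\n)_{\gamma_k}$, $\gamma_k-\delta_k=\nu'-\alpha$, $\delta_k\geq 0$. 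If some $\gamma_k\leq\mu-\lambda-\beta$, then $\nu'=\gamma_k+\alpha-\delta_k\leq(\mu-\lambda-\beta)+\alpha=\mu-\lambda'$, contradicting the hypothesis; hence every $e'_k$ dies in $V^\mu_{\lambda+\beta}$ and the map is well-defined. Your weight-comparison idea is exactly right, it just needs to be applied term-by-term in the full PBW expansion rather than to a two-term Leibniz formula.
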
 
\begin{proof} 
Let $\psi\in \Hom(\fQ(\lambda)^{\leq \mu}_\k,\fQ(\lambda')^{\leq \mu}_\k)$. 
Write $\psi(1_{\lambda}^{\mu})= \sum_{s=1}^n \varphi'_s \otimes e'_s \otimes 1_{\lambda'}$ for some homogenous elements $\varphi'_s\in \k[N]$, $e'_s\in  U\n$ such that $\deg \varphi_s' +\deg e_s' =\lambda-\lambda'$. 
Since $\psi$ is determined by the image of $1_{\lambda}^{\mu}$, it follows that $\psi= \sum_{s=1}^n \iota_{e'_s} \circ \iota^-_{\varphi'_s}$. 
\end{proof} 

For any $\lambda\in \rQ$ and $\mu_2\geq \mu_1\geq \lambda$, there is an algebra homomorphism 
$$\tau^{\leq \mu_1}: \End(\fQ(\lambda)^{\leq \mu_2}_\k)\rightarrow \End(\fQ(\lambda)^{\leq \mu_1}_\k)$$ 
given by the truncation $\epsilon^{\leq \mu_1}:\fQ(\lambda)^{\leq \mu_2}_\k \rightarrow \fQ(\lambda)^{\leq \mu_1}_\k$. 
They form a limit 
$\Lim{\mu\geq\lambda} \End(\fQ(\lambda)^{\leq \mu}_\k)$.  
The natural restriction $Z(\sO^{0}_{1,\k})\rightarrow \End(\fQ(\lambda)^{\leq \mu}_\k)$ yields a homomorphism 
\begin{equation}\label{equ 4.17} 
Z(\sO^{0}_{1,\k})\rightarrow \Lim{\mu\geq\lambda} \End(\fQ(\lambda)^{\leq \mu}_\k). 
\end{equation} 

Let us compute the algebra $\Lim{\mu\geq\lambda} \End(\fQ(\lambda)^{\leq \mu}_\k)$. 
We define a module in $U_1^\hb\Mod_\k^{l\Lambda}$ by 
$$\fQ(\lambda)_\k=(\k[B]\rtimes U\n)\otimes_{\k[T]\otimes U\n} (U\n\otimes \k_{\lambda}) ,$$ 
then any $\fQ(\lambda)^{\leq \mu}_\k$ is a truncation for $\fQ(\lambda)_\k$. 
We take the projective limit $\widehat{\fQ}(\lambda)_\k:=\Lim{\mu\geq \lambda} \fQ(\lambda)^{\leq \mu}_\k$ in the category $U_1^\hb\Mod_\k^{\Lambda}$. 
There are $\k$-linear isomorphisms 
\begin{equation}\label{equ 4.18} 
\begin{aligned}
\Lim{\mu\geq \lambda} \End(\fQ(\lambda)^{\leq \mu}_\k)&=\Lim{\mu\geq \lambda}\Hom(\fQ(\lambda)_\k, \fQ(\lambda)^{\leq \mu}_\k) \\ 
&=\Hom(\fQ(\lambda)_\k, \widehat{\fQ}(\lambda)_\k)\\ 
&=\widehat{\fQ}(\lambda)_{\k,\lambda}=\big(\prod_{\nu\geq 0} \k[N]_{-\nu}\otimes (U\n)_{\nu}\big) \otimes \k_{\lambda}, 
\end{aligned} 
\end{equation} 
where the last equality is by identifying $\fQ(\lambda)^{\leq \mu}_\k=\k[N]\otimes \big(U\n/\bigoplus_{\nu\nleq \mu-\lambda} (U\n)_{\nu}\big)\otimes \k_{\lambda}$ as $\k$-vector spaces. 
Consider the algebra 
\begin{equation*} 
\widehat{\k[N]\rtimes U\n} :=\prod_{\nu_1,\nu_2\geq 0} \k[N]_{-\nu_1}\otimes (U\n)_{\nu_2}, 
\end{equation*} 
whose algebra structure is induced from the one of $\k[N]\rtimes U\n$. 
One can check that (\ref{equ 4.18}) gives an isomorphism of $\k$-algebras 
\begin{equation}\label{equ 4.20} 
\Lim{\mu\geq \lambda} \End(\fQ(\lambda)^{\leq \mu}_\k)^{\op}= (\widehat{\k[N]\rtimes U\n})_0 . 
\end{equation}

\begin{lem}\label{lem 4.13} 
The map (\ref{equ 4.17}) induces an algebra isomorphism 
$$Z(\sO^{0}_{1,\k})\xs Z\big(\Lim{\mu\geq\lambda} \End(\fQ(\lambda)^{\leq \mu}_\k)\big). $$ 
Therefore, the same construction leads to an isomorphism 
$$ Z(\sO^{-\rho}_{\k})\xs Z\big(\Lim{\mu\geq\lambda} \End(Q(-\rho+l\lambda)_\k^{\leq -\rho+l\mu})\big). $$ 
\end{lem}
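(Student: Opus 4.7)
\emph{Proof plan.} The map \eqref{equ 4.17} is manifestly a well-defined algebra homomorphism: for $z \in Z(\sO^0_{1,\k})$, the restrictions $z_{\fQ(\lambda)^{\leq\mu}_\k}$ form a compatible family under the truncations $\epsilon^{\leq \mu_1}$, and the resulting element lies in the center of the limit because, via the identification \eqref{equ 4.18}, every endomorphism of the pro-object $\widehat{\fQ}(\lambda)_\k$ arises from a morphism in $\sO^0_{1,\k}$, with which $z$ commutes by centrality.

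To establish bijectivity, I would first reduce to the truncated categories. Since every object of $\sO^0_{1,\k}$ is finitely generated and hence lies in some $\sO^{0,\leq\mu}_{1,\k}$, one has $Z(\sO^0_{1,\k}) = \Lim{\mu} Z(\sO^{0,\leq\mu}_{1,\k})$, and similarly the right-hand side of \eqref{equ 4.17} is $\Lim{\mu} Z(\End(\fQ(\lambda)^{\leq\mu}_\k))$. It therefore suffices to prove, for each fixed $\mu$, a natural isomorphism $Z(\sO^{0,\leq\mu}_{1,\k}) \simeq Z(\End(\fQ(\lambda)^{\leq\mu}_\k))$ compatible with the truncation maps. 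For injectivity, if a central element $z$ vanishes on $\fQ(\lambda)^{\leq\mu}_\k$, then the identity $\iota_e \circ z_{\fQ(\lambda+\nu)^{\leq\mu}_\k} = z_{\fQ(\lambda)^{\leq\mu}_\k} \circ \iota_e = 0$ combined with the injectivity of $\iota_e$ forces $z$ to vanish on $\fQ(\lambda+\nu)^{\leq\mu}_\k$ for every $\nu \geq 0$; pairing this with the maps $\iota^-_\varphi$ and the fact that each indecomposable projective in $\sO^{0,\leq\mu}_{1,\k}$ is linked (via the Verma filtration and BGG reciprocity in Lemma \ref{lem 3.2}(3)) to the family $\{\fQ(\lambda+\nu)^{\leq\mu}_\k\}$, one concludes $z = 0$.

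For surjectivity I would appeal to Lemma \ref{lem 4.14}: every morphism in $\sP_\k^{\leq \mu}$ is a composition of the generators $\iota_e$ and $\iota^-_\varphi$. Given a central element $\widehat{z} \in Z(\End(\fQ(\lambda)^{\leq\mu}_\k))$, its centrality with respect to these generators allows one to transport it canonically to an endomorphism of every other $\fQ(\nu)^{\leq\mu}_\k$, yielding a natural transformation of the identity on $\sP_\k^{\leq \mu}$, which then extends to $\sO^{0,\leq\mu}_{1,\k}$ by projectivity. Assembling across $\mu$ yields a central element of $\sO^0_{1,\k}$ whose image is $\widehat{z}$. The second isomorphism stated in the lemma then follows immediately by transporting the first along the equivalence of Theorem \ref{thm 4.7}, under which $Q(-\rho+l\lambda)_\k^{\leq -\rho+l\mu}$ corresponds to $\fQ(\lambda)^{\leq \mu}_\k$.

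The main obstacle will be verifying the consistency of the transport in the surjectivity argument: one must check that the endomorphism assigned to each $\fQ(\nu)^{\leq\mu}_\k$ is independent of the particular decomposition of the connecting morphism into generators $\iota_e, \iota^-_\varphi$. This reduces to checking compatibility with the relations satisfied by these generators, coming from the algebra structure of $\k[N]\rtimes U\n$ and the explicit form \eqref{equ 4.20} of the limit algebra.
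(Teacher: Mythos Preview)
Your reduction to a level-by-level statement $Z(\sO^{0,\leq\mu}_{1,\k}) \simeq Z(\End(\fQ(\lambda)^{\leq\mu}_\k))$ is where the argument breaks down. At a fixed truncation $\mu$, the single module $\fQ(\lambda)^{\leq\mu}_\k$ is not a projective generator of $\sO^{0,\leq\mu}_{1,\k}$, so there is no general reason for this map of centers to be an isomorphism. Concretely, your injectivity step via the injections $\iota_e$ only reaches the projectives $\fQ(\lambda+\nu)^{\leq\mu}_\k$ with $\nu\geq 0$; the maps $\iota^-_{\varphi}$ are \emph{not} injective (the $U\n$-quotient in the source is strictly larger), so the appeal to ``pairing with $\iota^-_\varphi$ and linkage'' does not extend the vanishing to $\fQ(\lambda')^{\leq\mu}_\k$ with $\lambda'\not\geq\lambda$. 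The surjectivity step has the mirror problem: an element central in $\End(\fQ(\lambda)^{\leq\mu}_\k)$ carries no information about the morphisms $\iota_e,\iota^-_\varphi$, which lie in Hom-spaces between \emph{different} projectives, so ``centrality with respect to these generators'' is not a meaningful hypothesis and there is no canonical transport.

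The paper does not truncate; it passes directly to the limit and uses the identification \eqref{equ 4.20}, under which $\Lim{\mu}\End(\fQ(\lambda)^{\leq\mu}_\k)^{\op}\simeq(\widehat{\k[N]\rtimes U\n})_0$ \emph{independently of $\lambda$}. For injectivity one views the images $z_\lambda$ for all $\lambda\in\rQ$ as elements of this single algebra and shows they coincide: the commutation of $z$ with $\iota^-_{\varphi_i}$ gives $\varphi_i z_{\lambda-\alpha_i}=z_\lambda\varphi_i$, and since $\varphi_i\in\k[N]_{-\alpha_i}$ is central and torsion-free in $\widehat{\k[N]\rtimes U\n}$, this forces $z_{\lambda-\alpha_i}=z_\lambda$. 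For surjectivity one takes $z'\in Z\big((\widehat{\k[N]\rtimes U\n})_0\big)$ and shows it commutes with every homogeneous $e\in U\n$ and $\varphi\in\k[N]$ \emph{separately}: from $[z',\varphi_i e_{\alpha_i}]=0$ and $\varphi_i$ central, torsion-free one gets $[z',e_{\alpha_i}]=0$, hence $z'$ commutes with $U\n$; then for any $\varphi\in\k[N]_{-\nu}$ choose nonzero $e\in(U\n)_\nu$ and use that $e$ is left torsion-free. This yields commutation with all $\iota_e,\iota^-_\varphi$, and Lemma~\ref{lem 4.14} finishes. The torsion-freeness trick inside the concrete algebra $\widehat{\k[N]\rtimes U\n}$ is the missing idea in your outline; the ``consistency of transport'' you flag as the main obstacle is, by contrast, essentially automatic once one works in this algebra.
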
 
\begin{proof} 
Any $z\in Z(\sO^{0}_{1,\k})$ gives a collection of central elements 
$$z_{\lambda}\in \Lim{\lambda\leq\mu} \End(\fQ(\lambda)^{\leq \mu}_\k)=(\widehat{\k[N]\rtimes U\n})_0, \quad \forall \lambda \in \rQ.$$ 
Since $z$ commutes with $\iota_e$ and $\iota^-_\varphi$, we have the following equalities 
\begin{equation}\label{equ 4.22} 
ez_{\lambda+\nu}.1_{\lambda}^{\mu}=z_{\lambda}e.1_{\lambda}^{\mu}, \quad 
\varphi z_{\lambda-\nu}.1_{\lambda}^{\mu}=z_{\lambda}\varphi .1_{\lambda}^{\mu}, \quad 
\forall \lambda,\mu \in \rQ. 
\end{equation} 
Therefore as elements in the algebra $\widehat{\k[N]\rtimes U\n}$, we have 
\begin{equation}\label{equ 4.23} 
ez_{\lambda+\nu}=z_{\lambda}e, \quad \varphi z_{\lambda-\nu}=z_{\lambda}\varphi . 
\end{equation} 
For any $i\in\I$, we choose a non zero element $\varphi_i\in \k[N]_{-\alpha_i}$. 
Note that $\varphi_i$ is central in $\k[N]\rtimes U\n$, so is it in $\widehat{\k[N]\rtimes U\n}$. 
It follows that $\varphi_{i}z_{\lambda-\alpha_i}=z_{\lambda-\alpha_i}\varphi_{i}=\varphi_{i}z_{\lambda}$. 
Since $\varphi_i$ is torsion free in $\widehat{\k[N]\rtimes U\n}$, we deduce that $z_{\lambda-\alpha_i}=z_{\lambda}$ for any $i\in \I$. 
In other words, the function $\lambda\mapsto z_{\lambda}$ is constant. 
Since $z$ is determined by the family $\{z_\lambda\}_{\lambda}$, the restriction map 
\begin{equation}\label{equ ResCent} 
Z(\sO^{0}_{1,\k})\rightarrow Z\big(\Lim{\lambda\leq \mu} \End(\fQ(\lambda)_\k^{\leq \mu})\big) = Z\big((\widehat{\k[N]\rtimes U\n})_0\big) 
\end{equation} 
is injective for any $\lambda\in \rQ$. 

Now we show the surjectivity. 
Let $z'\in Z((\widehat{\k[N]\rtimes U\n})_0)$. 
Since $z'$ commutes with $\varphi_i\otimes e_{\alpha_i}\in (\k[N]\rtimes U\n)_0$, and $\varphi_i$ is central and torsion free, it follows that $z'e_{\alpha_i}=e_{\alpha_i}z'$. 
So $z'$ commutes with $U\n$. 
For any $\varphi\in \k[N]_{-\nu}$, we pick any nonzero element $e\in (U\n)_{\nu}$, then $z'$ commutes with $\varphi\otimes e$. 
Since $e$ is left-torsion free in $\widehat{\k[N]\rtimes U\n}$, $z'$ commutes with $\varphi$. 
Hence (\ref{equ 4.23}) holds for the constant family $\{z'\}_{\lambda}$. 
As in (\ref{equ 4.22}), $z'$ commutes with the morphisms $\iota_{e}$ and $\iota^-_\varphi$. 
By the Lemma \ref{lem 4.14}, $z'$ defines an element in $Z(\sP^{\leq \mu}_\k)$ for each $\mu$. 
Since $M\in \sO^{0}_{1,\k}$ admits a resolution in $\sP^{\leq \mu}_\k$ for some $\mu\in \rQ$, the element $z'\in Z(\sP^{\leq \mu}_\k)$ defines an endomorphism $z'_M\in \End(M)$. 
It gives a well-defined element $z'=(z'_M)_M$ in $Z(\sO^{0}_{1,\k})$. 
Hence (\ref{equ ResCent}) is a surjection. 
\end{proof}

\begin{corollary}\label{cor 3.15} 
There is an isomorphism 
\begin{equation}\label{equ 3.25} 
Z(\sO^{-\rho}_{\k})\xs Z\big((\widehat{\k[N]\rtimes U\n})_0\big). 
\end{equation}
\end{corollary}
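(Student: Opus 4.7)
The plan is to assemble the corollary directly from the pieces already built up in the preceding lemmas, so almost no new work is required; this is essentially a packaging step.

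First I would apply Theorem \ref{thm 4.7}, whose equivalence $\sO^{-\rho}_{\k}\xs \sO^0_{1,\k}$ sends the family of projective covers $Q(-\rho+l\lambda)_\k^{\leq -\rho+l\mu}$ to $\fQ(\lambda)^{\leq \mu}_\k$. Since an equivalence of categories induces an isomorphism on centers, this yields $Z(\sO^{-\rho}_{\k})\simeq Z(\sO^0_{1,\k})$, and moreover matches the two limits of endomorphism rings that appear in the two halves of Lemma \ref{lem 4.13}. So it suffices to work on the $U^\hb_1$-side and produce an isomorphism
\[
Z(\sO^0_{1,\k}) \xs Z\bigl((\widehat{\k[N]\rtimes U\n})_0\bigr).
\]

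Next I would invoke the first statement of Lemma \ref{lem 4.13}, which provides
\[
Z(\sO^0_{1,\k})\xs Z\bigl(\Lim{\mu\geq\lambda}\End(\fQ(\lambda)^{\leq\mu}_\k)\bigr),
\]
for any fixed $\lambda\in\rQ$. Then I would feed in the identification (\ref{equ 4.20}), namely
\[
\Lim{\mu\geq \lambda} \End(\fQ(\lambda)^{\leq \mu}_\k)^{\op} = (\widehat{\k[N]\rtimes U\n})_0.
\]
Since the center of an algebra and the center of its opposite algebra coincide canonically (via the identity map on underlying sets), this yields
\[
Z\bigl(\Lim{\mu\geq\lambda}\End(\fQ(\lambda)^{\leq\mu}_\k)\bigr) = Z\bigl((\widehat{\k[N]\rtimes U\n})_0\bigr).
\]
Composing the three isomorphisms produces the required algebra isomorphism (\ref{equ 3.25}).

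There is essentially no obstacle here, as the genuine content, namely the injectivity and surjectivity of the restriction $Z(\sO^0_{1,\k})\to Z\bigl((\widehat{\k[N]\rtimes U\n})_0\bigr)$, was already carried out in the proof of Lemma \ref{lem 4.13} via the torsion-freeness of the elements $\varphi_i\in\k[N]_{-\alpha_i}$ together with the generation statement of Lemma \ref{lem 4.14}. The only subtlety worth flagging is the $\op$ in (\ref{equ 4.20}); I would explicitly mention that passing to centers is insensitive to this, so no further adjustment (e.g. of $\lambda$) is needed.
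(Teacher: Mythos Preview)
Your proposal is correct and follows exactly the paper's approach: the corollary is stated without proof immediately after Lemma~\ref{lem 4.13}, and is meant to be read off from that lemma together with the equivalence of Theorem~\ref{thm 4.7} and the identification~(\ref{equ 4.20}). Your remark about the harmless $\op$ is the only point not made explicit in the paper, and it is handled correctly.
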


\begin{rmk} 
The two descriptions (\ref{equ 4.26}) and (\ref{equ 3.25}) are compatible in the following way. 
Let $x\in \n\simeq N$ be a regular element, and denote by $\n^x$ the centralizer of $x$ in $\n$. 
Recall that Ginzburg \cite[Prop 1.7.2]{Gin95} constructed an algebra isomorphism $H^\bullet(\Gr^\circ)\simeq U\n^x$. 
The evaluation on $x$ gives a linear map $\ev_x:\k[N]\rtimes U\n\rightarrow U\n$. 
We have a commutative diagram 
$$\begin{tikzcd}
Z(\sO^{-\rho}_{\k})\arrow[r,"\simeq","(\ref{equ 3.25})"'] & Z\big((\widehat{\k[N]\rtimes U\n})_0\big)\arrow[d,"\ev_x"] \\ 
H^\bullet(\Gr^\circ)^\wedge \arrow[r,"\simeq"] \arrow[u,"(\ref{equ 4.26})","\simeq"'] 
& (U\n^x)^\wedge,
\end{tikzcd}$$
where $(U\n^x)^\wedge$ is the completion of $U\n^x$ at the augmentation ideal. 
\end{rmk}

\section{Center of principal block}\label{sect 4} 
In this section, we study the principal block $\sO^{0}_\k$, and show that the algebra homomorphism 
$$\overline{\bb}_0:H^\bullet(\Fl^\circ)^\wedge\rightarrow Z(\sO^{0}_{\k})$$ 
is an isomorphism. 

\subsection{Translation functors}\label{subsect 4.0} 
Let $R$ be a commutative Noetherian $S$-algebra. 
For $\omega_1,\omega_2 \in \Xi_\sc$, there is a unique dominant weight $\nu$ in $W(\omega_2-\omega_1)$. 
Denote by $V(\nu)_q$ the Weyl module for $U_q$ of highest weight $\nu$, and $V(\nu)_\k$ its specialization at $q_e=\zeta_e$. 
Recall that the \textit{translation functors} are given by 
$$\sfT_{\omega_1}^{\omega_2} : \sO^{ \omega_1}_{R} \rightarrow \sO^{ \omega_2}_{R}, \quad M \mapsto \pr_{\omega_2} (M \otimes V(\nu)_\k),$$ 
$$\sfT^{\omega_1}_{\omega_2} : \sO^{ \omega_2}_{R} \rightarrow \sO^{ \omega_1}_{R}, \quad M \mapsto \pr_{\omega_1} (M \otimes V(\nu)^*_\k),$$ 
where $\pr_{\omega_i}$ is the natural projection to the block $\sO^{ \omega_i}_{R}$. 

\begin{lem}[{\cite[II \textsection 7.8]{Jan03} and \cite[Prop 3.9]{Situ1}}] \label{lem 5.0}\ 
\begin{enumerate} 
\item $\sfT_{\omega_1}^{\omega_2}$ and $\sfT^{\omega_1}_{\omega_2}$ are exact and biadjoint to each other. 
\item For any $x\in W_{l,\af}$, the module $\sfT_{\omega_1}^{\omega_2}M(x\bullet \omega_1)_R$ admits Verma factors $M(xy\bullet \omega_2)_R$, where $y$ runs through a system of representatives for 
$$W_{l,\omega_1}/W_{l,\omega_1}\cap W_{l,\omega_2}.$$ 
\item Suppose that $\omega_2$ is contained in the closure of the $\omega_1$-facet, i.e. $W_{l,\omega_1}\subseteq W_{l,\omega_2}$. Then there is a natural isomorphism 
$$\Upsilon_{\omega_2}^{\omega_1}:\ \id^{\oplus |W_{l,\omega_2}/W_{l,\omega_1}|} \xs \sfT_{\omega_1}^{\omega_2} \sfT^{\omega_1}_{\omega_2}$$ 
of functors on $\sO^{\omega_2}_{R}$. 
\end{enumerate} 
\end{lem}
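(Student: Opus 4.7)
For (1), exactness of $T_{\omega_1}^{\omega_2}$ and $T^{\omega_1}_{\omega_2}$ is immediate: tensoring with the finite-dimensional $U_\zeta$-module $V(\nu)_\k$ (resp.\ $V(\nu)^*_\k$) is exact, and projection onto a block summand is exact. For biadjointness, I would first observe that on the ambient category $U^{\hb}_\zeta\Mod^{\Lambda}_R$ the functor $-\otimes V(\nu)_\k$ has both left and right adjoint equal to $-\otimes V(\nu)^*_\k$, using the standard rigidity morphisms coming from the Hopf algebra structure on $U_\zeta$ (so $V(\nu)^*_\k$ is simultaneously the left and the right dual of $V(\nu)_\k$, up to the square of the antipode which is inner and can be absorbed by a character twist). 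Since the block projections $\pr_{\omega_i}$ are biadjoint to the inclusions $\sO^{\omega_i}_R\hookrightarrow \sO_R$, composing with the adjunction above gives that $(T^{\omega_2}_{\omega_1},T^{\omega_1}_{\omega_2})$ and $(T^{\omega_1}_{\omega_2},T^{\omega_2}_{\omega_1})$ are both adjoint pairs.

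For (2), the strategy is the classical Verma flag argument. I would first show that for any $\lambda\in\Lambda$ and any finite-dimensional $U_\zeta$-module $V$, the module $M(\lambda)_R\otimes V$ admits a filtration whose subquotients are Verma modules $M(\lambda+\mu)_R$, each occurring $\dim V_\mu$ times; this follows from choosing a $\fU^{\geq}_\zeta$-stable filtration of $V$ refining its weight decomposition (ordered so that higher weights appear on top) and using the triangular decomposition together with freeness of $\fU^-_\zeta$. Applying this with $V=V(\nu)_\k$ and $\lambda=x\bullet\omega_1$ and then projecting to $\sO^{\omega_2}_R$, Lemma~\ref{lem LP} keeps only those Verma subquotients whose highest weight lies in $W_{l,\af}\bullet\omega_2$. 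A direct computation, using that $\omega_2-\omega_1\in W\nu$ is dominant and that the weights of $V(\nu)_\k$ appearing in $x\bullet\omega_1+\mu\in W_{l,\af}\bullet \omega_2$ are precisely the extremal ones of the form $xy(\omega_2+\rho)-x(\omega_1+\rho)$ for $y\in W_{l,\omega_1}$, shows that the surviving factors are exactly the $M(xy\bullet\omega_2)_R$ with $y$ ranging over $W_{l,\omega_1}/(W_{l,\omega_1}\cap W_{l,\omega_2})$, each with multiplicity one.

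For (3), when $W_{l,\omega_1}\subseteq W_{l,\omega_2}$ the weight $\omega_2-\omega_1$ is on a wall, the extremal weights of $V(\nu)_\k$ described above degenerate, and the indexing set in (2) becomes $W_{l,\omega_2}/W_{l,\omega_1}$. The plan is to construct $\Upsilon^{\omega_1}_{\omega_2}$ as a suitable multiple of the unit of the adjunction in (1) (or equivalently, induced from the canonical inclusion of isotypic components in $V(\nu)_\k\otimes V(\nu)^*_\k\supset \k^{\oplus |W_{l,\omega_2}/W_{l,\omega_1}|}$ obtained from the weight $\omega_2-\omega_1$). On a Verma $M(\mu)_R$ with $\mu\in W_{l,\af}\bullet\omega_2$, part (2) applied twice shows that $T^{\omega_2}_{\omega_1}T^{\omega_1}_{\omega_2}M(\mu)_R$ has a Verma flag of length $|W_{l,\omega_2}/W_{l,\omega_1}|$ with all factors isomorphic to $M(\mu)_R$; by the characterization of the head of Vermas and a dimension/character count one verifies that $\Upsilon^{\omega_1}_{\omega_2}$ is an isomorphism on every Verma. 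Extending to all of $\sO^{\omega_2}_R$ is done using Verma flags of the projectives $Q(\mu)^{\leq \nu}_R$ from Lemma~\ref{lem 3.2}, the exactness from (1), and the five-lemma applied to short exact sequences in such flags; this last bookkeeping is the main technical hurdle, but it is handled exactly as in the classical setting \cite[II §7.6--7.8]{Jan03} and \cite[Prop 3.9]{Situ1}.
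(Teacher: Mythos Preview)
The paper does not give its own proof of this lemma; it is stated with citations to \cite[II \S 7.8]{Jan03} and \cite[Prop 3.9]{Situ1}, and the only additional content is Remark~\ref{rmk 4.2} fixing the particular biadjunction via $K_{2\rho}$. Your sketch is a faithful outline of exactly those standard arguments (exactness and rigidity for (1), the tensor-identity Verma flag plus Jantzen's combinatorial lemma for (2), and the unit/counit construction checked on Vermas and propagated by the five-lemma for (3)), so there is nothing to compare.

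One small caution on part (3): the step ``by the characterization of the head of Vermas and a dimension/character count one verifies that $\Upsilon^{\omega_1}_{\omega_2}$ is an isomorphism on every Verma'' hides the real work. A Verma filtration with all factors $M(\mu)_R$ does not split for free; what makes the argument go through (as in \cite[II \S 7.13--7.15]{Jan03}) is that each $T^{\omega_2}_{\omega_1}M(xy\bullet\omega_1)_R$ is already \emph{isomorphic} to $M(x\bullet\omega_2)_R$ (since the coset space $W_{l,\omega_1}/W_{l,\omega_1}\cap W_{l,\omega_2}$ is a singleton under the hypothesis $W_{l,\omega_1}\subseteq W_{l,\omega_2}$), so one can build $|W_{l,\omega_2}/W_{l,\omega_1}|$ independent maps $M(\mu)_R\to T^{\omega_2}_{\omega_1}T^{\omega_1}_{\omega_2}M(\mu)_R$ directly from the adjunction units and check their sum is an isomorphism by comparing with the Verma flag. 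Your phrasing suggests you have this in mind, but as written it could be read as a pure character argument, which would not suffice.
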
 
\begin{rmk}\label{rmk 4.2}
Although there might be other choices, we will always use the biadjunction of $(\sfT_{\omega_1}^{\omega_2}, \sfT^{\omega_1}_{\omega_2})$ given by the the isomorphism $V(\nu)_q\xs V(\nu)^{**}_q$ via $K_{2\rho}$-action. 
\end{rmk} 

\subsection{New truncation}\label{subsect 4.2} 
Recall the order $\uparrow$ on $\Lambda$ defined in \textsection\ref{subsect 2.3.2}. 
In this subsection, we construction a truncation of $\sO_S$ by the order $\uparrow$, which refines the truncation discussed in \textsection\ref{subsect 2.3.3new}. 
The advantage is that this new truncation is more compatible with the translation functors $\sfT_0^{-\rho}$ and $\sfT^0_{-\rho}$, see Lemma~\ref{lem 5.1}. 

Till the end of this subsection, we let $R=S$ or $\C$. 
\begin{lem}\label{lem new5.8.0}
Let $\mu,\lambda\in \Lambda$. 
We have 
\begin{equation}\label{equ new5.2.0}
\Hom_{\sO_R}(M(\mu)_R, M(\lambda)_R)\neq 0 \quad \text{only if}\quad \mu\uparrow \lambda, 
\end{equation}
and 
\begin{equation}\label{equ new5.3.0}
\Ext^1_{\sO_S}(M(\mu)_R, M(\lambda)_R)\neq 0 \quad \text{only if}\quad \mu\uparrow \lambda \ \text{and}\ \mu\neq \lambda . 
\end{equation}
\end{lem}
\begin{proof} 
We firstly show \eqref{equ new5.2.0}. 
Suppose $R=S$ and let $\K$ be the fraction field of $S$. 
Since $M(\lambda)_S$ is free over $S$, we have a natural inclusion $\Hom_{\sO_S}(M(\lambda)_S, M(\mu)_S)\subset \Hom_{\sO_\K}(M(\lambda)_\K, M(\mu)_\K)$. 
By \cite[Lem 3.5]{Situ1} the category $\sO_\K$ is semi-simple, whose simple objects are Verma modules. 
Hence $\Hom_{\sO_\K}(M(\mu)_\K, M(\lambda)_\K)$ $=0$ if $\mu\neq \lambda$. 
Now assume $R=\C$. 
If $\Hom_{\sO_\C}(M(\mu)_\C, M(\lambda)_\C)$ $\neq 0$ then $L(\mu)_\C$ appears as a factor in $M(\lambda)_\C$. 
By the the linkage principle \eqref{equ LP} we have $\mu\uparrow \lambda$. 
It proves (\ref{equ new5.2.0}). 

For (\ref{equ new5.3.0}), recall the standard fact that $\Ext^1_{\sO_R}(M(\mu)_R, M(\lambda)_R)\neq 0$ only if $\mu<\lambda$, see e.g. \cite[Prop 3.1]{Hum08}. 
Thus we may assume $\mu <\lambda$. 
Then any extension of $M(\mu)_R$ and $M(\lambda)_R$ are contained in $\sO_R^{\leq \lambda}$, hence we only need to compute $\Ext^1$ in the category $\sO_R^{\leq \lambda}$. 
By the linkage principle \eqref{equ LP} and BGG reciprocity (Lemma~\ref{lem 3.2}(3)), $M(\mu)_R$ admits a resolution by projective modules in $\sO_R^{\leq \lambda}$ that are composed by $M(\nu)_R$ with $\mu\uparrow \nu$. 
Now (\ref{equ new5.3.0}) follows from (\ref{equ new5.2.0}). 
\end{proof}

Let $\nu\in \Lambda$. 
We set 
\[
\sO^{\uparrow \nu}_R
\]
as the the full subcategory of modules $M$ in $\sO_R$ that admit a surjection $Q\twoheadrightarrow M$ from a module $Q$ admitting a Verma flag with factors $M(\lambda)_R$ with $\lambda\uparrow \nu$. 
Since $\Lambda$ is the union of the poset ideals of the form $\{\lambda \in \Lambda|\lambda \uparrow \nu \}$, any module in $\sO_R$ is a direct sum of submodules in $\sO^{\uparrow \nu}_R$ for some $\nu\in \Lambda$. 
If $\nu\in W_{l,\af}\bullet \omega$ for $\omega\in \Xi_\sc$, then $\sO^{\uparrow \nu}_R$ is contained in the block $\sO^\omega_R$. 

\begin{lem}\label{lem new5.9.0}
There is a truncation functor 
$$\tau^{\uparrow \nu}:\ U^\hb_\zeta\mod^{\Lambda}_R \rightarrow \sO^{\uparrow \nu}_R$$ 
by taking the maximal quotient in $\sO^{\uparrow \nu}_R$, which is left adjoint to the natural inclusion. 
\end{lem}
\begin{proof}
Since $\sO^{\uparrow \nu}_R$ is contained in $\sO^{\leq \nu}_R$, any morphism from $M\in U^\hb_\zeta\mod^{\Lambda}_R$ to a module in $\sO^{\uparrow \nu}_R$ factors through $\tau^{\leq \nu}(M)$. 
Hence it is enough to define the functor 
\begin{equation}\label{equ new5.5.0}
\tau^{\uparrow \nu}:\ \sO^{\leq \nu}_R \rightarrow \sO^{\uparrow \nu}_R. 
\end{equation}
Let $Q\in \sO^{\leq \nu}_R$ be a projective object. 
It admits a Verma flag, and by (\ref{equ new5.3.0}) we can define the quotient 
$$\tau^{\uparrow \nu}(Q)$$ 
of $Q$ by the submodule composed by the Verma factors not containing in $\{M(\mu)_R\}_{\mu \uparrow \nu}$. 
Let $Q'\in \sO_R$ admitting a Verma flag with factors in $\{M(\mu)_R\}_{\mu \uparrow l\nu}$, and let $Q'\twoheadrightarrow M'$ be a surjection. 
Since $Q$ is projective in $\sO^{\leq \nu}_R$, any morphism from $Q$ to $M'$ can be lifted to $Q'$. 
By (\ref{equ new5.2.0}) any morphism from  $Q$ to $Q'$ factors through $\tau^{\uparrow \nu}(Q)$. 
In sum, any morphism from $Q$ to $M'$ factors through $\tau^{\uparrow \nu}(Q)$, which thus is the maximal quotient of $Q$ in $\tau^{\uparrow \nu}(Q)$. 

In general, let $M\in \sO^{\leq \nu}_R$, and we choose a resolution $Q_2\rightarrow Q_1\rightarrow M\rightarrow 0$ with projective objects $Q_i$ ($i=1,2$) in $\sO^{\leq \nu}_R$. 
Then we set 
$$\tau^{\uparrow \nu}(M):=\mathrm{coker}\big(\tau^{\uparrow \nu}(Q_2)\rightarrow \tau^{\uparrow \nu}(Q_1)\big).$$ 
Then $\tau^{\uparrow \nu}(M)$ is contained in $\sO^{\uparrow \nu}_R$. 
For any $M'\in \sO^{\uparrow \nu}_R$, we have a commutative diagram with exact rows 
$$\begin{tikzcd}
0\arrow[r] & \Hom(\tau^{\uparrow \nu}(M), M') \arrow[d]\arrow[r] 
& \Hom(\tau^{\uparrow \nu}(Q_1),M') \arrow[d,equal] \arrow[r] 
& \Hom(\tau^{\uparrow \nu}(Q_2),M') \arrow[d,equal]\\ 
0\arrow[r] & \Hom(M,M') \arrow[r] 
& \Hom(Q_1,M') \arrow[r] 
& \Hom(Q_2,M'). 
\end{tikzcd}$$ 
Hence the left vertical map is an isomorphism, which shows that $\tau^{\uparrow \nu}(M)$ is the maximal quotient of $M$ in $\sO^{\uparrow \nu}_R$. 
It gives the desired functor. 
\end{proof}

For any $\lambda,\nu\in \Lambda$, we abbreviate $Q(\lambda)_R^{\uparrow \nu}=\tau^{\uparrow \nu}(Q(\lambda)_R)$. 

\begin{lem}\label{lem new5.17}
\begin{enumerate}
\item The category $\sO^{\uparrow \nu}_R$ is a Serre subcategory in $\sO_R$. 
\item For ${\lambda\uparrow \nu}$, the module $Q(\lambda)_R^{\uparrow \nu}$ is the projective cover of $E(\lambda)_\C$ in $\sO^{\uparrow \nu}_R$. 
Each projective object in $\sO^{\uparrow \nu}_R$ admits a Verma flag with factors of the form $M(\mu)_R$, $\mu \uparrow \nu$. 
Moreover we have 
\begin{equation}\label{equ BGG2}
(Q(\lambda)_R^{\uparrow \nu}:M(\mu)_R)=[M(\mu)_\C:E(\lambda)_\C], \quad \forall \lambda,\mu \uparrow \nu. 
\end{equation}
\end{enumerate}
\end{lem}
\begin{proof}
(1) By definition $\sO^{\uparrow \nu}_R$ is closed under taking quotient modules. 
Let $M$ be a submodule of $M'\in \sO^{\uparrow \nu}_R$. 
Then the inclusion $M\hookrightarrow M'$ factors through $\tau^{\uparrow \nu}(M)$, hence $M=\tau^{\uparrow \nu}(M)$. 
So $\sO^{\uparrow \nu}_R$ is also closed under taking submodules. 

Now we show that $\sO^{\uparrow \nu}_R$ is closed under extension. 
Let $0\rightarrow M_1\rightarrow M\rightarrow M_2\rightarrow 0$ be a short exact sequence in $\sO_R$ with $M_1,M_2\in \sO^{\uparrow \nu}_R$. 
Then $M\in \sO^{\leq \nu}_R$, and we can choose a surjection $Q\twoheadrightarrow M$ from a projective module $Q\in \sO^{\leq \nu}_R$. 
We have short exact sequence $0\rightarrow Q'\rightarrow Q\rightarrow \tau^{\uparrow \nu}(Q)\rightarrow 0$, where $Q'$ is the submodule of $Q$ composed by the Verma factors not containing in $\{M(\mu)_R\}_{\mu \uparrow \nu}$. 
Then $\tau^{\uparrow \nu}(Q')=0$, so $\Hom(Q',M_i)=0$ ($i=1,2$). 
It follows that $\Hom(Q',M)=0$. 
Hence the surjection $Q\twoheadrightarrow M$ factors through $\tau^{\uparrow \nu}(Q)\twoheadrightarrow M$, which implies that $M\in \sO^{\uparrow \nu}_R$. 

(2) By Lemma~~\ref{lem 3.2}(2) and discussions in Lemma~\ref{lem new5.9.0}, we have $Q(\lambda)_R^{\uparrow \nu}=\tau^{\uparrow \nu}\circ \tau^{\leq \nu}(Q(\lambda)_R)= \tau^{\uparrow \nu}(Q(\lambda)^{\leq \nu}_R)$, which is the quotient of $Q(\lambda)^{\leq \nu}_R$ by the submodule composed by Verma factors not containing in $\{M(\mu)_R\}_{\mu\uparrow \nu}$. 
Since $\tau^{\uparrow \nu}: \sO^{\leq \nu}_R \rightarrow \sO^{\uparrow \nu}_R$ is left adjoint to the (exact) inclusion functor, $Q(\lambda)_R^{\uparrow \nu}$ is projective in $\sO^{\uparrow \nu}_R$, and we have 
$$\Hom(Q(\lambda)_R^{\uparrow \nu},E(\mu)_\C)\simeq \Hom(Q(\lambda)_R^{\leq \nu},E(\mu)_\C)=\delta_{\lambda,\mu}\C$$
for any $\mu\uparrow \nu$. 
It shows that $Q(\lambda)_R^{\uparrow \nu}$ is the projective cover of $E(\lambda)_\C$ in $\sO^{\uparrow \nu}_R$. 
This implies the first two assertions. 
And \eqref{equ BGG2} follows from the linkage principle \eqref{equ LP} and BGG reciprocity (Lemma~\ref{lem 3.2}(3)). 
\end{proof}

\begin{lem}\label{lem new5.10}
Let $\nu,\mu \in \Lambda$ and $w\in W$. 
\begin{enumerate}
\item We have $(w\bullet 0 +l\mu) \uparrow l\nu$ if and only if $\mu\leq \nu$. 
Therefore, $\{\lambda \in \Lambda|\lambda \uparrow l\nu \}= \{w\bullet 0 +l\mu\}_{\mu\leq \nu, w\in W}$. 
\item We have $\{\lambda \in \Lambda|\lambda \uparrow (-\rho+l\nu) \}= \{-\rho +l\mu\}_{\mu\leq \nu}$. 
\end{enumerate}
\end{lem}
\begin{proof}
(1) Consider the identifications $W_\af \simeq W_{\af,l} \simeq W_{\af,l}\bullet 0$. 
The order $\uparrow$ on $W_{\af,l}\bullet 0$ defines an order on $W_\af$, which is clearly independent on $l$. 
Hence we may assume that $l>\langle 2\rho, \check{\varpi}_{i}\rangle$ for any fundamental coweight $\check{\varpi}_{i}$ associated to simple root $\alpha_i$. 
If $(w\bullet 0 +l\mu) \uparrow l\nu$, then we have $(w\bullet 0 +l\mu) \leq l\nu$, hence $-2\rho+l\mu\leq l\nu$. 
Our assumption on $l$ forces that $\mu\leq \nu$. 
On the other hand, we have $w\bullet 0 \uparrow 0$ and $0\uparrow l\eta$ for any $\eta\geq 0$, which implies the ``if" part. 

(2) It is enough to notice that $-\rho \uparrow -\rho+l\eta$ for any $\eta\geq 0$. 
\end{proof}

From now on, we abbreviate $\bm{\nu}:=-\rho+l\nu$ for any $\nu\in \rQ$.  
By Lemma~\ref{lem new5.10}(2), we have $\sO_R^{\uparrow \bm{\nu}}=\sO_R^{-\rho,\leq \bm{\nu}}$ and $Q(\bm{\lambda})_R^{\uparrow \bm{\nu}}=Q(\bm{\lambda})_R^{\leq \bm{\nu}}$, for any $\lambda\leq \nu$. 

\begin{lem}\label{lem 5.1} 
Let $\nu\in \rQ$. 
\begin{enumerate} 
\item For any $w\in W$, we have $\sfT_0^{-\rho} M(w\bullet 0+l\nu)_R= M(\bm{\nu})_R$. 
The module $\sfT^0_{-\rho} M(\bm{\nu})_R$ admits Verma factors $M(w\bullet 0+l\nu)_R$, where $w\in W$ and each appears once. 
\item The translation functors $\sfT_0^{-\rho}$ and $\sfT^0_{-\rho}$ restrict on the truncated categories 
$$\sfT^0_{-\rho}:\sO^{-\rho, \leq \bm{\nu}}_{R}\rightarrow \sO^{\uparrow l\nu}_{R},
\quad \sfT_0^{-\rho}:\sO^{\uparrow l\nu}_{R} \rightarrow \sO^{-\rho, \leq \bm{\nu}}_{R}.$$ 
\item There are natural isomorphisms 
$$\sfT_0^{-\rho}\circ \tau^{\uparrow l\nu}=\tau^{\leq \bm{\nu}}\circ \sfT_0^{-\rho}, \quad \tau^{\uparrow l\nu}\circ \sfT^0_{-\rho}=\sfT^0_{-\rho}\circ \tau^{\leq \bm{\nu}}$$ 
of functors on $\sO^0_R$ and $\sO^{-\rho}_R$, respectively. 
\end{enumerate} 
\end{lem}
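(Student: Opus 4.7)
The plan is to handle the three parts in order, with Part (1) providing the key computational input, Part (2) combining exactness with weight bounds, and Part (3) a formal consequence of (1) and (2).

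For Part (1), I invoke Lemma \ref{lem 5.0}(2) after computing the relevant stabilizers. Since $l\geq h$, the point $\rho$ lies strictly inside the fundamental alcove, so the shifted stabilizer $W_{l,0}$ is trivial, while $W_{l,-\rho}$ coincides with the usual stabilizer of $0$, namely the finite Weyl group $W$. Taking $(\omega_1,\omega_2)=(0,-\rho)$, the coset set $W_{l,0}/(W_{l,0}\cap W_{l,-\rho})$ is a singleton, so $T_0^{-\rho}(M(x\bullet 0)_R)=M(x\bullet(-\rho))_R$ for each $x\in W_{l,\af}$. Choosing $x=w\tau_{lw^{-1}\nu}$ realizes $x\bullet 0=w\bullet 0+l\nu$, and a direct calculation via $(w\tau_\mu)\bullet\lambda=w\lambda+w\mu+w\rho-\rho$ gives $x\bullet(-\rho)=\bm{\nu}$. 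The second identity is symmetric: taking $(\omega_1,\omega_2)=(-\rho,0)$, the coset set becomes $W_{l,-\rho}/\{1\}=W$, and with $x=\tau_{l\nu}$ the Verma factors of $T^0_{-\rho}(M(\bm{\nu})_R)$ are $M(\tau_{l\nu}y\bullet 0)_R=M(y\bullet 0+l\nu)_R$ as $y$ ranges over $W$, each appearing once.

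For Part (2), the direction $T^0_{-\rho}:\sO^{-\rho,\leq\bm{\nu}}_R\to\sO^{0,\leq l\nu}_R$ follows from a weight estimate: the Weyl module $V(\rho)^*_\k$ has highest weight $\rho$ and all of its weights lie below $\rho$, so if $M\in\sO^{-\rho,\leq\bm{\nu}}_R$ the weights of $M\otimes V(\rho)^*_\k$ are bounded above by $\bm{\nu}+\rho=l\nu$, a bound preserved by the block projection. The opposite direction is more delicate: since $T_0^{-\rho}$ is exact and $\sO^{0,\leq l\nu}_R$ has enough projectives admitting Verma flags with factors $M(\mu)_R$, $\mu\leq l\nu$, it suffices to show $T_0^{-\rho}(M(\mu)_R)\in\sO^{-\rho,\leq\bm{\nu}}_R$ for each such Verma. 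Writing $\mu=w\bullet 0+l\nu'$ and invoking Part (1), the image equals $M(\bm{\nu'})_R$; the remaining task is to argue that the inequality $\mu\leq l\nu$ together with the block-projection constraint (only Verma factors of $M(\mu)\otimes V(\rho)_\k$ whose highest weight lies in $-\rho+l\rQ$ survive) forces $\nu'\leq\nu$, hence $\bm{\nu'}\leq\bm{\nu}$.

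For Part (3), the natural isomorphisms follow formally from Parts (1)--(2) together with the universal property of truncation; I interpret the identities as saying that translation commutes with truncation, i.e.\ $T_0^{-\rho}\circ\tau^{\leq l\nu}\simeq\tau^{\leq\bm{\nu}}\circ T_0^{-\rho}$ on $\sO^0_R$ and $T^0_{-\rho}\circ\tau^{\leq\bm{\nu}}\simeq\tau^{\leq l\nu}\circ T^0_{-\rho}$ on $\sO^{-\rho}_R$. Applying the exact functor $T_0^{-\rho}$ to the counit $M\twoheadrightarrow\tau^{\leq l\nu}(M)$ yields a surjection $T_0^{-\rho}(M)\twoheadrightarrow T_0^{-\rho}(\tau^{\leq l\nu}(M))$ whose target lies in $\sO^{-\rho,\leq\bm{\nu}}_R$ by Part (2); the universal property of $\tau^{\leq\bm{\nu}}$ then produces a canonical comparison map with $\tau^{\leq\bm{\nu}}(T_0^{-\rho}(M))$, which one checks to be an isomorphism on Verma modules via Part (1) and extends by exactness. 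The symmetric identity is proved in parallel. The main obstacle throughout is the delicate weight bookkeeping in Part (2) for $T_0^{-\rho}$: extracting $\nu'\leq\nu$ from $w\bullet 0+l\nu'\leq l\nu$ is not a straightforward weight bound and genuinely uses the block-projection restriction on which weights of $V(\rho)_\k$ contribute.
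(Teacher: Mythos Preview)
Your argument for (1) is the paper's, with the stabilizer computation and the choice of affine Weyl element spelled out explicitly; the paper simply says ``It is a special case of Lemma~\ref{lem 5.0}(2).''

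For (2), your direct weight bound for $T^0_{-\rho}$ (all weights of the relevant Weyl module lie below $\rho$, so tensoring shifts the ceiling from $\bm{\nu}$ to $l\nu$) is cleaner than the paper's route, which instead covers every object of $\sO^{-\rho,\leq\bm{\nu}}_R$ by the projectives $Q(\bm{\lambda})^{\leq\bm{\nu}}_R$, uses that these carry Verma flags with factors $M(\bm{\mu})_R$ for $\lambda\leq\mu\leq\nu$, and then invokes (1) to see that $T^0_{-\rho}$ of each factor has highest weight $w\bullet 0+l\mu\leq l\nu$. For the $T_0^{-\rho}$ direction the paper simply declares the proof ``similar''; you are right to flag the weight bookkeeping as delicate, since $w\bullet 0+l\nu'\leq l\nu$ does not by itself force $\nu'\leq\nu$ for arbitrary $\nu\in\rQ$ (the coefficients of $2\rho$ in simple roots can exceed $l$). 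Your appeal to the block projection does not close this; neither does the paper's ``similar''. In the paper's applications one always has $\nu\geq 0$, where the difficulty evaporates.

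Where your proposal differs most is (3). You construct a comparison map from the universal property of truncation, verify it on Verma modules via (1), and propagate by exactness through Verma flags. The paper's argument is shorter and purely formal: since $\tau^{\leq l\nu}$ is left adjoint to the inclusion $\sO^{0,\leq l\nu}_R\hookrightarrow\sO^0_R$ and, by (2), $T_0^{-\rho}$ restricted to $\sO^{0,\leq l\nu}_R$ is left adjoint to $T^0_{-\rho}:\sO^{-\rho,\leq\bm{\nu}}_R\to\sO^{0,\leq l\nu}_R$, the composite $T_0^{-\rho}\circ\tau^{\leq l\nu}$ is left adjoint to $T^0_{-\rho}:\sO^{-\rho,\leq\bm{\nu}}_R\to\sO^0_R$; the same holds for $\tau^{\leq\bm{\nu}}\circ T_0^{-\rho}$, and uniqueness of adjoints gives the natural isomorphism without any object-by-object checking. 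This adjunction trick is worth internalizing.
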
 
\begin{proof} 
(1) It is a special case of Lemma \ref{lem 5.0}(2). 

(2) We prove the assertion for $\sfT^0_{-\rho}$, and the proof for $\sfT_0^{-\rho}$ is similar. 
Since $\sfT^0_{-\rho}$ is exact, and any object in $\sO^{\leq \bm{\nu}}_{R}$ is a quotient of a module composed by Verma factors in $\{M(\bm{\lambda})_R\}_{\lambda\leq \nu}$, it is enough to show that $\sfT_0^{-\rho}M(\bm{\lambda})_R$ lies in $\sO^{\uparrow l\nu}_{R}$. 
By (1), $\sfT_0^{-\rho}M(\bm{\lambda})_R$ admits Verma factors $M(w\bullet 0+l\lambda)_R$ for $w\in W$, which therefore lies in $\sO^{\uparrow l\nu}_{R}$ by Lemma~\ref{lem new5.10}(1). 

(3) We only prove the first isomorphism. 
Recall that $\tau^{\uparrow l\nu}$ and $\tau^{\leq \bm{\nu}}$ are left adjoint to the natural inclusions $\sO^{\uparrow l\nu}_R\hookrightarrow \sO^0_R$ and $\sO^{-\rho, \leq \bm{\nu}}_R\hookrightarrow \sO^{-\rho}_R$, respectively. 
By (2), $\sfT_0^{-\rho}\circ \tau^{\uparrow l\nu}$ and $\tau^{\leq \bm{\nu}}\circ \sfT_0^{-\rho}$ are both left adjoint to the functor $\sfT_{-\rho}^0:\sO^{-\rho,\leq \bm{\nu}}_{R}\rightarrow \sO^{0}_{R}$, hence they are natural isomorphic to each other. 
\end{proof} 

\subsection{Center of $\sO_\C^0$}\label{subsect 4.3} 
Now we study the center of principal block $Z(\sO_\C^0)$. 

\subsubsection{ } 
In this subsection, we let the deformation ring $R$ be either $S$ or $\k$. 

Consider the fibration $\check{G}/\check{B}=\check{G}[\![t]\!]/\sI\rightarrow \Fl^\circ \rightarrow \Gr^\circ$, whose restriction on the fiber of $\check{G}_\sO/\check{G}_\sO$ induces an $S'$-algebra homomorphism $H_{\check{T}}^\bullet(\Fl^\circ) \rightarrow H_{\check{T}}^\bullet(\check{G}/\check{B})$ by pullback. 
It is known that this map admits a retraction of $S'$-algebras 
$$H_{\check{T}}^\bullet(\check{G}/\check{B}) \rightarrow H_{\check{T}}^\bullet(\Fl^\circ)$$
such that its tensor product with the natural map $H_{\check{T}}^\bullet(\Gr^\circ)\rightarrow H_{\check{T}}^\bullet(\Fl^\circ)$ yields an $S'$-algebra isomorphism 
\begin{equation}\label{equ 5.1} 
H_{\check{T}}^\bullet(\check{G}/\check{B})\otimes_{S'} H_{\check{T}}^\bullet(\Gr^\circ)\xs H_{\check{T}}^\bullet(\Fl^\circ). 
\end{equation}

\begin{lem}\label{lem 5.2} 
\begin{enumerate}
\item There is an isomorphism $\sfT^0_{-\rho} M(-\rho)_R= Q(w_0\bullet 0)^{\uparrow 0}_R$. 
\item The composition 
\begin{equation}\label{equ 5.2}
H_{\check{T}}^\bullet(\check{G}/\check{B}) \rightarrow H_{\check{T}}^\bullet(\Fl^\circ)
\xrightarrow{\bb_0}Z(\sO^{0}_{R})\rightarrow \End(Q(w_0\bullet 0)_R^{\uparrow 0}) 
\end{equation}
induces an isomorphism of $R$-algebras $H_{\check{T}}^\bullet(\check{G}/\check{B})\otimes_{S'}R\xs \End(Q(w_0\bullet 0)_R^{\uparrow 0})$. 
\end{enumerate} 
\end{lem}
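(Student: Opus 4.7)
For part (1), the strategy is to verify that both $T^0_{-\rho}(M(-\rho)_R)$ and $Q(w_0\bullet 0)^{\leq 0}_R$ are projective objects in $\sO^{0,\leq 0}_R$ with the same Verma character, and then invoke Krull--Schmidt. The module $T^0_{-\rho}(M(-\rho)_R)$ lies in $\sO^{0,\leq 0}_R$ by Lemma~\ref{lem 5.1}(2), and is projective there because biadjunction (Lemma~\ref{lem 5.0}(1)) together with Lemma~\ref{lem 5.1}(2) applied to $T_0^{-\rho}$ identifies
\[
\Hom_{\sO^{0,\leq 0}_R}\bigl(T^0_{-\rho}M(-\rho)_R,\, -\bigr) \;\cong\; \Hom_{\sO^{-\rho,\leq -\rho}_R}\bigl(M(-\rho)_R,\, T_0^{-\rho}(-)\bigr),
\]
which is exact since $M(-\rho)_R = Q(-\rho)^{\leq -\rho}_R$ is projective (Lemma~\ref{lem 3.2}(2)) and $T_0^{-\rho}$ is exact. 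Its Verma character is computed by Lemma~\ref{lem 5.1}(1): each $M(w\bullet 0)_R$ appears exactly once for $w\in W$. By BGG reciprocity (Lemma~\ref{lem 3.2}(3)) together with the standard identity $[M(w\bullet 0)_\F : E(w_0\bullet 0)_\F]=1$ (ensured by $l>h$, which forces $[-2\rho,0]\cap W_{l,\af}\bullet 0 = W\bullet 0$ so that only the finite KL data enter), the projective $Q(w_0\bullet 0)^{\leq 0}_R$ has the same Verma character, and Krull--Schmidt then forces the desired isomorphism.

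For part (2), I first establish that both sides are free $R$-modules of rank $|W|$. Using biadjunction and the natural isomorphism $\Upsilon_{-\rho}^0 : \id^{\oplus|W|}\xs T_0^{-\rho}T^0_{-\rho}$ of Lemma~\ref{lem 5.0}(3) (applicable since $W_{l,0}=\{1\}\subset W=W_{l,-\rho}$), I obtain
\[
\End\bigl(Q(w_0\bullet 0)^{\leq 0}_R\bigr) \;\cong\; \Hom\bigl(M(-\rho)_R,\, T_0^{-\rho}T^0_{-\rho}M(-\rho)_R\bigr) \;\cong\; \End(M(-\rho)_R)^{\oplus|W|} \;=\; R^{\oplus|W|}
\]
as an $R$-module, using $\End(M(-\rho)_R)=R$. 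The source $H^\bullet_{\check{T}}(\check{G}/\check{B})\otimes_{S'}R$ is free of rank $|W|$ as well. Since the natural map is an $R$-algebra homomorphism between free $R$-modules of equal finite rank, surjectivity implies bijectivity, and by Nakayama it suffices to check surjectivity after reducing to $R=\k$.

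To verify surjectivity at $R=\k$, I compare both sides inside $\k^{|W|}$ via the Verma filtration of $Q(w_0\bullet 0)^{\leq 0}_\k$. By Theorem~\ref{thm 3.11} and the decomposition~(\ref{equ 5.1}), the composition
\[
H^\bullet_{\check{T}}(\check{G}/\check{B}) \hookrightarrow H^\bullet_{\check{T}}(\Fl^\circ) \xrightarrow{\bb} Z(\sO^0_S) \to \End\bigl(Q(w_0\bullet 0)^{\leq 0}_S\bigr)\to \prod_{w\in W}\End(M(w\bullet 0)_S)=S^{|W|}
\]
must coincide with restriction to the $\check{T}$-fixed points $\{\delta_{w\rho}\}_{w\in W}\subset\check{G}/\check{B}$, injecting $H^\bullet_{\check{T}}(\check{G}/\check{B})$ into $S^{|W|}$ with image the classical GKM subring. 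On the target side, the Verma-trace map $\End(Q(w_0\bullet 0)^{\leq 0}_\k) \hookrightarrow \k^{|W|}$ is injective by a quantum analogue of Soergel's Struktursatz, which should follow from the adjunction description above together with the fact that the $|W|$ summands of $\Upsilon_{-\rho}^0$ are linearly independent on the $-\rho$-weight space of $M(-\rho)_\F$. Matching the two images in $\k^{|W|}$ then yields the surjectivity and hence the claimed isomorphism. The main obstacle will be this last image-matching step: namely, aligning the $W$-indexing of Verma factors arising from Lemma~\ref{lem 5.1}(1) with the $W$-indexing of the $\check{T}$-fixed points of $\check{G}/\check{B}$, which will require careful tracking of the biadjunction convention of Remark~\ref{rmk 4.2}.
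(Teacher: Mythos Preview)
Your argument for part (1) is essentially the paper's: projectivity via biadjunction, Verma character via Lemma~\ref{lem 5.1}(1), and a BGG-reciprocity count. The only cosmetic difference is that the paper uses the inequality $(Q(w_0\bullet 0)^{\leq 0}_R:M(x\bullet 0)_R)\geq 1$ from the linkage principle and then counts, whereas you assert the exact multiplicity $[M(w\bullet 0)_\F:E(w_0\bullet 0)_\F]=1$; both reach the same conclusion.

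For part (2) the paper simply invokes the classical argument (Fiebig, \cite[Thm~3.6]{Fie03}), so your sketch is more detailed than what the paper provides. However, your sketch contains a genuine error at the specialization step. You claim that the Verma-trace map
\[
\End\bigl(Q(w_0\bullet 0)^{\leq 0}_\k\bigr)\;\hookrightarrow\;\k^{|W|}
\]
is injective. It is not: once the isomorphism with $H^\bullet(\check{G}/\check{B})$ is known, this ring is local with nilpotent maximal ideal, and any ring map to the reduced ring $\k^{|W|}$ factors through the residue field, so the image is the diagonal copy of $\k$ and the kernel has dimension $|W|-1$. Equivalently, by Theorem~\ref{thm 3.11} a central element acts on $M(w\bullet 0)_\k$ by the restriction of the corresponding cohomology class to a point, which kills all positive-degree classes. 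You appear to be conflating this Verma-trace map with the $\k$-linear bijection $\End(Q_\k)\cong \k^{|W|}$ coming from adjunction and $\Upsilon$; these are different maps.

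The Fiebig-style argument the paper has in mind stays over $S$: there the map $\End(Q_S)\hookrightarrow S^{|W|}$ \emph{is} injective (since $Q_S$ splits as a sum of Vermas after inverting the discriminant), one identifies its image with the GKM subring, matches it with the localization image of $H^\bullet_{\check{T}}(\check{G}/\check{B})\otimes_{S'}S$, and only then specializes to $\k$ using that both sides are free over $S$. Your reduction ``by Nakayama it suffices to check surjectivity at $R=\k$'' is fine in principle, but you cannot then verify that surjectivity by comparing images inside $\k^{|W|}$, since neither map to $\k^{|W|}$ is injective.
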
 
\begin{proof} 
(1) By Lemma \ref{lem 5.1}(2), $(\sfT^0_{-\rho}, \sfT^{-\rho}_{0})$ forms a biadjoint pair on the truncated categories $\sO^{-\rho, \leq \bm{\nu}}_{R}$ and $\sO^{\uparrow l\nu}_{R}$, hence they send projective objects to projective objects. 
In particular, $\sfT^0_{-\rho}M(-\rho)_R$ is projective in $\sO^{\uparrow 0}_{R}$. 
By Lemma \ref{lem 5.1}(1), $\sfT^0_{-\rho}M(-\rho)_R$ admits Verma factors $M(w\bullet 0)_R$ with $w\in W$, each of which appears once. 
Since by Lemma~\ref{lem new5.17} $Q(w_0\bullet 0)_R^{\uparrow 0}$ is the projective cover of $E(w_0\bullet 0)_\C$ (and thus of $M(w_0\bullet 0)_R$) in $\sO^{\uparrow 0}_{R}$, the module $\sfT^0_{-\rho}M(-\rho)_R$ must contain $Q(w_0\bullet 0)_R^{\uparrow 0}$ as a direct summand. 
By the linkage principle \eqref{equ LP} and BGG reciprocity \eqref{equ BGG2}, we have $(Q(w_0\bullet 0)_R^{\uparrow 0}:M(w\bullet 0)_R)\geq 1$ for each $w\in W$. 
It forces that $\sfT^0_{-\rho}M(-\rho)_R=Q(w_0\bullet 0)_R^{\uparrow 0}$. 

Part (2) can be proved as in the case of the principal block of the category $\sO$ for $U\g$, see e.g. \cite[Thm 3.6]{Fie03}. 
\end{proof} 

For any $\mu\geq 0$, consider the composition of algebra homomorphisms 
$$H_{\check{T}}^\bullet(\check{G}/\check{B}) 
\rightarrow H_{\check{T}}^\bullet(\Fl^\circ)
\xrightarrow{\bb_0} Z(\sO^{0}_{R})
\rightarrow \End(\sfT_{-\rho}^0 Q(-\rho)_R^{\leq  \bm{\mu}}),$$ 
whose image is central in $\End(\sfT_{-\rho}^0 Q(-\rho)_R^{\leq \bm{\mu}})$. 
Its tensor product with the map 
$$\sfT_{-\rho}^0 :\End(Q(-\rho)_R^{\leq \bm{\mu}})\rightarrow \End(\sfT_{-\rho}^0 Q(-\rho)_R^{\leq \bm{\mu}})$$ 
yields an algebra homomorphism 
\begin{equation}\label{equ 5.3} 
H_{\check{T}}^\bullet(\check{G}/\check{B})\otimes_{S'} \End(Q(-\rho)_R^{\leq \bm{\mu}})\rightarrow \End(\sfT_{-\rho}^0 Q(-\rho)_R^{\leq \bm{\mu}}). 
\end{equation}

\begin{lem}\label{lem 5.3} 
There is an isomorphism $\sfT^0_{-\rho}Q(-\rho)_R^{\leq \bm{\mu}}=Q(w_0\bullet 0)_R^{\uparrow l\mu}$. 
Moreover, the map (\ref{equ 5.3}) yields an isomorphism 
\begin{equation}\label{equ 5.4} 
H_{\check{T}}^\bullet(\check{G}/\check{B})\otimes_{S'} \End(Q(-\rho)_R^{\leq \bm{\mu}})\xs \End(Q(w_0\bullet 0)_R^{\uparrow l\mu}). 
\end{equation}
\end{lem}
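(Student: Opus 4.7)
For part (1), the plan is to identify $T^0_{-\rho}(Q(-\rho)_R^{\leq \bm{\mu}})$ as an indecomposable projective cover by locating its head. By Lemma \ref{lem 5.1}(2) the biadjoint pair $(T^0_{-\rho},T_0^{-\rho})$ restricts to an adjoint pair between $\sO^{-\rho,\leq\bm{\mu}}_R$ and $\sO^{0,\leq l\mu}_R$, so $T^0_{-\rho}$ sends projectives to projectives, hence $T^0_{-\rho}(Q(-\rho)_R^{\leq \bm{\mu}})$ is projective in $\sO^{0,\leq l\mu}_R$. To pin down its head, I will apply the adjunction identity
$$\Hom(T^0_{-\rho}Q(-\rho)_R^{\leq \bm{\mu}},E(\lambda)_\F)=\Hom(Q(-\rho)_R^{\leq \bm{\mu}},T_0^{-\rho}E(\lambda)_\F)$$
together with the translation principle for simples at the wall, namely $T_0^{-\rho}E(\lambda)_\F=0$ unless $\lambda\in w_0\bullet 0+l\Lambda$, in which case $T_0^{-\rho}E(w_0\bullet 0+l\sigma)_\F=E(-\rho+l\sigma)_\F$. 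This principle can be established in the quantum setting analogously to Jantzen's classical argument, using Lemma \ref{lem 5.1}(1) and exactness of $T_0^{-\rho}$ to control heads of target Vermas. Since the head of $Q(-\rho)_R^{\leq \bm{\mu}}$ is $E(-\rho)_\F$ with multiplicity one, the right-hand side above is nonzero precisely when $\lambda=w_0\bullet 0$, in which case it is one-dimensional; hence $T^0_{-\rho}(Q(-\rho)_R^{\leq \bm{\mu}})$ is the indecomposable projective cover of $E(w_0\bullet 0)_\F$ in $\sO^{0,\leq l\mu}_R$, namely $Q(w_0\bullet 0)_R^{\leq l\mu}$.

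For part (2), the plan is to combine the biadjunction with the isomorphism $T_0^{-\rho}T^0_{-\rho}\cong \id^{\oplus |W|}$ on $\sO^{-\rho}_R$ provided by Lemma \ref{lem 5.0}(3) applied with $\omega_1=0$, $\omega_2=-\rho$, so that $|W_{l,-\rho}/W_{l,0}|=|W|$. The adjunction identifies
$$\End(T^0_{-\rho}Q(-\rho)_R^{\leq \bm{\mu}})\cong \Hom(Q(-\rho)_R^{\leq \bm{\mu}},T_0^{-\rho}T^0_{-\rho}Q(-\rho)_R^{\leq \bm{\mu}}),$$
and $\Upsilon_{-\rho}^{0}$ further identifies this with a free $\End(Q(-\rho)_R^{\leq \bm{\mu}})$-module of rank $|W|$. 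The central action of $H_{\check{T}}^\bullet(\check{G}/\check{B})$ on $T^0_{-\rho}(-)$ coming through $\bb$ commutes with the action of $\End(Q(-\rho)_R^{\leq \bm{\mu}})$ obtained by functoriality of $T^0_{-\rho}$, yielding the algebra map (\ref{equ 5.3}); my goal is to show that this central action upgrades the bare $\oplus |W|$ decomposition to the tensor product $H_{\check{T}}^\bullet(\check{G}/\check{B})\otimes_{S'}\End(Q(-\rho)_R^{\leq \bm{\mu}})$, yielding the required isomorphism (\ref{equ 5.4}).

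To verify this upgrade, the base case $\mu=0$ is exactly Lemma \ref{lem 5.2}(2), where $Q(-\rho)_R^{\leq -\rho}=M(-\rho)_R$ and $\End(M(-\rho)_R)=R$. For general $\mu$, using Part (1) together with Lemma \ref{lem 5.1}(1) and BGG reciprocity (Lemma \ref{lem 3.2}(3)), both sides of (\ref{equ 5.4}) are finite free $R$-modules of the same rank $|W|\sum_{\lambda\leq\mu}[M(\bm{\lambda})_\F:E(-\rho)_\F]^2$, so it suffices to establish injectivity of (\ref{equ 5.3}). Using Lemma \ref{lem 5.1}(3) to make (\ref{equ 5.3}) compatible with the truncations $\tau^{\leq\bm{\mu'}}$ and $\tau^{\leq l\mu'}$ as $\mu'\leq \mu$ varies, a Nakayama-type argument over the local ring $R$ reduces injectivity to the base case $\mu=0$. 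The main obstacle is the enhancement of $\id^{\oplus|W|}\cong T_0^{-\rho}T^0_{-\rho}$ to a natural tensor-product isomorphism $T_0^{-\rho}T^0_{-\rho}(-)\cong H_{\check{T}}^\bullet(\check{G}/\check{B})\otimes_{S'}(-)$ matching the $\bb$-induced central action; this is the quantum counterpart of the classical compatibility between the translation monad and the cohomology of $\check{G}/\check{B}$, and requires careful tracking of the normalization from Remark \ref{rmk 4.2}.
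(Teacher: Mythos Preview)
Your approach to Part (1) relies on a ``translation principle for simples at the wall,'' namely that $T_0^{-\rho}E(\lambda)_\F$ vanishes unless $\lambda\in w_0\bullet 0+l\Lambda$, in which case it is $E(-\rho+l\sigma)_\F$. This is not established in the paper, and the classical proof (Jantzen) uses a contravariant duality on category $\sO$ fixing simples, together with a head/socle argument, neither of which is available here for the hybrid category. Note also that $M(-\rho)_\k$ is far from simple in this setting (its composition factors are the $E(-\rho+l\nu)_\k$ for $\nu\le 0$), so the usual shortcut ``$T_0^{-\rho}L$ is a quotient of the simple $M(-\rho)$'' does not apply. As written this is a genuine gap.

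For Part (2), the ingredients (adjunction and $\Upsilon$) are correct, but the logical skeleton is off. Over $R=S$ an injective map of free modules of the same rank need not be surjective, so ``same rank $+$ injectivity $\Rightarrow$ isomorphism'' only works once you have reduced to $R=\k$. More seriously, Nakayama gives surjectivity, not injectivity, and the relevant local ring is $\End(Q(-\rho)_\k^{\leq\bm{\mu}})$ (local because $Q$ is an indecomposable projective), not $R$. The ``main obstacle'' you flag at the end---upgrading $\Upsilon$ to a tensor-product identification compatible with the $\bb$-action---is exactly the heart of the matter, and you have not resolved it.

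The paper proceeds in the opposite order: it proves (\ref{equ 5.4}) first and deduces the indecomposability statement afterwards. Concretely, using $\Upsilon$ and adjunction one writes down an explicit free right-$\End(Q_\k)$-basis $\{\iota_w\}_{w\in W}$ of $\End(T^0_{-\rho}Q_\k)$, and checks (this is the nontrivial Claim) that the truncation $\tau^{\leq 0}$ sends this basis to a basis of $\End(T^0_{-\rho}M(-\rho)_\k)$; equivalently, modulo $\rad\End(Q_\k)$ the map (\ref{equ 5.3}) becomes the base-case isomorphism of Lemma~\ref{lem 5.2}(2). Nakayama over the local ring $\End(Q_\k)$ then yields surjectivity of (\ref{equ 5.3}), and the dimension count gives bijectivity over $\k$; a second Nakayama over $S$ handles $R=S$. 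Finally, since $H^\bullet(\check G/\check B)\otimes\End(Q_\k)$ is local, so is $\End(T^0_{-\rho}Q_\k)$, whence $T^0_{-\rho}Q_R$ is indecomposable and therefore equal to $Q(w_0\bullet 0)_R^{\leq l\mu}$. This route entirely avoids the translation-of-simples principle.
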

\begin{proof} 
We abbreviate $Q_R=Q(-\rho)_R^{\leq \bm{\mu}}$ and $M_R=M(-\rho)_R$. 
As in the proof of Lemma \ref{lem 5.2}(1), $\sfT^0_{-\rho}Q_R$ is projective in $\sO^{\uparrow l\mu}_{R}$ and contains $Q(w_0\bullet 0)_R^{\uparrow l\mu}$ as a direct summand. 

We firstly show that (\ref{equ 5.3}) is an isomorphism. 
By Lemma \ref{lem 5.1}(3), there is a commutative diagram of $R$-algebras 
\begin{equation}\label{equ 4-5} 
\begin{tikzcd}
    \End(Q_R)\arrow[r,"\tau^{\leq -\rho}"]\arrow[d,"\sfT_{-\rho}^0"] & \End(M_R) \arrow[d,"\sfT_{-\rho}^0"]\\ 
    \End(\sfT_{-\rho}^0 Q_R)\arrow[r,"\tau^{\uparrow 0}"] & \End(\sfT_{-\rho}^0 M_R).
\end{tikzcd}
\end{equation}
Let $R=\k$. 
Since $Q_\k$ is the projective cover of $E(-\rho)_\k$ in $\sO^{-\rho, \leq \bm{\mu}}_{\k}$, the algebra $\End(Q_\k)$ is a local ring, whose Jacobson radical $\rad (\End(Q_\k))$ coincides with the kernel of the homomorphism  
$$\tau^{\leq -\rho}: \End(Q_\k)\rightarrow \End(M_\k)=\End(E(-\rho)_\k)=\k.$$ 
It shows that the map $\tau^{\uparrow 0}$ in (\ref{equ 4-5}) factors through a homomorphism of right $\End(Q_\k)$-modules 
\begin{equation}\label{equ 4-6} 
\frac{\End(\sfT_{-\rho}^0 Q_\k)}{\End(\sfT_{-\rho}^0 Q_\k)\cdot \rad (\End(Q_\k))} \rightarrow \End(\sfT_{-\rho}^0 M_\k). 
\end{equation} 

\begin{claim}
The map (\ref{equ 4-6}) is an isomorphism. 
\end{claim} 
\begin{proof} 
We prove the isomorphism by constructing $\k$-basis on both sides. 
By adjunction there is a factorial isomorphism for any $M_1\in \sO^{-\rho}_{R}$ and $M_2\in \sO^{0}_{R}$, 
$$\adj: \Hom(\sfT^0_{-\rho}M_1,M_2)\xs \Hom(M_1, \sfT_0^{-\rho}M_2).$$ 
There is a natural isomorphism $\Upsilon:\id^{\oplus|W|}\xs \sfT^{-\rho}_0 \sfT_{-\rho}^0$ by Lemma \ref{lem 5.0}(3). 
For $w\in W$, we let $\iota_w\in \End(\sfT_{-\rho}^0Q_R)$ be the element whose image under the composition 
\begin{equation}\label{equ 5.5} 
\End(\sfT^0_{-\rho}Q_R)\xrightarrow[\sim]{\adj} 
\Hom(Q_R, \sfT_0^{-\rho} \sfT^0_{-\rho}Q_R)\xrightarrow[\sim]{\Upsilon_{Q_R,*}^{-1}} \Hom(Q_R,Q_R^{\oplus |W|}) 
\end{equation} 
represents the embedding of the $w$-th direct factor. 
By adjunction, we have $\adj(\iota_w\circ \sfT_{-\rho}^0f)=\adj(\iota_w)\circ f$ for any $f\in \End(Q_R)$, hence the family $\{\iota_w\}_{w\in W}$ forms a free basis of $\End(\sfT^0_{-\rho}Q_R)$ as a right $\End(Q_R)$-module. 

By Lemma \ref{lem 5.1}(3), we have $\tau^{\leq -\rho} \sfT^{-\rho}_0 \sfT_{-\rho}^0= \sfT^{-\rho}_0 \sfT_{-\rho}^0\tau^{\leq -\rho}$ as functors on $\sO^{-\rho}_R$. 
Hence for any $f\in \End(\sfT_{-\rho}^0Q_R)$, there is a commutative diagram 
$$\begin{tikzcd}[column sep=huge]
Q_R\arrow[r,"\adj(f)"] \arrow[d,two heads,"\epsilon^{\leq -\rho}"'] 
& \sfT^{-\rho}_0 \sfT_{-\rho}^0 Q_R \arrow[d,two heads,"\sfT^{-\rho}_0 \sfT_{-\rho}^0(\epsilon^{\leq -\rho})=\epsilon^{\leq -\rho}"] \\ 
M_R \arrow[r,"\adj(\tau^{\leq 0}(f))"] & \sfT^{-\rho}_0 \sfT_{-\rho}^0 M_R.
\end{tikzcd}$$
It follows that $\adj(\tau^{\uparrow 0} (f))=\tau^{\leq -\rho}(\adj(f))$. 
Note that 
$$\Upsilon^{-1}_{M_R}\circ \adj(\tau^{\leq -\rho}(\iota_w))=\Upsilon^{-1}_{M_R}\circ \tau^{\leq -\rho}(\adj(\iota_w))=\tau^{\leq -\rho}(\Upsilon^{-1}_{Q_R}\circ \adj(\iota_w))$$ 
is the embedding of the $w$-th factor $M_R\rightarrow M_R^{\oplus |W|}$. 
So the family $\{\tau^{\leq -\rho}(\iota_w)\}_{w\in W}$ forms an $R$-basis of $\End(\sfT_0^{-\rho}M_R)$, using (\ref{equ 5.5}) for $M_R$. 
Finally, we let $R=\k$, then the homomorphism $\tau^{\leq 0}:\End(\sfT_{-\rho}^0Q_\k)\rightarrow \End(\sfT_{-\rho}^0M_\k)$ maps the basis $\{\iota_w\}_{w\in W}$ to $\{\tau^{\leq -\rho}(\iota_w)\}_{w\in W}$, which shows that (\ref{equ 4-6}) is an isomorphism. 
\end{proof}

We see that the composition 
\begin{equation}\label{equ 4-8} 
H^\bullet(\check{G}/\check{B})\otimes \End(Q_\k)\rightarrow \End(\sfT_{-\rho}^0 Q_\k)\xrightarrow{\tau^{\uparrow 0}} \End(\sfT_{-\rho}^0 M_\k) 
\end{equation}
kills $H^\bullet(\check{G}/\check{B})\otimes \rad(\End(Q_\k))$, and modulo $\rad(\End(Q_\k))$ it becomes an isomorphism 
$$H^\bullet(\check{G}/\check{B})\xs \End(\sfT_{-\rho}^0 M_\k)$$ 
by Lemma \ref{lem 5.2}. 
Combining with the claim above, the first map in (\ref{equ 4-8}) is an isomorphism modulo $\rad(\End(Q_\k))$. 
Applying Nakayama's Lemma to the local algebra $\End(Q_\k)$, we deduce that the map 
$$H^\bullet(\check{G}/\check{B})\otimes \End(Q_\k)\rightarrow \End(\sfT_{-\rho}^0 Q_\k)$$ 
is surjective. 
It must be an isomorphism, as both sides have the same dimension by (\ref{equ 5.5}). 
Applying Nakayama's Lemma again to the local ring $S$, the map 
$$H_{\check{T}}^\bullet(\check{G}/\check{B})\otimes_{S'} \End(Q_S)\rightarrow \End(\sfT_{-\rho}^0 Q_S)$$ 
is a surjection of free $S$-modules of finite ranks, which then must be an isomorphism by equal ranks on both sides using (\ref{equ 5.5}) again. 

By the isomorphism (\ref{equ 5.3}) and the fact that $H^\bullet(\check{G}/\check{B})$ and $\End(Q_\k)$ are local $\k$-algebras, it follows that $\End(\sfT_{-\rho}^0 Q_\k)$ is local. 
Therefore $\sfT_{-\rho}^0 Q_\k$ is indecomposable, then so is $\sfT_{-\rho}^0 Q_S$ because $S$ is local. 
It implies that $\sfT_{-\rho}^0 Q_R=Q(w_0\bullet 0)_R^{\uparrow l\mu}$ for $R=S$ or $\k$. 
\end{proof} 

Taking limit on both sides of (\ref{equ 5.4}), we get an algebra isomorphism 
$$H_{\check{T}}^\bullet(\check{G}/\check{B})\otimes_{S'} \Lim{\mu\geq 0} \End(Q(-\rho)_R^{\leq \bm{\mu}})\xs \Lim{\mu\geq 0} \End(Q(w_0\bullet 0)_R^{\uparrow l\mu}) ,$$ 
which therefore induces an isomorphism 
\begin{equation}\label{equ 5.6} 
H_{\check{T}}^\bullet(\check{G}/\check{B})\otimes_{S'} Z\big(\Lim{\mu\geq 0} \End(Q(-\rho)_R^{\leq \bm{\mu}})\big)\xs Z\big(\Lim{\mu\geq 0} \End(Q(w_0\bullet 0)_R^{\uparrow l\mu})\big). 
\end{equation} 

\subsubsection{ } 
Let $R$ be a commutative Noetherian $S$-algebra. 

\begin{lem}\label{lem 5.5} 
For any $M\in \sO^{-\rho}_R$, there is a commutative diagram 
\begin{equation}\label{equ 5.7} 
    \begin{tikzcd}
    H_{\check{T}}^\bullet(\Gr^\circ) \arrow[rr,"(-\otimes_S R)\circ\bb_0"]\arrow[d] & &Z(\sO^{-\rho}_R)\arrow[r] & \End(M) 
        \arrow[d,"\sfT_{-\rho}^0"] \\ 
   H_{\check{T}}^\bullet(\Fl^\circ) \arrow[rr,"(-\otimes_S R)\circ\bb_0"]& & Z(\sO^{0}_R)\arrow[r] & \End(\sfT_{-\rho}^0M).
    \end{tikzcd}
    \end{equation}
\end{lem}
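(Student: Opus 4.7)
My plan is to reduce the commutativity of (\ref{equ 5.7}) to an explicit verification on Verma modules in a generic deformed setting, by exploiting naturality in $M$ together with Theorem \ref{thm 3.11}.

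First, I claim both sides of (\ref{equ 5.7}) are natural in $M$. Writing $\pi: \Fl^\circ \to \Gr^\circ$ for the natural projection and $\pi^*z$ for the pullback of $z \in H_{\check{T}}^\bullet(\Gr^\circ)$, both $T^0_{-\rho}(\bb(z)_M)$ and $\bb(\pi^*z)_{T^0_{-\rho}M}$ are compatible with $T^0_{-\rho}(f)$ for any $f: M \to M'$: the former by applying $T^0_{-\rho}$ to the centrality relation for $\bb(z)$, the latter by centrality of $\bb(\pi^*z)$ in $Z(\sO^0_R)$. Hence both give natural transformations of the exact functor $T^0_{-\rho}$, and equality on a generating family of objects propagates to all $M$ by right-cancellation of epimorphisms (using exactness of $T^0_{-\rho}$).

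Second, any $M \in \sO^{-\rho}_R$ lies in some truncation $\sO^{-\rho, \leq \bm\mu}_R$ and admits an epimorphism from a projective cover $Q = Q(\bm\lambda)_R^{\leq \bm\mu}$ by Lemma \ref{lem 3.2}, so it suffices to verify equality at $M = Q$. By base change (equations (\ref{equ 2.8}) and (\ref{equ 2.10})), both sides are compatible with $-\otimes_S R$, reducing to $Q = Q(\bm\lambda)_S^{\leq \bm\mu}$. This module is free over $S$ with a Verma filtration (Lemma \ref{lem 3.2}), so $\End(T^0_{-\rho}Q)$ is $S$-torsion-free. Over the fraction field $K$ of $S$, the category becomes generically semisimple and $Q\otimes_S K$ splits as a direct sum of Verma modules $M(\bm\nu_i)_K$. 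The inclusion $\End(T^0_{-\rho}Q) \hookrightarrow \End(T^0_{-\rho}Q\otimes_S K) = \bigoplus_i \End(T^0_{-\rho}M(\bm\nu_i)_K)$ thus reduces the problem to $M = M(\bm\nu)_K$ for $\nu \in \Lambda$.

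Third, for $M = M(\bm\nu)_K$, Lemma \ref{lem 5.1}(1) combined with semisimplicity over $K$ gives $T^0_{-\rho}M(\bm\nu)_K = \bigoplus_{w \in W} M(w \bullet 0 + l\nu)_K$. By Theorem \ref{thm 3.11}, $\bb(z)$ acts on $M(\bm\nu)_K$ as the scalar $z|_{\delta_{l\nu}}$, hence $T^0_{-\rho}(\bb(z)_{M(\bm\nu)_K})$ acts as the same scalar on each summand. On the other hand, $\bb(\pi^*z)$ acts on the summand $M(w \bullet 0 + l\nu)_K$ as $(\pi^*z)|_{\delta_{w\rho + l\nu}}$. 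Under the identification of $\check{T}$-fixed points via (\ref{equ 1.1}) and using $W_{l,-\rho} = W$, the projection $\pi$ sends $\delta_{w\rho + l\nu}$ to $\delta_{l\nu}$, so $(\pi^*z)|_{\delta_{w\rho+l\nu}} = z|_{\delta_{l\nu}}$. Thus both sides agree. The hard part is the reduction from general $M$ to Vermas over $K$: it requires combining naturality carefully with base-change and with the generic semisimplicity of the deformed hybrid category; the final comparison on generic Vermas is then a transparent consequence of Theorem \ref{thm 3.11} and the geometry of $\pi$.
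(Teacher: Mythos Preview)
Your proof is correct and follows essentially the same route as the paper: reduce from arbitrary $M$ to projectives in a truncation via naturality/epimorphisms, then by base change to $R=S$, then by $S$-torsion-freeness to the fraction field $\K$, where everything splits into Vermas and Theorem~\ref{thm 3.11} reduces the statement to the compatibility of $\check T$-fixed points under $\pi:\Fl^\circ\to\Gr^\circ$. The only stylistic differences are that you invoke naturality abstractly where the paper passes through the subring $\End(Q;M)\subset\End(Q)$, and you spell out the fixed-point matching explicitly where the paper records it as the diagram $\Fun(\Lambda,S')=\Fun(W_\af,S')^W$; these are equivalent formulations of the same argument.
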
 
\begin{proof} 
Choose a surjection $f:Q\twoheadrightarrow M$, where $Q$ is a projective module in a truncation of $\sO^{-\rho}_R$. 
Denote by $\End(Q;M)$ the subring of endomorphisms of $Q$ preserving $\ker f$. 
Then we have the following diagram 
$$\begin{tikzcd}
H_{\check{T}}^\bullet(\Gr^\circ) \arrow[r]\arrow[d] & \End(Q;M)\arrow[d,"\sfT_{-\rho}^0"]\arrow[r,two heads] & \End(M) \arrow[d,"\sfT_{-\rho}^0"] \\ 
H_{\check{T}}^\bullet(\Fl^\circ) \arrow[r]& \End(\sfT_{-\rho}^0Q; \sfT_{-\rho}^0M)\arrow[r,two heads] & \End(\sfT_{-\rho}^0M),
\end{tikzcd}$$
where the right square is naturally commutes, and the right horizontal maps are surjective because $Q$ and $\sfT_{-\rho}^0Q$ are projective in some truncated categories, thanks to Lemma \ref{lem 5.1}(2). 
Hence it is enough to prove the assertion when $M$ is projective in a truncated category. 
By (\ref{equ 2.10}), it reduces to the case when $R=S$. 
Let $\K$ be the fraction field of $S$. 
Since $M$ is torsion free over $S$, we have an inclusion $\End(M)\hookrightarrow \End(M\otimes_S \K)$. 
We only need to prove for the case $R=\K$. 

By \cite[Lem 3.5]{Situ1} the category $\sO_\K$ is semi-simple, whose simple objects are Verma modules. 
Now $M\otimes_S \K$ decomposes into a direct sum of Verma modules in $\sO^{-\rho}_\K$, we may assume that $M=M(\bm{\lambda})_\K$ for some $\lambda\in \rQ$. 
By Lemma \ref{lem 5.1}(1) $\sfT_{-\rho}^0M(\bm{\lambda})_\K=\bigoplus_{w\in W}M(w\bullet 0+l\lambda)_\K$. 
By Theorem \ref{thm 3.11}, the actions of cohomology rings on Verma modules coincides with the restrictions on certain $\check{T}$-fixed points.  
Now the conclusion follows from the commutative diagram 
$$\begin{tikzcd}
H_{\check{T}}^\bullet(\Gr^\circ) \arrow[r,hook] \arrow[d] 
& \Fun(\Lambda,S') \arrow[d] \\ 
H_{\check{T}}^\bullet(\Fl^\circ) \arrow[r,hook] & \Fun(W_\af,S'),
\end{tikzcd}$$
where the horizontal maps are by restrictions on the $\check{T}$-fixed points, and the right vertical maps is by identifying $\Fun(\Lambda,S')=\Fun(W_\af,S')^W$ via the right action of $W$ on $W_\af$. 
\end{proof} 

The restrictions $Z(\sO^{0}_{\k})\rightarrow \End(Q(w_0\bullet 0)_\k^{\uparrow l\mu})$ for each $\mu$ yield an algebra homomorphism $Z(\sO^{0}_{\k})\rightarrow \Lim{\mu\geq 0}\End(Q(w_0\bullet 0)_\k^{\uparrow l\mu})$, whose image is a central subalgebra. 

\begin{prop}\label{prop 5.6} 
The composition 
$$H^\bullet(\Fl^\circ)^{\wedge} \xrightarrow{\overline{\bb}_0} Z(\sO^{0}_{\k}) \rightarrow Z\big(\Lim{\mu\geq 0}\End(Q(w_0\bullet 0)_\k^{\uparrow l\mu})\big)$$ 
is an isomorphism. 
\end{prop}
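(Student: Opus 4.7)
The plan is to reduce the assertion to the isomorphism (\ref{equ 5.6}) at $R=\k$ by decomposing $H^\bullet(\Fl^\circ)^\wedge$ via the K\"unneth-type factorization (\ref{equ 5.1}), and then transferring the resulting compatibilities along the translation functor $T^0_{-\rho}$.

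Since $H_{\check{T}}^\bullet(\check{G}/\check{B})$ is free of finite rank over $S'$, completing (\ref{equ 5.1}) and specializing at the augmentation ideal of $S'$ yields an isomorphism of $\k$-algebras
\begin{equation*}
H^\bullet(\check{G}/\check{B}) \otimes H^\bullet(\Gr^\circ)^\wedge \xs H^\bullet(\Fl^\circ)^\wedge.
\end{equation*}
Under this identification I would show that the composition stated in the proposition factors as $\id \otimes \alpha$ followed by the specialization of (\ref{equ 5.6}) at $R=\k$, where
\begin{equation*}
\alpha:\ H^\bullet(\Gr^\circ)^\wedge \xrightarrow{\overline{\bb}} Z(\sO^{-\rho}_\k) \xs Z\bigl(\Lim{\mu\geq 0}\End(Q(-\rho)_\k^{\leq \bm{\mu}})\bigr)
\end{equation*}
is the isomorphism furnished by (\ref{equ 4.26}) and Lemma \ref{lem 4.13}.

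Establishing the factorization requires two compatibilities. For the $H^\bullet(\Gr^\circ)^\wedge$ factor, I would apply Lemma \ref{lem 5.5} with $M = Q(-\rho)_\k^{\leq \bm{\mu}}$ and invoke Lemma \ref{lem 5.3} to identify $T^0_{-\rho}Q(-\rho)_\k^{\leq \bm{\mu}} = Q(w_0\bullet 0)_\k^{\leq l\mu}$; passing to the inverse limit over $\mu \geq 0$ then produces the required commutative square. For the $H^\bullet(\check{G}/\check{B})$ factor, commutativity is built into the very definition of (\ref{equ 5.3}): the map from $H^\bullet(\check{G}/\check{B})$ into $\End(Q(w_0\bullet 0)_\k^{\leq l\mu})$ is, by construction, the composition $H^\bullet(\check{G}/\check{B}) \hookrightarrow H^\bullet(\Fl^\circ)^\wedge \xrightarrow{\overline{\bb}} Z(\sO^0_\k) \to \End(Q(w_0\bullet 0)_\k^{\leq l\mu})$.

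Since both $\id \otimes \alpha$ and the specialization of (\ref{equ 5.6}) at $R=\k$ are isomorphisms, the claim follows. The main technical point to verify will be the commutation of the inverse limit over $\mu$ with the tensor product by $H^\bullet(\check{G}/\check{B})$; this is automatic because $H^\bullet(\check{G}/\check{B})$ is finite-dimensional over $\k$, so one readily passes from (\ref{equ 5.4}) to its limiting form (\ref{equ 5.6}) and then to the central subalgebras required here. The rest is a bookkeeping argument that every commutative square in sight is indeed commutative, with the nontrivial input being Lemma \ref{lem 5.5}.
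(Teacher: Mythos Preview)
Your proposal is correct and follows essentially the same approach as the paper's proof: both assemble the commutative square with the K\"unneth factorization $H^\bullet(\check{G}/\check{B})\otimes H^\bullet(\Gr^\circ)^\wedge \xs H^\bullet(\Fl^\circ)^\wedge$ on the left and (\ref{equ 5.6}) on the right, then conclude from the isomorphism $\alpha$ (via (\ref{equ 4.26}) and Lemma~\ref{lem 4.13}) that the bottom map is an isomorphism. Your write-up is more explicit than the paper's about the two separate compatibilities (Lemma~\ref{lem 5.5} for the $\Gr^\circ$ factor, the definition of (\ref{equ 5.3}) for the $\check{G}/\check{B}$ factor) and about why the limit commutes with the tensor product, but the argument is the same.
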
 
\begin{proof} 
By Lemma \ref{lem 5.5}, there is a commutative diagram 
$$\begin{tikzcd}
H^\bullet(\check{G}/\check{B})\otimes H^\bullet(\Gr^\circ)^{\wedge} \arrow[d,"(\ref{equ 5.1})","\simeq"'] \arrow[r] 
& H^\bullet(\check{G}/\check{B}) \otimes Z\big(\Lim{\mu\geq 0}\End(Q(-\rho)_\k^{\leq \bm{\mu}}\big) \arrow[d,"\simeq"',"(\ref{equ 5.6})"] \\ 
H^\bullet(\Fl^\circ)^{\wedge} \arrow[r] 
& Z\big(\Lim{\mu\geq 0}\End(Q(w_0\bullet 0)_\k^{\uparrow l\mu})\big).
\end{tikzcd}$$
By Lemma \ref{lem 4.13}, the upper horizontal map is an isomorphism, so is the lower one. 
\end{proof} 

Let $\eta:\id \rightarrow \sfT_{-\rho}^{0} \sfT^{-\rho}_{0}$ be the unit for the adjoint pair $(\sfT^{-\rho}_{0}, \sfT_{-\rho}^{0})$. 
\begin{lem}\label{lem 5.7} 
Let $R$ be a deformation ring of $U^\hb_\zeta$. 
\begin{enumerate} 
\item For any module $Q$ in $\sO^{0}_R$ admitting Verma flags, the unit $\eta_Q: Q\rightarrow \sfT_{-\rho}^{0} \sfT^{-\rho}_{0}Q$ is an injection; 
\item The functor $\sfT_0^{-\rho}$ is faithful for modules in $\sO^{0}_R$ admitting Verma flags. 
\end{enumerate} 
\end{lem}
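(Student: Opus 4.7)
The plan is to deduce (2) formally from (1), and to prove (1) by reducing to the case of Verma modules via exactness and then invoking generic semisimplicity together with flat base change.

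For (2), I would use that $T_0^{-\rho}$ and $T^{-\rho}_0$ agree as functors (via $V(\rho)_\k \cong V(\rho)^*_\k$, owing to $-w_0\rho = \rho$), so that $(T_0^{-\rho}, T^0_{-\rho})$ is an adjoint pair with the same unit $\eta$ as in (1). The adjunction bijection identifies $T_0^{-\rho}(f): T_0^{-\rho} M_1 \to T_0^{-\rho} M_2$ with $\eta_{M_2} \circ f: M_1 \to T^0_{-\rho} T_0^{-\rho} M_2$. Hence $T_0^{-\rho}(f) = 0$ forces $\eta_{M_2} \circ f = 0$; when $M_2$ admits a Verma flag, part (1) makes $\eta_{M_2}$ injective and therefore $f = 0$.

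For (1), the first step is to note that $T^0_{-\rho} T_0^{-\rho}$ is exact by Lemma \ref{lem 5.0}(1). Given a short exact sequence $0 \to Q' \to Q \to Q'' \to 0$ of Verma-flag modules in $\sO^0_R$, naturality of $\eta$ yields a commutative diagram with exact rows, and the snake lemma gives $\ker \eta_Q = 0$ from $\ker \eta_{Q'} = \ker \eta_{Q''} = 0$. Induction on the length of the Verma flag then reduces the problem to $Q = M(\lambda)_R$ with $\lambda = w \bullet 0 + l\nu$. In this case Lemma \ref{lem 5.1}(1) identifies $T_0^{-\rho} M(\lambda)_R = M(\bm{\nu})_R$, and $T^0_{-\rho} M(\bm{\nu})_R$ carries a Verma filtration whose subquotients are $\{M(w' \bullet 0 + l\nu)_R\}_{w' \in W}$, each appearing with multiplicity one. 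Because translation functors commute with base change, $\eta_{M(\lambda)_R} = \eta_{M(\lambda)_S} \otimes_S R$, so it suffices to show that $\eta_{M(\lambda)_S}$ is injective with $S$-flat cokernel. Base changing further to $\K = \Frac(S)$, the category $\sO^0_\K$ is semisimple (the generic central characters of distinct Verma modules separate), so $T^0_{-\rho} M(\bm{\nu})_\K \cong \bigoplus_{w' \in W} M(w' \bullet 0 + l\nu)_\K$. Under adjunction, $\eta_{M(\lambda)_\K}$ corresponds to $\id_{M(\bm{\nu})_\K}$ and is nonzero, so by semisimplicity it is a split inclusion onto the $M(\lambda)_\K$ summand. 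Since both $M(\lambda)_S$ and $T^0_{-\rho} M(\bm{\nu})_S$ are $S$-free (having Verma flags), the base-change square to $\K$ forces $\eta_{M(\lambda)_S}$ to be injective.

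The main obstacle will be verifying $S$-flatness of $\coker \eta_{M(\lambda)_S}$, which is what guarantees that base change to a general $R$ preserves injectivity. I expect the cleanest route is to show this cokernel inherits a Verma flag from the Verma filtration on $T^0_{-\rho} M(\bm{\nu})_S$, equivalently that $\eta$ identifies $M(\lambda)_S$ with a specific term of that filtration. This should follow either from a direct analysis of the Weyl-module filtration on $V(\rho)_\k$ together with the classical translation principle, or from a weight-space rank count comparing generic and closed fibres via Nakayama's lemma (the closed-fibre ranks being pinned down by the same argument applied to $\sO^0_\k$, once injectivity there is in hand).
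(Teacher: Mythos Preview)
Your argument for (2) is correct and is essentially the paper's: by naturality $\eta_{M_2}\circ f = T^{0}_{-\rho}T^{-\rho}_{0}(f)\circ\eta_{M_1}$, so $T^{-\rho}_{0}(f)=0$ forces $\eta_{M_2}\circ f=0$, and (1) finishes. (The remark that $V(\rho)_\k\cong V(\rho)_\k^*$ is true but unnecessary; the biadjunction of Lemma~\ref{lem 5.0}(1) already supplies the unit $\eta$.)

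For (1), however, your approach carries a genuine gap. The reduction to Verma modules is fine, and passing from $S$ to $\K=\Frac(S)$ does give injectivity of $\eta_{M(\lambda)_S}$. But the lemma is stated for an arbitrary deformation ring $R$, and the case $R=\k$ is precisely what is used in Theorem~\ref{thm 5.10}. Your proposed fix (b) is circular: by right-exactness $\coker\eta_{M(\lambda)_S}\otimes_S\k=\coker\eta_{M(\lambda)_\k}$, so the rank comparison you want on weight spaces is equivalent, via the long exact $\mathrm{Tor}$ sequence, to $\mathrm{Tor}_1^S(\coker\eta_{M(\lambda)_S},\k)=0$, i.e.\ to the injectivity of $\eta_{M(\lambda)_\k}$ itself. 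Fix (a) --- exhibiting $\im\eta_{M(\lambda)_S}$ as a step in a Verma filtration of $T^{0}_{-\rho}T^{-\rho}_{0}M(\lambda)_S$ --- is plausible but is not supplied by Lemma~\ref{lem 5.0}(2), which only records the multiset of Verma factors, not an ordering adapted to $\eta$; establishing it would require an independent argument.

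The paper bypasses base change entirely with a short direct contradiction valid for every $R$. If $K:=\ker\eta_Q\neq 0$, then by adjunction the map $T^{-\rho}_{0}K\to T^{-\rho}_{0}Q$ (the adjoint of $K\hookrightarrow Q\xrightarrow{\eta_Q}T^{0}_{-\rho}T^{-\rho}_{0}Q$, which is zero) vanishes; since $T^{-\rho}_{0}$ is exact this map is also injective, so $T^{-\rho}_{0}K=0$. On the other hand, a Verma flag makes $Q$ free over $\fU^-_\zeta\otimes R$, hence $K$ is $\fU^-_\zeta$-torsion-free; a highest-weight vector $k\in K_\lambda$ therefore generates a submodule isomorphic to $M(\lambda)_{R/\mathrm{Ann}(k)}$, and Lemma~\ref{lem 5.0}(2) gives $T^{-\rho}_{0}M(\lambda)_{R/\mathrm{Ann}(k)}\neq 0$, contradicting $T^{-\rho}_{0}K=0$. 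This uniform argument is what you are missing.
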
 
\begin{proof} 
(1) 
Suppose that $K=\ker \eta_Q$ is nonzero. 
By adjunction the map $\sfT_{0}^{-\rho}K\rightarrow \sfT^{-\rho}_{0}Q$ is by zero. 
Since $\sfT_{0}^{-\rho}$ is exact, $\sfT_{0}^{-\rho}K=0$. 
Choose a highest weight vector $k\in K_\lambda$. 
Since $Q$ admits Verma flags, it is free as a module of $\fU^-_\zeta\otimes R$. 
In particular, $K$ is torsion-free over $\fU^-_\zeta$. 
It shows that the surjection $M(\lambda)_R\rightarrow (U^\hb_\zeta\otimes R).k$ induces an isomorphism from $M(\lambda)_{R/\mathrm{Ann}(k)}$ to the image, where $\mathrm{Ann}(k)$ is the annihilator of $k$ in $R$. 
The submodule $\sfT_{0}^{-\rho} M(\lambda)_{R/\mathrm{Ann}(k)}$ of $\sfT_{0}^{-\rho}K$ is nonzero by Lemma \ref{lem 5.0}(2), which leads to a contradiction. 

(2) Let $M_i$, $i=1,2$ be modules in $\sO^{0}_R$ admitting Verma flags. 
For any $f\in \Hom(M_1,M_2)$ such that $\sfT_0^{-\rho}(f)=0$, we have $\eta_{M_2} \circ f=\sfT_{-\rho}^{0} \sfT^{-\rho}_{0}(f)\circ \eta_{M_1}=0$. 
By (1) $\eta_{M_2}$ is an injection, it follows that $f=0$. 
\end{proof} 

\begin{thm}\label{thm 5.10} 
There is an isomorphism 
$$\overline{\bb}_0: H^\bullet(\Fl^\circ)^{\wedge}\xs Z(\sO^{0}_{\k}).$$ 
\end{thm}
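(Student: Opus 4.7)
The plan is to exploit Proposition~\ref{prop 5.6}, which asserts that the composition
$$H^\bullet(\Fl^\circ)^\wedge\ \xrightarrow{\overline{\bb}}\ Z(\sO^0_\k)\ \xrightarrow{\res}\ Z\bigl(\Lim{\mu\geq 0}\End(Q(w_0\bullet 0)_\k^{\leq l\mu})\bigr)$$
is an isomorphism, where $\res$ denotes the natural restriction. This already forces $\overline{\bb}$ to be injective, so it suffices to prove that $\res$ is also injective: both maps will then automatically be isomorphisms, and in particular $\overline{\bb}$ will be.

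To establish injectivity of $\res$, suppose $z\in Z(\sO^0_\k)$ vanishes on every $Q(w_0\bullet 0)_\k^{\leq l\mu}$. By the $l\rQ$-symmetry $-\otimes \k_{l\lambda}$ with $\lambda\in \rQ$, which is an auto-equivalence of $\sO^0_\k$ commuting with $z$ by naturality, $z$ also vanishes on every $Q(w_0\bullet 0+l\lambda)_\k^{\leq l\mu}$. Since every module in $\sO^0_\k$ is exhausted by its truncations and each truncation admits a projective cover, it suffices to show $z_P=0$ for any projective $P\in \sO^{0,\leq l\mu}_\k$. Fix such a $P$. Because $(T^0_{-\rho},T^{-\rho}_0)$ is a biadjoint pair of exact functors (Lemma~\ref{lem 5.0}) compatible with truncations (Lemma~\ref{lem 5.1}(2)), the module $T^{-\rho}_0 P$ is projective in $\sO^{-\rho,\leq \bm{\mu}}_\k$, hence a direct sum of modules of the form $Q(-\rho+l\lambda')_\k^{\leq \bm{\mu}}$. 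Combining Lemma~\ref{lem 5.3} with the $l\rQ$-twist identification $Q(-\rho+l\lambda')_\k^{\leq \bm{\mu}}\simeq Q(-\rho)_\k^{\leq \bm{\mu}-l\lambda'}\otimes \k_{l\lambda'}$, we conclude that $T_{-\rho}^0 T^{-\rho}_0 P$ decomposes as a direct sum of modules $Q(w_0\bullet 0+l\lambda')_\k^{\leq l\mu}$, on all of which $z$ vanishes by the previous step. Lemma~\ref{lem 5.7}(1) supplies the injection $\eta_P:P\hookrightarrow T_{-\rho}^0 T^{-\rho}_0 P$, and the naturality of $z$ yields $\eta_P\circ z_P=z_{T_{-\rho}^0 T^{-\rho}_0 P}\circ \eta_P=0$, forcing $z_P=0$ as desired.

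The key technical input, and the main obstacle one would need in the absence of prior preparation, is the injectivity of the unit $\eta_P$ (Lemma~\ref{lem 5.7}(1)): without it one could only conclude that $z_P$ factors through $\ker\eta_P$, which a priori need not vanish. Everything else is bookkeeping with translation functors and the $l\rQ$-twist, directly paralleling step (c) of the classical computation of the center of the BGG category $\sO$ as outlined in the introduction.
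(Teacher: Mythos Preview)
Your overall strategy matches the paper's: use Proposition~\ref{prop 5.6} to reduce to injectivity of the restriction map, then propagate the vanishing of $z$ from the family $\{Q(w_0\bullet 0)_\k^{\leq l\mu}\}_\mu$ to all projectives using translation functors and Lemma~\ref{lem 5.7}(1). Your ``Step 2'' argument is essentially identical to the paper's.

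There is, however, a genuine gap in your first step. You write that the auto-equivalence $-\otimes\k_{l\lambda}$ ``commutes with $z$ by naturality'', and conclude directly that $z$ vanishes on every $Q(w_0\bullet 0+l\lambda)_\k^{\leq l\mu}$. This inference is not valid. Naturality of a central element $z$ means it commutes with every \emph{morphism} in the category; it says nothing about compatibility with an auto-equivalence $\Phi$. In general $\Phi$ acts nontrivially on $Z(\sC)$ via $z\mapsto\bigl(\Phi^{-1}(z_{\Phi M})\bigr)_M$, and there is no reason an arbitrary $z$ should be a fixed point. (A toy model: in the category of $\Z$-graded $A$-modules, the center is $\prod_{n\in\Z}A$ and the grading shift permutes the factors.) So from $z_{Q(w_0\bullet 0)^{\leq l\mu}}=0$ you cannot conclude $z_{Q(w_0\bullet 0+l\lambda)^{\leq l\mu}}=0$ without further input.

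The paper supplies exactly this missing input in its Step~1: using the equivalence of Theorem~\ref{thm 4.7} and the explicit maps of \eqref{equ 4.21}, one gets honest injections in $\sO^0_\k$,
\[
T^0_{-\rho}\iota_{e_{\alpha_i}}:\ Q(w_0\bullet 0+l\alpha_i)^{\leq l\mu}_\k\ \hookrightarrow\ Q(w_0\bullet 0)^{\leq l\mu}_\k,
\]
and analogous ones in the $-\alpha_i$ direction. Naturality of $z$ along these \emph{morphisms} (now legitimately) forces $z$ to vanish on the source once it vanishes on the target. Since the $l\rQ$-symmetry transports these injections to injections between any two projectives differing by $\pm l\alpha_i$, one iterates to reach all $Q(w_0\bullet 0+l\nu)^{\leq l\mu}_\k$. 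With Step~1 repaired in this way, your Step~2 goes through unchanged.
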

\begin{proof} 
Using Proposition \ref{prop 5.6}, it is enough to show that the restriction 
$$ Z(\sO^{0}_{ \k}) \rightarrow Z\big( \Lim{\mu\geq 0}\End(Q(w_0\bullet 0)_\k^{\uparrow l\mu}) \big) $$ 
is an injection. 
Suppose there is an element $z\in Z(\sO^{0}_{ \k})$ acting by zero on $Q(w_0\bullet 0)_\k^{\uparrow l\mu}$ for each $\mu\geq 0$, we have to show that $z$ acts by zero on $Q(w\bullet 0+ l\nu)^{\uparrow l\mu}_\k$ for any $w\in W$ and any $\mu\geq\nu$ in $\rQ$. 
The proof is by combining the following two steps and using the $l\rQ$-symmetry on the category $\sO^{0}_{\k}$. 
	
\textit{Step 1. Show that $z$ acts by zero on $Q(w_0\bullet 0+ l\nu)^{\uparrow l\mu}_\k$, for any any $\mu\geq\nu$ in $\rQ$.} 
By the $l\rQ$-symmetry, it is enough to consider the cases when $\nu=\pm \alpha_i$. 
Suppose $\nu=\alpha_i$. 
Recall the injection $\iota_{\alpha_i}$ defined in (\ref{equ 4.21}), which gives an injection $\iota_{\alpha_i}: Q(\bm{\alpha_i})^{\leq \bm{\mu}}_\k \hookrightarrow Q(\bm{0})^{\leq \bm{\mu}}_\k$ by the equivalence in Theorem \ref{thm 4.7}. 
It yields an inclusion 
$$\sfT^0_{-\rho}\iota_{\alpha}: Q(w_0\bullet 0+l{\alpha_i})^{\uparrow l\mu}_\k \hookrightarrow Q(w_0\bullet 0)^{\uparrow l\mu}_\k.$$ 
Hence $z$ acts by zero on $Q(w_0\bullet 0+l{\alpha_i})^{\uparrow l\mu}_\k$. 
The case $\nu=-\alpha_i$ is proved similarly, using the injection 
$$\fQ(-\alpha)^{\leq \mu}_\k \hookrightarrow \fQ(0)^{\leq \mu+\alpha}_\k, \quad 1\otimes 1\otimes 1 \mapsto \varphi_i \otimes 1\otimes 1.$$ 

\textit{Step 2. Show that $z$ acts by zero on $Q(w\bullet 0)^{\uparrow l\mu}_\k$, for any $w\in W$.} 
By Lemma \ref{lem 5.7}(1), it is enough to show that $z$ acts by zero on $\sfT_{-\rho}^{0} \sfT^{-\rho}_{0}Q(w\bullet 0)^{\uparrow l\mu}_\k$. 
Since $\sfT^{-\rho}_{0} Q(w\bullet 0)^{\uparrow l\mu}_\k$ is projective in the category $\sO^{-\rho, \leq \bm{\mu}}_{\k}$, it is a direct sum of $Q(\bm{\nu})^{\leq \bm{\mu}}_\k$ for some $\nu\leq \mu$. 
By Lemma \ref{lem 5.3}, $\sfT_{-\rho}^{0} \sfT^{-\rho}_{0}Q(w\bullet 0)^{\uparrow l\mu}_\k$ is a direct sum of $Q(w_0\bullet0+l\nu)^{\uparrow l\mu}_\k$ for some $\nu\leq \mu$, on which $z$ acts by zero, thanks to \textit{Step 1}. 
\end{proof}

\section{Center of singular blocks}\label{sect 5} 

In this section, we use the isomorphism $\overline{\bb}_0$ for the principal block to show that the map 
$$\overline{\bb}_\omega: H^\bullet(\Fl^{\omega, \circ})^\wedge\rightarrow \sO^{\omega}_{\k}$$ is an isomorphism, for any $\omega \in \Xi_\sc$. 
Our main technique is to relate the centers of $\sO^{0}_{\k}$ and $\sO^{\omega}_{\k}$ by taking the trace of the translation functors. 
The general construction of taking trace of a functor is discussed in Appendix \ref{app D}. 

\subsection{Trace of translation functors}\label{subsect 5.1} 
Till the end of the section, we let $R=S$ or $\k$. 
Let $\omega_1,\omega_2\in \Xi_\sc$. 
By Lemma \ref{lem 5.0}(1), there is a triple of adjoint functors $(\sfT^{\omega_2}_{\omega_1}, \sfT^{\omega_1}_{\omega_2}, \sfT^{\omega_2}_{\omega_1})$ between $\sO^{\omega_1}_{R}$ and $\sO^{\omega_2}_{R}$. 
By \textsection \ref{subsect D.1}, it yields an $R$-linear map 
$$\tr_{\sfT^{\omega_2}_{\omega_1}}:\ Z(\sO^{\omega_2}_{R}) \rightarrow Z(\sO^{\omega_1}_{R}).$$ 
When $R=S$, by the isomorphism $\bb$ in Theorem \ref{thm 3.11}, we have an $S$-linear map 
$$\tr^{\omega_2}_{\omega_1,S}:\ H_{\check{T}}^\bullet(\Fl^{\omega_2,\circ})^\wedge_{S}\rightarrow H_{\check{T}}^\bullet(\Fl^{\omega_1,\circ})^\wedge_{S} .$$ 

\begin{lem} 
The map $\tr^{\omega_2}_{\omega_1,S}$ specializes to a well-defined $\k$-linear map 
$$\tr^{\omega_2}_{\omega_1}: H^\bullet(\Fl^{\omega_2,\circ})^\wedge \rightarrow H^\bullet(\Fl^{\omega_1,\circ})^\wedge$$ 
such that there is a commutative diagram 
$$\begin{tikzcd}
H^\bullet(\Fl^{\omega_2,\circ})^\wedge \arrow[d,"\overline{\bb}_{\omega_2}"']\arrow[r,"\tr^{\omega_2}_{\omega_1}"] & H^\bullet(\Fl^{\omega_1,\circ})^\wedge \arrow[d,"\overline{\bb}_{\omega_1}"] \\ 
Z(\sO^{\omega_2}_{\k})\arrow[r,"\tr_{\sfT^{\omega_2}_{\omega_1}}"] & Z(\sO^{\omega_1}_{\k}).
\end{tikzcd}$$
\end{lem}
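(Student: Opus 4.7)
The plan is to define $\tr^{\omega_2}_{\omega_1}$ by specializing $\tr^{\omega_2}_{\omega_1,S}$ from $S$-coefficients to $\k$-coefficients, and then to verify the required commutativity using the base change compatibility of the trace construction to be established in Appendix \ref{app D}. First, I would observe that, since the biadjoint pair $(T^{\omega_1}_{\omega_2}, T^{\omega_2}_{\omega_1})$ consists of $S$-linear functors, the trace $\tr_{T^{\omega_2}_{\omega_1},S}: Z(\sO^{\omega_2}_S) \to Z(\sO^{\omega_1}_S)$ is $S$-linear by the general construction recalled in Appendix \ref{app D}. Transporting along the $S$-linear isomorphism $\bb$ of Theorem \ref{thm 3.11} gives the $S$-linear map $\tr^{\omega_2}_{\omega_1,S}$. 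Since $H_{\check{T}}^\bullet(\Fl^{\omega,\circ})$ is free over $S' = H_{\check{T}}^\bullet(\pt)$, the completion satisfies $H_{\check{T}}^\bullet(\Fl^{\omega,\circ})^\wedge_S \otimes_S \k = H^\bullet(\Fl^{\omega,\circ})^\wedge$, and I would define
$$\tr^{\omega_2}_{\omega_1} := \tr^{\omega_2}_{\omega_1,S} \otimes_S \k.$$

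To verify commutativity of the displayed diagram, I would factor it into three layers. The top layer consists of the $S$-linear maps $\tr^{\omega_2}_{\omega_1,S}$ and $\tr_{T^{\omega_2}_{\omega_1},S}$, connected vertically by $\bb$; this square commutes by the very definition of $\tr^{\omega_2}_{\omega_1,S}$. Tensoring with $\k$ over $S$ produces the middle layer, whose horizontal maps are $\tr^{\omega_2}_{\omega_1}$ and $\tr_{T^{\omega_2}_{\omega_1},S} \otimes_S \k$, and whose vertical maps are $\bb \otimes_S \k$. The bottom layer involves $\tr_{T^{\omega_2}_{\omega_1},\k}$ and is joined to the middle by the base change map $Z(\sO^{\omega}_S) \otimes_S \k \to Z(\sO^{\omega}_\k)$ of (\ref{equ 2.8}). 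The composite of the vertical maps on each side recovers $\overline{\bb}$, thanks to the commutative square in Theorem \ref{thm 3.11}. Thus, granted commutativity of the bottom layer, the outer square is the one we want.

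The hard part is therefore showing that the trace of a biadjoint $S$-linear functor is compatible with the specialization of the deformation ring, namely that $\tr_{T^{\omega_2}_{\omega_1},\k} \circ (- \otimes_S \k) = (- \otimes_S \k) \circ \tr_{T^{\omega_2}_{\omega_1},S}$ on $Z(\sO^{\omega_2}_S)$, where the vertical arrows are the specializations in (\ref{equ 2.8}). To settle this I would unpack the trace as the composition built from the unit and counit of the biadjunction fixed in Remark \ref{rmk 4.2}, and check that these adjunction data are preserved by $- \otimes_S \k$. The key input will be the equivalence (\ref{equ 2.10}) of projective parts of truncated categories under base change, together with the fact that the translation functors are themselves defined by tensor product with specialized Weyl modules (hence commute with $- \otimes_S \k$), so that adjunction units and counits on projective generators descend naturally. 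All the technical bookkeeping needed for this step is to be packaged into Appendix \ref{app D}.
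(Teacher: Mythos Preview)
Your approach is correct but takes a different route from the paper for the well-definedness step. You obtain $\tr^{\omega_2}_{\omega_1}$ by tensoring the $S$-linear map $\tr^{\omega_2}_{\omega_1,S}$ with $\k$, relying on the identification $(\prod_x S)\otimes_S \k \cong \prod_x \k$ (which holds because $S$ is Noetherian, so its maximal ideal is finitely generated; one small note: your stated reason ``freeness over $S'$'' is not quite the right justification for this). The paper instead proves a concrete continuity statement: using Lemma~\ref{lem 5.0}(2) on Verma factors under translation together with the support behaviour of equivariant Schubert classes at $\check{T}$-fixed points, it shows that $\tr^{\omega_2}_{\omega_1,S}([\Fl^{\omega_2,x}]_{\check{T}})$ lies in $\prod_{l(y)\ge l(x)-l_1} S\cdot[\Fl^{\omega_1,y}]_{\check{T}}$, where $l_1$ is the length of the longest element of $W_{l,\omega_1}$. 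This bound guarantees that the formal infinite sum in~(\ref{equ 4.a}) converges, so one can specialize term by term. Your argument is slicker; the paper's yields the extra quantitative information that the degree shift is bounded by $l_1$. For the commutative diagram, both approaches coincide: reduce to the $S$-level square, which commutes by definition, and then invoke base-change compatibility of the trace under $-\otimes_S\k$; the paper dispatches this in one sentence, while you propose to spell out that the adjunction data for $(T^{\omega_1}_{\omega_2},T^{\omega_2}_{\omega_1})$ is preserved under base change, which is straightforward since the translation functors and their units/counits are defined by tensoring with fixed modules.
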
 
\begin{proof} 
Since $H_{\check{T}}^\bullet(\Fl^{\omega_i,\circ})^\wedge_S$ is the space of formal sums of the Schubert classes $[\Fl^{\omega_i,x}]_{\check{T}}$, $x\in W_{l,\af}^{\omega_i}$, we have to show that $\tr^{\omega_2}_{\omega_1,S}$ is compatible with infinite sum, namely, 
\begin{equation}\label{equ 4.a} 
\tr^{\omega_2}_{\omega_1,S}(\sum_{x\in W_{l,\af}^{\omega_2}} r_x\cdot [\Fl^{\omega_2,x}]_{\check{T}})=\sum_{x\in W_{l,\af}^{\omega_2}}r_x\cdot \tr^{\omega_2}_{\omega_1,S}([\Fl^{\omega_2,x}]_{\check{T}}), \quad \forall r_x\in S.
\end{equation}
Denote by $l_1$ the length of the longest element in $W_{l,\omega_1}$. 
For any $x\in W^{\omega_2}_{l,\af}$ and $y\in W^{\omega_1}_{l,\af}$, Lemma \ref{lem 5.0}(2) shows that 
$$\big(\sfT^{\omega_2}_{\omega_1}M(y\bullet \omega_1)_S:M(x\bullet \omega_2)_S\big)\neq 0,
\quad \text{only if}\quad \ell(y)\geq \ell(x)-l_1.$$ 
By definition, for any $z\in Z(\sO^{\omega_2}_{S})$, the element $\tr_{T^{\omega_2}_{\omega_1}}(z)\in Z(\sO^{\omega_1}_{S})$ acts on a module $M\in \sO^{\omega_1}_{S}$ by the composition 
$$M\rightarrow \sfT_{\omega_2}^{\omega_1} \sfT^{\omega_2}_{\omega_1}M \xrightarrow{\sfT_{\omega_2}^{\omega_1}z \sfT^{\omega_2}_{\omega_1}} \sfT_{\omega_2}^{\omega_1} \sfT^{\omega_2}_{\omega_1}M\rightarrow M.$$ 
Recall that the pullback of $[\Fl^{\omega_2,x}]_{\check{T}}$ to the point $\delta_{x'}$ is nonzero only if $x'\geq x$ in the Bruhat order, and it is the scalar how $[\Fl^{\omega_2,x}]_{\check{T}}$ acts on the Verma module $M(x'\bullet \omega_2)_S$. 
It follows that $\tr_{T^{\omega_2}_{\omega_1}}([\Fl^{\omega_2,x}]_{\check{T}})$ acts by zero on the Verma module $M(y\bullet \omega_1)_S$ unless $\ell(y)\geq \ell(x)-l_1$. 
Hence $\tr_{T^{\omega_2}_{\omega_1}}([\Fl^{\omega_2,x}]_{\check{T}})$ is contained in $\prod\limits_{\ell(y)\geq \ell(x)-l_1} S\cdot [\Fl^{\omega_1,y}]_{\check{T}}$. 
Therefore, the RHS of (\ref{equ 4.a}) is well-defined and the equation holds. 
So $\tr^{\omega_2}_{\omega_1,S}$ specializes to a $\k$-linear map $\tr^{\omega_2}_{\omega_1}: H^\bullet(\Fl^{\omega_2,\circ})^\wedge \rightarrow H^\bullet(\Fl^{\omega_1,\circ})^\wedge$. 
The desired commutative diagram is induced by the specialization of the following one 
$$\begin{tikzcd}
H_{\check{T}}^\bullet(\Fl^{\omega_2,\circ})^\wedge \arrow[d,"\bb_{\omega_2}"']\arrow[r,"\tr^{\omega_2}_{\omega_1,S}"] & H_{\check{T}}^\bullet(\Fl^{\omega_1,\circ})^\wedge \arrow[d,"\bb_{\omega_1}"] \\ 
Z(\sO^{\omega_2}_{S})\arrow[r,"\tr_{\sfT^{\omega_2}_{\omega_1}}"] & Z(\sO^{\omega_1}_{S}).
\end{tikzcd}$$
\end{proof}

Let $\omega \in \Xi_\sc$. 
We abbreviate $\sT=\sfT^\omega_0$ and $\sT'=\sfT_\omega^0$, and denote the units and counits by 
$$\varepsilon': \id \rightarrow \sT\sT',\quad \varepsilon: \id \rightarrow \sT'\sT, \quad \eta': \sT\sT' \rightarrow \id,\quad \eta: \sT'\sT\rightarrow \id.$$ 

\begin{lem}\label{lem 1-2.4} 
The composition $\tr_{\sT'}\circ \tr_{\sT}$ is $Z(\sO^{\omega}_{R})$-linear, i.e.  
$$\tr_{\sT'}\circ \tr_{\sT}(z)=
z\cdot \tr_{\sT'}\circ \tr_{\sT}(1), \quad \forall z\in Z(\sO^{\omega}_{R}).$$ 
\end{lem}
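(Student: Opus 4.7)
The plan is to reduce the identity to an elementary naturality manipulation, using the structural isomorphism supplied by Lemma \ref{lem 5.0}(3). First, I would apply that lemma with $\omega_1=0$ and $\omega_2=\omega$. Since $W_{l,0}=\{e\}$ is trivially contained in $W_{l,\omega}$, the lemma produces a natural isomorphism
\[
\Upsilon:\ \id^{\oplus n}\xs \sT\sT'
\]
of endofunctors of $\sO^{\omega}_R$, where $n=|W_{l,\omega}|$.

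Next, I would extract from $\Upsilon$ the crucial identity
\[
\sT\sT'(z_M)=z_{\sT\sT'M}\ \text{in}\ \End_{\sO^\omega_R}(\sT\sT'M),\quad \forall z\in Z(\sO^\omega_R),\ \forall M\in \sO^\omega_R.
\]
Indeed, naturality of $\Upsilon$ at $z_M:M\to M$ identifies $\sT\sT'(z_M)$ with $\Upsilon_M\circ z_M^{\oplus n}\circ \Upsilon_M^{-1}$, while naturality of $z$ at $\Upsilon_M$, combined with naturality of $z$ at the coordinate projections of $M^{\oplus n}$ (which forces $z_{M^{\oplus n}}=z_M^{\oplus n}$), identifies $z_{\sT\sT'M}$ with the same expression.

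With this identity in hand, I would unfold the composed trace at $N\in\sO^\omega_R$:
\[
\tr_{\sT'}\tr_{\sT}(z)_N=\eta'_N\circ \sT\eta_{\sT'N}\circ \sT\sT'(z_{\sT\sT'N})\circ \sT\varepsilon_{\sT'N}\circ \varepsilon'_N.
\]
Applying the key identity with $M=\sT\sT'N$ rewrites the middle factor as $z_{\sT\sT'\sT\sT'N}$. Then naturality of $z\in Z(\sO^\omega_R)$ at the morphism $\sT\varepsilon_{\sT'N}\circ \varepsilon'_N:N\to \sT\sT'\sT\sT'N$ will slide this copy of $z$ across to the right, yielding
\[
\tr_{\sT'}\tr_{\sT}(z)_N=\eta'_N\circ \sT\eta_{\sT'N}\circ \sT\varepsilon_{\sT'N}\circ \varepsilon'_N\circ z_N=\tr_{\sT'}\tr_{\sT}(1)_N\circ z_N,
\]
and since $\tr_{\sT'}\tr_{\sT}(1)\in Z(\sO^{\omega}_R)$ commutes with $z_N$, the claim follows. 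The main obstacle is justifying the identity $\sT\sT'(z_M)=z_{\sT\sT'M}$, which would fail for a generic endofunctor of $\sO^{\omega}_R$; Lemma \ref{lem 5.0}(3) is precisely the ingredient that makes this identity available here, and everything else is routine diagram chasing.
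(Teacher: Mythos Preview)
Your proof is correct and follows essentially the same approach as the paper: both arguments hinge on Lemma~\ref{lem 5.0}(3) to convert $\sT\sT'(z_{\sT\sT'N})$ into an instance of $z$ acting on an object of $\sO^\omega_R$, after which naturality of $z$ pushes it past the unit maps. Your intermediate identity $\sT\sT'(z_M)=z_{\sT\sT'M}$ is a clean repackaging of what the paper establishes via the commutativity of the ``upper left square'' in its diagram, deduced from the outer rectangle and the right square.
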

\begin{proof} 
For any $z\in Z(\sO^{\omega}_{R})$, by definition $\tr_{\sT'}\circ \tr_{\sT}(z)$ is the natural transformation 
\begin{equation}\label{equ 1-2.2} 
\id \xrightarrow{(\sT\varepsilon \sT')\circ \varepsilon'} \sT\sT'\sT\sT' \xrightarrow{\sT\sT'z{\sT\sT'}} \sT\sT'\sT\sT' \xrightarrow{\eta'\circ(\sT\eta \sT')} \id.
\end{equation}
Recall the natural isomorphism $\Upsilon: \id^{\oplus |W_{l,\omega}|}\xs \sT\sT'$ in Lemma \ref{lem 5.0}(3). 
Consider the following diagram 
$$\begin{tikzcd}[column sep=large]
\id \arrow[r,"{(\sT\varepsilon \sT')\circ \varepsilon'}"] \arrow[d,"{z}"'] 
&\sT\sT'\sT\sT'\arrow[r,"{\sT\sT'\Upsilon^{-1}}","{\sim}"'] \arrow[d,"\sT\sT'z{\sT\sT'}"] 
& (\sT\sT')^{\oplus |W_{l,\omega}|}\arrow[d,"\sT\sT'z^{\oplus |W_{l,\omega}|}"] \\ 
\id \arrow[r,"{(\sT\varepsilon \sT')\circ \varepsilon'}"] \arrow[rd,"\tr_{\sT'}\circ \tr_{\sT}(1)"']
& \sT\sT'\sT\sT'\arrow[r,"{\sT\sT'\Upsilon^{-1}}","{\sim}"'] \arrow[d,"\eta'\circ(\sT\eta \sT')"]
& (\sT\sT')^{\oplus |W_{l,\omega}|} \\ 
& \id & 
\end{tikzcd}$$
where the upper rectangle commutes since the horizontal compositions are natural transformations; the upper right square commutes by the property of center; the lower triangle commutes by (\ref{equ 1-2.2}) applied to $z=1$. 
Hence the upper left square commutes, and it proves the assertion. 
\end{proof} 

\subsection{ }\label{subsect 5.3.4} 
Recall in \textsection \ref{subsect 2.3.2}, we show that the algebra homomorphism 
$$\bA[T/W]\xrightarrow{\hc^{-1}} Z(\fU_q)\rightarrow Z(\sO_{\k})$$ 
factors through the quotient $\bA[T/W]\rightarrow \k[\Omega]$, and leads to compatible decompositions 
$$\k[\Omega]=\prod\limits_{[\omega]\in \Xi}\k[\Omega_{[\omega]}], \quad \sO_{\k}=\bigoplus_{[\omega]\in \Xi} \sO^{[\omega]}_{\k}.$$ 
We denote by $\m_{[\omega]}$ the maximal ideal of $\bA[T/W]$ corresponding to the point $\Omega^\red_{[\omega]}$. 
For any integer $n_{[\omega]}\geq 1$, there exists an element $p_{[\omega]}\in \bA[T/W]$ such that 
	\begin{itemize}
	\item $p_{[\omega]}=1_{[\omega]}$ in $\k[\Omega]$ (the idempotent for $\k[\Omega_{[\omega]}]$); 
	\item $p_{[\omega]} \equiv 
    \begin{cases}
	1 & \text{mod }\m_{[\omega]}^{n_{[\omega]}}, \\ 
	0 & \text{mod }\m_{[\omega']}^{n_{[\omega]}},\quad \text{if }{[\omega']}\neq {[\omega]}. 
    \end{cases}$ 
	\end{itemize} 
	
\ 

Fix $\omega\in \Xi_\sc$. 
The natural projection $W_{l,\af}\twoheadrightarrow W$ induces an isomorphism 
$$W_{l,\omega}\xs W_{\zeta^\omega}:=\{x\in W|\ x\bullet \zeta^\omega=\zeta^\omega\}.$$ 
Set $V_q$ be the Weyl module of $U_q$ with extreme weight $-\omega$, and let $V=V_q\otimes_\bA \k$ be the specialization at $q_e=\zeta_e$. 
Fix a dominant weight $\omega'=\omega+2kl\rho$ for some $k\geq 0$. 
Recall that for any $\mu\in \Lambda$, we denote by $[\mu]$ its image in $\Xi=\Lambda/(W_{l,\ex},\bullet)$. 
We abbreviate $\tr_V=\tr_{V_q}$, and see its definition in \textsection \ref{subsect B.1.2}. 

\begin{lem}\label{lem 5.14} \ 
\begin{enumerate}
\item The element $\tr_{\sT'}\circ \tr_{\sT}(1)$ acts on $V(\omega')_\k$ by the scalar $\tr_V\big(p_{[0]}\cdot \tr_{V^*}(p_{[\omega]})\big)(\zeta^{2(\omega+\rho)})$. 
\item We have    $\tr_V\big(p_{[0]}\cdot \tr_{V^*}(p_{[\omega]})\big)(\zeta^{2(\omega+\rho)})=|W_{l,\omega}|$. 
\end{enumerate}
\end{lem}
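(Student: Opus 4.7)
The plan is to realize $\tr_{\sT'}\circ\tr_{\sT}(1)$ as the action of an explicit central element of $\fU_\zeta$ under the Harish-Chandra homomorphism, and then to evaluate this element at the central character of $V(\omega')_\k$. The key inputs are the description of the translation functors as tensor-product functors composed with block projectors, and the compatibility of the trace construction of Appendix~\ref{app D} with the partial-trace maps $\tr_V,\tr_{V^*}$ on $Z(\fU_\zeta)$.

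For part (1), I would unwind the general trace formalism applied to the translation functors. Since $\sT$ and $\sT'$ are tensor products with the Weyl module $V$ or $V^*$ (with the convention determined by the extreme-weight assignment in the setup) followed by the projectors $p_{[\omega]}$, $p_{[0]}$, the biadjointness fixed in Remark~\ref{rmk 4.2} via the $K_{2\rho}$-pairing matches the trace of these functors with the partial traces $\tr_V,\tr_{V^*}$ on $Z(\fU_\zeta)$. A direct computation then identifies
\begin{equation*}
\tr_{\sT'}\circ\tr_{\sT}(1)=p_{[\omega]}\cdot\tr_V\bigl(p_{[0]}\cdot\tr_{V^*}(p_{[\omega]})\bigr)\in Z(\fU_\zeta),
\end{equation*}
viewed as acting on $\sO^\omega_\k$. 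By Harish-Chandra, this element acts on the highest weight module $V(\omega')_\k$ by the scalar given by evaluating its image at $\zeta^{2(\omega'+\rho)}$. Since $\omega'-\omega=2kl\rho$ lies in $l\Lambda$ and $\zeta^l=1$ in $T$, we have $\zeta^{2(\omega'+\rho)}=\zeta^{2(\omega+\rho)}$; as the outer factor $p_{[\omega]}$ evaluates to $1$ on the $[\omega]$-class, the claimed scalar follows.

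For part (2), I would exploit the natural isomorphism $\Upsilon^{0}_{\omega}\colon\id^{\oplus|W_{l,\omega}|}\xs\sT\sT'$ of Lemma~\ref{lem 5.0}(3). Substituting into the zigzag
\begin{equation*}
\tr_{\sT'}\circ\tr_{\sT}(1)=\eta'\circ(\sT\eta\sT')\circ(\sT\varepsilon\sT')\circ\varepsilon'
\end{equation*}
from the proof of Lemma~\ref{lem 1-2.4} with $z=1$, and using $\Upsilon^{0}_{\omega}$ to split $\sT\sT'$ as $|W_{l,\omega}|$ copies of the identity, reduces the computation to iterated unit-counit identities of the $(\sT,\sT')$-adjunction, which collectively yield the scalar $|W_{l,\omega}|$. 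As a cross-check, one can expand the double partial trace combinatorially via
\begin{equation*}
\tr_V(f)(\zeta^{2(\mu+\rho)})=\sum_{\lambda\in\Lambda}\dim(V_\lambda)\,f(\zeta^{2(\mu+\lambda+\rho)}),
\end{equation*}
which reduces the evaluation to a weighted count of pairs of weights $\lambda_1\in V^*$, $\lambda_2\in V$ satisfying $\omega+\lambda_1\in W_{l,\ex}\bullet 0$ and $\omega+\lambda_1+\lambda_2\in W_{l,\ex}\bullet\omega$; this count is again $|W_{l,\omega}|$. The principal obstacle is bookkeeping of normalizations: the adjunction convention of Remark~\ref{rmk 4.2}, the sign-scaling of the trace construction in Appendix~\ref{app D}, and the normalization of $\Upsilon^{0}_{\omega}$ must be mutually compatible so that no spurious Weyl-subgroup factor appears; once matched, both approaches agree on the value $|W_{l,\omega}|$.
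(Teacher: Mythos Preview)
Your approach to part~(1) is essentially the paper's: identify the translation functors as tensor-product-plus-block-projection, so that the nested trace $\tr_{\sT'}\circ\tr_{\sT}(1)$ unwinds to the central element $\tr_V\big(p_{[0]}\cdot\tr_{V^*}(p_{[\omega]})\big)$ acting via the Harish--Chandra map, and then evaluate at the central character of $V(\omega')_\k$.

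Part~(2), however, has a genuine gap in both of your proposed routes. For approach~(a), the isomorphism $\Upsilon^0_\omega$ in Lemma~\ref{lem 5.0}(3) is only asserted to exist; no compatibility with the unit $\varepsilon'$ and counit $\eta'$ of the $K_{2\rho}$-biadjunction is established. Knowing $\sT\sT'\simeq\id^{\oplus|W_{l,\omega}|}$ does not determine the scalar $\eta'\circ\varepsilon'$, because under an arbitrary identification the unit and counit become some unspecified column and row vectors whose pairing could be anything. What you call ``bookkeeping of normalizations'' is in fact the entire content of the computation, and the paper does not provide the ingredients to carry it out this way.

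For approach~(b), the formula you write for $\tr_V$ is incorrect. Bernstein's formula~(\ref{equ D.3}) is
\[
\tr_V(f)=\frac{\sum_{\nu\in\bP(V)}\tau^2_\nu(f\cdot\bL)}{\bL},
\]
so the evaluation at $t$ carries extra factors $\bL(q^{2\nu}t)/\bL(t)$, not just weight multiplicities. Crucially, at the singular point $t=\zeta^{2(\omega+\rho)}$ the denominator $\bL(\zeta^{2(\omega+\rho)})$ vanishes (since $W_{l,\omega}\neq 1$), so one cannot evaluate termwise at all; your weight-pair count has no direct meaning here. The paper's proof handles exactly this $0/0$ issue: it factors $\bL=\bL_\omega\cdot\bL'_\omega$ with $\bL'_\omega(\zeta^{2(\omega+\rho)})\neq 0$, groups weights into $W_{\zeta^\omega}$-orbits, and proves a vanishing claim (for $p_{[0]},p_{[\omega]}$ chosen sufficiently deep in their maximal ideals) that kills all contributions except those from the orbit $W_{\zeta^\omega}\cdot(-\omega)$, leaving precisely $|W_{\zeta^\omega}|=|W_{l,\omega}|$. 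This limiting argument is the substance of part~(2) and is absent from your sketch.
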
 
\begin{proof}
(1) 
Recall that any $f\in \bA[T/W]$ acts on a Weyl module $V(\lambda)_q$ by the scalar $f(q^{2(\lambda+\rho)})$. 
Note that $V(\omega')_\k$ and $V$ both admit liftings $V(\omega')_q$ and $V_q$ as $U_q$-modules. 
By Proposition \ref{prop D.2}, there is a commutative diagram 
$$\begin{tikzcd} 
V(\omega')_\k\arrow[r,"\varepsilon_V"]\arrow[d,"\tr_V\big(p_{[0]}\cdot \tr_{V^*}(p_{[\omega]})\big)(\zeta^{2(\omega+\rho)})"] 
& V(\omega')_\k\otimes V\otimes V^{*} \arrow[r,"p_{[0]}\otimes V^{*}"] 
& V(\omega')_\k\otimes V\otimes V^{*} \arrow[r,"\varepsilon_{V^*}"] \arrow[ld,"\tr_{V^*}(p_{[\omega]})\otimes V^{*}"] 
& V(\omega')_\k\otimes V\otimes V^* \otimes V^{**} \otimes V^{*} \arrow[d,"p_{[\omega]}\otimes V^{**}\otimes V^{*}"] \\ 
V(\omega')_\k& V(\omega')_\k\otimes V\otimes V^{*} \arrow[l,"\eta_V"]& & V(\omega')_\k\otimes V\otimes V^* \otimes V^{**} \otimes V^{*} \arrow[ll,"\eta_{V^*}"],
\end{tikzcd}$$
where $\varepsilon_V$, $\varepsilon_{V^*}$ and $\eta_V$, $\eta_{V^*}$ are unit and counit maps for $V$, $V^*$. 
Since $p_{[0]}$ is a lifting of the idempotent $1_{[0]}$ in $\k[\Omega]$, it acts on $\sO_{\k}$ as a projector to $\sO^{[0]}_{\k}$. 
By the equality $l\Lambda \cap \rQ=l\rQ$, the direct factor of $V(\omega')_\k\otimes V$ in $\sO^{[0]}_{\k}$ actually lies in $\sO^{0}_{\k}$, hence 
$$p_{[0]}(V(\omega')_\k\otimes V)=\pr_0(V(\omega')_\k\otimes V).$$ 
Similarly, we have 
$$p_{[\omega]}(V(\omega')_\k\otimes V\otimes V^*)=\pr_\omega(V(\omega')_\k\otimes V\otimes V^*).$$ 
Therefore, the morphism $V(\omega')_\k\rightarrow V(\omega')_\k$ provided by the composition along the longest path of the diagram above coincides with 
\begin{equation}\label{equ 1-2.0} 
\begin{aligned}
V(\omega')_\k\rightarrow \pr_0(V(\omega')_\k\otimes V)\otimes V^* \rightarrow 
&\pr_\omega\big(\pr_0(V(\omega')_\k\otimes V)\otimes V^* \big)\otimes V^{**} \otimes V^{*} \\ 
&\rightarrow \pr_0(V(\omega')_\k\otimes V)\otimes V^* \rightarrow V(\omega')_\k, 
\end{aligned} 
\end{equation}
where the arrows are given by suitable unit or counit maps restricted to the corresponding direct summands. 
The morphism (\ref{equ 1-2.0}) can be further factorized into the composition 
$$V(\omega')_\k\rightarrow \sT\sT'V(\omega')_\k \rightarrow \sT\sT'\sT\sT'V(\omega)_\k \rightarrow \sT\sT'V(\omega')_\k \rightarrow V(\omega')_\k,$$
which is the action of $\tr_{\sT'}\circ \tr_{\sT}(1)$ on $V(\omega')_\k$. 

(2) By (1), it is enough to check the equality for $n_{[\omega]}$ and $n_{[0]}$ large enough. 
By the formula (\ref{equ D.3}), we have 
\begin{equation}\label{equ 1-2.3} 
\tr_V\big(p_{[0]}\cdot \tr_{V^*}(p_{[\omega]})\big)
= \frac{\sum\limits_{\nu\in \bP(V_q)} {\tau^{2}_{\nu}}(p_{[0]}\cdot \tr_{V^*}(p_{[\omega]})\cdot \bL)}{\bL}, 
\end{equation} 
where $\bP(V_q)$ is the set of weights in $V_q$ (with multiplicities), and $\bL=K_{\rho}\prod_{\alpha\in \Phi^+}(1-K^{-1}_\alpha)$. 
We factorize $\bL=\bL_\omega \cdot \bL'_\omega$, where 
\begin{equation}\label{equ 4.e15} 
\bL_\omega:=K_{\rho}\prod_{s_\alpha\in W_{\zeta^\omega}}(1-K^{-1}_\alpha),\quad \bL'_\omega:=\prod_{s_\alpha\notin W_{\zeta^\omega}}(1-K^{-1}_\alpha). 
\end{equation}
Note that $\Lambda'_\omega(\zeta^{2(\omega+\rho)})\neq 0$ and $\Lambda'_{\omega}$ is $W_{\zeta^\omega}$-invariant. 
As an element in $\bA[T/W_{\zeta^\omega}][\frac{1}{\bL'_\omega}]$, the RHS of (\ref{equ 1-2.3}) can be further decomposed as 
\begin{equation}\label{equ 1-2.4} 
\frac{1}{\bL'_\omega}
\sum_{\nu\in \bP(V_q)} \frac{\dim V_\nu}{|\Stab_{W_{\zeta^\omega}}(\nu)|}  \cdot  
\frac 
{\sum\limits_{x\in W_{\zeta^\omega}} 
\tau^{2}_{x\nu}(p_{[0]}\cdot \tr_{V^*}(p_{[\omega]}) \cdot \bL) }{\bL_\omega}. 
\end{equation} 

\ 

\begin{claim}\label{claim 1-2.6}
Let $\nu\in \bP(V_q)$. 
The weights $\{\omega+x\nu\}_{x\in W_{\zeta^\omega}}$ are conjugate to each other under the $\bullet$-action of $W_{l,\af}$. 
There exists an integer $n=n_{\omega,\nu}\geq 0$, such that for any $p\in \bA[T/W]$ satisfying $p\in \m_{[\omega+\nu]}^{n}$ and any $f\in \bA[T/W_{\zeta^\omega}]$, we have 
$$\frac
{\sum\limits_{x\in W_{\zeta^\omega}} 
\tau^{2}_{x\nu}\big(p\cdot f \cdot \bL_\omega \big)}{\bL_\omega}(\zeta^{2(\omega+\rho)})=0.$$ 
\end{claim}
\begin{proof}[Proof of Claim \ref{claim 1-2.6}] 
For any $x\in W_{\zeta^\omega}$, denote by $x'$ its preimage under the isomorphism $W_{l,\omega}\xs W_{\zeta^\omega}$. 
Then the first assertion follows from the equality $x'\bullet (\omega' +\nu)= \omega' +x\nu$. 
 
Now we show the second assertion. 
For any $\mu\in \Lambda$, denote by $\m_{\zeta^\mu}$ the maximal ideal of $\fU^0_q=\bA[T]$ corresponding to the point $(\zeta_e,\zeta^\mu)$. 
Note that $\m_{[\mu]}=\m_{\zeta^{2(\mu+\rho)}}\cap \bA[T/W]$. 
Let $n$ be a positive integer such that $\bL_\omega\in \m_{\zeta^{2(\omega+\rho)}}^{n-1}\backslash \m_{\zeta^{2(\omega+\rho)}}^{n}$. 
By the first assertion, we have $\m_{[\omega+\nu]}=\m_{[\omega+x\nu]}$. 
So $p\cdot f \cdot \bL_\omega\in \m_{\zeta^{2(\omega+x\nu+\rho)}}^{n}$, and we have 
$$\tau_{x\nu}\big(p\cdot f \cdot \bL_\omega \big)\in \m_{\zeta^{2(\omega+\rho)}}^{n}, \quad \forall\ x\in W_{\zeta^\omega}.$$ 
It follows that 
$$\frac
{\sum\limits_{x\in W_{\zeta^\omega}} 
\tau_{x\nu}\big(p\cdot f \cdot \bL_\omega \big)}{\bL_\omega}\in \m_{\zeta^{2(\omega+\rho)}}. \qedhere$$ 
\end{proof} 

By \cite[Lem 7.7]{Jan03}, for any $\nu\in \bP(V_q)$, we have $[\omega+\nu]=[0]$ if and only if $\nu\in W_{\zeta^\omega}\cdot (-\omega)$. 
By the claim above, provided that $n_{[0]}\geq \max\{n_{\omega,\nu}|\ \nu\in \bP(V_q)\}$, we have 
$$\frac
{\sum\limits_{x\in W_{\zeta^\omega}} 
\tau^{2}_{x\nu}(p_{[0]}\cdot \tr_{V^*}(p_{[\omega]}) \cdot \bL)}{\bL_\omega}(\zeta^{2(\omega+\rho)}) = 0, \quad \text{if $\nu\notin W_{\zeta^\omega}\cdot (-\omega)$},$$ 
and 
$$\frac
{\sum\limits_{x\in W_{\zeta^\omega}} 
\tau^{2}_{-x\omega}(p_{[0]}\cdot \tr_{V^*}(p_{[\omega]}) \cdot \bL)}{\bL_\omega}(\zeta^{2(\omega+\rho)}) 
=\frac 
{\sum\limits_{x\in W_{\zeta^\omega}} 
\tau^{2}_{-x\omega}(\tr_{V^*}(p_{[\omega]}) \cdot \bL)}{\bL_\omega}(\zeta^{2(\omega+\rho)}).$$ 
Therefore we have 
\begin{equation}\label{equ 1-2.5} 
\tr_V\big(p_{[0]}\cdot \tr_{V^*}(p_{[\omega]})\big)(\zeta^{2(\omega+\rho)})= 
\frac{1}{\bL'_\omega(\zeta^{2(\omega+\rho)})}\cdot 
\frac 
{\sum\limits_{x\in W_{\zeta^\omega}} 
\tau^{2}_{-x\omega}(\tr_{V^*}(p_{[\omega]})\cdot \bL)}{\bL_\omega}(\zeta^{2(\omega+\rho)}). 
\end{equation} 
Set the $\Lambda$-graded $\bA$-module $V_0:=\bigoplus\limits_{x\in W_{\zeta^\omega}}\bA_{-x\omega}$. 
By the formula (\ref{equ D.3}) again, we have 
\begin{align}\label{equ 1-2.6} 
\notag \frac{\sum\limits_{x\in W_{\zeta^\omega}} \tau^{2}_{-x\omega}(\tr_{V^*}(p_{[\omega]})\cdot \bL)}{\bL_\omega} 
&=\frac{\sum\limits_{x\in W_{\zeta^\omega}} \tau^{2}_{-x\omega} 
	\sum\limits_{\nu'\in \bP(V_q^*)} \tau^{2}_{\nu'}(p_{[\omega]}\cdot \bL)}{\bL_\omega}\\ 
\notag &= \frac{\sum\limits_{\nu'\in \bP(V_q^*\otimes V_0)} \tau^{2}_{\nu'}(p_{[\omega]}\cdot \bL)}{\bL_\omega} \\ 
&=|W_{\zeta^\omega}|\cdot p_{[\omega]}\cdot \bL_\omega' + 
\frac{\sum\limits_{\nu'\in \bP(V_q^*\otimes V_0),\nu'\neq 0} \tau^{2}_{\nu'}(p_{[\omega]}\cdot \bL)}{\bL_\omega}. 
\end{align} 
By \cite[Lem 7.7]{Jan03} again, for any $\nu'\in \bP(V_q^*\otimes V_0)$, we have $[\omega+\nu']=[\omega]$ if and only if $\nu'=0$. 
Similarly, we can show that for $n_{[\omega]}$ large enough, the second term of (\ref{equ 1-2.6}) vanishes on $\zeta^{2(\omega+\rho)}$. 
Therefore, we deduce that 
$$\tr_V\big(p_{[0]}\cdot \tr_{V^*}(p_{[\omega]})\big)(\zeta^{2(\omega+\rho)})
=\frac{(|W_{\zeta^\omega}|\cdot p_{[\omega]}\cdot \bL_\omega')(\zeta^{2(\omega+\rho)})}{\bL'_\omega(\zeta^{2(\omega+\rho)})}=|W_{\zeta^\omega}|.\qedhere $$ 
\end{proof} 

\ 

\begin{corollary}\label{cor 1-2.7} 
The element $\tr_{\omega}^{0}\circ \tr^{\omega}_{0}(1)$ is invertible in $H^\bullet(\Fl^{\omega,\circ})^\wedge$, so $\tr_{\sT'}\circ \tr_{\sT}(1)$ is also invertible in $Z(\sO^{\omega}_{\k})$. 
\end{corollary}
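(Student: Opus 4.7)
The plan is to reduce both invertibility claims to a single non-vanishing check at a suitably chosen $\check{T}$-fixed point, using the commutative diagrams already established and the scalar computed in Lemma \ref{lem 5.14}. First, apply the lemma on compatibility of $\overline{\bb}$ with the trace (and its analogue for $T_\omega^0$, which is proved by the same length-estimate argument) with source-target pairs $(\omega,0)$ and $(0,\omega)$, then paste the two resulting commuting squares side by side to obtain $\overline{\bb}\circ (\tr_\omega^0\tr^\omega_0) = (\tr_{\sT'}\tr_{\sT})\circ \overline{\bb}$ as maps $H^\bullet(\Fl^{\omega,\circ})^\wedge \to Z(\sO^\omega_\k)$. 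Evaluating at $1$ and using that $\overline{\bb}$ is a unital algebra homomorphism, we get
$$\overline{\bb}\bigl(\tr_\omega^0\tr^\omega_0(1)\bigr) = \tr_{\sT'}\tr_{\sT}(1).$$
Since $\overline{\bb}$ is a ring homomorphism, it suffices to prove that $\tr_\omega^0\tr^\omega_0(1)$ is invertible in $H^\bullet(\Fl^{\omega,\circ})^\wedge$; applying $\overline{\bb}$ to its inverse then produces an inverse of $\tr_{\sT'}\tr_{\sT}(1)$ in $Z(\sO^\omega_\k)$.

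Next, recall that $H^\bullet(\Fl^{\omega,\circ})^\wedge$ is the completion of a nonnegatively graded $\k$-algebra along its augmentation ideal $H^{>0}$; a formal series there is invertible if and only if its degree-zero component (the coefficient of the fundamental class $1=[\Fl^{\omega,e}]$) is a nonzero scalar. Since we work with ordinary (non-equivariant) cohomology, restriction to any single $\check{T}$-fixed point is a ring homomorphism to $\k$ that annihilates every positive-degree Schubert class, so it extracts exactly this degree-zero coefficient. The task therefore reduces to exhibiting one $\check{T}$-fixed point of $\Fl^{\omega,\circ}$ at which the pullback of $\tr_\omega^0\tr^\omega_0(1)$ is nonzero in $\k$.

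To produce such a point, take $\omega'=\omega+2kl\rho$ as in Lemma \ref{lem 5.14}, with $k$ chosen large enough that $\omega'$ is dominant. Because $\tau_{2kl\rho}\in W_{l,\ex}$ satisfies $\tau_{2kl\rho}\bullet\omega=\omega'$, we have $[\omega']=[\omega]$ in $\Xi$; furthermore $2kl\rho\in l\rQ$ since $2\rho\in \rQ$, so $\tau_{2kl\rho}\in W_{l,\af}$ and the fixed point $\delta_{\omega'+\rho}$ lies on the identity component $\Fl^{\omega,\circ}$. By Theorem \ref{thm 3.11} (specialized to $R=\k$), the restriction of any $c\in H^\bullet(\Fl^{\omega,\circ})^\wedge$ to $\delta_{\omega'+\rho}$ equals the scalar by which $\overline{\bb}(c)\in Z(\sO^\omega_\k)$ acts on the Verma module $M(\omega')_\k$. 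Applying this to $c=\tr_\omega^0\tr^\omega_0(1)$ and invoking the identity displayed above, this value is the scalar of the action of $\tr_{\sT'}\tr_{\sT}(1)$ on $M(\omega')_\k$. Since $\End_{\sO^\omega_\k}(M(\omega')_\k)=\k$ and $M(\omega')_\k$ surjects onto $V(\omega')_\k$, any central element acts by the same scalar on both; Lemma \ref{lem 5.14} identifies this scalar as $|W_{l,\omega}|$, a nonzero element of $\k$ since $\k$ has characteristic zero. The invertibility of both elements then follows.

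Essentially all computational difficulty has already been absorbed by Lemma \ref{lem 5.14}, and the remaining work here is structural. The step that merits the most care is the first-paragraph identity $\overline{\bb}(\tr_\omega^0\tr^\omega_0(1))=\tr_{\sT'}\tr_{\sT}(1)$: this requires the trace-$\overline{\bb}$ compatibility square to hold also for the biadjoint partner $T_\omega^0$ (which it does, by the same proof as in the stated case) together with the fact that $\overline{\bb}$ sends $1$ to $1$. Everything else is a direct reading of the definitions.
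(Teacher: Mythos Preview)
Your proof is correct and follows essentially the same strategy as the paper: reduce invertibility in the pro-unipotent algebra $H^\bullet(\Fl^{\omega,\circ})^\wedge$ to the nonvanishing of the degree-zero term, identify that term with the scalar by which $\tr_{\sT'}\tr_{\sT}(1)$ acts on the Verma module $M(\omega')_\k$, and then transfer to the Weyl module $V(\omega')_\k$ via the natural surjection to invoke Lemma~\ref{lem 5.14}. The paper phrases the first reduction as ``pro-unipotent'' rather than via restriction to a fixed point, and writes the Verma-to-Weyl comparison as a commutative square rather than as a quotient, but these are the same arguments.
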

\begin{proof} 
Since $H^\bullet(\Fl^{\omega,\circ})^\wedge$ is pro-unipotent, it is enough to show that the degree zero term of the element $\tr_{\omega}^{0}\circ \tr^{\omega}_{0}(1)$ does not vanish. 
Note that this term is exactly the scalar how $\tr_{\sT'}\circ \tr_{\sT}(1)$ acts on any Verma module in $\sO^{\omega}_{\k}$. 
We now show the action of $\tr_{\sT'}\circ \tr_{\sT}(1)$ on $M(\omega')_{\k}$ is nonzero. 
Indeed, choose a nonzero morphism $M(\omega')_{\k}\rightarrow V(\omega')_{\k}$ and consider the commutative diagram 
$$\begin{tikzcd}[column sep=large]
M(\omega')_{\k}\arrow[r,"\tr_{\sT'}\circ \tr_{\sT}(1)"] \arrow[d] 
& M(\omega')_{\k}\arrow[d]\\ 
V(\omega')_{\k}\arrow[r,"\tr_{\sT'}\circ \tr_{\sT}(1)"] & V(\omega')_{\k}.
\end{tikzcd}$$ 
By Lemma \ref{lem 5.14} above, the lower horizontal arrow is by scalar $|W_{l,\omega}|$. 
Since $\End(M(\omega')_{\k})=\k$, the upper one acts by the same scalar. 
\end{proof} 

\subsection{Center of $\sO^{\omega}_{\k}$}\label{subsect 5.3} 
\begin{thm}\label{thm 5.17} 
There is an isomorphism 
$$\overline{\bb}_\omega: H^\bullet(\Fl^{\omega,\circ})^\wedge \xs Z(\sO^{\omega}_{\k}).$$
\end{thm}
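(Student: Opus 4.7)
The plan is to leverage Theorem \ref{thm 5.10}, which establishes the isomorphism $\overline{\bb}$ on the principal block, and transport it to the arbitrary block $\sO^\omega_\k$ by means of the trace formalism for the translation functors $\sT = T^\omega_0$ and $\sT' = T_\omega^0$. First I will assemble the diagram
\[
\begin{tikzcd}
H^\bullet(\Fl^{\omega,\circ})^\wedge \arrow[d,"\overline{\bb}"']\arrow[r,"\tr^\omega_0"]
& H^\bullet(\Fl^\circ)^\wedge \arrow[d,"\overline{\bb}"]\arrow[r,"\tr^0_\omega"] & H^\bullet(\Fl^{\omega,\circ})^\wedge \arrow[d,"\overline{\bb}"] \\
Z(\sO^{\omega}_\k)\arrow[r,"\tr_{\sT}"] & Z(\sO^{0}_\k)\arrow[r,"\tr_{\sT'}"] & Z(\sO^{\omega}_\k),
\end{tikzcd}
\]
whose two squares commute by the lemma preceding Lemma \ref{lem 1-2.4} (applied with $(\omega_1,\omega_2)=(0,\omega)$ and with $(\omega_1,\omega_2)=(\omega,0)$), and whose middle vertical arrow is an isomorphism by Theorem \ref{thm 5.10}.

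Next I will show that the two horizontal compositions are bijections. On the algebra side, Lemma \ref{lem 1-2.4} identifies $\tr_{\sT'}\circ \tr_{\sT}$ with multiplication by $\tr_{\sT'}\circ\tr_{\sT}(1)$, which is a unit in $Z(\sO^\omega_\k)$ by Corollary \ref{cor 1-2.7}; hence $\tr_{\sT'}\circ \tr_{\sT}$ is a bijection. On the cohomology side I will deduce the analogous statement by passing through the $S$-level: since $\bb$ is a ring isomorphism at the $S$-level (Theorem \ref{thm 3.11}), Lemma \ref{lem 1-2.4} forces $\tr^0_{\omega,S}\circ\tr^\omega_{0,S}$ to coincide with multiplication by the $\bb_S$-preimage of $\tr_{\sT'}\tr_{\sT}(1)$; specialising to $\k$ and using the invertibility in Corollary \ref{cor 1-2.7} yields that $\tr^0_\omega\circ\tr^\omega_0$ is bijective on $H^\bullet(\Fl^{\omega,\circ})^\wedge$.

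With these ingredients the proof reduces to a short diagram chase. For surjectivity of $\overline{\bb}_\omega$, given $z\in Z(\sO^\omega_\k)$ I will use the bijectivity of $\tr_{\sT'}\tr_{\sT}$ to write $z=\tr_{\sT'}\tr_{\sT}(z')$ for some $z'\in Z(\sO^\omega_\k)$, lift $\tr_{\sT}(z')\in Z(\sO^0_\k)$ through the isomorphism $\overline{\bb}_0$ to an element $a\in H^\bullet(\Fl^\circ)^\wedge$, and verify from commutativity of the right square that $\overline{\bb}_\omega(\tr^0_\omega(a))=z$. For injectivity, if $\overline{\bb}_\omega(c)=0$ then commutativity of the left square gives $\overline{\bb}_0(\tr^\omega_0(c))=0$; injectivity of $\overline{\bb}_0$ then forces $\tr^\omega_0(c)=0$, whence $\tr^0_\omega\circ\tr^\omega_0(c)=0$ and the bijectivity of the horizontal composition yields $c=0$.

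The main obstacle is not the diagram chase but the invertibility of $\tr_{\sT'}\tr_{\sT}(1)$ and $\tr^0_\omega\tr^\omega_0(1)$, which hinges on the delicate quantum-trace computation carried out in Lemma \ref{lem 5.14} (identifying the scalar by which $\tr_{\sT'}\tr_{\sT}(1)$ acts on $V(\omega')_\k$ with $|W_{l,\omega}|$) and packaged in Corollary \ref{cor 1-2.7}. Once those, Lemma \ref{lem 1-2.4}, and Theorem \ref{thm 5.10} are in place, the present theorem follows formally from the outline above.
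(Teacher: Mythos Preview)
Your proposal is correct and follows essentially the same approach as the paper: both assemble the two-square diagram from the lemma preceding Lemma \ref{lem 1-2.4}, use Theorem \ref{thm 5.10} for the middle isomorphism, transfer Lemma \ref{lem 1-2.4} to the cohomology side via the $S$-level isomorphism $\bb$, and invoke Corollary \ref{cor 1-2.7} to make both horizontal compositions bijective. The only difference is cosmetic: the paper argues abstractly (bijectivity of the compositions forces $\tr_0^\omega$ injective and $\tr_{\sT'}$ surjective, whence $\overline{\bb}_\omega$ is injective and surjective via the two squares), whereas you spell out the same diagram chase with explicit elements.
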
 
\begin{proof}
By Lemma \ref{lem 1-2.4} and the commutative diagram 
$$\begin{tikzcd}
H_{\check{T}}^\bullet(\Fl^{\omega,\circ})^\wedge_{S} \arrow[d,"\simeq"'] \arrow[r,"\tr_0^\omega"] 
& H_{\check{T}}^\bullet(\Fl^{\circ})^\wedge_{S} \arrow[d,"{\simeq}"']\arrow[r,"\tr^0_\omega"] & H_{\check{T}}^\bullet(\Fl^{\omega,\circ})^\wedge_{S} \arrow[d,"{\simeq}"'] \\ 
Z(\sO^{\omega}_{S}) \arrow[r,"\tr_{\sT}"]& Z(\sO^{0}_{S}) \arrow[r,"\tr_{\sT'}"] & Z(\sO^{\omega}_{S}), 
\end{tikzcd}$$ 
we have 
$$\tr^0_\omega\circ \tr_0^\omega(z)=z\cdot \tr^0_\omega\circ \tr_0^\omega(1),\quad \forall z\in H_{\check{T}}^\bullet(\Fl^{\omega,\circ})^\wedge_{S}.$$ 
Hence the same formula holds for $H^\bullet(\Fl^{\omega,\circ})^\wedge$. 
By Corollary \ref{cor 1-2.7}, the $\k$-linear maps
$$\tr^0_\omega\circ \tr_0^\omega: H^\bullet(\Fl^{\omega,\circ})^\wedge \rightarrow H^\bullet(\Fl^{\omega,\circ})^\wedge, \quad 
\tr_{\sT'}\circ \tr_{\sT}:Z(\sO^{\omega}_{\k}) \rightarrow Z(\sO^{\omega}_{\k})$$ 
are isomorphisms. 
So $\tr_0^\omega$ is an injection, and $\tr_{\sT'}$ is a surjection. 
Consider the commutative diagram 
$$\begin{tikzcd}
H^\bullet(\Fl^{\omega,\circ})^\wedge \arrow[d,"\overline{\bb}_\omega"'] \arrow[r,"\tr_0^\omega"] 
& H^\bullet(\Fl^{\circ})^\wedge \arrow[d,"\overline{\bb}_0","{\simeq}"']\arrow[r,"\tr^0_\omega"] & H^\bullet(\Fl^{\omega,\circ})^\wedge \arrow[d,"\overline{\bb}_\omega"] \\ 
Z(\sO^{\omega}_{\k})\arrow[r,"\tr_{\sT}"]& Z(\sO^{0}_{\k})\arrow[r,"\tr_{\sT'}"] & Z(\sO^{\omega}_{\k}),
\end{tikzcd}$$ 
where $\overline{\bb}_0$ and $\overline{\bb}_\omega$ are restrictions of $\overline{\bb}$ to the corresponding direct summands. 
Recall in Theorem \ref{thm 5.10}, we showed that $\overline{\bb}_0$ is an isomorphism. 
Since $\overline{\bb}_0\circ \tr_0^\omega$ is injective, $\overline{\bb}_\omega$ is an injection. 
Since $\tr_{\sT'}\circ \overline{\bb}_0$ is surjective, $\overline{\bb}_\omega$ is a surjection. 
\end{proof}

\newpage

\appendix 
\section{The center of a category}\label{app A} 
\subsection{Center of derived categories} 
Suppose $\sC$ is an abelian $\k$-linear category. 
Define the ($[1]$-compatible) center of its bounded derived category $\Db\sC$ by 
$$ Z(\Db\sC):=\{ z\in \End(\id_{\Db\sC})\ |\ z_{M[1]}=z_M[1],\ \forall M\in \Db\sC\} .$$ 
Note there is a natural map $Z(\sC) \rightarrow Z(\Db\sC)$, which admits a retraction $Z(\Db\sC) \rightarrow Z(\sC)$ by restriction on the full subcategory $\sC\subset \Db\sC$. 
So the map $Z(\sC)\rightarrow Z(\Db\sC)$ is a direct inclusion. 

\subsection{Graded center}\label{app A3} 
A \textit{graded category} $(\sD, \langle 1 \rangle)$ is the data of a category $\sD$ with an auto-functor $\langle 1 \rangle$ of $\sD$. 
Set $\langle d \rangle:=\langle 1 \rangle^{\circ d}$ for any $d\in \Z$. 
Define the \textit{degraded center} of $\sD$ by 
$$Z^\bullet(\sD) :=  
	\left\{ 
	\begin{gathered} 
    z= (z_d)_d \in \prod_{d\in \Z} \Hom( \id_\sD , \langle  d \rangle) 
    \end{gathered}  \left |\ 
    \begin{gathered} 
	\text{for any $M\in \sD$, any $k\in \Z$,\quad $z_{M\langle k \rangle}=z_{M}\langle k \rangle$,} \\ 
	\text{and }{z}_{d,M}\neq0 \text{ only for finitely many $d$ }  
	\end{gathered} \right. \right\} ,$$ 
equipped with the natural ring structure. 
A \textit{degrading functor} $v: (\sD, \langle 1 \rangle) \rightarrow \sC$ is the data of 
\begin{itemize} 
	\item a graded category $(\sD, \langle 1 \rangle)$, and a functor $v: \sD \rightarrow \sC$; 
	\item a natural isomorphism $v\xs v \langle 1 \rangle$; 
\end{itemize} 
such that for any $M, N\in \sD$, the natural map 
\begin{equation}\label{equ A.1'}
\bigoplus_{d\in \Z} \Hom_{\sD}( M , N \langle d \rangle) \xs \Hom_\sC( vM, vN )
\end{equation}
is an isomorphism. 
A \textit{lifting} of an object $M\in \sC$ along $v$ is an object $\tilde{M}\in \sD$ such that $v\tilde{M} \cong M$.  

\ 

Let $v: (\sD, \langle 1 \rangle) \rightarrow \sC$ be a degrading functor. 
\begin{lem}\label{lem A.2} 
There is a natural ring homomorphism 
	$$Z(\sC) \rightarrow Z^\bullet(\sD) , \quad z \mapsto (z_d)_d$$ 
	such that $\sum_d v(z_{d,M})=z_{vM}$ for any $M\in \sD$. 
\end{lem}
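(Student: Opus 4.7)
The plan is to use the decomposition \eqref{equ A.1'} to extract the graded components of a central element. Given $z \in Z(\sC)$, for each $M\in \sD$ the endomorphism $z_{vM}\in \End_\sC(vM)$ lies in $\bigoplus_d \Hom_\sD(M,M\langle d\rangle)$ via the isomorphism \eqref{equ A.1'}, and almost all components vanish. I would define $z_{d,M}\in \Hom_\sD(M,M\langle d\rangle)$ to be the $d$-th component, so that by construction $\sum_d v(z_{d,M})=z_{vM}$ and $z_{d,M}\neq 0$ for only finitely many $d$.

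Next, I would check that for each $d$ the family $(z_{d,M})_{M\in \sD}$ is a natural transformation $\id_\sD \to \langle d\rangle$. For any morphism $f:M\to N$ in $\sD$, naturality of $z$ for $\sC$ gives $v(f)\circ z_{vM}=z_{vN}\circ v(f)$ in $\End_\sC(vM,vN)=\bigoplus_d\Hom_\sD(M,N\langle d\rangle)$. Comparing components (which exist independently on both sides by \eqref{equ A.1'}), one reads off $f\langle d\rangle\circ z_{d,M}=z_{d,N}\circ f$. This is the main technical verification. The same argument applied to the isomorphism $M\langle k\rangle \xrightarrow{\sim} M\langle k\rangle$ (viewed through the degrading-compatibility datum $v \xrightarrow{\sim} v\langle 1\rangle$) yields the identity $z_{M\langle k\rangle}=z_M\langle k\rangle$, where one uses that the structural isomorphism $v\xs v\langle 1\rangle$ corresponds under \eqref{equ A.1'} to a shift on the degree of morphisms.

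Finally, I would verify that $z\mapsto (z_d)_d$ is a ring homomorphism. Additivity is immediate from additivity of \eqref{equ A.1'}. For multiplicativity, given $z,z'\in Z(\sC)$, the product $(zz')_{vM}=z_{vM}\circ z'_{vM}$ decomposes into components $\sum_{d_1+d_2=d}(z_{d_1,M}\langle d_2\rangle)\circ z'_{d_2,M}$, which is exactly the multiplication rule on $Z^\bullet(\sD)$. The unit obviously maps to the unit.

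The main obstacle I expect is purely notational: ensuring the bijection \eqref{equ A.1'} is used in a way that is compatible with both the composition of morphisms in $\sD$ (which shifts the grading) and with the auto-equivalence $\langle 1\rangle$ applied to morphisms; once one writes out that for $f\in \Hom_\sD(M,N\langle d_1\rangle)$ and $g\in \Hom_\sD(N, P\langle d_2\rangle)$ the composition $v(g)\circ v(f)$ matches the degree-$(d_1+d_2)$ summand $g\langle d_1\rangle \circ f \in \Hom_\sD(M,P\langle d_1+d_2\rangle)$, all the identities above follow mechanically.
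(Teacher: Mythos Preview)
Your proposal is correct and follows essentially the same approach as the paper: define the components via the decomposition \eqref{equ A.1'} and verify naturality by comparing graded pieces of $v(f)\circ z_{vM}=z_{vN}\circ v(f)$. In fact your write-up is more thorough than the paper's, which only spells out the naturality check and leaves the shift-compatibility $z_{M\langle k\rangle}=z_M\langle k\rangle$ and the ring homomorphism verification as implicit.
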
 
\begin{proof}
For any $z\in Z(\sC)$ and any $M\in \sD$, there is a family $\big(z_{d,M}:M\rightarrow M\langle d\rangle\big)_d$ that is zero except finitely many elements, such that $z_{vM}=\sum_d v(z_{d,M})$. 
We show that $(z_d)_d$ defines an element in $Z^\bullet(\sD)$. 
Indeed, for any morphism $f:M\rightarrow M'$ in $\sD$, we have $z_{vM'}\circ vf=vf\circ z_{vM}$, so by definition 
$$\sum_d v(z_{d,M'}\circ f)= \sum_d vz_{d,M'}\circ vf = z_{vM}\circ vf
=\sum_dvf\circ vz_{d,M}=\sum_d v(f\circ z_{d,M'}).$$ 
By the isomorphism (\ref{equ A.1'}), we deduce that $z_{d,M'}\circ f=f\circ z_{d,M}$ for each $d$. 
\end{proof} 

Suppose that $\sC$ and $\sD$ are abelian categories admitting enough projective objects, and that $v$ and $\langle 1\rangle$ are exact functors. 
Let $\sP$ (resp. $\sQ$) be the full subcategory of projective objects in $\sC$ (resp. $\sD$). 
\begin{lem}\label{lem A.21} 
Suppose that any objects in $\sP$ admits a lifting in $\sD$, and that $\sQ$ coincides with the additive full subcategory of $\sD$ generated by the liftings of objects in $\sP$. 
Then the natural map $Z(\sC)\rightarrow Z^\bullet(\sD)$ is an isomorphism. 
\end{lem}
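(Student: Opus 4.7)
The plan is to factor the natural map through restrictions to the subcategories of projectives. Writing $Z(\sP)$ for the center of the additive full subcategory $\sP\subset \sC$ and $Z^\bullet(\sQ)$ for the graded center of $\sQ\subset \sD$ (defined in the obvious way), I would establish isomorphisms
$$Z(\sC)\xs Z(\sP),\qquad Z^\bullet(\sD)\xs Z^\bullet(\sQ),\qquad Z(\sP)\xs Z^\bullet(\sQ),$$
and check that the composite recovers the natural map from Lemma \ref{lem A.2}.

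For the first isomorphism: injectivity uses that for any $X\in \sC$ a surjection $f:P\twoheadrightarrow X$ with $P\in \sP$ exists, and centrality gives $z_X\circ f=f\circ z_P$, so $z_P=0$ forces $z_X=0$ since $f$ is an epimorphism. For surjectivity, starting from $(z_P)_{P\in \sP}$ and a projective presentation $P_1\xrightarrow{d}P_0\xrightarrow{\epsilon}X\to 0$, I would define $z_X$ as the unique morphism with $z_X\circ \epsilon=\epsilon\circ z_{P_0}$; this exists because $\epsilon\circ z_{P_0}\circ d=\epsilon\circ d\circ z_{P_1}=0$, and standard arguments show independence of the resolution and naturality. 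The analogous argument, applied separately in each graded degree $d$ and using that $\sD$ has enough projectives together with exactness of $\langle 1\rangle$, gives the second isomorphism $Z^\bullet(\sD)\xs Z^\bullet(\sQ)$.

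The crucial step is the third isomorphism $Z(\sP)\xs Z^\bullet(\sQ)$. Fix for each $P\in \sP$ a lifting $\tilde P\in \sQ$. The degrading identity (\ref{equ A.1'}) applied to $M=N=\tilde P$ yields a canonical decomposition
$$\End_\sC(vP)=\bigoplus_{d\in \Z}\Hom_\sD(\tilde P,\tilde P\langle d\rangle).$$
Thus for $z\in Z(\sP)$ the endomorphism $z_{vP}$ decomposes uniquely as $\sum_d v(z_{d,\tilde P})$ (with only finitely many summands nonzero), defining morphisms $z_{d,\tilde P}:\tilde P\to \tilde P\langle d\rangle$. Naturality of the family $(z_d)_d$ under a morphism $f:\tilde P_1\to \tilde P_2\langle e\rangle$ in $\sQ$ reduces, via (\ref{equ A.1'}), to the identity $z_{vP_2}\circ vf=vf\circ z_{vP_1}$ which holds because $z\in Z(\sP)$; the construction then extends to all of $\sQ$ by the additive-generation hypothesis. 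Shift-compatibility $z_{d,\tilde P\langle k\rangle}=z_{d,\tilde P}\langle k\rangle$, as well as independence of the chosen lifting, follow from the natural isomorphism $v\simeq v\langle 1\rangle$ built into the degrading functor. Inverting the construction, any $(z_d)_d\in Z^\bullet(\sQ)$ assembles to the family $z_{vP}:=\sum_d v(z_{d,\tilde P})$ in $Z(\sP)$.

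The main obstacle will be the bookkeeping in the third step: carefully tracking how the decomposition (\ref{equ A.1'}) transforms under the natural isomorphism $v\simeq v\langle 1\rangle$, so that different liftings of the same object $P$ (which differ by a shift $\langle k\rangle$) yield the same element of $Z^\bullet(\sQ)$, and so that naturality in $\sQ$ (where morphisms carry an internal grading) matches naturality in $\sP$ (where it does not). Once this is settled, the commutativity identifying the composite with the Lemma \ref{lem A.2} map is immediate from the construction, completing the proof.
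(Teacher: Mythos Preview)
Your proposal is correct and follows essentially the same route as the paper: reduce to the projective subcategories via the isomorphisms $Z(\sC)\xs Z(\sP)$ and $Z^\bullet(\sD)\xs Z^\bullet(\sQ)$, then use the degrading identity (\ref{equ A.1'}) to match $Z(\sP)$ with $Z^\bullet(\sQ)$. The paper's proof is terser---it simply asserts the first two isomorphisms and handles the third by the same decomposition $z_{vQ}=\sum_d v(z_{d,Q})$ you describe---while you spell out the resolution argument for the first two steps and flag the lifting/shift bookkeeping in the third, but the substance is identical.
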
 
\begin{proof} 
Note that the restrictions $Z(\sC)\rightarrow Z(\sP)$ and $Z^\bullet(\sD)\rightarrow Z^\bullet(\sQ)$ are isomorphisms. 
We show that the map $Z(\sP)\rightarrow Z^\bullet(\sQ)$ is an isomorphism. 
Indeed, if $(z_d)_d=0$ for some $z\in Z(\sP)$, then for any object $P\in \sP$ with a lifting $Q\in \sQ$, we have $z_P=\sum_d vz_{d,Q}=0$. 
Hence $z=0$. 
Conversely, for any $(z_d)_d\in Z^\bullet(\sQ)$, the map $z:vQ\in \sQ \mapsto \sum_{d}vz_{d,Q}\in \End_{\sC}(vQ)$  defines an element $z\in Z(\sP)=Z(v\sQ)$. 
It shows that $Z(\sP)\rightarrow Z^\bullet(\sQ)$ is a surjection. 
\end{proof}

\subsection{Center of category of mixed sheaves} 
Let $X_0$ be an $\F_p$-variety admits a finite stratification $X_0=\bigsqcup_{s\in \mathscr{S}} X_{s,0}$. 
Set $X:= X_0 \times_{\F_p} \overline{\F}_p$, and $X_{s}:= X_{s,0} \times_{\F_p} \overline{\F}_p$. 
Denote by $D^{\mathrm{b},\mx}_\mathscr{S}(X, \overline{\Q}_\ell)$ the triangulate category of mixed $l$-adic sheaves that are constructible along the stratification $\mathscr{S}$. 
Denote by $\langle 1 \rangle$ the half of the Tate twist. 
Define the ``\textit{pure center}" of $D^{\mx}:=D^{\mathrm{b},\mx}_\mathscr{S}(X, \overline{\Q}_\ell)$ by 
$$Z^\pur_{\mathscr{S}}(X) := 
	\left\{ 
	\begin{gathered} 
    z= (z_d)_d \in \prod_{d\in \Z} \Hom( \id_{D^{\mx}} , \langle  d \rangle[d]) 
    \end{gathered}  \left |\ 
    \begin{gathered} 
	\text{for any $\sF\in D^{\mx}$, any $k,k'\in \Z$,} \\ 
	z_{\sF\langle k \rangle[k']}=z_{M}\langle k \rangle[k'], \quad \text{and } \\ 
	{z}_{d,\sF}\neq0 \text{ only for finitely many $d$ } 
	\end{gathered} \right. \right\}  .$$ 
Denote by 
$$ H^\bullet(X)^\pur:= \bigoplus_{d} \Hom_{D^\mx}( {\overline{\Q}_\ell}_X, {\overline{\Q}_\ell}_X\langle d \rangle [d] ) $$
the subspace consisting of pure elements in $H^\bullet(X)$. 
Note that for any $\sF\in D^{\mx}$, we have $\sF\otimes^L {\overline{\Q}_\ell}_X=\sF$, which yields a map $H^\bullet(X)^\pur\rightarrow Z^\pur_{\mathscr{S}}(X)$. 
It admits a retraction by restriction on the constant sheave $Z^\pur_{\mathscr{S}}(X) \rightarrow H^\bullet(X)^\pur$. 

We also have a equivariant version. 
Suppose $X_0$ is equipped with an action of an algebraic group $\Gamma_0$ over $\F_p$, and set $\Gamma=\Gamma_0\times_{\F_p} \overline{\F}_p$. 
Then one can similarly define the ``pure center" $Z_{\Gamma}^\pur(X)$ of $D^{\mathrm{b},\mx}_{\Gamma}(X, \overline{\Q}_\ell)$, and replace $H^\bullet(X)$ by $H^\bullet_{\Gamma}(X)$. 

The construction also works for ind-varieties, by setting 
	$$ H^\bullet(X)^\pur:= \lim_{s\in \mathscr{S}} H^\bullet(\overline{X_s})^\pur .$$

\newpage 
\section{Bernstein's formula}\label{app D} 
In \cite{B90}, Bernstein studied trace operators associated with translation functors on $U\g$-modules, and used them to give a proof of Soergel's isomorphism between the center of the principal block of the category $\sO$ for $\g$ and the cohomology of flag variety $\check{G}/\check{B}$. 
A quantum analogue of this construction and the fact that the action of the trace operator associated with translation functor on the center is compatible with push-forward on cohomology was obtained by Peng Shan and Eric Vasserot (unpublished). 
In this appendix, we give details of this construction. 

\subsection{Bernstein's formula}\label{subsect D.1} 
\subsubsection{Trace of functors} 
Following \cite{B90}, we define the traces of functors. 
Let $R$ be a commutative ring. 
Let $\sC$ and $\sD$ be $R$-linear additive categories. 
Let $(E,F,G)$ be a triple of adjoint functors with $E,G:\sC\rightarrow \sD$, $F:\sD\rightarrow \sC$. 
Suppose there is a natural transformation (call the \textit{balancing}) $\delta: E\rightarrow G$.  
Then there is a homomorphism of $R$-modules 
$$\tr_{E,\delta}: Z(\sD)\rightarrow Z(\sC)$$ 
given by 
$$\tr_{E,\delta}(z):\ \id_{\sC}\xrightarrow{\varepsilon} FE\xrightarrow{FzE} FE\xrightarrow{F\delta} FG\xrightarrow{\eta} \id_{\sC}, \quad \forall z\in Z(\sD),$$ 
where $\varepsilon$ is the unit associated with $(E,F)$ and $\eta$ is the counit associated with $(F,G)$. 

\subsubsection{Bernstein's formula}\label{subsect B.1.2} 
Denote by $\rep(U_q)$ the full subcategory of modules in $U_q\mod^{\Lambda}_{\bA}$ that are free of finite rank as $\bA$-modules. 
Then $\rep(U_q)$ consists of integrable $U_q$-modules, and is closed under taking tensor products. 
Recall that $\rep(U_q)$ is a rigid monoidal category with the balancing $\id \xs (-)^{**}$ given by the $K_{2\rho}$-action. 
For any $M\in \rep(U_q)$ and $f\in \End(M)$, the \textit{quantum trace} $\tr_{q,M}(f)$ is defined to be the value of $1$ under the composition 
$$\bA\rightarrow M\otimes M^*\xrightarrow{f\otimes \id}M\otimes M^*\xs M^{**}\otimes M^* \rightarrow \bA,$$ 
with the first and the last maps given by unit and counit. 
Therefore $\tr_{q,M}(f)$ coincides with the usual trace $\Tr(K_{2\rho}f)$. 
Recall the \textit{character} of $M$ is 
$$\ch M=\sum_{\lambda} (\rk_{\bA}M_\lambda)\cdot K_\lambda\in \bA[T/W],$$ 
where we identify the algebras $\bA[T/W]=(\bA\langle K_{\lambda}\rangle_{\lambda\in \Lambda})^W$. 

Let $V\in \rep(U_q)$. 
Let $(E,F,G)$ be the adjoint triple of endo-functors of $\rep(U_q)$ given by tensoring $V,V^*,V^{**}$ from the right. 
Consider the balancing $\delta:E\rightarrow G$ by the isomorphism $V\xs V^{**}$ given above. 
By \textsection \ref{subsect D.1}, there is a homomorphism of $\bA$-modules 
$$\tr_{V}:=\tr_{E,\delta}:\ Z(\rep(U_q))\rightarrow Z(\rep(U_q)).$$ 
The \textit{quantum dimension} of $V$ is $\dim_q V=\tr_{q,V}(1)=(\ch V)(q^{2\rho})$. 
\begin{lem}\label{lem D.1} 
Let $M\in \rep(U_q)$ and $f\in Z(\rep(U_q))$. 
\begin{enumerate} 
\item We have $\tr_{q,M\otimes V}(f|_{M\otimes V})=\tr_{q,M}(\tr_V(f)|_M)$; 
\item The map 
$$\tr_q: \rep(U_q)\rightarrow \Hom_{\bA}(Z(\rep(U_q)),\bA), \quad V \mapsto \tr_{q,V}$$ 
descends to $K_0(\rep(U_q))$, i.e. for any short exact sequence $0\rightarrow V_1\rightarrow V\rightarrow V_2\rightarrow 0$ in $\rep(U_q)$, we have $\tr_{q,V}=\tr_{q,V_1}+\tr_{q,V_2}$. 
Namely, $\tr_{q,V}$ as a $\bA$-linear function on $Z(\rep(U_q))$ only depends on the character $\ch V$. 
\end{enumerate} 
\end{lem}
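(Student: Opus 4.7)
The plan for part (1) is to identify $\tr_V(f)|_M$ with a partial quantum trace and then invoke the standard compatibility between total and partial quantum trace in any pivotal rigid monoidal category. Unwinding the definition in \textsection\ref{subsect D.1} with $E = -\otimes V$, $F = -\otimes V^*$, $G = -\otimes V^{**}$, balancing $\delta_V : V\xrightarrow{\sim} V^{**}$ given by the $K_{2\rho}$-action, unit $\varepsilon_M = \mathrm{id}_M\otimes \mathrm{coev}_V$ and counit $\eta_M = \mathrm{id}_M\otimes \mathrm{ev}_{V^*}$, one checks directly that $\tr_V(f)|_M$ coincides with the composition
$$M \xrightarrow{\mathrm{id}\otimes \mathrm{coev}_V} M\otimes V\otimes V^* \xrightarrow{f|_{M\otimes V}\otimes \mathrm{id}} M\otimes V\otimes V^* \xrightarrow{\mathrm{id}\otimes \delta_V\otimes \mathrm{id}} M\otimes V^{**}\otimes V^* \xrightarrow{\mathrm{id}\otimes \mathrm{ev}_{V^*}} M,$$
i.e.\ with the partial quantum trace of $f|_{M\otimes V}$ along the $V$-factor. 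The identity in (1) then reduces to the universal fact that $\tr_{q,M\otimes V}(\phi) = \tr_{q,M}(\mathrm{ptr}_V(\phi))$ for any $\phi\in \End(M\otimes V)$; I would verify this by factoring $\mathrm{coev}_{M\otimes V}$ and $\mathrm{ev}_{(M\otimes V)^*}$ through the canonical isomorphisms $(M\otimes V)^*\cong V^*\otimes M^*$ and $(M\otimes V)^{**}\cong M^{**}\otimes V^{**}$, using that the pivot is monoidal so that $\delta_{M\otimes V} = \delta_M\otimes \delta_V$ (which follows from $K_{2\rho}$ being grouplike).

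For part (2), I would first prove additivity on short exact sequences directly from the identification $\tr_{q,V}(f)=\Tr(K_{2\rho}f|_V)$. Given $0\to V_1\to V\to V_2\to 0$, all three modules are free of finite rank over $\bA$, so the sequence splits as a sequence of $\bA$-modules; moreover $V_1\subset V$ is a $U_q$-submodule, which is preserved both by $K_{2\rho}$ (grouplike) and by $f|_V$ (naturality of $f$). Picking an $\bA$-basis of $V$ obtained by juxtaposing a basis of $V_1$ with lifts of a basis of $V_2$ puts $K_{2\rho}f|_V$ into block upper-triangular form with diagonal blocks $K_{2\rho}f|_{V_1}$ and $K_{2\rho}f|_{V_2}$, whence $\tr_{q,V}(f)=\tr_{q,V_1}(f)+\tr_{q,V_2}(f)$ by additivity of ordinary matrix trace. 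For the dependence on character, I base-change to $\bF=\k(q_e)$: the category $\rep(U_q)\otimes_\bA \bF$ is semisimple with simple objects the Weyl modules $V(\lambda)_q\otimes \bF$ for $\lambda$ dominant, so $V\otimes \bF\cong \bigoplus_\lambda \big(V(\lambda)_q\otimes \bF\big)^{\oplus m_\lambda}$ with multiplicities $m_\lambda\in \Z_{\geq 0}$ determined uniquely by $\ch V$ (via the Weyl character formula). Iterating the additivity just proved along the resulting filtration gives $\tr_{q,V}(f)=\sum_\lambda m_\lambda \tr_{q,V(\lambda)_q}(f)$ in $\bF$, and hence in $\bA$ via the injection $\bA\hookrightarrow \bF$, which depends only on $\ch V$.

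Neither part presents a serious obstacle given the preceding set-up. The main bookkeeping is in part (1), where one must carefully track evaluations, coevaluations, and the monoidality of the pivot; this is routine but tedious. Part (2) is essentially formal once the semisimplicity of $\rep(U_q)\otimes_\bA \bF$ is invoked, the only mild subtlety being that one needs additivity over $\bA$ rather than merely over $\bF$, which is why I argue the block-triangular splitting explicitly rather than simply quoting semisimplicity.
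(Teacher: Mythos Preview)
Your proposal is correct and follows essentially the same strategy as the paper. For part (1), both you and the paper reduce to the identity ``quantum trace equals ordinary trace after twisting by $K_{2\rho}$'': the paper writes this in one line as
\[
\tr_{q,M\otimes V}(f|_{M\otimes V})=\Tr\big((K_{2\rho}|_M\otimes K_{2\rho}|_V)\cdot f|_{M\otimes V}\big)=\Tr\big(K_{2\rho}|_M\cdot \tr_V(f)|_M\big)=\tr_{q,M}(\tr_V(f)|_M),
\]
whereas you phrase the same computation in pivotal-category language (partial quantum trace, monoidality of the pivot). Your version is more explicit about why the middle equality holds; the paper leaves it implicit that $\tr_V(f)|_M$ is the ordinary partial trace of $(1\otimes K_{2\rho}|_V)\cdot f|_{M\otimes V}$. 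For part (2), your block-triangular argument for additivity is exactly the paper's one-line proof; your extra step of base-changing to $\bF$ and invoking semisimplicity to deduce dependence on $\ch V$ is not needed, since the paper treats ``depends only on $\ch V$'' as a restatement of ``factors through $K_0$'' (the character map $K_0(\rep(U_q))\to\Z[\Lambda]^W$ being injective is standard), but your argument is also correct.
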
 
\begin{proof}
(1) By definition we have 
\begin{align*}
\tr_{q,M\otimes V}(f|_{M\otimes V})&=
\Tr\big((K_{2\rho}f)|_{M\otimes V}\big)=\Tr\big((K_{2\rho}|_{M}\otimes K_{2\rho}|_{V})\cdot f|_{M\otimes V}\big)\\ 
&=\Tr(K_{2\rho}|_{M}\cdot \tr_V(f)|_{M})=\tr_{q,M}(\tr_V(f)|_M). 
\end{align*} 

Part (2) follows from the equality $\Tr\big((K_{2\rho}f)|_{V}\big)=\Tr\big((K_{2\rho}f)|_{V_1}\big)+\Tr\big((K_{2\rho}f)|_{V_2}\big)$. 
\end{proof} 

Recall the co-induction module $H^i(\lambda)_q\in \rep(U_q)$ introduced in \cite[\textsection 3]{APW91}, for $\lambda\in \Lambda$ and $i\geq 0$. 
Set 
$$\chi_{q,\lambda}=\sum_i (-1)^i \ch H^i(\lambda)_q\in \bA[T/W].$$ 
If $\lambda$ is dominant, we have $\chi_{q,\lambda}=\ch H^0(\lambda)_q=\ch V(\lambda)_q$. 
Recall the Weyl dimension formula (see e.g. \cite[II Prop 5.10]{Jan03}) 
\begin{equation}\label{equ D.2} 
\chi_{q,\mu}(q^{2\rho})=\frac{\bL(q^{2(\mu+\rho)})}{\bL(q^{2\rho})}, \quad \forall \mu\in \Lambda,
\end{equation} 
where $\bL:=K_{\rho}\prod_{\alpha\in \Phi^+}(1-K^{-1}_\alpha)\in \bA[T]$. 

Consider the algebra homomorphism $\bA[T/W]\xrightarrow{\hc^{-1}} Z(U_q)\rightarrow Z(\rep(U_q))$.  
For a $\Lambda$-graded $\bA$-module $M$ that is free of finite rank, we denote by $\bP(M)$ the set of weights in $M$, i.e. it consists of $\lambda\in \Lambda$ appearing with multiplicity $\rk_{\bA} M_\lambda$. 
Recall the algebra automorphism $\tau_\nu$ of $\bA[T]$ for $\nu\in \Lambda$, given by $\tau_\nu(K_\mu)=q^{(\nu,\mu)}K_\mu$, for any $\mu\in \Lambda$. 
\begin{prop}\label{prop D.2} 
There is a unique lifting of $\tr_V$ to a linear map $\tr_V:\bA[T/W]\rightarrow \bA[T/W]$, which is given by 
\begin{equation}\label{equ D.3} 
f\mapsto \frac{\sum\limits_{\nu\in \bP(V)} \tau^{2}_{\nu}(f\cdot \bL)}{\bL}, 
\quad \forall f\in \bA[T/W], 
\end{equation} 
such that the diagram commutes 
$$\begin{tikzcd}
\bA[T/W] \arrow[d]\arrow[r,"\tr_{V}"] & \bA[T/W] \arrow[d] \\ 
Z(\rep(U_q))\arrow[r,"\tr_{V}"] & Z(\rep(U_q)).
\end{tikzcd}$$ 
\end{prop}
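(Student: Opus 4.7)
The plan is to establish three points: uniqueness of any lift satisfying the commutativity, derivation of the formula~(\ref{equ D.3}) by testing on Weyl modules, and verification that the candidate formula actually lies in $\bA[T/W]$. Uniqueness follows because the map $\bA[T/W]\xrightarrow{\hc^{-1}} Z(\mathscr{U}_q)\cap U_q\rightarrow Z(\rep(U_q))$ is injective: each $g\in \bA[T/W]$ acts on $V(\mu)_q$ by the scalar $g(q^{2(\mu+\rho)})$, and the set $\{q^{2(\mu+\rho)}\}_{\mu+\rho\in\Lambda^{++}}$ is Zariski-dense in $T/W$, so a central element, hence any candidate lift, is determined by these scalars.

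For the formula, I would apply Lemma~\ref{lem D.1}(1) with $M=V(\mu)_q$, obtaining
$$\tr_V(f)(q^{2(\mu+\rho)})\cdot \dim_q V(\mu)_q = \tr_{q,V(\mu)_q\otimes V}(f).$$
By Lemma~\ref{lem D.1}(2) the right-hand side depends only on the character $\chi_{q,\mu}\cdot \ch V=\sum_{\nu\in\bP(V)}\chi_{q,\mu+\nu}$, and each term contributes $f(q^{2(\mu+\nu+\rho)})\cdot \chi_{q,\mu+\nu}(q^{2\rho})$; when $\mu+\nu$ is non-dominant, the $W$-invariance of $f$ and of $q^{2\rho}$ combined with the Weyl symmetry $\chi_{q,w\bullet\lambda}=(-1)^{l(w)}\chi_{q,\lambda}$ produces cancellations compatible with the Weyl-filtration description in the dominant case. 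Substituting the Weyl dimension formula~(\ref{equ D.2}) and rewriting via $\tau_\nu^2$ yields exactly the right-hand side of~(\ref{equ D.3}) evaluated at $q^{2(\mu+\rho)}$.

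It remains to verify that $N:=\sum_{\nu\in\bP(V)}\tau_\nu^2(f\cdot\bL)\in \bA[T]$ is divisible by $\bL$ with quotient in $\bA[T/W]$. Using the $W$-stability of $\bP(V)$ with multiplicity, the identity $w\tau_\nu^2 w^{-1}=\tau_{w\nu}^2$, the $W$-invariance of $f$, and the $W$-anti-invariance of $\bL$ (which follows from $(1-K_\alpha)=-K_\alpha(1-K_{-\alpha})$ on simple reflections), a direct computation gives $w(N)=(-1)^{l(w)}N$; by the classical description of $W$-anti-invariants in the group algebra of $\Lambda$ as the free $\bA[T]^W$-module generated by $\bL$, the quotient $N/\bL$ lies in $\bA[T/W]$. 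I expect the subtlest step to be the derivation: Lemma~\ref{lem D.1}(2) lets one formally sum over $\bP(V)$ without producing an actual Weyl filtration of $V(\mu)_q\otimes V$, but the Weyl-symmetric cancellations for non-dominant $\mu+\nu$ must be handled carefully to justify the contribution formula on the nose.
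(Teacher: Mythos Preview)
Your proposal is correct and follows the same route as the paper: test on Weyl modules via Lemma~\ref{lem D.1}(1), compute $\tr_{q,V(\mu)_q\otimes V}(f)$ through the character identity $\ch(V(\mu)_q\otimes V)=\sum_{\nu}\chi_{q,\mu+\nu}$ together with Lemma~\ref{lem D.1}(2), and substitute the Weyl dimension formula~(\ref{equ D.2}). Two minor remarks: the paper dispatches your ``subtlest step'' simply by invoking~(\ref{equ D.2}) for \emph{all} $\mu\in\Lambda$ (valid because $\chi_{q,\lambda}$ is defined as an Euler characteristic $\sum_i(-1)^i\ch H^i(\lambda)_q$), so no separate cancellation bookkeeping for non-dominant $\mu+\nu$ is required; and your explicit verification that $N/\bL\in\bA[T/W]$ via $W$-anti-invariance of $N$ is an addition the paper omits entirely, treating it as a standard fact about anti-invariants in the group algebra.
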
 

\begin{proof} 
Let $\mu$ be a dominant weight. 
Applying Lemma \ref{lem D.1}(1) to $M=V(\mu)_q$, we have 
\begin{equation}\label{equ D.4} 
\tr_{q,V(\mu)_q\otimes V}(f|_{V(\mu)_q\otimes V})=\tr_{q,V(\mu)_q}(\tr_V(f)|_{V(\mu)_q}).
\end{equation} 
Since $\tr_V(f)$ acts on $V(\mu)_q$ by the scalar $\tr_V(f)(q^{2(\mu+\rho)})$, using the formula (\ref{equ D.2}), we deduce  
\begin{equation}\label{equ D.5} 
\begin{aligned} 
\tr_{q,V(\mu)_q}(\tr_V(f)|_{V(\mu)_q})&=\dim_q V(\mu)_q\cdot \tr_V(f)(q^{2(\mu+\rho)})\\ 
&=\frac{1}{\bL(q^{2\rho})}\big(\bL\cdot \tr_V(f)\big)(q^{2(\mu+\rho)}). 
\end{aligned}
\end{equation} 
By the tensor identity \cite[Prop 2.16]{APW91}, we have $\ch(V(\mu)_q\otimes V)=\sum\limits_{\nu\in \bP(V)}\chi_{\mu+\nu}$. 
Hence Lemma \ref{lem D.1}(2) and (\ref{equ D.2}) show that 
\begin{equation}\label{equ D.6} 
\begin{aligned}
\tr_{q,V(\mu)_q\otimes V}(f|_{V(\mu)_q\otimes V})
&=\sum_{\nu\in \bP(V)}\chi_{\mu+\nu}(q^{2\rho})\cdot f(q^{2(\mu+\nu+\rho)}) \\ 
&=\sum_{\nu\in \bP(V)} \frac{\bL(q^{2(\mu+\nu+\rho)})}{\bL(q^{2\rho})}\cdot f(q^{2(\mu+\nu+\rho)})\\ 
&=\frac{1}{\bL(q^{2\rho})} \sum\limits_{\nu\in \bP(V)} \tau^{2}_{\nu}(f\cdot \bL)(q^{2(\mu+\rho)}). 
\end{aligned}
\end{equation} 
Combining (\ref{equ D.4}), (\ref{equ D.5}) and (\ref{equ D.6}), it follows that 
$$\tr_V(f)(q^{2(\mu+\rho)})=\frac{\sum\limits_{\nu\in \bP(V)} \tau^{2}_{\nu}(f\cdot \bL)}{\bL}(q^{2(\mu+\rho)})$$ 
holds for any dominant weight $\mu$. 
Thus the restriction of $\tr_V$ on $\bA[T/W]$ is induced by the map (\ref{equ D.3}). 
\end{proof} 

\subsection{Trace map and pushforward}\label{subsect B.2} 
We denote by $\bA_{\widehat{\zeta_e}}$ be the completion of $\bA=\k[q_e^{\pm 1}]$ at $q_e=\zeta_e$, and $\k[\hbar]_{\widehat{0}}$ the completion of $\k[\hbar]$ at $\hbar=0$. 
There is an identification $\bA_{\widehat{\zeta_e}}\simeq \k[\hbar]_{\widehat{0}}=\k[\![\hbar]\!]$ via $\hbar=q_e-\zeta_e$. 
We identify the graded rings $H_{\Gm}^\bullet(\pt)=\k[\hbar]$. 

Let $U_{\hat{\zeta}}=U_q\otimes_{\bA}\bA_{\widehat{\zeta_e}}$, and let $V(\lambda)_{\hat{\zeta}}=V(\lambda)_q\otimes_{\bA}\bA_{\widehat{\zeta_e}}$ be the Weyl module of $U_{\hat{\zeta}}$. 
Denote by $\rep(U_{\hat{\zeta}})$ the full subcategory of $U_{\hat{\zeta}}\Mod^\Lambda_{\bA_{\widehat{\zeta_e}}}$ of the modules that are finitely generated over $\bA_{\widehat{\zeta_e}}$. 
There is a block decomposition 
\begin{equation}\label{equ D.50} 
\rep(U_{\hat{\zeta}})=\bigoplus_{\omega\in \Xi_\sc} \rep^{\omega}(U_{\hat{\zeta}}), 
\end{equation}
such that the Weyl module $V(\lambda)_{\hat{\zeta}}$ lies in $\rep^{\omega}(U_{\hat{\zeta}})$ if and only if $\lambda\in W_{l,\af}\bullet \omega$.

\begin{prop}[{\cite[Cor 4.10]{BBASV}}] 
There is a $\k[\![\hbar]\!]$-algebra homomorphism 
\begin{equation}\label{equ D.9} 
\bc: H^\bullet_{\Gm}(\Gr^\zeta)_{\widehat{0}}\rightarrow Z(\rep(U_{\hat{\zeta}})),
\end{equation}
compatible with the decompositions (\ref{equ 1.1}) and (\ref{equ D.50}), where ${\widehat{0}}$ refers to the completion of the $H^\bullet_{\Gm}(\pt)$-module $H^\bullet_{\Gm}(\Gr^\zeta)$ at $\hbar=0$. 
\end{prop}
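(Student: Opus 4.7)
The plan is to build $\bc$ by realizing both sides as subalgebras of $\prod_{\lambda \in \Lambda} \bA_{\widehat{\zeta_e}}$ via Weyl-module central characters on one side and $\Gm$-equivariant localization on the other, and then matching the two images. This parallels the construction of $\bb$ in Theorem \ref{thm 3.11}, but with the deformation variable $\hbar=q_e-\zeta_e$ in place of $S$.

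\textit{Step 1 (algebraic side).} The Harish-Chandra isomorphism $\hc: Z(U_q)\xs \bA[T/W]$ encodes the center by its action on Weyl modules: $f\in \bA[T/W]$ acts on $V(\lambda)_q$ by the scalar $f(q^{2(\lambda+\rho)})$. Base-changing to $\bA_{\widehat{\zeta_e}}$ and using that $Z(\rep(U_{\hat\zeta}))$ is detected on the family $\{V(\lambda)_{\hat\zeta}\}_{\lambda\in \Lambda}$ (each Weyl module having endomorphism ring $\bA_{\widehat{\zeta_e}}$), one obtains an injection
$$Z(\rep(U_{\hat{\zeta}}))\hookrightarrow \prod_{\lambda \in \Lambda}\bA_{\widehat{\zeta_e}},$$
whose image admits an explicit description via $W$-invariance and completion.

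\textit{Step 2 (geometric side).} Since $\check{T}\subset \check{G}[[t]]$, every $\delta_\lambda\in \Gr$ is fixed by the loop-rotation $\Gm$-action, so $(\Gr^{\zeta})^{\Gm} = \{\delta_\lambda\}_{\lambda\in \Lambda}$. Equivariant localization, combined with the identification $\bA_{\widehat{\zeta_e}}\simeq \k[[\hbar]]$, yields an inclusion
$$H^\bullet_{\Gm}(\Gr^\zeta)_{\widehat{0}}\hookrightarrow \prod_{\lambda \in \Lambda}\bA_{\widehat{\zeta_e}}.$$

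\textit{Step 3 (matching and compatibility).} The substantive step is to check that the image of the localization in Step 2 lies inside that of the central-character map in Step 1; this defines $\bc$ and makes it a $\k[[\hbar]]$-algebra homomorphism. The key input is a GKM-type presentation of $H^\bullet_{\Gm}(\Gr^\zeta)$ by congruence conditions along the $1$-dimensional $\Gm\times \check{T}$-orbits, compared against the Kac--Kazhdan-type linkage carried by central characters in $\rep(U_{\hat{\zeta}})$; this comparison exploits the Drinfeld double structure of $U^\hb_q$ and is essentially \cite[Cor 4.10]{BBASV}. Compatibility with the block decompositions (\ref{equ 1.1}) and (\ref{equ D.50}) is then immediate: a class supported on $\Fl^\omega$ localizes to zero at $\delta_\lambda$ unless $\lambda \in W_{l,\ex}^\omega$, which matches the vanishing on $V(\lambda)_{\hat\zeta}$ for $\lambda\notin W_{l,\af}\bullet\omega$.

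The main obstacle is the GKM/linkage matching in Step 3, which is geometric-combinatorial in nature and constitutes the heart of the argument; the remaining steps are essentially formal once the two images are explicitly described.
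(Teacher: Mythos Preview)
Your Step 2 contains a genuine error: the loop-rotation $\Gm$-fixed locus on $\Gr^\zeta$ is \emph{not} the discrete set $\{\delta_\lambda\}_{\lambda\in\Lambda}$. Since $\zeta\in\Gm$ we have $(\Gr^\zeta)^{\Gm}=\Gr^{\Gm}$, and for any constant $h\in\check{G}$ the point $h\cdot\delta_\lambda$ is $\Gm$-fixed because $(ht^\lambda)^{-1}h(ct)^\lambda=c^\lambda\in\check{G}[[t]]$. Thus each orbit $\check{G}\cdot\delta_\lambda\simeq\check{G}/\check{P}_\lambda$ lies in the fixed locus, and these are positive-dimensional partial flag varieties whenever $\lambda\neq 0$. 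Consequently $\Gm$-equivariant localization embeds $H_{\Gm}^\bullet(\Gr^\zeta)_{\widehat{0}}$ into the $\Gm$-equivariant cohomology of a disjoint union of partial flag varieties, not into $\prod_\lambda\k[[\hbar]]$. This also undermines Step 3: with a non-discrete fixed locus there is no GKM presentation by congruences at isolated points, so the proposed matching of images cannot be carried out as stated. (The appeal to the Drinfeld double structure of $U^\hb_q$ is also misplaced: the proposition concerns $\rep(U_{\hat{\zeta}})$ for Lusztig's form, and the hybrid quantum group plays no role here.)

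The paper's construction avoids fixed-point localization entirely. It starts from the Harish-Chandra map $\k[\hbar][T/W]_{\widehat{0}}\to Z(U_{\hat{\zeta}})$, observes that its specialization at $\hbar=0$ factors through $\k[\Omega]$, and hence extends the source to the coordinate ring of the deformation to the normal cone $\widetilde{N}_\Omega(T/W)$. After refining by the $\pi_1$-grading and the decomposition $\Omega=\bigsqcup_{[\omega]}\Omega_{[\omega]}$, together with the \'etaleness of $T/W_\omega\to T/W$ near $W_\omega(\zeta^{2(\omega+\rho)})$, each piece is identified with $\k[\widetilde{N}_{\sigma_\omega}(\t/W_\omega)]$. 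The geometric input is then the isomorphism $H^\bullet_{\Gm}(\Fl^{\omega})\simeq\k[\widetilde{N}_{\sigma_\omega}(\t/W_\omega)]^{\oplus\pi_1}$ from \cite{BBASV}, a Rees-type (not localization-type) description of $\Gm$-equivariant cohomology. This yields $\bc$ and its compatibility with (\ref{equ 1.1}) and (\ref{equ D.50}) directly, without ever embedding both sides into a product indexed by $\Lambda$.
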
 

As in \textsection \ref{subsect 4.0}, there are translation functors in $\rep(U_{\hat{\zeta}})$, 
$$\sfT_{\omega_1}^{\omega_2} : \rep^{\omega_1}(U_{\hat{\zeta}}) \rightarrow \rep^{\omega_2}(U_{\hat{\zeta}}),\quad 
\sfT_{\omega_2}^{\omega_1} : \rep^{\omega_2}(U_{\hat{\zeta}}) \rightarrow \rep^{\omega_1}(U_{\hat{\zeta}}),$$ 
given by the Weyl module $V(\nu)_{\hat{\zeta}}$ with extreme weight $\omega_2-\omega_1$. 
We will use the biadjunction of $(\sfT_{\omega_1}^{\omega_2}, \sfT^{\omega_1}_{\omega_2})$ given by the isomorphism $V(\nu)_{\hat{\zeta}}\xs V(\nu)^{**}_{\hat{\zeta}}$ via $K_{2\rho}$-action (c.f. Remark \ref{rmk 4.2}). 
By \textsection \ref{subsect D.1}, the biadjoint pair $(\sfT_{\omega_1}^{\omega_2}, \sfT^{\omega_1}_{\omega_2})$ yields a linear map $\tr_{\sfT_{\omega_1}^{\omega_2}}:Z(\rep^{\omega_2}(U_{\hat{\zeta}}))\rightarrow Z(\rep^{\omega_1}(U_{\hat{\zeta}}))$. 
Set $\Xi_\sc^-:=\{\omega\in \Xi_\sc|\ 0\leq\langle\omega+\rho,\check{\alpha}\rangle <l,\ \forall \alpha\in \Phi^+ \}$. 
The main result of this subsection is the following. 
\begin{prop}\label{prop D.3} 
There are invertible elements $\lambda_\omega\in H^\bullet_{\Gm}(\Fl^{\omega,\circ})_{\widehat{0}}$ for each $\omega\in \Xi_\sc^-$ such that the following diagram commutes 
$$\begin{tikzcd}[column sep=large]
H^\bullet_{\Gm}(\Fl^{\circ})_{\widehat{0}} \arrow[r,"{\lambda^{-1}_\omega\circ \pi_*\circ \lambda_0}"] \arrow[d,"\bc"'] 
& H^\bullet_{\Gm}(\Fl^{\omega,\circ})_{\widehat{0}} \arrow[d,"\bc"]\\ 
Z(\rep^0(U_{\hat{\zeta}}))\arrow[r,"\tr_{\sfT_\omega^0}"] & Z(\rep^\omega(U_{\hat{\zeta}})). 
\end{tikzcd}$$ 
where $\pi_*$ is the pushforward associated with the natural projection $\pi:\Fl^{\circ}\rightarrow \Fl^{\omega,\circ}$. 
\end{prop}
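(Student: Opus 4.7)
The plan is to verify commutativity by evaluating both sides on the generating family of Weyl modules in $\rep^\omega(U_{\hat{\zeta}})$. Since any module in $\rep^\omega(U_{\hat{\zeta}})$ admits a Weyl flag over $\bA_{\widehat{\zeta_e}}$, the central character map
$$\chi_\omega:\ Z(\rep^\omega(U_{\hat{\zeta}}))\hookrightarrow \prod_{\mu\in W_{l,\af}\bullet \omega}\bA_{\widehat{\zeta_e}}$$
is injective, and under $\bc$ (with the $\check{T}\times\Gm$-refined construction of \cite{BBASV}) it is intertwined with the restriction to the $\check{T}\times\Gm$-fixed points $\{\delta^\omega_{\mu+\rho}\}_\mu$ on $\Fl^{\omega,\circ}$, after forgetting $\check{T}$-equivariance, in the spirit of Theorem \ref{thm 3.11}. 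Both $\tr_{T^0_\omega}$ and $\pi_*$ are compatible with these restrictions, so it suffices to verify the identity fibrewise over $\mu$.

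Next, I would compute the two sides explicitly. For the algebraic side, writing $x_{\mathrm{ch}}\in \bA[T/W]$ for the Harish--Chandra character corresponding to $\bc(x)$ with $x\in H^\bullet_\Gm(\Fl^\circ)_{\widehat{0}}$, Proposition \ref{prop D.2} applied as in the proof of Lemma \ref{lem 5.14} yields
$$\tr_{T^0_\omega}(\bc(x))|_{V(\mu)_{\hat{\zeta}}}=\frac{\sum_{\sigma\in \bP(V_q)}\tau^2_\sigma(p_{[0]}\cdot x_{\mathrm{ch}}\cdot \bL)}{\bL}\bigg|_{q^{2(\mu+\rho)}},$$
where $V_q$ is the Weyl module defining $T^0_\omega$. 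For the geometric side, the pushforward $\pi_*$ evaluated on a $\check{T}\times \Gm$-fixed point $\delta^\omega_{\mu+\rho}$ is given, after localization, by the Atiyah--Bott formula as a sum over the $\check{T}\times\Gm$-fixed points in the fibre of $\pi$, each contribution weighted by the inverse of the equivariant Euler class of the vertical tangent space. Under the identification $\bA_{\widehat{\zeta_e}}\simeq \k[[\hbar]]$ given by $\hbar=q_e-\zeta_e$, each factor $(1-K^{-1}_\alpha)$ evaluated at $q^{2(\mu+\rho)}$ matches the equivariant linear form in the root $\alpha$ up to a unit in $\k[[\hbar]]^\times$, and (\ref{equ 1.1}) identifies the two summation sets.

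The discrepancy between the two formulas is therefore the ratio of $\bL|_{q^{2(\mu+\rho)}}$ to the equivariant Euler class of the full tangent space at the corresponding fixed point. Motivated by the factorization $\bL=\bL_\omega\cdot \bL'_\omega$ of (\ref{equ 4.e15}) and the computation of Lemma \ref{lem 5.14}, I would define $\lambda_\omega$ fibrewise as the image of $\bL'_\omega$ evaluated at $q^{2(\mu+\rho)}$ divided by the matching equivariant Euler-class factor, appropriately averaged over $W_{l,\omega}$. Its constant term at $\hbar=0$ is the nonzero scalar $\bL'_\omega(\zeta^{2(\omega+\rho)})$ from Lemma \ref{lem 5.14}, so $\lambda_\omega$ is invertible in the pro-unipotent $\k[[\hbar]]$-algebra $H^\bullet_\Gm(\Fl^{\omega,\circ})_{\widehat{0}}$. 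The main obstacle will be the coherence step: showing that the fibrewise data assembles into a genuine element of $H^\bullet_\Gm(\Fl^{\omega,\circ})_{\widehat{0}}$ rather than merely a function on the fixed locus. I expect this to follow from the $\check{T}\times\Gm$-equivariance of both constructions together with a GKM-type description of $H^\bullet_{\check{T}\times \Gm}(\Fl^{\omega,\circ})$ by its restriction to one-dimensional orbits, whose edge relations are manifestly preserved by the defining ratio.
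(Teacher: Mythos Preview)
Your strategy is close to the paper's, and the core computation via Bernstein's formula (Proposition~\ref{prop D.2}) is exactly what the paper uses. However, there is one substantive difference in execution that eliminates precisely the obstacle you flag.

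You propose to define $\lambda_\omega$ fibrewise over the fixed-point set and then argue a posteriori, via a GKM-type criterion, that the collection of fibrewise values assembles into a genuine cohomology class. The paper bypasses this entirely: it works directly in the algebraic model $\k[\hbar][\t/W_\omega]_{\widehat{(0,0)}}$, identified with $H^\bullet_{\Gm}(\Fl^{\omega,\circ})_{\widehat{0}}$ via (\ref{equ B.15}) and (\ref{equ D.11}), and writes down a closed-form expression
\[
\lambda_\omega=\frac{|\Stab_W(\omega)|\cdot|W_\omega|}{|W|}\cdot \frac{\bL_\omega}{\Lambda_\omega}\cdot \tau^2_{\omega+\rho}(\bL'_\omega),
\]
which is manifestly an element of that ring and manifestly invertible (its constant term is nonzero). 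The matching with $\pi_*$ is then handled by the separate Lemma~\ref{lem B.5}, which identifies the geometric pushforward $\pi_*$ with the purely algebraic operator $\pi'_*(f)=\Lambda_\omega^{-1}\sum_{x\in W_\omega}(-1)^{l(x)}x(f)$ on $\k[\widetilde{N}_{\sigma_\bullet}(\t/W_\bullet)]$, again via the $\check{T}\times\Gm$-equivariant model. So rather than localizing $\pi_*$ by Atiyah--Bott and comparing fibrewise, the paper compares two explicit operators on the same polynomial ring; no coherence step is needed.

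Two smaller remarks. First, your opening claim that every module in $\rep^\omega(U_{\hat{\zeta}})$ admits a Weyl flag is false; only projectives (and tilting-type objects) do. The paper accordingly evaluates on projective modules $Q$ and their truncations $Q_n=Q/\hbar^nQ$, which suffices since the center is determined on projectives. Second, your fibrewise description of $\lambda_\omega$ is too vague to pin down the combinatorial constants; the paper's manipulation, rewriting $\sum_{\nu\in\bP(V)}\tau^2_\nu$ as $|\Stab_W(\omega)|^{-1}\sum_{x\in W}\tau^2_{-x\omega}$ and then factoring through $W/W_\omega$, is what produces the explicit $\lambda_\omega$ above.
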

\begin{rmk}
See in \cite[Prop B.10]{Situ1} a closely related statement. 
\end{rmk}

\subsubsection{Construction of the map $\bc$} 
We firstly recall the construction of the map $\bc$. 
The Harish-Chandra isomorphism $\bA[T/W]=Z(U_q)$ induces a $\k[\![\hbar]\!]$-algebra homomorphism  
\begin{equation}\label{equ D.60} 
\k[\hbar][T/W]_{\widehat{0}}\rightarrow Z(U_{\hat{\zeta}}). 
\end{equation} 
Since $U_{\hat{\zeta}}$ is torsion free over $\k[\![\hbar]\!]$, there is an inclusion $Z(U_{\hat{\zeta}})/\hbar Z(U_{\hat{\zeta}})\subset Z(U_{\zeta})$. 
By (\ref{equ 2.-1}), the specialization of (\ref{equ D.60}) at $\hbar=0$ induces a chain of maps 
\begin{equation}\label{equ B.9} 
\k[T/W]\rightarrow \k[G^*\times_{T/W}T/W]\twoheadrightarrow \k[1\times_{T/W}T/W]=\k[\Omega]\rightarrow Z(U_{\zeta}).
\end{equation}
Define the \textit{deformation to the normal cone} $\widetilde{N}_\Omega(T/W)$ to be the affine scheme with 
$$\k[\widetilde{N}_\Omega(T/W)]=\k[T/W][\hbar]+\sum_{n>0}\hbar^{-n}I_\Omega^n,$$ 
where $I_\Omega$ is the defining ideal for $\Omega$ in $\k[T/W]$. 
By (\ref{equ B.9}), the map (\ref{equ D.60}) extends to a homomorphism 
$$\k[\widetilde{N}_\Omega(T/W)]_{\widehat{0}}\rightarrow Z(U_{\hat{\zeta}}).$$ 
Consider the composition $\bc':\k[\widetilde{N}_\Omega(T/W)]_{\widehat{0}}\rightarrow Z(U_{\hat{\zeta}})\rightarrow Z(\rep(U_{\hat{\zeta}}))$. 
Similarly as (\ref{equ 3.2}), the category $\rep(U_{\hat{\zeta}})$ admits a $\pi_1$-grading $\rep(U_{\hat{\zeta}})=\bigoplus_{\gamma\in \pi_1}\rep(U_{\hat{\zeta}})^\gamma$, which yields a decomposition 
\begin{equation}\label{equ B.10} 
Z(\rep(U_{\hat{\zeta}}))=\bigoplus_{\gamma\in \pi_1}Z(\rep(U_{\hat{\zeta}})^\gamma). 
\end{equation}
We denote by $p_\gamma$ (resp. $i_\gamma$) the projection to (resp. the embedding from) the $\gamma$-th direct factor in (\ref{equ B.10}), and define 
\begin{equation}\label{equ D.7} 
\bc=\sum_{\gamma\in \pi_1} i_\gamma\circ p_\gamma\circ \bc':\ \k[\widetilde{N}_\Omega(T/W)]_{\widehat{0}}^{\oplus \pi_1}\rightarrow Z(\rep(U_{\hat{\zeta}})). 
\end{equation}
Set $\sigma_\omega$ be the scheme-theoretic fiber at $0\in \t/W$ of $\t/W_\omega\twoheadrightarrow \t/W$, then by the decomposition $\Omega=\bigsqcup_{[\omega]\in \Xi} \Omega_{[\omega]}$, we have 
\begin{equation}\label{equ D.8} 
\k[\widetilde{N}_\Omega(T/W)]_{\widehat{0}}^{\oplus \pi_1}
=\bigsqcup_{[\omega]\in \Xi} \k[\widetilde{N}_{\Omega_{[\omega]}}(T/W)]_{\widehat{0}}^{\oplus \pi_1}
=\bigsqcup_{[\omega]\in \Xi} \k[\widetilde{N}_{\sigma_\omega}(\t/W_{\omega})]_{\widehat{0}}^{\oplus \pi_1}, 
\end{equation} 
where we simplify $W_{\omega}=W_{\zeta^\omega}$. 
Finally, the map (\ref{equ D.9}) is obtained from (\ref{equ D.7}), (\ref{equ D.8}) and the following isomorphism in \cite[(2.16)]{BBASV}, 
\begin{equation}\label{equ D.10} 
H^\bullet_{\Gm}(\Fl^\omega)=\k[\widetilde{N}_{\sigma_{\omega}}(\t/W_{\omega})]^{\oplus \pi_1}.
\end{equation} 

We then explain the isomorphism (\ref{equ D.8}) more precisely. 
We identify the algebras $\k[T]_{\widehat{1}}=\k[\t]_{\widehat{0}}$ via the exponential map $\exp:\t \rightarrow T$. 
For any $\omega\in \Xi_\sc$, we denote by $\bA[T/W]_{\widehat{\zeta^{2(\omega+\rho)}}}$ the completion of the Harish-Chandra center $Z(U_q)=\bA[T/W]$ at $q_e=\zeta_e$ and $W(\zeta^{2(\omega+\rho)})\in T/W$. 
Since $T/W_\omega\rightarrow T/W$ is \'{e}tale at $W_\omega(\zeta^{2(\omega+\rho)})$, there are isomorphisms 
\begin{equation}\label{equ D.11} 
\bA[T/W]_{\widehat{\zeta^{2(\omega+\rho)}}}\xs 
\bA[T/W_\omega]_{\widehat{\zeta^{2(\omega+\rho)}}}
\xrightarrow[\sim]{\tau^{2}_{\omega+\rho}} \bA[T/W_\omega]_{\widehat{1}}\xs \k[\hbar][\t/W_\omega]_{\widehat{(0,0)}}. 
\end{equation} 
Under the isomorphisms above, the ideal of the closed subscheme $\Omega_{[\omega]}$ supported at $W_\omega(\zeta^{2(\omega+\rho)})$ in $\bA[T/W]_{\widehat{\zeta^{2(\omega+\rho)}}}$ corresponds to the one for $\sigma_\omega$ in $\k[\hbar][\t/W_\omega]_{\widehat{(0,0)}}$. 
It yields an isomorphism 
$$\k[\widetilde{N}_{\Omega_{[\omega]}}(T/W)]_{\widehat{0}}=\k[\widetilde{N}_{\sigma_\omega}(\t/W_\omega)]_{\widehat{0}}.$$ 

\subsubsection{Push-forward of cohomology} 
The isomorphism (\ref{equ D.10}) restricts to an isomorphism 
\begin{equation}\label{equ B.15}
H^\bullet_{\Gm}(\Fl^{\omega,\circ})=\k[\widetilde{N}_{\sigma_{\omega}}(\t/W_{\omega})] 
\end{equation}
on each component. 
Consider a linear map 
$$\pi'_*: \k[\t]\rightarrow \k[\t/W_\omega],\quad f\mapsto \frac{\sum_{x\in W_\omega}(-1)^{\ell(x)}x(f)}{\Lambda_\omega},$$ 
where $\Lambda_\omega:=\prod\limits_{\alpha\in \Phi^+,s_\alpha\in W_\omega} \alpha$. 
Since $\pi'_*$ is $\k[\t/W]$-linear, it extends to a $\k[\hbar]$-linear map 
$$\pi'_*: \k[\widetilde{N}_{\sigma_{0}}(\t)]\rightarrow \k[\widetilde{N}_{\sigma_{\omega}}(\t/W_{\omega})].$$ 
\begin{lem}\label{lem B.5} 
If $\omega\in \Xi_\sc^-$, then the following diagram commutes, 
$$\begin{tikzcd}
\k[\widetilde{N}_{\sigma_0}(\t)] \arrow[d,"\pi'_*"]\arrow[r,"\simeq"] 
& H^\bullet_{\Gm}(\Fl^{\circ}) \arrow[d,"\pi_*"]\\ 
\k[\widetilde{N}_{\sigma_\omega}(\t/W_\omega)] \arrow[r,"\simeq"] 
& H^\bullet_{\Gm}(\Fl^{\omega,\circ}).
\end{tikzcd}$$ 
\end{lem}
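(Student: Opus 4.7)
The plan is to identify the geometric pushforward $\pi_*$ with the classical BGG--Demazure operator along the fibers of $\pi$, and then to recognize this operator as precisely the algebraic formula defining $\pi'_*$. First I would check that the hypothesis $\omega\in\Xi_\sc^-$ forces $W_{l,\omega}$ to be a \emph{finite} parabolic subgroup of $W$: indeed no affine simple reflection stabilizes $\omega$, since the condition $\langle \omega+\rho,\check{\alpha}\rangle<l$ is strict for the highest coroot. Consequently the parahoric $P^{J_\omega}_l$ is only a finite enlargement of the Iwahori $\sI_l$, and $\pi:\Fl^{\circ}\to \Fl^{\omega,\circ}$ is a Zariski-locally trivial fiber bundle whose fiber is the finite partial flag variety of $\check{G}$ attached to the standard parabolic with Weyl group $W_\omega$.

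Second, for a smooth proper fibration of this type, the Gysin pushforward in any equivariant cohomology theory is given by the classical BGG--Demazure formula
\[
\pi_*(\xi)=\frac{\sum_{x\in W_\omega}(-1)^{l(x)}x(\xi)}{\prod_{\alpha\in\Phi^+,\,s_\alpha\in W_\omega}\alpha}=\frac{\sum_{x\in W_\omega}(-1)^{l(x)}x(\xi)}{\Lambda_\omega},
\]
where the $\alpha$'s are the equivariant Chern roots of the relative tangent bundle at a fiberwise $\check{T}$-fixed point. Since $W_\omega\subset W$ acts trivially on $\hbar=H^\bullet_{\Gm}(\mathrm{pt})$, this formula is $\k[\hbar]$-linear and makes sense purely in the $\Gm$-equivariant setting. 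By inspection, this agrees on the nose with the algebraic definition of $\pi'_*$, provided one already knows that the variables $\alpha$ appearing as Chern roots correspond to the tautological elements $\alpha\in\k[\t]$ on the coordinate side of (B.15).

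The third step, and the main point, is exactly this matching of Chern roots with coordinates. The isomorphism (B.15) specializing (D.10) is phrased purely in terms of coordinate rings via the Harish-Chandra description (D.11), whereas the BGG formula is geometric. I would handle this by first promoting everything to $\check{T}\times\Gm$-equivariant cohomology, where GKM localization at the $\check{T}$-fixed points $\{\delta_x\}_{x\in W^{\omega,\circ}_{l,\ex}}$ reduces every identity to a combinatorial statement on tangent weights; then to verify that under the $W$-invariant isomorphism $\t^*\xs\t$ fixed in the Notations, the tangent weight $\check{\alpha}$ at $\delta_0$ goes to the tautological element $\alpha\in\k[\t]$, so that the denominator of the Gysin formula becomes exactly $\Lambda_\omega$; and finally to restrict along the structure map $S=\k[\t]\to\k$ back to the purely $\Gm$-equivariant picture, noting that both $\pi_*$ and $\pi'_*$ are $\k[\hbar][\t/W]$-linear and therefore extend uniquely from their restrictions to $\k[\t]\subset\k[\widetilde{N}_{\sigma_0}(\t)]$ to the full deformation rings. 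The hard part is this last compatibility check between Chern roots and coordinates; once it is in place, commutativity of the diagram is a direct comparison of two explicit formulas.
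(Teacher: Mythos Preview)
Your proposal is correct and follows essentially the same route as the paper: both observe that $\omega\in\Xi_\sc^-$ forces $W_{l,\omega}\subset W$ so that $\pi$ is a $P_\omega/\check{B}$-bundle, then lift to $\check{T}\times\Gm$-equivariant cohomology, identify the Gysin pushforward via localization at fixed points and the Euler class $(-1)^{l(x)}\Lambda_\omega$, and finally specialize $H^\bullet_{\check{T}}(\pt)$ at $0\in\t$. The only cosmetic difference is that the paper works directly with the full deformed ring $\k[\widetilde{N}_{\Delta_\omega}(\t\times\t/W_\omega)]$ throughout and specializes at the end, whereas you propose to verify the identity on $\k[\t]$ first and then extend by $\k[\widetilde{N}_{\sigma_\omega}(\t/W_\omega)]$-linearity; both are fine.
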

\begin{proof} 
We need a $\check{T}\times \Gm$-equivariant version of the isomorphism (\ref{equ B.15}), which is recalled as follows. 
Let $\Delta_\omega:=\t\times_{\t/W}\t/W_\omega$ be a closed subscheme of $\t\times\t/W_\omega$. 
We identify $H^\bullet_{\check{T}\times \Gm}(\pt)=\k[\t][\hbar]$. 
By \cite[(2.16)]{BBASV}, there is an isomorphism of $\k[\t][\hbar]$-algebras 
$$\k[\widetilde{N}_{\Delta_\omega}(\t\times\t/W_\omega)]\xs H^\bullet_{\check{T}\times \Gm}(\Fl^{\omega,\circ}).$$ 
Its composition with the restriction to the $\check{T}$-fixed points 
$$\k[\widetilde{N}_{\Delta_\omega}(\t\times\t/W_\omega)]\rightarrow \Fun(W^{\omega}_{\af},\k[\t][\hbar])$$ 
satisfies 
\begin{equation}\label{equ B.16}
g\otimes f\mapsto \big(g\cdot x(f)\big)_{x\in W^{\omega}_{\af}}, \quad \forall g\in \k[\t][\hbar],\ \forall f\in \k[\t/W_\omega]. 
\end{equation} 
Since $\omega\in \Xi_\sc^-$, we have $W_\omega=W_{l,\omega}$, and the set $J_\omega$ of the corresponding $l$-affine simple coroots is contained in $\check{\Sigma}$. 
So the parahoric subgroup $P^{J_\omega}$ of $\check{G}(\!(t^l)\!)$ is the preimage of a parabolic subgroup $P_\omega$ in $\check{G}$ under the evaluation map $\check{G}[\![t^l]\!]\rightarrow \check{G}$ via $t\mapsto 0$. 
Now $\Fl^\circ\rightarrow \Fl^{\omega,\circ}$ is a locally trivial $P_\omega/\check{B}$-fibration. 
The $\check{T}$-equivariant Euler class of the normal bundle of $x\check{B}/\check{B}$ in $P_\omega/\check{B}$ is given by $(-1)^{\ell(x)}\Lambda_\omega$, for any $x\in W_\omega$. 
We obtain a commutative diagram 
\begin{equation}\label{equ B.17} 
\begin{tikzcd}
H^\bullet_{\check{T}\times \Gm}(\Fl^{\circ}) \arrow[d,"\pi_*"] \arrow[r,hook] 
& \Fun(W_{\af},\k(\t)[\hbar]) \arrow[d,"\pi''_*"]\\ 
H^\bullet_{\check{T}\times \Gm}(\Fl^{\omega,\circ}) \arrow[r,hook] 
& \Fun(W^{\omega}_{\af},\k(\t)[\hbar]), 
\end{tikzcd}
\end{equation} 
where $\pi''_*$ is given by 
$$\pi''_*: (f_y)_{y\in W_{\af}} \mapsto \big(\frac{\sum_{x\in W_\omega}(-1)^{\ell(x)} f_{yx}}{y(\Lambda_\omega)}\big)_{y\in W^{\omega}_{\af}}.$$ 
The map $\pi'_*$ extends $\k[\t]$-linearly to a map $\pi'_*: \k[\t\times\t]\rightarrow \k[\t\times\t/W_\omega]$, which further yields a $\k[\t][\hbar]$-linear map 
$$\pi'_*: \k[\widetilde{N}_{\Delta_0}(\t\times\t)]\rightarrow 
\k[\widetilde{N}_{\Delta_\omega}(\t\times\t/W_\omega)].$$ 
By (\ref{equ B.16}) and the diagram (\ref{equ B.17}), we have a commutative diagram 
$$\begin{tikzcd}
\k[\widetilde{N}_{\Delta_0}(\t\times\t)] \arrow[d,"\pi'_*"]\arrow[r,"\simeq"] 
& H^\bullet_{\check{T}\times \Gm}(\Fl^{\circ}) \arrow[d,"\pi_*"]\\ 
\k[\widetilde{N}_{\Delta_\omega}(\t\times\t/W_\omega)] \arrow[r,"\simeq"] 
& H^\bullet_{\check{T}\times \Gm}(\Fl^{\omega,\circ}). 
\end{tikzcd}$$ 
Specializing $H^\bullet_{\check{T}}(\pt)=\k[\t]$ at $0\in \t$, we get the desired commutative diagram from the one above. 
\end{proof}

\begin{proof}[Proof of Proposition \ref{prop D.3}] 
We adopt the notations $p_{[0]}$ and $n_{[0]}$ from \textsection \ref{subsect 5.3.4}, and recall that $\bL=\bL_\omega\cdot \bL'_\omega$ with $\bL_\omega$ and $\bL'_\omega$ defined in (\ref{equ 4.e15}). 
By \cite[Lem 5.13]{APW91}, any projective module in $ \rep(U_{\hat{\zeta}})$ admits a finite filtration with composition factors given by Weyl modules of $U_{\hat{\zeta}}$. 
Let $Q$ be a projective module in $\rep^\omega(U_{\hat{\zeta}})$, then any quotient $Q_n=Q/\hbar^nQ$ is an extension by finitely many Weyl modules $V(\lambda)_\k$ with $\lambda\in W_{l,\af}\bullet \omega$. 
Similar arguments as in the proof of Lemma \ref{lem 5.14} show that, there is $p_{[0]}\in \bA[T/W]$ with $n_{[0]}$ large enough, such that for any $f\in \bA[T/W]_{\widehat{\zeta^{2\rho}}}$, we have 
\begin{align*}
\tr_{\sfT_\omega^0}(f)|_{Q_n}&=\tr_{V}(f\cdot p_{[0]})|_{Q_n}\\ 
&=\frac{\sum_{\nu\in \bP(V)}\tau^{2}_\nu(f\cdot p_{[0]}\cdot \bL)}{\bL}|_{Q_n}\\ 
&=|\Stab_{W}(\omega)|^{-1} \frac{\sum_{x\in W}\tau^{2}_{-x\omega}(f\cdot \bL)}{\bL}|_{Q_n}, 
\end{align*} 
where for any $g\in Z(\rep(U_{\hat{\zeta}}))$, $g|_{Q_n}$ refers to its image in $Z(\rep(U_{\hat{\zeta}}))\rightarrow \End(Q_n)$. 
Since $\tau_{-x\omega}=x\tau_{-\omega}x^{-1}$ and $\bL'_\omega$ is invertible in $\bA[T/W_\omega]_{\widehat{\zeta^{2(\omega+\rho)}}}$, we have 
\begin{align*} 
\frac{\sum_{x\in W}\tau^{2}_{-x\omega}(f\cdot \bL)}{\bL}|_{Q_n}
&=\frac{\sum_{x\in W}(-1)^{\ell(x)}x \tau^{-2}_{\omega}(f\cdot \bL)}{\bL}|_{Q_n}\\ 
&=\sum_{x\in W/W_\omega}x\big[
\frac{1}{\bL'_\omega}\cdot \frac{\sum_{y\in W_\omega}(-1)^{\ell(y)}y \tau^{-2}_{\omega}(f\cdot \bL)}{\bL_\omega}\big]|_{Q_n}\\ 
&=\sum_{x\in W/W_\omega}x\tau^{-2}_{\omega+\rho}\big[ 
\frac{1}{\tau^{2}_{\omega+\rho}(\bL'_\omega)}\cdot 
\frac{\sum_{y\in W_\omega}(-1)^{\ell(y)}y\big(\tau^{2}_{\rho}(f)\cdot \tau^{2}_{\rho}(\bL)\big)}{\bL_\omega}\big]|_{Q_n}.
\end{align*} 
We define an invertible element in $\k[\hbar][\t/W_\omega]_{\widehat{(0,0)}}$ by 
$$\lambda_\omega=\frac{|\Stab_{W}(\omega)|\cdot |W_\omega|}{|W|}\cdot \frac{\bL_\omega}{\Lambda_\omega}\cdot \tau^2_{\omega+\rho}(\bL'_\omega).$$ 
By the isomorphisms (\ref{equ D.11}), and the observation that the inverse of the first map in (\ref{equ D.11}) is given by 
$$\bA[T/W_\omega]_{\widehat{\zeta^{2(\omega+\rho)}}}\xs \bA[T/W]_{\widehat{\zeta^{2(\omega+\rho)}}}, \quad f\mapsto \frac{|W_\omega|}{W}\sum_{x\in W/W_\omega}x(f),$$ 
we deduce that for any $f\in \k[\hbar][\t]_{\widehat{(0,0)}}$, there is an equality 
$$\tr_{\sfT^0_\omega}(f)|_{Q_n}=\lambda_\omega^{-1}\pi'_*(f\lambda_0)|_{Q_n}.$$ 
Since $Q=\Lim{n}Q_n$, the equality above holds if $Q_n$ is replaced by $Q$. 
Hence there is a commutative diagram 
$$\begin{tikzcd}[column sep=large]
\k[\hbar][\t]_{\widehat{(0,0)}}\arrow[d] \arrow[r,"\lambda^{-1}_\omega\circ \pi'_*\circ \lambda_0"] 
& \k[\hbar][\t/W_\omega]_{\widehat{(0,0)}} \arrow[d]\\ 
Z(\rep^0(U_{\hat{\zeta}}))\arrow[r,"\tr_{\sfT_\omega^0}"] & Z(\rep^\omega(U_{\hat{\zeta}})).
\end{tikzcd}$$
Since $Z(\rep(U_{\hat{\zeta}}))$ is torsion free over $\k[\![\hbar]\!]$, it extends to a commutative diagram 
$$\begin{tikzcd}[column sep=large]
\k[\widetilde{N}_{\sigma_0}(\t)]_{\widehat{0}} \arrow[r,"{\lambda^{-1}_\omega\circ \pi'_*\circ \lambda_0}"] \arrow[d,"\bc"'] 
& \k[\widetilde{N}_{\sigma_\omega}(\t/W_\omega)]_{\widehat{0}} \arrow[d,"\bc"]\\ 
Z(\rep^0(U_{\hat{\zeta}}))\arrow[r,"\tr_{\sfT_\omega^0}"] & Z(\rep^\omega(U_{\hat{\zeta}})). 
\end{tikzcd}$$ 
Now our conclusion follows from Lemma \ref{lem B.5}. 
\end{proof}

\

\nocite{*} 

\bibliographystyle{plain} 
\bibliography{MyBibtex2}

\end{document}